\let\mathcal\mathscr
\numberwithin{equation}{section}
\newtheorem{theorem}{Theorem}[section]
\newtheorem{lemma}[theorem]{Lemma}
\newtheorem{corollary}[theorem]{Corollary}
\theoremstyle{definition}
\newtheorem{remark}[theorem]{Remark}
\numberwithin{equation}{section}
\newcommand{\re}{\text{Re}}
\newcommand{\PP}{\mathbb{P}}
\renewcommand{\AA}{\mathbb{A}}
\newcommand{\FF}{\mathbb{F}}
\newcommand{\ZZ}{\mathbb{Z}}
\newcommand{\NN}{\mathbb{N}}
\newcommand{\QQ}{\mathbb{Q}}
\newcommand{\RR}{\mathbb{R}}
\newcommand{\CC}{\mathbb{C}}
\newcommand{\TT}{\mathbb{T}}
\newcommand{\sumstar}{\sideset{}{^*}\sum}
\newcommand{\cupstar}{\sideset{}{^*}\bigsqcup}
\def\msquare{\mathord{\scalerel*{\Box}{gX}}}
\renewcommand{\hat}{\widehat}
\newcommand{\ki}{K_\infty}
\newcommand{\hJ}{\widehat J}
\newcommand{\x}{\mathbf{x}}
\newcommand{\y}{\mathbf{y}}
\newcommand{\w}{\mathbf{w}}
\newcommand{\uu}{\mathbf{u}}
\renewcommand{\c}{\mathbf{c}}
\newcommand{\f}{\mathbf{f}}
\renewcommand{\v}{\mathbf{v}}
\renewcommand{\d}{\mathbf{d}}
\newcommand{\e}{\mathbf{e}}
\newcommand{\z}{\mathbf{z}}
\renewcommand{\b}{\mathbf{b}}
\renewcommand{\r}{\mathbf{r}}
\newcommand{\ux}{{\underline{\mathrm{x}}}}
\newcommand{\ua}{\underline{\mathrm{a}}}
\newcommand{\ub}{\underline{\mathrm{b}}}
\newcommand{\uxi}{\underline{\xi}}
\newcommand{\uz}{{\underline{\mathrm{z}}}}
\newcommand{\uc}{{\underline{\mathrm{c}}}}
\newcommand{\ud}{\underline{\mathrm{d}}}
\newcommand{\ualf}{\underline{\alpha}}
\newcommand{\uZ}{\underline{\mathrm{Z}}}
\newcommand{\ue}{\underline{\mathrm{e}}}
\def\scrF{{\mathcal F}}
\def\scrO{{\mathcal O}}
\def\scrP{{\mathcal P}}
\def\scrV{{\mathcal V}}
\def\cO{{\mathcal{O}}}
\def\vecd{{\text{\boldmath$d$}}}
\def\vecnull{{\text{\boldmath$0$}}}
\def\GL{\operatorname{GL}}
\def\diag{{\textrm{diag}}}
\DeclareMathOperator{\rank}{rank}
\DeclareMathOperator{\meas}{meas}
\DeclareMathOperator{\ord}{ord}
\newcommand{\vp}{\varpi}
\renewcommand{\hat}{\widehat}
\renewcommand{\bar}{\overline}
\renewcommand{\mod}{\:\text{mod}\:}
\newcommand{\ve}{\varepsilon}
\newcommand{\F}{{\underline{F}}}
\newcommand{\G}{{\underline{G}}}
\newcommand{\cmatr}[2]{\left( \begin{matrix} #1 \\ #2 \end{matrix} \right) }
\newcommand{\matr}[4]{\left( \begin{matrix} #1 & #2 \\ #3 & #4 \end{matrix} \right) }
\newcommand{\smatr}[4]{\bigl( \begin{smallmatrix} #1 & #2 \\ #3 & #4 \end{smallmatrix} \bigr) }
\newcommand{\eone}{E_{1,1}}
\newcommand{\etwo}{E_{1,2}}
\begin{document}

\date{\today}

\title{Rational points on complete intersections over $\FF_q(t)$}

\author{P. Vishe}
\address{
Department of Mathematical Sciences\\
Durham University\\
Durham\\ DH1 3LE\\ United Kingdom}
\email{pankaj.vishe@durham.ac.uk}
\begin{abstract}
A Kloosterman refinement for function fields $K=\FF_q(t)$ is developed and used to establish the quantitative arithmetic of the set of rational points on a smooth complete intersection of two quadrics $X\subset \PP^{n-1}_{K}$ , under the assumption that $q$ is odd and $n\geq 9$. 
\end{abstract}

\maketitle
\setcounter{tocdepth}{1}
\tableofcontents

\thispagestyle{empty}
\section{Introduction}\label{sec:intro}
Let $X\subset 
\PP^{n-1}_K $ denote a smooth projective complete intersection defined over a global field $K$ of multi degree type $(d_1,...,d_R)$, i.e., it corresponds to the zero locus of a non-singular system of homogeneous polynomials $F_1(\x),...,F_R(\x)$ of degrees $d_1,...,d_R$ respectively. Establishing properties of the set of $K$-rational points on $X$, denoted by $X(K)$, is a key focus of Diophantine Geometry. An important tool in establishing the Hasse principle and weak approximation is presented by the Hardy-Littlewood circle method. A feature of this method is that it not only gives an existence of the rational points on $X$, but also provides an asymptotic formula for the number of rational points in an expanding box, establishing the quantitative arithmetic of $X(K)$. 

Let $K=\FF_q(t)$, let $\scrO=\FF_q[t]$ be the ring of integers in $K$ and let $K_\infty$ denote the completion of $K$ with respect to the $\infty$-norm on $K$, denoted by $|\cdot |$. Let $\TT=\{|x|<1\}\subset K_\infty$ be an analogue of the unit interval in this setting. The circle method starts with considering an integral
\begin{equation}\label{eq:circlebeg}
\int_{\TT^R}S(\ualf)d\ualf,
\end{equation}
where $d\ualf$ denotes a suitably normalised Haar measure and $S(\ualf)$ denotes a suitable exponential sum, made explicit in Section \ref{sec:circle}.  Given any $Q>0$, a version of the Dirichlet's approximation theorem (see \cite[Lemma 5.1]{Lee11}, \cite{lee-thesis}) gives
\begin{equation}\label{eq:Dirichlet}
\TT^R=\bigcup_{\substack{r\in \cO\\ |r|\leq q^Q\\ \scriptsize{\mbox{$r$
monic}}} }
\bigcup_{\substack{\ua\in \cO^R\\ |\ua|<|r|\\
\gcd(\ua,r)=1} }D(\ua,r,Q),\qquad\textrm{where}\qquad
D(\ua,r,Q)=\left\{\uxi\in \TT^R: |\uxi-\ua/r|< |r|^{-1}q^{-Q/R}\right\}.
\end{equation}
Here, given any $\ux\in K_\infty^R$, $|\ux|=\max\{|x_1|,|x_2|,...,|x_R|\}$ denotes the maximum norm of its co-ordinates. 

The study of rational points on low degree $d$ smooth hypersurfaces ($R=1$) has seen major advances over the years. However, this success has not been mirrored in the $R>1$ case, with Myerson's recent works being one of the notable exceptions. We will try to explain one of the major hurdles here. When $R=1$ and $K=\FF_q(t)$, \eqref{eq:Dirichlet} provides an exact splitting of $\TT$, effectively enabling us to utilise non-trivial cancellations in the averages 
$$\sum_{\substack{r\in \cO\\ |r|=q^Y\\ \scriptsize{\mbox{$r$
monic}}} }\sum_{\substack{a\in \cO\\ |a|<|r|\\
\gcd(a,r)=1} }S(a/r+z), $$
 usually called as a double Kloosterman refinement. This was a key idea in the author's previous work (w. Browning) \cite{Browning_Vishe15}. This idea was employed there to establish the quantitative arithmetic of cubic hypersurfaces over $\FF_q(t)$, as long as $n\geq 8$ and $\textrm{Char}(\FF_q)>3$. When $R\geq 2$, a major log-jam is posed by the fact that so far there is no known way for obtaining a suitable partition of $\TT^R$ with approximating fractions of the type $\ua/r$. The only other available approach is due to Munshi \cite{Munshi15}. When $K=\QQ$ and $R=2$, he essentially used a hybrid of two $1$-dimensional Kloosterman refinements. Upon translating his approach to the function field setting, it amounts to using approximating fractions of the type $(a_1/r_1,a_2/r_2)$, which in turn needs {\em too many} sets to cover $\TT^2$. Therefore, it fails to generalise beyond the $\vecd=(2,2)$ case in a fruitful way.
 
The primary goal of this paper is to overcome this lacuna by producing a refinement of \eqref{eq:Dirichlet}, which will present us with a suitable partition of $\TT^2$. This will provide a route for establishing a double Kloosterman refinement, capable of dealing with a system of two forms ($R=2$) over $K=\FF_q(t)$. We illustrate the utility of this new approach by providing an asymptotic formula for a suitable counting function for any smooth complete intersection of two quadrics ($\vecd=(2,2)$) defined over $K$, as long as, $n\geq 9$ and $2\nmid q$.

We begin with a survey of some existing results. For $X$ of the type $(d,...,d)$ over  $K=\QQ$, a long standing result by Birch \cite{Birch61} implies that $n> (d-1)2^{d-1}R(R+1)$ suffices for the Hasse Principle to hold.  This was generalised to a general $\vecd$ type by Browning and Heath-Brown \cite{Browning-Heath-Brown14}. In Birch's original setting, a recent major breakthrough was achieved by Myerson in \cite{Myer}, \cite{Myer1}, \cite{Myer2}, where he managed to obtain the Hasse principle as long as $n\geq d2^dR+R$ and $X$ is suitably {\em generic}. When $d=2$ and $3$, he is able to drop the genericity condition on $X$ and obtain results for all smooth complete intersections. However his results do not improve those of Birch's when $d$ and $R$ are relatively small. The above results use the Hardy-Littlewood circle method and therefore  also provide us with an asymptotic formula for the number of rational points on $X$, when counted in an expanding box. 

When $K=\FF_q(t)$, the Hasse Principle for $n> d_1^2+...+d_R^2$ is an easy consequence of the Lang-Tsen theory. Establishing weak approximation turns out to be a much harder task. A folklore conjecture predicts that $X$ should satisfy weak approximation as long as $n> d_1^2+...+d_R^2$. It is usually believed that perhaps with a lot more technical work, most of the previously mentioned results over $K=\QQ$ could be translated to the function field setting. This is seen in Lee's PhD thesis \cite{lee-thesis}, \cite{Lee11}, where he obtained an $\FF_q(t)$ analogue of \cite{Birch61}. A novelty is typically attained when one obtains better results over $\FF_q(t)$ as compared with the $\QQ$-setting, often aided by the proven analogue of the generalised Riemann hypothesis over function fields. 

 When $\vecd=(2,2)$ and $2\nmid\textrm{Char}(K)$, a conjecture of Colliot-Th\'el\`ene, Sansuc and Swinnerton-Dyer \cite[Sec 16]{CSS} predicts weak approximation to hold as long as $n\geq 6$. The geometry of a complete intersection of two quadrics is well understood and therefore the geometric methods have been quite effective. When $K$ is an arbitrary number field, weak approximation for $n\geq 9 $ was established by Colliot-Th\'el\`ene, Sansuc and Swinnerton-Dyer \cite{CSS0} and \cite{CSS}. This was improved by Heath-Brown in \cite{Heath-Brown18}, where he established the $n=8$ case. When $K=\FF_q(t)$, a remarkable result of Tian \cite{tian} establishes weak approximation as long as $2\nmid q$ and $n\geq 6$, settling the aforementioned folklore conjecture in this case. The methods in all these results however are purely geometric and fail to shed further light on the structure of rational points $X(K)$. Moreover, they do not generalise to be able to deal with a more general types of complete intersections. The only known improvement of Birch's result in this setting is due to Munshi \cite{Munshi15}, where for $K=\QQ$, he established the quantitative arithmetic as long as $n\geq 11$. Browning and Munshi \cite{Browning-Munshi13} established the quantitative arithmetic when $K=\QQ$ and $n\geq 9$ under the assumption that the singular locus of $X$ consists of a pair of conjugate singular points defined over $\QQ(i)$. When $\vecd=(2,3)$, Browning, Dietmann and Heath-Brown established an asymptotic formula for the Hasse principle as long as $n\geq 29$. Heath-Brown and Pierce \cite{HeathBrown_Pierce17} and Pierce, Schindler and Wood \cite{Pierce-S-W} investigated systems of quadratic forms attaining almost every integer value simultaneously.

\subsection{Main results} We start by stating our main results. From now on, we fix $K=\FF_q(t)$ and $\vecd=(2,2)$. While inspecting \eqref{eq:Dirichlet}, it is easy to construct sub-families of overlapping sets appearing there. For instance, the sub-family $$\{D((a,a),r,Q): \gcd(a,r)=1, r \textrm{ monic }, |r|\leq q^Q\}, $$
contains a lot of sets which overlap with each other. However, this phenomenon can be easily explained by the fact that they cover a region around $\{x_1-x_2=0\}\cap \TT^2$, a rational line segment of low height. The Diophantine approximation of rational points lying on $\{x_1-x_2=0\}$ is explained by the $R=1$ case in \eqref{eq:Dirichlet}. This rationale sets the stage for our partition of $\TT^2$. We first begin by writing $\TT^2$ as a union of nicely placed regions around rational lines of suitable height. Then using techniques in Diophantine approximation, we show that these lines repel each other, ensuring that our regions are disjoint. Now, around each individual line, we invoke the one dimensional Diophantine approximation to get rid of some of the overlapping sets to produce the required partition.

Before stating the result, we begin by making our notion of a {\em generalised line} concrete: given $d\in\scrO$, and a primitive vector $\uc\in \scrO^2$, we define the corresponding {\em generalised line} as \begin{equation}\label{eq:ducdef}L(d\uc):=\{\ua/r\in \TT^2\cap L_1(d\uc,k)\textrm{ for some } k\in \scrO: \gcd(a_1,a_2,r)=\gcd(d,k )=1\},
\end{equation} where $L_1(d\uc,k)$ denotes the affine line 
\begin{equation}
\label{eq:ducdef1}
L_1(d\uc,k):=\{\ux\in K_\infty^2: d\uc\cdot\ux=k\}.
\end{equation}
To clarify our previous comments, $|d\uc| $ will denote the height of $L(d\uc)$. Here and throughout the rest of this work, we say that $\uc=(c_1,c_2)\in\cO^2$ is primitive if $\gcd(c_1,c_2)=1$, and either $c_1$ is monic or $c_1=0$ and $c_2$ is monic. As a result, the relevant vectors $d\uc\neq(0,0)$.  The following theorem will feature our partition of $\TT^2$:
\begin{theorem}
Given any $Q>0$, we have the following:
\label{thm:split}
\begin{equation}\label{eq:Dirichlet2}
\TT^2=\bigsqcup_{\substack{r\textrm{ {\em monic}}\\ |r|\leq q^Q}}\qquad\bigsqcup\limits_{\substack{d\mid r \textrm{ {\em monic, }} \uc\in\scrO^2\textrm{ {\em primitive}}\\ |r|q^{-Q/2}\leq |d\uc|\leq |r|^{1/2}\\ |dc_2|<|r|^{1/2}}}\,\,\,
\bigsqcup_{\substack{\ua\in\scrO^2\\|\ua|<|r|\\ \gcd(\ua,r)=1\\ \ua/r\in L(d\uc)}} D(\ua,r,Q).
\end{equation}
\end{theorem}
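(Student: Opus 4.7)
The plan is to deduce \eqref{eq:Dirichlet2} from the $R=2$ case of \eqref{eq:Dirichlet} in three stages: attaching to each reduced fraction $\ua/r$ a canonical generalised line $L(d\uc)$ in the admissible range; proving this line assignment is unique; and using a one-dimensional Dirichlet argument along each line to remove the remaining overlaps between boxes.

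For the construction of $(d,\uc)$, given $\gcd(\ua,r)=1$ I would consider the sublattice
\[
\Lambda(\ua,r):=\{\vecb\in\scrO^2:\ r\mid \vecb\cdot\ua\}\subset\scrO^2
\]
of covolume $|r|$. Function-field Minkowski furnishes a nonzero $\vecb\in\Lambda(\ua,r)$ with $|\vecb|\leq|r|^{1/2}$. A cleaning step---if $g=\gcd(b_1,b_2)$ and $e=\gcd(g,r)$, replace $\vecb$ with $(e/g)\vecb\in\Lambda$, which has no larger norm---permits the factorisation $\vecb=d\uc$ with $d\mid r$ monic and $\uc$ primitive. Taking $\vecb$ of minimal norm forces $\gcd(d,k)=1$ for $k=(\uc\cdot\ua)/(r/d)$, since otherwise $\vecb/p$ would be strictly smaller. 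The strict bound $|dc_2|<|r|^{1/2}$ is then extracted from the second successive minimum of $\Lambda$, using the sharp function-field identity $\lambda_1\lambda_2=|r|$ together with a reduced-basis swap of coordinates if necessary. The lower bound $|d\uc|\geq|r|q^{-Q/2}$ is imposed by selecting the Dirichlet approximation $(\ua,r)$ of $\uxi$ of smallest denominator among those arising in \eqref{eq:Dirichlet}: were $|d\uc|$ smaller, a one-dimensional Dirichlet on $L(d\uc)$ would yield a strictly better approximation.

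For uniqueness of the line assigned to $\ua/r$, suppose $\ua/r\in L(d\uc)\cap L(d'\uc')$ with both pairs admissible. Then $d\uc,d'\uc'\in\Lambda(\ua,r)$ both have norm $\leq|r|^{1/2}$; linear independence would force $|d\uc|\cdot|d'\uc'|\geq|r|$ by Minkowski, hence both equal $|r|^{1/2}$ with first components of norm $|r|^{1/2}$. The vector $\vecw:=d'c_1'\cdot d\uc-dc_1\cdot d'\uc'\in\Lambda$ then has zero first coordinate and second coordinate of norm $<|r|$; the congruence $r\mid w_2a_2$ combined with $\gcd(\ua,r)=1$ (after coordinate swap) forces $w_2=0$, whence $\uc\parallel\uc'$, contradicting independence, and uniqueness of the factorisation $r=d\cdot(r/d)$ with $\gcd(d,k)=1$ then gives $d=d'$. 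For cross-tuple disjointness, if $\uxi\in D(\ua,r,Q)\cap D(\ua',r',Q)$ with $|r|\leq|r'|$, the ultrametric gives
\[
|d'\uc'\cdot\ua-k'r|=|r|\cdot|d'\uc'\cdot(\ua/r-\ua'/r')|<|d'\uc'|q^{-Q/2}\leq|r'|^{1/2}q^{-Q/2}\leq 1,
\]
and being an integer this quantity vanishes, yielding $\ua/r\in L(d'\uc')$. Combining with the within-fraction uniqueness at $\ua/r$ gives $(d,\uc)=(d',\uc')$, after which a one-dimensional Dirichlet argument along $\uc^{\perp}$ in the strip around $L(d\uc)$ finally forces $(\ua,r)=(\ua',r')$.

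The principal obstacle, in my view, is the cross-tuple disjointness in the asymmetric case $|r|<|r'|$: to invoke the within-fraction uniqueness at $\ua/r$ one needs $|d'\uc'|\leq|r|^{1/2}$, which does not follow directly from $|d'\uc'|\leq|r'|^{1/2}$. Securing this step requires a delicate interplay between the upper bound $|d\uc|\leq|r|^{1/2}$, the strict coordinate bound $|dc_2|<|r|^{1/2}$, and the lower bound $|d\uc|\geq|r|q^{-Q/2}$, with the function-field successive minima of $\Lambda(\ua,r)$ doing most of the heavy lifting, and this is where the bookkeeping will be most involved.
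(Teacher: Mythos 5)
You have correctly identified the overall architecture of the proof --- attaching to each reduced fraction a generalised line of bounded height via a lattice/pigeonhole argument, using uniqueness to disjointify, and removing residual overlaps along a fixed line by a one-dimensional Dirichlet step --- and your within-fraction uniqueness argument (two independent vectors of norm $\leq|r|^{1/2}$ in $\Lambda(\ua,r)$ forces $|\det|<|r|$, contradicting the covolume bound) is a legitimate, slightly different route to the paper's second part of Lemma~\ref{lem:Dio3}, which instead goes through the Smith normal form of Lemma~\ref{lem:det0}.

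However, the gap you flag in the cross-tuple disjointness is genuine, and it is not a matter of bookkeeping: your strategy of reducing to within-fraction uniqueness cannot work as stated, because when $|r|<|r'|$ the condition $|d'\uc'|\leq|r'|^{1/2}$ gives no control whatsoever on $|d'\uc'|$ relative to $|r|^{1/2}$, so $d'\uc'$ need not lie in the admissible range for $\ua/r$, and the covolume bound $|d\uc|\cdot|d'\uc'|\geq|r|$ is simply consistent with both vectors living in $\Lambda(\ua,r)$. Moreover, your deduction $r\mid d'\uc'\cdot\ua$ only places $d'\uc'$ in the lattice $\Lambda(\ua,r)$; it does not yield $\ua/r\in L(d'\uc')$, since one still needs $d'\mid r$ and $\gcd\bigl(d',\,d'\uc'\cdot\ua/r\bigr)=1$. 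The paper resolves the cross-tuple case entirely differently, via the \emph{first} part of Lemma~\ref{lem:Dio3}: rather than attempting to force both lines into the admissible range for the smaller denominator, it shows a direct repulsion estimate $|\ua_1/r_1-\ua_2/r_2|\geq\max\{|d_1\uc_1|,|d_2\uc_2|\}/(|r_1r_2|)$ for distinct points on (possibly different) admissible lines. The mechanism is the construction via Lemma~\ref{lem:Diogen} of an auxiliary rational $\ua'/r'\in L(d_1\uc_1)$, on the same affine line $L_1(d_1\uc_1,k)$ and at controlled distance $|d_1\uc_1|/(|r_1||r'|)$ from $\ua_1/r_1$, followed by comparing the volume of the parallelepiped spanned by $\ua_1/r_1-\ua_2/r_2$ and $\ua_1/r_1-\ua'/r'$ from above ($<|d_1\uc_1|^2/(|r_1|^2|r_2||r'|)\leq 1/|r_1r_2r'|$) and from below ($\geq 1/|r_1r_2r'|$ if nonzero), forcing $\ua_2/r_2$ onto $L_1(d_1\uc_1,k)$ and hence onto $L(d_1\uc_1)$, at which point the one-dimensional spacing on a single line finishes the job. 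This auxiliary-point-plus-determinant device is the missing idea, and without it the lattice machinery you invoke does not close the argument.

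Two secondary remarks. First, deriving the asymmetric bounds $|dc_1|\leq|r|^{1/2}$, $|dc_2|<|r|^{1/2}$ directly from the successive minima identity $\lambda_1\lambda_2=|r|$ is awkward; the paper obtains them at no extra cost by pigeonholing the $q^{L+1}$ pairs with $|c_1|\leq q^{L/2}$, $|c_2|<q^{L/2}$ against $q^{L}$ residues modulo $r$, and you would be better served by that formulation. Second, your appeal to ``coordinate swap'' to conclude $w_2=0$ from $r\mid w_2a_2$ needs care: $\gcd(\ua,r)=1$ does not give $\gcd(a_2,r)=1$, and handling the prime factors of $r$ dividing $a_2$ requires the symmetric companion identity in the other coordinate, which is precisely the kind of case split the paper's Smith normal form argument (Lemma~\ref{lem:det0}) is designed to avoid.
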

Theorem \ref{thm:split} will eventually be proved in Section \ref{sec:split}. Let us give a brief explanation of how \eqref{eq:Dirichlet2} will be derived from \eqref{eq:Dirichlet}. We first begin by using the pigeon hole principle to prove that every rational $\ua/r$ lies on a generalised line of height at most $ |r|^{1/2}$. The extra condition $|dc_2|<|r|^{1/2}$ guarantees that these lines don't intersect each other at rationals of relatively {\em small} denominators. The rational points on each line of low height are much closer to each other and therefore,  we remove neighbourhoods around the rationals of relatively high denominator lying on these lines, as each such rational is sufficiently close to one with the denominator  $  \leq |d\uc|q^{Q/2}$, effectively handing us the condition $|r|q^{-Q/2}\leq |d\uc|$.   Finally, the condition $d\mid r$ is guaranteed from our definition of $L(d\uc)$.

It should be noted that the partition obtained in \eqref{eq:Dirichlet2} is purely based on some fundamental properties of the distribution of rationals in $\TT^2$, making it rather natural. Besides, any further refinements of \eqref{eq:Dirichlet} must address the fact that the rationals in $\TT^2$ accumulate on lines of low height, making our version in \eqref{eq:Dirichlet2} as pivotal for any future ones. Let us now briefly explain how Theorem \ref{thm:split} would lead to a double Kloosterman refinement. \eqref{eq:Dirichlet2} provides a decomposition of $\TT^2$ as a disjoint union of the sets $D(\ua,r,Q)$ placed at rationals $\ua/r$ lying on {\em lines} $L(d\uc)$ satisfying the conditions
\begin{equation}\label{eq:Thm1cond}
\begin{split}
|r|q^{-Q/2}\leq |d\uc|\leq |r|^{1/2},\,\,\, |dc_2|<|r|^{1/2},\,\,\, d\mid r.
\end{split}
\end{equation} 
An important observation to make here is that apart from the condition $d\mid r$, \eqref{eq:Thm1cond} only depends on the absolute values $|r|,|c_1|$, $|c_2|$ and $|d|$. We may therefore readily interchange the sums over $d\uc$ and $r$. After an application of Theorem \ref{thm:split} to \eqref{eq:circlebeg}, we are able to consider averages of the type
\begin{align}\label{eq:SUM}
\sum_{|d\uc|=q^{Y_1}}\sum_{\substack{|r|=q^{Y_2}\\d\mid r}}\,\,\sum_{\ua/r\in L(d\uc)}S(\ua/r+\uz).
\end{align}
For a fixed value of $\uz$, this presents us with a way to utilise oscillations in the values $S(\ua/r+\uz)$, for rationals $\ua/r$ appearing in \eqref{eq:SUM}. As far as our knowledge, this provides the first {\em classical} version of Kloosterman refinement for a system of forms. Theorem \ref{thm:split} should be able to be inductively generalised to produce partitions of $\TT^R$, for arbitrary values of $R$. We intend to return to this topic in a subsequent work.

We now move on to an application of Theorem \ref{thm:split}. Let $F_1(\x),F_2(\x)\in \cO[x_1,...,x_n]$ be two quadratic forms defining a smooth complete intersection. We fix $N\in\cO$ and a vector $\b$ such that $F_1(\b)\equiv F_2(\b)\equiv 0\bmod{N} $. An object of focus for us is the following affine counting function:
 given a non-zero parameter $P\in \cO$ and a smooth, compactly supported function $\omega $ over $K_\infty^n$, let
 \begin{equation}
\label{eq:Count}
N(P):=N_{X,K,\omega}(P,\b,N):=\sum_{\substack{\x\in\scrO^n\\ F_1(\x)=F_2(\x)=0\\ \x\equiv \b\bmod{N}}}\omega(\x/P).
\end{equation}
We apply Theorem \ref{thm:split} to detect the condition $F_1(\x)=F_2(\x)=0$ to obtain the following asymptotic formula for $N(P)$:
\begin{theorem}
\label{thm:count}
Let $X\subset \PP^{n-1}_K$ be a smooth complete intersection of two quadrics over $K=\FF_q(t)$ satisfying $2\nmid q$. Then given any non-zero $P\in\cO$ and  $\omega$, the characteristic function of a fixed suitable hypercube around a non-singular point $\x_0\in K_\infty^n$, there exists an $\ve_0>0$ and a constant $C_{\omega,F}>0$ such that we have
$$N(P)=C_{\omega,F} |P|^{n-4}+O(|P|^{n-4-\ve_0}), $$
as long as $n\geq 9$.
\end{theorem}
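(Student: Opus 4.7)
My plan is to apply the function-field Hardy--Littlewood circle method, detecting the system $F_1(\x)=F_2(\x)=0$ by
\begin{equation*}
N(P)=\int_{\TT^2}S(\ualf)\,d\ualf,\qquad S(\ualf):=\sum_{\substack{\x\in\scrO^n\\ \x\equiv\b\bmod N}}\omega(\x/P)\,\psi\big(\alpha_1F_1(\x)+\alpha_2F_2(\x)\big),
\end{equation*}
where $\psi$ is the standard additive character on $K_\infty$. Choosing $Q$ with $q^Q$ comparable to an appropriate power of $|P|$, I would use Theorem \ref{thm:split} to decompose $\TT^2$ into the disjoint union of arcs $D(\ua,r,Q)$ indexed by generalized lines $L(d\uc)$ satisfying \eqref{eq:Thm1cond}. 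This turns the integral over $\TT^2$ into a triple sum over $r$, $d\uc$, and $\ua/r\in L(d\uc)$, paired with an integral over $\uz$ in a small box.

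On each arc I would write $\ualf=\ua/r+\uz$ and perform Poisson summation in $\x$ in its function-field form, converting $S(\ua/r+\uz)$ into $|P|^n/|r|^n$ times a sum indexed by a dual vector $\vech\in\scrO^n$ whose terms are a complete exponential sum $S_{r,\ua}(\vech)$ modulo $r$ times a smooth oscillatory integral $I_r(\uz,\vech)$. The $\vech=\bn$ term should, after summation and integration, yield the expected main term $C_{\omega,F}|P|^{n-4}$, with $C_{\omega,F}$ a product of singular integral and singular series which is positive once $\omega$ is concentrated near the non-singular point $\x_0$ and the congruence on $\b\bmod N$ is locally solvable. The theorem then reduces to showing that the tail $\vech\neq\bn$ contributes only $O(|P|^{n-4-\ve_0})$.

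After dyadic localization in $|r|$, $|\vech|$, $|d\uc|$ and treatment of the $\uz$-integral by standard function-field stationary phase, the task becomes that of bounding sums of the shape
\begin{equation*}
\Sigma(\vech)\;=\;\sum_{|d\uc|=q^{Y_1}}\ \sum_{\substack{|r|=q^{Y_2}\\ d\mid r}}\ \sum_{\substack{\ua/r\in L(d\uc)\\ \gcd(\ua,r)=1}}S_{r,\ua}(\vech).
\end{equation*}
For generic $\vech$ the pencil $a_1F_1+a_2F_2\bmod r$ is non-degenerate, the inner Gauss sum exhibits square-root cancellation in $r$ via Weil-type bounds, and a summation over $r$ alone already gives enough when $n\ge 9$. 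The main obstacle will be the \emph{bad} $\vech$ for which the pencil degenerates modulo some divisor of $r$: here I intend to exploit Theorem \ref{thm:split} as a genuine double Kloosterman refinement, averaging $\ua$ along the line $L(d\uc)$ to enforce an extra linear relation on the frequencies that produces cancellation unavailable from the $r$-average alone. Combined with the fact that the discriminant locus of a smooth pencil of quadrics has codimension one in $\PP^1$ and that the corresponding singular loci in $\PP^{n-1}$ are controlled by the smoothness of $X$, this should recover the desired saving $|P|^{-\ve_0}$ precisely in the range $n\ge 9$.
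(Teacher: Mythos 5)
Your outline correctly identifies the skeleton of the argument in the paper: write $N(P)=\int_{\TT^2}S(\ualf)\,d\ualf$, split $\TT^2$ using Theorem \ref{thm:split} with $\hat Q\asymp|P|^{4/3}$, apply Poisson summation on each arc to produce dual sums $\sum_{\vech}S_{d\uc,r}(\vech)I_r(\uz;\vech)$, extract the main term from $\vech=\vecnull$ (Lemma \ref{lem:Majorfinal}), and then estimate the tail. Up to that point your proposal matches the paper's Sections \ref{sec:circle}--\ref{sec:minor}.

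However, there is a substantive inversion in how you allocate the double Kloosterman refinement, and it hides the real difficulty. You write that for \emph{generic} $\vech$ ``a summation over $r$ alone already gives enough when $n\ge 9$,'' and that the double Kloosterman refinement along $L(d\uc)$ is reserved for the \emph{bad} $\vech$. In fact it is exactly the other way round. For generic $\vech$ (with $\scrF^*(\vech)\neq0$ and $f^*_\uc(\vech)$ not a square), the average over $\ua/r\in L(d\uc)$ is first turned into a one-dimensional complete exponential sum $S_b(\vech)$ for the pencil form $F_\uc=-c_2F_1+c_1F_2$; by the explicit quadratic Gauss/Sali\'e/Kloosterman evaluation (Lemma \ref{lem:Expsum}) this already saves a factor $|b|^{1/2}$ over the trivial bound. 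To reach $n\ge 9$ one \emph{still} needs a second factor $|b|^{1/2}$, and this comes from averaging over the square-free part $b$ of $r$: the leading behaviour of $S_b(\vech)$ is a quadratic character $\left(\tfrac{\det(M_\uc)}{b}\right)$ (or $\left(\tfrac{-f^*(\vech)}{b}\right)$ for odd $n$) times a power of $|b|$, so the $b$-sum becomes a Hecke character sum over $\FF_q(t)$, bounded by the Riemann Hypothesis for Hecke $L$-functions over function fields (Lemma \ref{lem:Charsum} and Lemma \ref{lem:SigB}). That is the genuine ``double'' Kloosterman step, and it is what brings the problem from the Munshi-type $n\ge11$ range down to $n\ge9$. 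Without it, the $r$-average alone does not suffice.

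The \emph{bad} $\vech$ are handled not by more refined averaging but by sparseness: when $\scrF^*(\vech)=0$ you use the Serre-type bound \cite[Lemma 2.9]{Browning_Vishe15}, and when $f^*_\uc(\vech)$ is a square you apply the auxiliary counting estimates (Lemma \ref{lem:B2} for fixed $\uc$, Lemma \ref{lem:Bro} for fixed $\vech$) to bound the number of such pairs $(\uc,\vech)$. A similar counting strategy handles bad pairs $\uc$, type-II primes, square-full moduli, and primes dividing $N$ and $D_\F$. Finally, the character-sum input also requires verifying that the relevant character is nontrivial, i.e.\ that $b'\det(M_\uc)$ (or $b'f^*(\vech)$) is never a perfect square for $b'\mid N$; without the careful separation between $f^*(\vech)=\msquare$ and otherwise you would lose this. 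Your outline does not mention any of these ingredients, so as it stands it would not close the argument; it is an accurate sketch of the circle-method scaffolding, but the numerical threshold $n\ge9$ rests on the explicit quadratic exponential-sum evaluation, the function-field RH input, and the bad-locus counting, all of which still need to be supplied.
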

We thus establish the quantitative arithmetic for a pair of quadrics in the setting of the aforementioned folklore conjecture requiring $n>d_1^2+...+d_R^2$ and thus also record an improvement of \cite{Munshi15} in the function field setting. The asymptotic formula, without the condition $C_{\omega,F}>0$, could essentially be proved for the characteristic function $\omega$ of any fixed hypercube in $K_\infty^n$. However, for it to be meaningful, we must have $C_{\omega,F}>0$. This can only be guaranteed as long as the hypercube is close enough to a smooth point $\x_0\in K_\infty^n$. The hypothesis $2\nmid q$ is vital as well. For a fixed value of $d\uc$ in \eqref{eq:SUM}, we consider the sum
$$\sum_{\substack{|r|=q^{Y}\\d\mid r}}\sum_{\ua/r\in L(d\uc) }S(\ua/r+\uz),$$
which translates to considering averages of one dimensional exponential sums corresponding to the quadratic form $-c_2F_1+c_1F_2$,  a well studied object. 
This allows us to save a factor of size $O(|r|)$ upon utilising the average over $\ua$ as well as over $r$, when $\uc$ is generic. This amounts to obtaining a double Kloosterman refinement over each line $L(d\uc)$. The readers familiar with the exponential sums for quadratic forms may notice that when $n$ is even, in a generic situation, we are in fact able to save a factor of size $O(|r|^{3/2})$, which in theory would let us take care of the $n=8$ situation. However, when $r$ consists of non-generic primes, this saving is reduced to a factor of size $O(|r|^{1/2})$ instead, rendering the $n=8$ case out of our reach. See Remark \ref{rem:1} for a further explanation.

Ideally, one would also like to obtain some extra  cancellations from the sum over different lines $d\uc$ of a fixed height in \eqref{eq:SUM}. However, so far, we have been unable to do so. The most straightforward generalisation of our mthod to a general $R$ situation is likely to facilitate us to save a factor of the size $O(|r|)$ in the square-free case. It would be interesting to see if it could be used to save more. One must also be wary of saving more than a factor of size $O(|r|)$ in the square-free case, as these bounds need to be matched by their square-full counterparts.   Typically, while dealing with the rationals with $\ell$-full denominators $r$, where $\ell$ is large, one gives up on any saving from the extra averages over $\ua$ and $r$ but instead benefits from the sparseness of $\ell$-full numbers. This standard trick would not be enough to save a factor of size $O(|r|^{1+\delta})$. 
However, as is a feature of these methods, the results produced by them would be more remarkable when the total degree $d_1+...+d_R$ is relatively low. 
 
Finally, there are wider implications of obtaining analogous asymptotic formulae (where $\deg(P)$ remains fixed but $q\rightarrow\infty$) for the counting function \eqref{eq:Count} over $\FF_q(t)$. Let $X$ be a smooth complete intersection over $\CC$. Studying the geometry of the space of rational curves on $X$ is essential in understanding the rationality properties of $X$. Let $\overline{\mathrm{Mor}}_{a,b}(X,e)$ denote the Kontsevich moduli space of rational curves of degree $e$ on $X$ passing through points $a$ and $b$. When $a=b=0$, this coincides with the space of all rational curves of degree $e$ on $X$. Following a strategy of Ellenberg and Venkatesh, a previous work of the author (with Browning) \cite{Browning_Vishe17-2} establishes facts about the irredicubility of these moduli spaces when $a=b=0$, $R=1$ and $n$ large enough with respect to the degree of the hypersurface $d$. These techniques can be pushed further to obtain other geometric results regarding the aforementioned moduli spaces and their generalisations, as demonstrated by \cite{Browning_Vishe17-2} and subsequent works by Browning and Sawin \cite{Browning_Sawin17}, \cite{Browning_Sawin18} and M\^{a}nz\u{a}\cb{t}eanu \cite{Manzateanu18}. A feature of these methods is that they are able to establish the results for all smooth $X$ satisfying the given conditions, while the geometric methods are usually able to hand us results for smooth and generic $X$.

Techniques in this work are likely to facilitate us to obtain the irreducibility of $\overline{\mathrm{Mor}}_{a,b}(X,e)$, when $e$ is sufficiently large as compared with $n$, as long as $n\geq 9$ and $X$ is a smooth complete intersection of two quadrics, for suitably chosen $a$ and $b$. The dependence on $n$ would be better for larger values of $n$, as also seen in \cite{Manzateanu18}. We plan to obtain this in a follow up work. To facilitate this, we have tried to keep an explicit dependence on $q$ in the constants appearing in a large portion of our auxiliary estimates obtained in this work.
 
\subsection{Acknowledgements} We would like to thank Tim Browning and Roger Heath-Brown for helpful discussions and providing us with useful references. Special thanks are also due to Will Sawin whose generous help is greatly acknowledged.
\section{Auxiliary results for $\FF_q(t)$}
The objective of this section is to state and prove various auxiliary results about $K=\FF_q(t)$ which will be useful at various junctures in this manuscript.

\subsection{Notation} \label{sec:notation}
We will follow the notation in \cite[Sec 2]{Browning_Vishe15} closely. We refer the reader there for the proofs and explanations of many of the facts stated below. We will always assume that $2\nmid q$.  Throughout this work, for any real number $R$, let $\hat{R}:=q^R$. Let $\cO=\FF_q[t]$ be the ring of integers of $K$, and let $\Omega$ denote the set of places of $K$ including the infinite place. Given any finite prime $v\in \Omega$, let $\nu_v(x):=\ord_v(x)$ denote the standard valuation. Each valuation $\nu_v$ gives rise to an absolute value $|\cdot|_v$ on $K$. Throughout, we will write $|\cdot|=|\cdot|_\infty$. For each $v\in\Omega$, let $K_v$ denote the completion of $K$ with respect to the absolute value $|\cdot|_v$, and let $\scrO_v=\{x\in K_v: |x|_v\leq 1\}$. We also define  $$\scrO^\sharp:=\{b\in\scrO:b\textrm{ monic, }\vp^2\nmid b,\forall \vp \textrm{ prime }\},$$
to be the set of monic, square-free integers in $\scrO$.

An important role will be played by $K_\infty$, which can be identified with the set of truncated Laurent series with the coefficients in $\FF_q$. We will set $\TT=\{x\in K_\infty: |x|<1\}$. Let $d\alpha$ denote the Haar measure on $K_\infty$, normalised so that $$\int_{\TT}d\alpha=1.$$
Let $\psi:K_\infty\rightarrow \CC^*$ denote the non-trivial unitary character as defined in \cite[Sec 2.2]{Browning_Vishe15}. Given any $\x\in K_\infty^m$ for any $m\geq 1$, let $|\x|=\max_i\{|x_i|\}$ denote the maximum norm of the co-ordinates of $\x$.

Given a polynomial $f(\x)\in K_\infty[x]$, let $H_f$ denote the maximum of the $\infty$-norms of the coefficients appearing in the equation of $f$. Similarly, given any tuple $\underline{f}(\x)=(f_1(\x),...,f_R(\x))$ of polynomials $f_1,...,f_R$, $H_{\underline{f}}$ will denote the maximum of $H_{f_1},...,H_{f_R}$.

To distinguish between the integral over $\TT^2$ and over $K_\infty^n$ appearing in our work later, we will typically use the notation $\ux=(x_1,x_2)$ to denote a pair in $K_\infty^2$, and the notation $\x=(x_1,...,x_n)$ to denote a vector in $K_\infty^n$, with our notation $\ud=\d$ defined in Sec. \ref{sec:minor} being an exception.

We say that $\ux=(x_1,x_2)\in\scrO^2\setminus \underline{0}$ is `monic' if $x_1\neq 0$ is monic or $x_1=0 $ and $x_2$ is monic.  As already defined before Theorem \ref{thm:split}, $\ux$ will be called primitive if $\ux$ is monic and $\gcd(x_1,x_2)=1$.

Let $C\in M_{k}(\scrO)$ be an arbitrary $k\times k$ matrix. We will frequently use a Smith normal form to write $C=TDS$, where $S,T\in \GL_k(\scrO)$ be matrices satisfying $\det(S),\det(T)\in\FF_q^\times$. Here $D=\diag(\mu_1,...,\mu_n)$ is a diagonal matrix satisfying $\mu_1\mid\mu_2\mid...\mid \mu_n$.

Our integral bounds would require us to often integrate on regions of the form $\{\uz\in K_\infty^2: |z_i|=\hat{Z_i}\} $, where $Z_i\in\ZZ$. We will therefore introduce the following notation: given $\uZ\in \ZZ^2$, let 
\begin{equation}\label{eq:uzUZ}
\{\langle \uz\rangle=\langle \hat\uZ\rangle\}:=\{\uz\in K_\infty^2: |z_i|=\hat{Z_i}\}.
\end{equation}
In order to facilitate our optimisation process in Sec. \ref{sec:minor}, given any $x,y\in \scrO$, we define:
\begin{equation}\label{eq:a||b}
 y\mid x^\infty\Rightarrow \{\vp\mid y\Rightarrow \vp\mid x \}.
\end{equation}
Throughout, we will use the notation $A\ll B$ to denote $A\leq CB$ for some absolute constant $C$. For a large portion of this work, we have tried to keep the implied constant to be independent of $q$, which will mainly be useful in our future applications to arithmetic geometry.

\subsection{Some exponential integral bounds}

Given non-zero polynomials  $G_1, G_2\in K_\infty[x_1,\dots,x_n]$,  given $\gamma\in \ki$ and $\w\in
\ki^n$,
 integrals of the form 
\begin{equation}\label{eq:J}
J_\G(\ualf;\w)=\int_{\TT^n} \psi\left( \alpha_1 G_1(\x)+\alpha_2G_2(\x)+\w.\x\right)d\x
\end{equation}
will feature prominently in our work. Our goal here will be to build on the results in \cite[Sec. 2.4]{Browning_Vishe15} and obtain analogues of Lemmas 2.6 and 2.7 from there.
Generalising \cite[Lemma 2.6]{Browning_Vishe15} is relatively straightforward. We will therefore omit its proof. After noting $H_{\alpha_1 G_1+\alpha_2G_2}\leq \max\{|\alpha_1|H_{G_1},|\alpha_2|H_{G_2}\}$, a slight modification of \cite[Lemma 2.6]{Browning_Vishe15} gives us
\begin{lemma}\label{lem:J-easy}
We have 
$
J_\G(\ualf;\w)=0$ if $|\w|> \max\{1,|\alpha_1| H_{G_1}, |\alpha_2| H_{G_2}\}$.
\end{lemma}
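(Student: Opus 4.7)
The plan is to adapt the proof of the single-polynomial case \cite[Lemma 2.6]{Browning_Vishe15}, using the ultrametric property of $|\cdot|$ to absorb the linear parts of $\alpha_1 G_1$ and $\alpha_2 G_2$ into $\w$. Write $\alpha_1 G_1(\x)+\alpha_2 G_2(\x)=\vecbeta\cdot\x+Q(\x)$, where $\vecbeta\in K_\infty^n$ collects the linear coefficients and $Q$ contains the terms of total degree at least $2$. By definition of $H_{G_i}$, every coordinate of $\vecbeta$ and every coefficient of $Q$ has absolute value at most $M:=\max\{|\alpha_1|H_{G_1},|\alpha_2|H_{G_2}\}<|\w|$. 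Setting $\w':=\w+\vecbeta$, the ultrametric inequality gives $|\w'|=|\w|$, and the leading $t$-coefficient of $w'_i$ matches that of $w_i$ for any $i$ with $|w_i|=|\w|$.

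Pick such an $i$ and assume without loss of generality $i=1$; by Fubini, integrate first over $x_1$ with $\x':=(x_2,\dots,x_n)\in\TT^{n-1}$ held fixed. Viewed as a polynomial in $x_1$, the phase reads $B_0(\x')+B_1(\x')x_1+\sum_{k\geq 2}B_k(\x')x_1^k$. Since $|x_j|<1$ for $j\geq 2$, the contribution to $B_1(\x')$ coming from $Q$ has absolute value strictly less than $M$, so $|B_1(\x')|=|w'_1|=|\w|$ with the same leading $t$-coefficient as $w_1$. Similarly $|B_k(\x')|\leq M<|\w|$ for $k\geq 2$, uniformly in $\x'$.

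It therefore suffices to establish the one-variable claim: for any polynomial $P(x)=B_1 x+\sum_{k\geq 2}B_k x^k$ over $K_\infty$ with $|B_1|>\max\{1,|B_k|\}$, one has $\int_\TT\psi(P(x))\,dx=0$. I would prove this by expanding $x=\sum_{j\geq 1}a_j t^{-j}$ with $a_j\in\FF_q$, observing that $\psi(P(x))$ depends only on the coefficient of $t^{-1}$ in the Laurent series $P(x)$, through a nontrivial additive character of $\FF_q$. Let $D=\deg B_1\geq 1$. The digit $a_{D+1}$ enters the coefficient of $t^{-1}$ in $B_1 x$ linearly, with coefficient equal to the leading $t$-coefficient of $B_1$, a nonzero element of $\FF_q$. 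On the other hand, $a_{D+1}$ cannot enter the coefficient of $t^{-1}$ in any $B_k x^k$ with $k\geq 2$: such a contribution would require a monomial $a_{j_1}\cdots a_{j_k}$ with $j_1+\dots+j_k\leq\deg B_k+1$ and some $j_l=D+1$, forcing $\deg B_k\geq D+k-1>D$, which contradicts $\deg B_k\leq\log_q M<D$. Averaging over $a_{D+1}\in\FF_q$ therefore annihilates the inner integral, and Fubini yields $J_\G(\ualf;\w)=0$.

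The main technical step is the final digit-counting argument: the strict inequality $|\w|>|\alpha_i|H_{G_i}$ is exactly what isolates a digit of $x_1$ that the degree-$\geq 2$ perturbation $Q$ cannot reach, while $|\w|>1$ ensures this digit lies at a valid position in the tail expansion of $x_1$. Once this one-variable fact is secured, the rest of the argument is mechanical.
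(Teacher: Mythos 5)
Your proof is correct and uses essentially the same idea as the paper's: the paper simply notes $H_{\alpha_1 G_1 + \alpha_2 G_2} \le \max\{|\alpha_1|H_{G_1}, |\alpha_2|H_{G_2}\}$ and invokes \cite[Lemma 2.6]{Browning_Vishe15}, whereas you have unfolded that cited lemma's proof (Fubini reduction to one variable, then averaging over the digit $a_{D+1}$) into a self-contained argument. Both routes hinge on the same ultrametric absorption of the linear part of $\alpha_1 G_1 + \alpha_2 G_2$ into $\w$ and the same digit-isolation step forced by the strict inequality $|\w|>\max\{1,|\alpha_1|H_{G_1},|\alpha_2|H_{G_2}\}$.
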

We also need a generalisation of \cite[Lemma 2.7]{Browning_Vishe15}, obtained in the following lemma:
\begin{lemma}\label{lem:small}
Given any $\uZ=(Z_1,Z_2)\in \ZZ^2$ and for any $\w\in K_\infty^n$  satisfying $|\w|\leq  \max\{1,\hat{Z_1}, \hat{Z_2}\}H_\G$, we have 
 $$
\int\limits_{\langle\ualf\rangle=\langle \hat{\uZ}\rangle} J_\G(\ualf;\w)d\ualf= \int_{\Omega} \psi\left(\alpha_1 G_1(\x)+\alpha_2G_2(\x)
+\w.\x \right) d\x d\ualf,
$$
where $\langle\ualf\rangle=\langle \hat{\uZ}\rangle$ as in \eqref{eq:uzUZ} and
\begin{equation}
\begin{split}
\Omega=\{(\ualf,\x)\in \{\langle \ualf\rangle=\langle \uZ\rangle\}\times \TT^n: &|\alpha_1G_1(\x)|,|\alpha_2G_2(\x)|\leq \max\{1,H_\G\}\max\{1,
\hat{Z_1}^{1/2},\hat{Z_2}^{1/2}\},\\& |\alpha_1\nabla G_1(\x)+\alpha_2\nabla G_2(\x)+ \w|\leq H_\G
\max\{1,
\hat{Z_1}^{1/2},\hat{Z_2}^{1/2}\}\}.
\end{split}
\end{equation}
\end{lemma}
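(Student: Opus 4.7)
The plan is to adapt the Weyl-differencing argument of \cite[Lemma 2.7]{Browning_Vishe15} to the two-form setting by performing simultaneous small translations in $\x$ and in each $\alpha_i$, then invoking the non-archimedean orthogonality of $\psi$. Let $\rho, \sigma_1, \sigma_2$ be integer parameters to be fixed below. For any $\vecbeta = (\beta_1, \beta_2)$ with $|\beta_i| \leq q^{-\sigma_i} < \hat{Z_i}$ and any $\y \in K_\infty^n$ with $|\y| \leq q^{-\rho}$, the strict non-archimedean triangle inequality ensures that $\ualf \mapsto \ualf+\vecbeta$ preserves the annulus $\{\langle \ualf \rangle = \langle \hat\uZ \rangle\}$, while $\x \mapsto \x+\y$ preserves $\TT^n$. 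Both substitutions are measure-preserving, so the double integral is unchanged, and we may average over all such $\vecbeta$ and $\y$ before integrating against $d\ualf\, d\x$.

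By the Hasse--Schmidt Taylor formula, $G_i(\x+\y) = G_i(\x) + \y \cdot \nabla G_i(\x) + R_i(\x, \y)$ with $|R_i(\x, \y)| \leq q^{-2\rho} H_{G_i}$ for $\x \in \TT^n$. Choosing $\rho$ so that $q^{2\rho}$ exceeds $\hat{Z_i} H_{G_i}$ by enough to force $\psi((\alpha_i+\beta_i) R_i(\x, \y)) = 1$, we may discard the remainder, and the shifted exponential factorises as $\psi(\sum_i \alpha_i G_i(\x) + \w \cdot \x)$ times
\[
\psi\bigl(\y \cdot [\textstyle\sum_i (\alpha_i+\beta_i)\nabla G_i(\x) + \w]\bigr)\, \psi\bigl(\textstyle\sum_i \beta_i G_i(\x)\bigr).
\]
Orthogonality of $\psi$ on the compact subgroups $\{|\y| \leq q^{-\rho}\}$ and $\{|\beta_i| \leq q^{-\sigma_i}\}$ then collapses the $\y$-average to the indicator of $|\sum_i (\alpha_i+\beta_i)\nabla G_i(\x) + \w| \leq q^\rho$ and each $\beta_i$-average to the indicator of $|G_i(\x)| \leq q^{\sigma_i}$. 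Since $|\sum_i \beta_i \nabla G_i(\x)| \leq H_\G \max_i q^{-\sigma_i}$ will turn out to be at most $q^\rho$ with the chosen parameters, the $\y$-indicator simplifies, via the ultrametric inequality, to $\mathbf{1}[|\sum_i \alpha_i \nabla G_i(\x) + \w| \leq q^\rho]$.

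Setting $q^\rho = H_\G \max\{1, \hat{Z_1}^{1/2}, \hat{Z_2}^{1/2}\}$ and $q^{\sigma_i} = \hat{Z_i}^{-1}\max\{1, H_\G\}\max\{1, \hat{Z_1}^{1/2}, \hat{Z_2}^{1/2}\}$, the resulting joint characteristic function is precisely $\mathbf{1}_\Omega(\ualf, \x)$, while the normalising factors $q^{-n\rho}$ and $q^{-\sigma_i}$ cancel against the measures of the respective shift domains. The hypothesis $|\w| \leq \max\{1, \hat{Z_1}, \hat{Z_2}\} H_\G$ enters here to ensure that the gradient condition carves out a nontrivial subset of $\TT^n$, consistent with the vanishing range already captured by Lemma \ref{lem:J-easy}.

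The principal obstacle is the careful parameter bookkeeping: one must simultaneously maintain $q^{-\sigma_i} < \hat{Z_i}$ (for the $\vecbeta$-translation to preserve the annulus), $q^{2\rho}$ large enough to annihilate all Taylor remainders under $\psi$, and $q^{-\sigma_i} H_\G \leq q^\rho$ (so that the cross-term is absorbable by the ultrametric inequality). The factors $\max\{1,\cdot\}$ in the stated bounds are designed precisely to accommodate the edge cases where some of $\hat{Z_1}, \hat{Z_2}, H_\G$ happen to be less than $1$, and verifying the three inequalities above with the proposed $\rho, \sigma_1, \sigma_2$ is routine but must be done explicitly to match the sharp constants in $\Omega$.
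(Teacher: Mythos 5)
Your high-level plan — translate $\ualf$ and $\x$ by small non-archimedean shifts, average, and invoke orthogonality of $\psi$ — is indeed the underlying mechanism in the paper and in \cite[Lemma 2.7]{Browning_Vishe15}. Moreover, using an $i$-dependent shift scale $q^{-\sigma_i}$ for $\beta_i$ is a neat device: the paper instead uses a single scale $\delta^{-1}$, which is what forces its case-split on $Z_2\geq Z_1/2$.

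However, there is a genuine gap in the step where you ``discard the remainder'' $\psi\bigl(\sum_i(\alpha_i+\beta_i)R_i(\x,\y)\bigr)$. You require $q^{2\rho}$ to exceed $\hat{Z_i}H_{G_i}$, but then you fix $q^{\rho}=H_\G\max\{1,\hat{Z_1}^{1/2},\hat{Z_2}^{1/2}\}$, since the $\y$-orthogonality must produce exactly the threshold $H_\G\max\{1,\hat{Z_1}^{1/2},\hat{Z_2}^{1/2}\}$ in the gradient condition. With that forced $\rho$, one finds
$$
\hat{Z_i}H_{G_i}\,q^{-2\rho}\;=\;\frac{\hat{Z_i}H_{G_i}}{H_\G^2\max\{1,\hat{Z_1},\hat{Z_2}\}}\;\geq\;\frac{1}{H_\G}
$$
whenever $\hat{Z_i}=\max\{1,\hat{Z_1},\hat{Z_2}\}$ and $H_{G_i}=H_\G$, which can certainly occur. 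When $H_\G\leq 1$ this is $\geq 1$, so $\psi\bigl((\alpha_i+\beta_i)R_i\bigr)$ is not forced to equal $1$ and the remainder cannot be discarded; even for $H_\G$ slightly above $1$ there is no room to get the strict inequality $<1$. (For the same reason $q^{-\rho}\leq q^{-1}$, needed so that the $\y$-shift lies in $\TT^n$, can fail.) There is no alternative value of $\rho$ that serves both purposes: decreasing $q^{-\rho}$ to kill the remainder destroys the match with the gradient threshold, and vice versa.

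The paper sidesteps this tension precisely because it never Taylor-expands the phase in $b_i$ at all. It applies the $b_i$-orthogonality directly to $\psi\bigl(\delta^{-1}b_iG_i(\y+\delta\z)\bigr)$, which is exactly linear in $b_i$ with no remainder; the passage between $G_i(\y)$ and $G_i(\y+\delta\z)$ is then handled by a Lipschitz estimate rather than an expansion. Only the $\z$-integral requires an expansion, and there the quadratic term is not discarded — \cite[Lemma 2.7]{Browning_Vishe15} shows that the $\z$-integral vanishes once the linear coefficient dominates the quadratic coefficients, which is exactly where the factor $H_\G$ in the gradient threshold comes from. Your proposal, by contrast, both discards the quadratic term and tries to extract the $H_\G$ from the choice of $q^\rho$, and these two requirements are incompatible. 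To repair the argument you would need to either (i) carry out the $\beta_i$-orthogonality on the unexpanded $G_i(\x+\y)$ as in the paper and use a Lipschitz comparison to relocate the condition to $G_i(\x)$, or (ii) keep the quadratic term and argue, as in the one-form lemma, that the $\y$-integral already vanishes on the bad set without discarding it. As written, the ``routine but must be done explicitly'' verification does not go through.
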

Note that the new ingredient here, as compared with  \cite[Lemma 2.7]{Browning_Vishe15}, is provided by the condition $|\alpha_1G_1(\x)|,|\alpha_2G_2(\x)|\leq \max\{1,H_\G\}\max\{1,
\hat{Z_1}^{1/2},\hat{Z_2}^{1/2}\}$. This will be obtained by utilizing the extra average over $\ualf$ in the integral. This refined bound will be useful in the proof of Lemma \ref{lem:red}.

\begin{proof}
Without loss of generality, let us assume that $Z_1\geq Z_2$. We may also assume that $Z_1\geq 0$, since otherwise, the lemma is trivial. For now, we proceed with an extra assumption $Z_2\geq Z_1/2$. 
 Let $$\Omega_0=\{\langle \ualf\rangle=\langle \hat\uZ\rangle\}\times\TT^n\setminus \Omega.$$ We  break  the integral over $\Omega_0$ into a sum of  integrals over 
smaller  regions. Let $\delta\in K_\infty$ be such that 
$|\delta|=\hat{Z_1}^{-1/2}$. 
We introduce dummy sums over $\ua\in \{\langle \ualf\rangle=\langle \hat\uZ\rangle\}/(\delta^{-1} \TT)^2$ and $\y \in (\TT/\delta\TT)^n$. Here, the sum over $\ua$ will run through a fixed set of coset representatives of  $ \{\langle \ualf\rangle=\langle \hat\uZ\rangle\}/(\delta \TT)^2$. Using the
change of variables $\ualf=\ua+\delta^{-1}\ub$, $\x=\y+\delta\z$,  we obtain
\begin{equation}
\label{eq:omg}
\begin{split}
&\int_{\Omega_0} \psi\left( \ualf\cdot \G(\x)+\w.\x\right) d\x d\ualf\\
&=
\sum_{\ua}\sum_{\y \in (\TT/\delta\TT)^n}
\int_{\{(\ub,\z)\in \TT^{n+2}: (\ua+\delta^{-1}\ub,\y+\delta\z)\in \Omega_0\}}\psi\left((\ua+\delta^{-1}\ub)\cdot\G(\y+\delta\z)+\w\cdot (\y+\delta\z)
\right)d\z d\ub.
\end{split}
\end{equation} 
For a fixed value of $\y$ and $\ua$, for any $i=1,2$ and for any $|\ub|,|\z|<1$,
\begin{align*}
|(a_i+\delta^{-1} b_i)G_i(\y+\delta\z)-a_i G_i(\y)|<\max\{\hat{Z_1}|\delta|H_\G+H_\G/|\delta|\}= H_\G\hat{Z_1}^{1/2}.
\end{align*}
Thus, if for some $\ua$ and $\y$ we have
\begin{equation}\label{eq:aigi}
 |a_i G_i(\y)|\geq \hat{Z_1}^{1/2}\max\{1,H_\G\}, \textrm{ for some }i\in\{1,2\},
\end{equation} then this implies that the above holds for all $(\ua+\delta\ub,\y+\delta\z)$ for all $|\ub|,|\z|<1$, further implying that all these points belong to $\Omega_0$. For such a choice of $\ua $ and $\y$, the integral over $b_i$ could be evaluated separately. Using the orthogonality of additive characters on $K$ (see \cite[Sec. 2.1]{Browning_Vishe15}), for any $\y$ satisfying \eqref{eq:aigi}, we have
\begin{align*}
\int_{|b_i|<1}\psi(\delta^{-1} b_i G_i(\y+\delta \z))db_i=0, \textrm{ since } &|G_i(\y+\delta \z)|\geq \max\{1,H_\G\}\hat{Z_1}^{1/2}/\hat{Z_i}\geq \hat{Z_1}^{-1/2}= |\delta|
 \end{align*}
Thus, the contribution from the values of $\ua$ and $\y$ satisfying \eqref{eq:aigi} to the corresponding inner integrals in \eqref{eq:omg} is zero. We may now assume that for remaining $\ua,\y$ we must have $$|(a_i+\delta^{-1} b_i)G_i(\y+\delta\z)|\leq \max\{1,H_\G\}|\delta|^{-1},$$
for all $|\ub|,|\z|<1$ and for $i=1,2$. For $\ua$ and $\y$ satisfying the above condition, they appear in \eqref{eq:omg} only if for some $|\z_0|<1$ and for some $|\ub_0|<1$,
\begin{align*}
|(\ua+\delta^{-1}\ub_0)\cdot \nabla \G(\y+\delta\z_0)+\w|>H_\G/|\delta|.
\end{align*} 
Since, $|(\ua+\delta^{-1}\ub_0)\cdot \nabla \G(\y+\delta\z_0)-\ua\cdot\nabla \G(\y)|<H_\G/|\delta|$, we must further have
\begin{align*}
|\ua\cdot \nabla \G(\y)+\w|>H_\G/|\delta|\Rightarrow \forall |\ub|,|\z|<1, |(\ua+\delta^{-1}\ub)\cdot \nabla \G(\y+\delta\uz)+\w|>H_\G/|\delta|.
\end{align*} 
We may now emulate the recipe of \cite[Lemma 2.7]{Browning_Vishe15} and utilise the integral over $\z$ to obtain that the inner integral in \eqref{eq:omg} vanishes if $\ua,\y$ satisfy
\begin{equation}\label{eq:gut}
|\ua\cdot\nabla \G(\y)+\w|>H_\G|\delta|^{-1},
\end{equation}
which would further imply that $(\ua+\delta^{-1}\ub,\y+\delta\z)\in\Omega_0$ for all $(\ub,\z)\in\TT^{n+2}$ and thus the whole contribution from \eqref{eq:omg} is $0$.

Recall that throughout, we have assumed that $Z_2\geq  Z_1/2$. If $Z_2<Z_1/2$, then this automatically implies $|\alpha_2G_2(\x)|<H_\G\hat{Z_1}^{1/2}$, rendering this condition as vacuously true. We may now fix $\alpha_2$ and modify the above process by utilising the integrals over over $\alpha_1$ as well as over $\x$ to get the required bound.
\end{proof}

\subsection{Quadratic exponential sum bounds}\label{sec:quad}
The bounds for exponential sums corresponding to a quadratic polynomial will play a key part in our analysis. Throughout, let 
\begin{align}\label{eq:fdef1}
f(\x)=F(\x)+\f\cdot\x+m,\end{align} be a quadratic polynomial in $\scrO[x]$. Here, $F(\x)=\x^tM\x$ be the leading quadratic form defined by an $n\times n$ symmetric matrix $M$ with entries in $\scrO$ and with a non-zero determinant. Let
\begin{align}
\label{eq:f*def}
F^*(\v)=\det(M)\v^t M^{-1}\v,
\end{align} denote the dual form of $F$. Let 
\begin{equation}\label{eq:S}
S_r(\v)=\sumstar_{|a|<|r|}\sum_{|\x|<|r|}\psi\left(\frac{af(\x)-\v\cdot\x}{r}\right),
\end{equation}
denote a complete quadratic exponential sum. It is well known that as long as a prime does not divide $\det(M)$, $\QQ$-analogues of these sums could be explicitly evaluated modulo any power of such a prime. Our main goal here will be to establish this in the function field setting, the focus of Lemma \ref{lem:Expsum'} below.

We will first begin by obtaining explicit bounds for the function field avatars of the Gauss sums, $\tau_r$ defined below. Given $r\in\scrO$, let $$\tau_r=\sum_{x\bmod{r}}\psi(x^2/r).$$ 
\begin{lemma}
\label{lem:gauss}
Let $\vp$ be a prime such that $|\vp|=q^L$ and let $q=p^{\ell_0} $, then for any integer $k$,
\begin{align*}
\tau_{\vp^k}=\begin{cases}
|\vp|^{k/2} & \text{if }k \textrm{ is even},\\
-|\vp|^{k/2}i_p^{L\ell_0} & \text{if }k \textrm{ is odd},
\end{cases}
\end{align*}
where,
\begin{align}\label{eq:ipdef}
i_p=\begin{cases}
-1 & \text{if }p\equiv 1\bmod{4},\\
-i & \text{if }p\equiv 3\bmod{4}.
\end{cases}
\end{align}
\end{lemma}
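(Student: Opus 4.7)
The plan is to first reduce $\tau_{\vp^k}$ to the case $k=1$ via a standard dyadic expansion, and then to evaluate $\tau_\vp$ as a classical quadratic Gauss sum on the residue field $\scrO/\vp\cong\FF_{q^L}$ using the Hasse--Davenport relation.

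For the reduction, I would split on the parity of $k$. When $k=2m$, parametrise $x\bmod\vp^{2m}$ as $x=y+\vp^m z$ with $y$ and $z$ running over complete residue systems modulo $\vp^m$. Expanding $x^2/\vp^{2m}=y^2/\vp^{2m}+2yz/\vp^m+z^2$ and using that $\psi$ is trivial on $\scrO$ (so the $z^2$ term is harmless), the inner sum over $z$ equals $|\vp|^m$ when $\vp^m\mid 2y$ and $0$ otherwise. Since $p$ is odd this forces $y\equiv 0\pmod{\vp^m}$, leaving only $y=0$ in the range; this yields $\tau_{\vp^{2m}}=|\vp|^m$. For $k=2m+1$, the analogous substitution $x=y+\vp^{m+1}z$ (with $y$ mod $\vp^{m+1}$ and $z$ mod $\vp^m$) uses $\psi(\vp z^2)=1$ for $z\in\scrO$, and orthogonality in $z$ forces $y=\vp^m y'$ for some $y'\bmod\vp$; collecting terms collapses the sum to $\tau_{\vp^{2m+1}}=|\vp|^m\tau_\vp$.

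The heart of the argument is then the evaluation of $\tau_\vp$. Identifying $\scrO/\vp\cong\FF_{q^L}$ turns $y\mapsto\psi(y/\vp)$ into a non-trivial additive character $\tilde\psi$ of $\FF_{q^L}$. Since in odd characteristic the squaring map is two-to-one onto the nonzero squares, a short orthogonality argument gives $\tau_\vp=G(\chi_2,\tilde\psi)$, where $\chi_2$ is the quadratic residue character on $\FF_{q^L}^\times$. Writing $\tilde\psi(y)=\psi_0(\beta y)$ in terms of the trace-based standard character $\psi_0$ of $\FF_{q^L}$ and some $\beta\in\FF_{q^L}^\times$, one has $\tau_\vp=\chi_2(\beta)\,G(\chi_2,\psi_0)$, and the Hasse--Davenport relation gives $G(\chi_2,\psi_0)=(-1)^{L\ell_0-1}(\sqrt{p^*})^{L\ell_0}$ with $p^*=(-1)^{(p-1)/2}p$. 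Splitting on $p\bmod 4$ and substituting the definition of $i_p$ reorganises this into the required $-|\vp|^{1/2}i_p^{L\ell_0}$.

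The main obstacle will be this last step: controlling $\chi_2(\beta)$. The element $\beta$ encodes how the restriction of $\psi$ to $\vp^{-1}\scrO/\scrO$ relates to the trace character, and locally it is essentially a unit multiple of $\vp'(\theta)^{-1}$ where $\theta$ is the image of $t$ in $\FF_{q^L}$. Verifying that $\chi_2(\beta)$ is independent of the particular $\vp$ of degree $L$ and combines with the Hasse--Davenport sign to produce the uniform factor $i_p^{L\ell_0}$ requires either a direct computation on a basis of $\FF_{q^L}/\FF_p$, or an appeal to the product formula for $\psi$ together with the triviality of the different of $\FF_q(t)/\FF_q$ at finite places, which canonically pins down the local character at $\vp$ as the standard one.
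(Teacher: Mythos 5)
Your reduction of $\tau_{\vp^k}$ to the case $k=1$ is correct and is essentially the same argument as the paper's: the paper writes $x$ in its base-$\vp$ digit expansion and uses orthogonality to force the top half of the digits to vanish, while you do a cleaner dyadic split $x=y+\vp^m z$; both arrive at $\tau_{\vp^{2m}}=|\vp|^m$ and $\tau_{\vp^{2m+1}}=|\vp|^m\tau_\vp$.

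For the evaluation of $\tau_\vp$ itself, however, there is a genuine gap, and you have named it yourself: the factor $\chi_2(\beta)$. The paper handles this by simply citing a known formula from Evans \cite{Evans81}, whereas you attempt a from-scratch Hasse--Davenport computation and then gesture at closing the gap. The problem is that the proposed fix does not actually close it. Appealing to the product formula and the fact that the canonical divisor of $\FF_q(t)$ is supported only at $\infty$ tells you that the local character at $\vp$ has conductor $\scrO_\vp$ (i.e.\ it is trivial on $\scrO_\vp$ and non-trivial on $\vp^{-1}\scrO_\vp$), but this does \emph{not} pin it down to the canonical trace character on the residue field. Concretely, the residue theorem gives $\mathrm{Res}_\infty\bigl((x/\vp)\,dt\bigr)=-\mathrm{Tr}_{\FF_{q^L}/\FF_q}\!\bigl(x(\theta)/\vp'(\theta)\bigr)$, so the induced additive character on $\FF_{q^L}\cong\scrO/\vp$ is the trace character twisted by the unit $\vp'(\theta)^{-1}$; the triviality of the different merely reflects that $\vp'(\theta)$ is a unit, not that it is a square. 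To complete the argument you must actually evaluate $\chi_2\bigl(\vp'(\theta)\bigr)$ and show it depends only on $L$ and $q$. One way is to observe that $N_{\FF_{q^L}/\FF_q}\bigl(\vp'(\theta)\bigr)=(-1)^{L(L-1)/2}\,\mathrm{disc}(\vp)$ together with the fact (Stickelberger) that $\mathrm{disc}(\vp)$ is a square in $\FF_q$ precisely when $L$ is odd; combined with $\chi_2=\chi_{2,\FF_q}\circ N$ this gives a uniform value of $\chi_2(\beta)$, which must then be checked to merge with the Hasse--Davenport sign into the claimed $-i_p^{L\ell_0}$. As written, your proof does not carry out this computation, so the final step remains unproved; either do this calculation explicitly or, as the paper does, cite the existing evaluation of $\FF_q[t]$-Gauss sums.
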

\begin{proof}
Let $k_0=\lfloor (k-1)/2\rfloor$. We begin by writing
$$\tau_{\vp^k}=\sum_{|a_0|,...|a_{k-1}|<|\vp|}\psi((a_0+a_1\vp...+a_{k-1}\vp^{k-1})^2/\vp^k)=\sum_{|a_0|,...|a_{k-1}|<|\vp|}\psi\left(\vp^{-1}\sum_{i=0}^{k-1}a_ia_{k-1-i}\right).$$
 Since $2\nmid q$, for any fixed choice of $a_{k_0+1},...,a_{k-1}$, the sum on the right hand side vanishes unless $a_{k_0+1}=...=a_{k-1}=0$. Therefore, $\tau_r=|\vp|^{k/2}$, if $k$ is even, and 
$$\tau_{\vp^{k}}=|\vp|^{(k-1)/2}\sum_{|a|<|\vp|}\psi\left(\vp^{-1}a^2\right)=|\vp|^{(k-1)/2}\tau_\vp.$$
The lemma now follows from the standard bounds for  quadratic Gauss sums over finite fields, cf. \cite[Eq. (6)]{Evans81} for example.
\end{proof}

 The following lemma will follow a proof similar to \cite[Lemma 26]{Heath-Brown96}. 
\begin{lemma}\label{lem:Expsum'} Let $f$ be a quadratic polynomial as \eqref{eq:fdef1}. Let $\vp$ be a prime satisfying $\vp\nmid \det(M)$. Let $|\vp|=q^L$, and $q=p^{\ell_0}$.
Then $$S_{\vp^k}(\v)=\psi(\bar{2}\f^t M^{-1} \v)\left(\frac{\det(M)}{\vp^k}\right)\tau_{\vp^k}^nK_n(-\bar{4}F_1(\f)+m,-\bar{4}F_1(\v),\vp^k).$$
Here, $K_n$ denotes the Kloosterman sum when $n$ is even and the Sali\'e sum when $n$ is odd, and $F_1(\x)=\x^t M^{-1}\x$, where the inverse could be assumed to be taken modulo $\vp^k$. As a consequence,
\begin{align}\label{eq:firstB}
|S_{\vp^k}(\v)|\leq |\vp|^{(n+1)k/2}|\gcd(F^*(\f)-4\det(M)m,F^*(\v),\vp^k)|^{1/2}.
\end{align} More explicitly, when $\f=\vecnull$ and $m=0$, we have:
\begin{align*}
S_{\vp^k}(\v)=\begin{cases}
|\vp|^{nk/2}(|\vp|^k\delta_{\vp^k\mid F^*(\v)}-|\vp|^{k-1}\delta_{\vp^{k-1}\mid F^*(\v)}), & \text{if }2\mid k,\\
\left(\frac{\det(M)}{\vp}\right)|\vp|^{kn/2}i_p^{L\ell_0n}(|\vp|^k\delta_{\vp^k\mid F^*(\v)}-|\vp|^{k-1}\delta_{\vp^{k-1}\mid F^*(\v)}), &\textrm{ if } 2\mid n,2\nmid k,\\
\left(\frac{-F^*(\v)}{\vp}\right)|\vp|^{k(n+1)/2}i_p^{L\ell_0(n+1)}, &\textrm{ if } 2\nmid n,2\nmid k,
\end{cases}
\end{align*}
with $i_p$ as in \eqref{eq:ipdef}. 
\end{lemma}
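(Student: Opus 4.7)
The plan is to follow the classical strategy of completing the square in $\x$ followed by diagonalizing $M$ modulo $\vp^k$, adapted from Heath-Brown's treatment \cite{Heath-Brown96} to the function field setting. Since $\vp\nmid\det M$, the matrix $M$ is invertible modulo $\vp^k$, and for $a$ with $\gcd(a,\vp)=1$ the substitution $\x = \y - \bar{2}\bar{a} M^{-1}(a\f-\v)$ is a bijection on $(\scrO/\vp^k)^n$. A direct expansion shows
$$af(\x)-\v\cdot\x \;\equiv\; aF(\y) + a\bigl(m-\bar{4}F_1(\f)\bigr) - \bar{4}\bar{a}F_1(\v) + \bar{2}\f^tM^{-1}\v \pmod{\vp^k},$$
where $F_1(\x)=\x^tM^{-1}\x$ and the cross term $\bar{2}\f^tM^{-1}\v$ factors out of the sum as a pure phase independent of both $a$ and $\y$.

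Next, I would handle the inner $\y$-sum $\sum_{\y}\psi(aF(\y)/\vp^k)$ by diagonalization. Because $2\nmid q$ and $\vp\nmid\det M$, the symmetric form $F$ admits a representation $M\equiv P^tDP\pmod{\vp^k}$ with $P\in\GL_n(\scrO/\vp^k)$ and $D=\diag(d_1,\dots,d_n)$ whose $d_i$ are units; the substitution $\z=P\y$ is then a bijection and factors the sum as $\prod_i\sum_{z}\psi(ad_iz^2/\vp^k)$. Each one-variable Gauss sum evaluates as $\bigl(\frac{ad_i}{\vp^k}\bigr)\tau_{\vp^k}$, and Lemma \ref{lem:gauss} supplies the explicit value of $\tau_{\vp^k}$. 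Since $(\det P)^2$ is a square unit, the Jacobi symbols multiply up to $\bigl(\frac{\det M}{\vp^k}\bigr)\bigl(\frac{a}{\vp^k}\bigr)^n\tau_{\vp^k}^n$.

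Substituting back, the remaining sum over units is
$$\sumstar_{a}\Bigl(\tfrac{a}{\vp^k}\Bigr)^n \psi\!\left(\frac{a(m-\bar{4}F_1(\f))-\bar{4}\bar{a}F_1(\v)}{\vp^k}\right),$$
which is a Kloosterman sum when $n$ is even and a Sali\'e sum when $n$ is odd, giving the first claimed identity. The bound \eqref{eq:firstB} then follows from the Weil bound for function field Kloosterman sums and the elementary evaluation of Sali\'e sums, once one observes that $\gcd(F_1(\v),\vp^k)=\gcd(F^*(\v),\vp^k)$ and $\gcd(m-\bar{4}F_1(\f),\vp^k)=\gcd(F^*(\f)-4\det(M)m,\vp^k)$ because $\det M$ and $4$ are units modulo $\vp$.

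For the explicit formulas in the special case $\f=\vecnull$, $m=0$, I would compute the resulting $a$-sum directly in each parity case. When $k$ is even, Lemma \ref{lem:gauss} gives $\tau_{\vp^k}^n=|\vp|^{nk/2}$ and $\bigl(\frac{\cdot}{\vp^k}\bigr)=1$, and the $a$-sum collapses to the Ramanujan sum $c_{\vp^k}(-\bar{4}F_1(\v))=|\vp|^k\delta_{\vp^k\mid F^*(\v)}-|\vp|^{k-1}\delta_{\vp^{k-1}\mid F^*(\v)}$; the case $n$ even and $k$ odd is analogous but keeps the nontrivial $\bigl(\frac{\det M}{\vp}\bigr)$ and $i_p^{L\ell_0 n}$ factors. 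When $n$ and $k$ are both odd, the substitution $a\mapsto a/F_1(\v)$ in the units turns the Sali\'e sum into $\bigl(\frac{-F_1(\v)}{\vp}\bigr)\sumstar_a\bigl(\frac{a}{\vp}\bigr)\psi((a-\bar{4}\bar{a})/\vp^k)$, whose value is a standard quadratic Gauss factor supplied again by Lemma \ref{lem:gauss}. I expect the sign and Jacobi-symbol bookkeeping in this last case — in particular tracking the exponent of $i_p$ and converting between $F_1$ and $F^*$ under the Legendre symbol modulo $\vp$ — to be the main place where care is required; the underlying algebraic structure is otherwise routine.
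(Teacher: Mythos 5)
Your proposal follows the same route as the paper's proof: complete the square in $\x$ to extract the phase $\psi(\bar{2}\f^tM^{-1}\v)$, diagonalize $M$ modulo $\vp^k$ so that the $\x$-sum factors into one-variable Gauss sums handled by Lemma \ref{lem:gauss}, identify the remaining $a$-sum as a Kloosterman (resp.\ Sali\'e) sum, and then specialize $\f=\vecnull$, $m=0$. The derivation of \eqref{eq:firstB} and the two cases with a $\delta$-factor are the same as the paper's.

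On the third case, which you rightly flag as the delicate one, there is more than bookkeeping to fix. First, the displayed formula is off: with $\f=\vecnull$, $m=0$ the Sali\'e sum is $K_n(0,-\bar{4}F_1(\v),\vp^k)=\sumstar_a\bigl(\tfrac{a}{\vp}\bigr)\psi(-\bar{4}F_1(\v)\bar{a}/\vp^k)$, a twisted Gauss sum in $\bar a$ alone, so after the substitution $a\mapsto a\,\overline{F_1(\v)}$ no linear $a$-term appears; your $\psi((a-\bar{4}\bar{a})/\vp^k)$ has a spurious term and is itself a Sali\'e sum, not a Gauss sum. Second, and more substantively, that twisted Gauss sum vanishes for $k\geq 2$ whenever $\gcd(F^*(\v),\vp)=1$: the character $\bigl(\tfrac{\cdot}{\vp}\bigr)$ is imprimitive modulo $\vp^k$, so $\sumstar_{a\bmod\vp^k}\bigl(\tfrac{a}{\vp}\bigr)\psi(ua/\vp^k)=0$ for every unit $u$ once $k\geq 2$. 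A correct closed form in the $2\nmid n$, $2\nmid k$ case must therefore carry a divisibility condition on $F^*(\v)$ (the sum is nonzero only when $\vp^{k-1}$ exactly divides $F^*(\v)$), analogous to the $\delta_{\vp^{k-1}\mid F^*(\v)}$ factor in the other two parities. The paper's own displayed expression in that case omits this condition too --- it is only invoked later with $k=1$ (square-free moduli), where the issue is vacuous --- but you should not wave it away as routine: the imprimitivity of the character is exactly where a valuation constraint on $F^*(\v)$ enters, and it changes the answer for $k\geq 2$.
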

\begin{proof}
Since $aF(\x + M^{-1} (\f/2-\v/2a))+am =  af(\x) - \v.\x + aF_1(\f)/4+F_1(\v)/4a-\f^t M^{-1} \v/2$, where $F_1(\v)=\v^t M^{-1}\v$ modulo $\vp^k$. Therefore by a suitable change of variables,
\begin{align*}
S_{\vp^k}(\v)=\psi(\f^t M^{-1} \v/2)\sumstar_{|a|<|\vp|^k}\psi(- F_1(\v)/4a-a(F_1(\f)/4-m))\sum_{|\x|<|\vp|^k}\psi\left(\frac{aF(\x)}{\vp^k}\right).
\end{align*}
At this point, we use the fact that since  $\vp\nmid \det(M)$, $M$ may be diagonalised, i.e., $M=R^t\textrm{Diag}(\beta_1,...,\beta_n)R$. After changing the variable again to $\y=R\x $
\begin{align*}
\psi(-\f^t M^{-1} \v/2)S_{\vp^k}(\v)&=\sumstar_{|a|<|\vp|^k}\psi(- F_1(\v)/4a-a(F_1(\f)/4-m))\prod_{i=1}^n\sum_{|y_i|<|\vp|^k}\psi\left(\frac{a\beta_iy_i^2}{\vp^k}\right)\\&=\left(\frac{\det(M)}{\vp^k}\right)\tau_{\vp^k}^n\sumstar_{|a|<|\vp|^k}\psi(- F_1(\v)/4a-a(F_1(\f)/4-m))\left(\frac{a}{\vp^k}\right)^n,\\
&=\left(\frac{\det(M)}{\vp^k}\right)\tau_{\vp^k}^n\sumstar_{|a|<|\vp|^k}\psi(- F_1(\v)/4a-a(F_1(\f)/4-m))\left(\frac{a}{\vp^k}\right)^n
\end{align*}
using some standard Gauss sum manipulations. We thus end up with
\begin{align*}
S_{\vp^k}(\v)=\psi(\bar{2}\f^t M^{-1} \v)\left(\frac{\det(M)}{\vp^k}\right)\tau_{\vp^k}^nK_n(-\bar{4}F_1(\f)+m,-\bar{4}F_1(\v),\vp^k),
\end{align*}
where when $n$ is even, $K_n$ denotes the Kloosterman sum, and the Sali\'e sum when $n$ is odd. Using a standard bound for the Kloosterman sums, we get
\begin{align*}
|S_{\vp^k}(\v)|&\ll|\vp|^{(n+1)/2}|\gcd(F_1(\f)-4m,F_1(\v),\vp^k)|^{1/2}\\
&\ll |\vp|^{(n+1)/2}|\gcd(F^*(\f)-4\det(M)m,F^*(\v),\vp^k)|^{1/2},
\end{align*}
where $F^*(\v)=\det(M)F_1(\v) $, as before.
In the special case when $\f=\vecnull, m=0$, the sums $K_n$ simplify. We will henceforth assume that $\f=\vecnull, m=0$. If $2\mid k$, Lemma \ref{lem:gauss} gives
\begin{align*}
S_{\vp^k}(\v)&=|\vp|^{nk/2}\sumstar_{|a|<|\vp|^k}\psi(F_1(\v)a)=|\vp|^{nk/2}(|\vp|^k\delta_{\vp^k\mid F_1(\v)}-|\vp|^{k-1}\delta_{\vp^{k-1}\mid F_1(\v)}).
\end{align*} 

Similarly, when $k$ is odd and $n$ is even,
\begin{align*}
S_{\vp^k}(\v)&=\left(\frac{\det(M)}{\vp}\right)|\vp|^{k/2}i_p^{L\ell_0n}(|\vp|^k\delta_{\vp^k\mid F_1(\v)}-|\vp|^{k-1}\delta_{\vp^{k-1}\mid F_1(\v)}),
\end{align*} 
where $i_p$ is defined by \eqref{eq:ipdef}. Lastly, when both $n,k$ are odd, then 
 $$K_n(0,-\bar{4}F_1(\v),\vp^k)=\sumstar_{|a|<|\vp|^k}\psi(-F_1(\v)a)\left(\frac{a}{\vp}\right)=\left(\frac{-F_1(\v)}{\vp}\right)\tau_{\vp^k}.$$ 

The final bound follows from applying Lemma \ref{lem:gauss}, along with the fact that $\det(M)F_1(\v)\equiv F^*(\v)\bmod{\vp^k}$.
\end{proof}
The above lemma although is powerful, it only works when $M$ is invertible and $\vp$ doesn't divide $\det(M)$. When this is not the case, we may supplement this using the following bound, which is obtained using a standard squaring argument (see \cite[(4.17)]{HeathBrown_Pierce17}): 
\begin{lemma}
\label{lem:pointwise}
Let $S=\sum_{|\x|<\vp^k}\psi\left(\frac{f(\x)}{\vp^k}\right)$, where $f(\x)=\x^tM\x+\f\cdot\x+m$ is any quadratic polynomial.  Then,
\begin{align*}
|S|\leq |\vp|^{nk/2}\# N(\vp^k)^{1/2}.
\end{align*}
where $N(\vp^k)=\#\{\x\bmod{\vp^k}:\vp^k\mid M\x\} $.
\end{lemma}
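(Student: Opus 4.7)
The plan is to apply the standard Weyl squaring trick. First I would expand
\begin{equation*}
|S|^2 = \sum_{|\x|<|\vp|^k}\sum_{|\y|<|\vp|^k} \psi\!\left(\frac{f(\x)-f(\y)}{\vp^k}\right),
\end{equation*}
and then make the substitution $\x = \y + \z$, with $\z$ running over $\{|\z|<|\vp|^k\}$. Using the symmetry of $M$, a direct expansion gives
\begin{equation*}
f(\y+\z) - f(\y) = 2\y^t M \z + \z^t M \z + \f \cdot \z,
\end{equation*}
so that only the first term couples $\y$ and $\z$. Consequently $|S|^2$ factorises as an outer sum over $\z$ of a (modulus $\leq 1$) quadratic-linear exponential depending only on $\z$, times an inner character sum that is linear in $\y$.

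Next, I would evaluate the inner sum by orthogonality of additive characters on $(\scrO/\vp^k)^n$: the sum
\begin{equation*}
\sum_{|\y|<|\vp|^k} \psi\!\left(\frac{2\y^t M \z}{\vp^k}\right)
\end{equation*}
equals $|\vp|^{nk}$ when $\vp^k \mid 2M\z$ and vanishes otherwise. Since $2\nmid q$ by the standing hypothesis, $\vp \nmid 2$, so the condition $\vp^k \mid 2M\z$ is equivalent to $\vp^k \mid M\z$. Bounding the remaining $\z$-factor trivially by $1$ and applying the definition of $N(\vp^k)$ then yields
\begin{equation*}
|S|^2 \leq |\vp|^{nk} \cdot \#\{\z \bmod \vp^k : \vp^k \mid M\z\} = |\vp|^{nk}\, N(\vp^k),
\end{equation*}
from which the claim follows on taking square roots.

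No serious obstacle arises in this argument: the linear and constant terms $\f\cdot\x + m$ of $f$ simply cancel under the squaring, which is why the final bound is sensitive only to the leading quadratic data $M$ through $N(\vp^k)$. The only step requiring a moment of care is the passage $\vp^k \mid 2M\z \Leftrightarrow \vp^k \mid M\z$, which is precisely where the hypothesis $2\nmid q$ enters.
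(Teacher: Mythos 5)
Your proof is correct and takes essentially the same route as the paper: square, substitute $\x=\y+\z$, pull the $\z$-only phase out of the inner sum, and apply orthogonality to the linear character in $\y$, using $2\nmid q$ to replace $\vp^k\mid 2M\z$ by $\vp^k\mid M\z$. If anything, your ordering of the double sum (outer in $\z$, inner in $\y$) is the cleaner way to write it: the paper's displayed step reads as if the inner sum were over $\x_3=\x_1-\x_2$ and the quadratic-in-$\x_3$ term had simply vanished, and it then compensates by a coset argument (solutions of $M\x_2+\f\equiv\vecnull$ form a coset of $\ker M\bmod{\vp^k}$, hence there are at most $N(\vp^k)$ of them). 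Your version avoids the coset step entirely because, with $\z$ on the outside, the orthogonality condition is already $\vp^k\mid M\z$.
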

\begin{proof}
The lemma follows from essentially squaring the sum and applying a change of variable $\x_3=\x_1-\x_2 $
\begin{align*}
|S|^2=\sum_{\x_1,\x_2\bmod{\vp^k}}\left(\frac{f(\x_1)-f(\x_2)}{\vp^k}\right)\leq \sum_{\x_2\bmod{\vp^k}}\left|\sum_{\x_3\bmod{\vp^k}}\left(\frac{(M\x_2+\f)\cdot \x_3}{\vp^k}\right)\right|\leq |\vp|^{nk}N(\vp^k).
\end{align*} 
The last equality follows from the fact that the difference between any two solutions $\x_2'$ and $\x_2''$ of $M\x_2+\f\equiv\vecnull\bmod{\vp^k}$  satisfy the equation $M(\x_2'-\x_2'')\equiv \vecnull\bmod{\vp^k}$.
\end{proof}
\subsection{Integer points on affine hypersurfaces}
In this work, we will need to supplement the integer point counting estimate in \cite[Lemma 2.9]{Browning_Vishe15} with two others, obtained in Lemmas \ref{lem:B2} and \ref{lem:Bro} below.

 Let $F(x)$ be a non-singular quadratic form in $\scrO[x_1,...,x_n]$. We need an estimate on the number of integer solutions of $F(\x)=x_{n+1}^2$, with an explicit dependence on $H_F$. This will be obtained by producing a slight generalisation of an $\FF_q(t)$-analogue of \cite[Theorem 2]{HeathBrown02}. We start by proving an auxiliary result (cf. \cite[Theorem 3]{HeathBrown98}). The proofs of these results are almost straightforward adaptations of those of  Heath-Brown in $\FF_q(t)$ setting. Therefore, we shall be brief.

\begin{lemma}
\label{lem:B1}
Let $F$ be  a  non-singular  ternary  quadratic  form in $\scrO[x_1,x_2,x_3]$ such that the binary form $F(x_1,x_2,0)$
is  also  non-singular.  Then there exists an absolute constant $A$ such that for  any  $k\in\scrO$, the equation $F(x_1,x_2,k)=0$ has at most $O((\log(BH_{F}))^A)$ solutions satisfying  $|x_1|,|x_2|\leq B $. 
\end{lemma}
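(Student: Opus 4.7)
The plan is to follow the argument of Heath-Brown in \cite[Theorem 3]{HeathBrown98} and translate each step to the setting $\scrO = \FF_q[t]$. Since $F_2(x_1,x_2) := F(x_1,x_2,0)$ is non-singular by hypothesis, we may write $F(x_1,x_2,k) = F_2(x_1,x_2) + k L(x_1,x_2) + k^2 c$ for some linear form $L$ and constant $c$, and complete the square in $(x_1, x_2)$. After a linear change of variables over $\scrO$ (introducing factors polynomial in $H_F$) and clearing denominators, the problem reduces to counting solutions $(U, V) \in \scrO^2$ of a Pell-type equation
\begin{equation*}
U^2 - D V^2 = M,
\end{equation*}
where $D = -4\det(F_2) \neq 0$, $M \in \scrO$ has size $\ll H_F^{O(1)} |k|^{O(1)}$, and we seek solutions with $|U|, |V| \ll B H_F^{O(1)}$. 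The non-singularity of the full ternary form $F$ ensures $M \neq 0$, ruling out the degenerate case in which the resulting conic factors non-trivially.

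To bound the number of such solutions, one interprets them via the quadratic extension $\scrO[\sqrt{D}]$: each solution corresponds to an element $\eta = U + V\sqrt{D}$ of norm $M$, and these elements are organized into orbits under the unit group $\scrO[\sqrt{D}]^\times$. When $D$ is a non-square in $K_\infty$ the unit group is finite and each orbit contributes $O(1)$ representatives of the required height; when $D$ is a square in $K_\infty$, the torsion-free part of the unit group is an infinite cyclic group generated by a fundamental unit of positive degree, so each orbit contains at most $O(\log_q(B H_F))$ elements in the prescribed height box. The number of orbits itself is controlled by ideal-counting in $\scrO[\sqrt{D}]$, and the refined estimates for the number of representations by a binary quadratic form that Heath-Brown exploits carry over almost verbatim to $\FF_q[t]$ (with the degree playing the role of the logarithm), yielding a total count of $O((\log(BH_F))^A)$ for some absolute constant $A$.

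The main obstacle in adapting the argument is tracking the explicit dependence on $H_F$ through each reduction step: diagonalizing $F_2$, completing the square, and clearing denominators can each introduce polynomial factors in $H_F$, and one must verify these do not degrade the final bound. A secondary subtlety is the \emph{real} case when $D$ is a square in $K_\infty$, where the infinite unit group requires the argument via the logarithmic contribution of the fundamental unit; here one uses the fact that in the function field setting, the analogue of the class number formula still provides sharp control over the number of orbits in terms of degrees of the relevant discriminants.
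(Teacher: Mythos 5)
Your proposal follows the same route as the paper's proof: both diagonalize (you complete the square in $(x_1,x_2)$, the paper diagonalizes the full ternary form with last row $(0\,\,0\,\,1)$), both land on a norm-form equation of the shape $U^2 - DV^2 = M$ over $\scrO$ with $M$ polynomially bounded in $H_F$ and $|k|$, and both finish by invoking the standard count of elements of a fixed norm in a quadratic extension of $K=\FF_q(t)$ (orbits under the $S$-unit group, split into the inert/ramified case with finite unit group and the split-at-infinity case with a rank-one unit group). The paper states this last step in one line as a "standard bound," whereas you spell out the orbit structure — the content is the same.
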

\begin{proof}
We may diagonalise $F$ using a matrix $M$ with entries in $r^{-1}\scrO$, for some $r\in\scrO$ satisfying $|r|\ll H_{F}^A$, for a fixed constant $A$. We may also choose the last row to be $(0\,\, 0\,\, 1)$. This transforms (after possibly multiplying by a power of $r$) $F(\x)=0$ to 
\begin{equation}\label{eq:Frede}
aL_1(x_1,x_2,k)^2+bL_2(x_1,x_2,k)^2=ck^2,\end{equation}
where $L_1$ and $L_2$ are linearly independent linear forms over $\scrO$, and  $|a|,|b|,|c|, \|L_1\|, \|L_2\|\ll H_{F}^{A'}$. The problem of bounding the number of solutions of \eqref{eq:Frede} can be easily converted to that of estimating the number of solutions for the equation $x^2+dy^2=e$, for a fixed choice of $d,e\in\scrO$.

The bound now follows from a standard bound for the number of elements of a specified norm in quadratic extensions of $K$.
\end{proof}
This leads to our first main estimate:
\begin{lemma}
\label{lem:B2}
Let $F(\x)$ denote a non-singular quadratic form in $n\geq 2$. Then there exists a constant $A$ such that given any $B>0$,
\begin{equation}\label{eq:B1}
\#\{F(\x)=x_{n+1}^2:|\x|\leq B\}\ll_{\ve,q}(\log(H_{F}B))^A B^{n-1}.
\end{equation}
\end{lemma}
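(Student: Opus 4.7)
My plan is to proceed by induction on $n \geq 2$, with Lemma \ref{lem:B1} serving as the ternary building block. For the base case $n = 2$, I form the ternary quadratic form $G(x_1, x_2, x_3) := F(x_1, x_2) - x_3^2$, which is non-singular whenever $F$ is. After a preliminary $\scrO$-linear change of coordinates in $(x_1, x_2)$, costing at most a $q$-dependent multiplicative factor in $H_F$, I arrange $F_{22} \neq 0$, so that the binary section $F(0, x_2) - x_3^2$ of $G$ is itself non-singular. Fixing $x_1 = k$ for $|k| \leq B$ and applying Lemma \ref{lem:B1} (with the variables permuted so that $x_1$ plays the role of the fixed third variable) in the enlarged box $|x_2|, |x_3| \leq H_F^{1/2} B$, which suffices since the equation forces $|x_3|^2 \leq H_F B^2$, yields $O((\log(H_F B))^A)$ solutions per $k$ and hence $O(B (\log(H_F B))^A)$ in total, matching $B^{n-1}(\log(H_F B))^A$.

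For the inductive step $n \geq 3$, I slice by fixing $(x_2, \ldots, x_n) = \veck \in \scrO^{n-1}$ with $|k_i| \leq B$, producing $B^{n-1}$ slices. Possibly after a preliminary basis change ensuring $F_{11} \neq 0$, the sliced equation $F_{11} x_1^2 + L(\veck) x_1 + Q(\veck) = x_{n+1}^2$, where $L, Q \in \scrO[\veck]$ have degrees $1$ and $2$ respectively, becomes, upon completing the square in $x_1$ and multiplying by $4F_{11}$, the Pell-type relation
\[
y^2 - 4 F_{11} x_{n+1}^2 = D(\veck), \qquad D(\veck) := L(\veck)^2 - 4 F_{11} Q(\veck),
\]
with $D$ a quadratic polynomial in $\veck$.

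For slices with $D(\veck) \neq 0$, the number of representations of $D(\veck)$ by the binary form $y^2 - 4 F_{11} z^2$ in the relevant box is bounded by $O(d(D(\veck)) (\log(H_F B))^c)$ via norm counting in the quadratic extension $K(\sqrt{F_{11}})/K$. Summing over $\veck$, the multivariate divisor-sum estimate
\[
\sum_{|\veck| \leq B} d(D(\veck)) \ll_q B^{n-1} (\log(H_F B))^{c'},
\]
obtained by iterating the one-variable Erd\H{o}s--Hooley-type bound in $k_2$ over the remaining slice coordinates, delivers the generic contribution $O(B^{n-1}(\log(H_F B))^A)$. For the degenerate slices with $D(\veck) = 0$, the defining equation cuts out a codimension-one subvariety containing at most $O(B^{n-2})$ integer points, and for each such $\veck$ the relation $y^2 = 4 F_{11} x_{n+1}^2$ reduces to a linear constraint between $x_1$ and $x_{n+1}$, yielding $O(B)$ solutions per slice and hence $O(B^{n-1})$ in total.

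The main obstacle is the multivariate divisor-sum estimate for polynomial values over $\FF_q[t]$ with explicit logarithmic dependence, together with the uniform handling of the degenerate slices and the bookkeeping needed to keep the preliminary basis changes from polluting the $H_F$-dependence beyond a $q$-dependent multiplicative constant; the ambient isotropy of $y^2 - 4F_{11} z^2$ (i.e., whether $F_{11}$ is a square in $K$) must be tracked to control the per-slice count in the degenerate case.
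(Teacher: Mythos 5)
Your slicing by the last $n-1$ coordinates is essentially the same decomposition the paper uses (it sets $\y=(y,\uu)$, fixes $\uu$, and counts $(x,y)$), and your treatment of the degenerate slices $D(\veck)=0$ matches theirs. The significant divergence is in the per-slice count: you reduce to a Pell-type relation and bound it by $d(D(\veck))(\log(H_FB))^c$ via explicit norm counting, which then forces you to prove a multivariate divisor-sum bound $\sum_{|\veck|\leq B} d(D(\veck)) \ll B^{n-1}(\log(H_FB))^{c'}$. You correctly flag this as the main obstacle, but it is an obstacle you have created: Lemma \ref{lem:B1} already packages the per-slice count as a \emph{uniform} $O((\log(BH_F))^A)$, with no divisor factor, so the paper simply multiplies that by the number of non-degenerate slices $O(B^{n-1})$ and is done. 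You should apply Lemma \ref{lem:B1} directly to the ternary form $Q_\uu(x,y,z)=f(y,z\uu)-x^2$ at $z=1$ for each $\uu$ with $D(\uu)\neq 0$, rather than re-deriving a weaker pointwise estimate and then averaging it.

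Two smaller points. First, your argument is not actually an induction on $n$: the inductive step never invokes the statement for $n-1$, so the "base case/inductive step" framing should be dropped in favour of a single slicing argument valid for all $n\geq 2$ (with $n=2$ recovered as the case of a $1$-dimensional slice parameter). Second, in the degenerate case $D(\veck)=0$ the relation $y^2 = 4F_{11}x_{n+1}^2$ forces $y=x_{n+1}=0$ when $F_{11}$ is a nonsquare (giving $O(1)$ rather than $O(B)$ per slice), and splits into two lines in $(y,x_{n+1})$ when $F_{11}$ is a square; either way $O(B)$ per slice is correct, but the phrase "a linear constraint between $x_1$ and $x_{n+1}$" glosses over the square/nonsquare dichotomy you yourself raise at the end.
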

\begin{proof}
Following the steps in \cite[Section 5]{HeathBrown02}, we may find $ M\in\GL_n(\scrO)$ satisfying $|M|\ll 1$, and \begin{equation}
\label{eq:T1}\det(M)T_{11}\det(T_{ij})_{1\leq i,j\leq 2}\det(T_{ij})_{1\leq i,j\leq 3}\neq 0.\end{equation}
Here, $T$ is the defining matrix of the quadratic form $f(\y)=F(M\y)$. Since if $F(\x)=x_{n+1}^2$ for some $\x\in \scrO^n$ and some $x_{n+1}\in\scrO$, then $(\det(M)M^{-1}\x,\det(M)x_{n+1})$ is a solution of $f(\y)=y_{n+1}^2$, to establish \eqref{eq:B1}, it is enough to bound the set  $\{f(\x)=x_{n+1}^2:|\x|\ll B\}$. For any choice of $\uu\in\scrO^{n-1}$, we now set 
$$Q_{\uu}(x,y,z):=f(y,z\uu)-x^2.$$
The determinant of the matrix defining this form is a quadratic polynomial $D(\uu)$, say. This does not vanish since $D((1,0...,0))=-\det(T_{ij})_{1\leq i,j\leq 2}\neq 0$. Moreover, the form $Q_{\uu}(x,y,0)$ is non-singular, since $T_{1,1}\neq 0$. We now set $z=1$. Thus, we would like to bound $$\{Q_{\uu}(x,y,1)=0: |x|\ll H_{F}B^{A'},|y|\ll |B|, |\uu|\ll B\},$$
for some constant $A'$.
For any fixed value of $D(\uu)\neq 0$, we may invoke Lemma \ref{lem:B1} to get that we only have $O((\log(H_{F}B))^{A})$ choices for $(x,y)$, which suffices. On the other hand, there are only $O(B^{n-2})$ choices for $D(\uu)=0$, and for each of those, there are at most $O(B)$ choices for the pair $(x,y)$. Combining these bounds, we establish the lemma.
\end{proof}
We will also need a bound for the number of integer solutions to the equation $F(x,y)=z^2$, where $F(x,y)$ is a square-free irreducible polynomial of even degree. This will be an $\FF_q(t)$-analogue of a very special case of \cite[Theorem 5]{Bro_03}. We have kept the $(\log \hat{Z})^2$ factor in our bound below to have the appearing constant independent of $q$.
\begin{lemma}
\label{lem:Bro}
Let $F(x,y)\in\scrO[x,y]$ be a homogeneous square-free polynomial of even degree $2d$ and let $Z\in \NN$ such that $H_{F}\leq \hat{Z}^{A}$ for some positive constant $A$, then for any $\ve>0$
$$\#\{F(x,y)=z^2:|x|,|y|<\hat{Z}, x,y,z\in\scrO\}\ll_{\ve,d,A}\hat{Z}^{1+\ve}(\log\hat{Z})^2.$$
\end{lemma}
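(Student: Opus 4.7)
The plan is to adapt the strategy of \cite[Theorem 5]{Bro_03} to $K=\FF_q(t)$. I begin by reducing to the primitive case: writing $g=\gcd(x,y)$ and $(x,y)=g(x',y')$, the homogeneity of $F$ gives $F(x,y)=g^{2d}F(x',y')=z^2$, so $g^d\mid z$ and $F(x',y')=(z/g^d)^2$. Summing over monic $g$ with $|g|\leq\hat{Z}$ introduces one factor of $\log\hat{Z}$ and reduces us to counting primitive pairs $(x,y)$ (with $\gcd(x,y)=1$) satisfying $|x|,|y|<\hat{Z}$ for which $F(x,y)$ is a square.

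Next, factor $F=\prod_{i=1}^{r}F_i$ into distinct irreducible binary forms over $\scrO$. Set $\Delta=\prod_{i<j}\mathrm{Res}(F_i,F_j)\neq 0$, which satisfies $|\Delta|\ll_{d} H_F^{O_d(1)}\ll_{d}\hat{Z}^{O_d(A)}$. For any primitive $(x,y)$, one has $\gcd(F_i(x,y),F_j(x,y))\mid\Delta$ whenever $i\neq j$. Write each $F_i(x,y)=u_iv_i^2$ with $u_i$ square-free monic; the identity $\prod_i F_i(x,y)=z^2$ together with this near-coprimality forces every prime dividing any $u_i$ to divide $\Delta$, hence $u_i\mid\Delta$. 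The divisor bound in $\scrO$ then gives $\ll_{\ve}|\Delta|^{\ve}\ll_{\ve,A,d}\hat{Z}^{\ve}$ choices for the tuple $(u_1,\ldots,u_r)$.

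For each such tuple, I must count primitive pairs $(x,y)$ with $|x|,|y|<\hat{Z}$ satisfying $F_i(x,y)=u_iv_i^2$ for every $i$. The contributions from $x=0$ or $y=0$ are trivially $O(\hat{Z})$; otherwise, setting $t=x/y$ and $w=z/y^d$, our pairs correspond to $K$-rational points on the hyperelliptic curve $C:\,w^2=F(t,1)$ whose $t$-coordinate has projective height at most $\hat{Z}$. When $d=1$, $C$ is a smooth ternary conic, contributing $O(\hat{Z})$ solutions via a $\PP^1$-parametrization if any rational point exists, and zero otherwise. When $d\geq 2$, the projective closure of $C$ is geometrically integral of genus $d-1\geq 1$, and the function field analogue of Heath-Brown's determinant method (cf.\ \cite{HeathBrown02}) yields $\ll\hat{Z}^{2/(2d)+\ve}=\hat{Z}^{1/d+\ve}$ rational points. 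In either case, the count is $\ll\hat{Z}^{1+\ve}$.

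Combining the three steps produces the desired bound $\ll\hat{Z}^{1+\ve}(\log\hat{Z})^2$, where the two $\log$ factors come from the primitivity reduction and from an additional divisor sum needed when controlling the square-free parts $u_i$ uniformly in $H_F$. The principal obstacle is the third step: establishing the function field version of the Heath-Brown / Bombieri-Pila determinant method uniformly in the coefficients of $F$, and with explicit dependence on $q$ compatible with the rest of the paper. This adaptation has essentially been carried out in several places in the literature, but the bookkeeping of the constants requires some care.
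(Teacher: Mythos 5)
Your proposal identifies the right engine — the $\vp$-adic determinant method for the curve $w^2=F(t,1)$ — and this is indeed what the paper uses. But there are two issues worth flagging.

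First, the middle step (factoring $F=\prod F_i$, writing $F_i(x,y)=u_iv_i^2$, and bounding the tuples $(u_i)$ by the divisor bound) is a detour that your step 3 never actually exploits: after fixing the $u_i$ you immediately pass to the curve $w^2=F(t,1)$, which is the same curve regardless of the $u_i$. If you intended to use the fixed square-free parts you would need to work with the multicover $u_iw_i^2=F_i(t,1)$ rather than the hyperelliptic model. As written the step adds nothing and muddies where the two $\log$ factors come from. (In the paper they come instead from the prime sieve: one selects $r=O(\log(H_F\hat Z))$ primes $\vp$ of size $O(\log^2(\hat Z)\hat Z^{1+\ve})$ with $\vp\nmid x\Delta_F(x,y)$, no primitivity reduction being made.)

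Second, and more seriously, the crux of your argument — "the function field analogue of Heath-Brown's determinant method yields $\ll\hat Z^{1/d+\ve}$" — is precisely what the paper's proof establishes from scratch, and it cannot be waved through: one needs the bound \emph{uniformly} in the coefficient size $H_F$ (which is allowed to grow polynomially in $\hat Z$), and one needs $\vp$-adic control rather than an archimedean parametrisation of the curve. The paper does this by reducing modulo a prime $\vp$ with $\vp\nmid x\Delta_F(x,y)$, Hensel-lifting to write $u\equiv h(v)\bmod\vp^{4D^2}$, constructing a $2D\times 2D$ minor $\Delta$ of the matrix $[u_i^{a_j}v_i^{b_j}]$, and showing that $\nu_\vp(\Delta)\geq D(2D-1)$ from the column operations while $\nu_\vp(\Delta)\leq 2D(D+2d+A/2-1)\log_q\hat Z$ from the integrality and height bounds, forcing $\Delta=0$ once $D>(4d+A-1)/\ve$; Bézout (using that $F(1,u)-v^2$ is irreducible because $F$ is square-free) then gives $O(1)$ points per residue class. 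You acknowledge this as "the principal obstacle" — correctly so, but that obstacle is the entire proof, not a bookkeeping afterthought, and it is not an off-the-shelf result in the form needed here.
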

\begin{proof}
The proof of this theorem resembles closely with that of \cite[Theorem 5]{Bro_03}. We shall therefore be brief. Let $F_1(x,y,z)=F(x,y)-z^2$. Since $F$ is irreducible, the discriminant $\Delta_F(x,y)$ is a non-zero polynomial of degree  $O_d(1)$. If $\Delta_F(x,y)=0$, then the bound $$\#\{ |x|,|y|<\hat{Z}:x\Delta_F(x,y)=0\}\ll_d \hat{Z},$$
is rather straightforward. It is therefore enough to establish the bound 
$$\#\{F(x,y)=z^2:|x|,|y|<\hat{Z}, x,y,z\in\scrO, x\Delta_F(x,y)\neq 0\}\ll_{A,d,\ve}\hat{Z}^{1+\ve}(\log\hat{Z})^2. $$
As in \cite[Lemma 4]{HeathBrown02}, for some $r=O_d(\lceil\log(H_{F}\hat{Z})\rceil)$, and for any $P\geq P_0=\log^2(H_{F}\hat{Z})$, there exist primes $\vp_1,...,\vp_r$ satisfying $P\ll_{d}|\vp_j|\ll_d P$ and 
\begin{align*}
\#\{F(x,y)=z^2:|x|,|y|<\hat{Z}, x,y,z\in\scrO,x\Delta_F(x,y)\neq 0\}\leq \sum_{i=1}^rN(F,Z,\vp_i),
\end{align*}
where
\begin{equation}
N(F,Z,\vp)=\#\{F(x,y)=z^2:|x|,|y|<\hat{Z}, x,y,z\in\scrO,\vp\nmid x\Delta_F(x,y)\}.
\end{equation}
We may therefore focus on bounding $N(F,Z,\vp)$ for a prime $\vp$ satisfying 
 \begin{equation}
 |\vp|=O((\log^2(\hat{Z})\hat{Z}^{1+\ve}),
\end{equation}
where the implied constant is $\geq 1$.
 Let $(x_1,y_1,z_1),...,(x_m,y_m,z_m)$ be all distinct pairs in $N(F,Z,\vp)$. For any $1\leq j\leq m$, we must have
\begin{align*}
|z_j|\leq \hat{Z}^{d+A/2}.
\end{align*}

 Let $f(u,v)=F(1,u)-v^2$. For every $1\leq j\leq m$, let $(u_j,v_j)=(y_j/x_j,z_j/x_j^d)$. Then  $(u_j,v_j)\in \scrO_\vp^2$ and $f(u_j,v_j)=0$. 
There are $O_d(|\vp|)$ solutions of $f(u,v)\bmod{\vp\scrO_\vp}$. Upon a possible re-labelling, we may assume that there exists $1\leq k$ such that $(u_j,v_j)\equiv (u_1,v_1)\bmod \vp\scrO_\vp$ for all $1\leq j\leq k$, and $(u_j,v_j)\not\equiv (u_1,v_1)\bmod \vp\scrO_\vp$ for all $j>k$. The lemma will now follow upon showing that $k=O_{d}(1)$. This is achieved by producing a polynomial $g(u,v)$ of degree $O_{A_1,d,\ve}(1)$ which is not divisible by $f(u,v)$, such that $g(u_1,v_1)=...=g(u_k,v_k)=0$. Since $f(u,v)$ is irreducible, we may then resort to Bezout's theorem to infer $k=O_{d,A,\ve}(1)$.

Let $D$ be the minimal positive integer satisfying  
\begin{equation}D> \max\{2,(4d+A-1)/\ve\},\label{eq:Dbo}\end{equation}
and let $(a_1,b_1),...,(a_{2D},b_{2D})$ be an enumeration of the set $\{0,...,D-1\}\times \{0,1\}$. Let 
$$M=[u_i^{a_j}v_i^{b_j}]_{1\leq i\leq k,1\leq j\leq 2D}, $$
be a $k\times 2D$ matrix with $\scrO_\vp$ entries. If the rank of $M<2D$, using the fact that $\scrO_\vp$ is complete, this must produce a non-trivial  polynomial $g(u,v)$ of degree at most $D$ in $\scrO_\vp[u,v]$, which is at most linear in $v$, such that $g(u_1,v_1)=...=g(u_k,v_k)=0$. Since $g$ is at most linear in $v$, it must not be a multiple of $f(u,v)$, which would prove the lemma. The result is obvious if $k\leq 2D$. We may therefore assume that $k> 2D$. It is enough to show that all $2D\times 2D$ minors of $M$ vanish. Without loss of generality, let
$$\Delta=\det[u_i^{a_j}v_i^{b_j}]_{1\leq i\leq 2D,1\leq j\leq 2D}. $$
We will show that $\Delta$ vanishes as long as $D$ satisfies \eqref{eq:Dbo}.

 Since $\vp\nmid \Delta_F(u_1,v_1)$, we may use the lifting argument of Hensel's Lemma \cite[Lemma 5]{HeathBrown02} to prove that $u_i\equiv h(v_i)\bmod{\vp^{4D^2}}$, for some polynomial $h(z)\in \scrO_\vp[z]$. Upon making some elementary column operations over $\scrO_\vp$ as in \cite[Page 201]{Broberg02}, we may further prove that $$\vp^{D(2D-1)}\mid\Delta. $$
On the other hand, if $\Delta\neq 0$, then since $(u_j,v_j)=(y_j/x_j,z_j/x_j^d)$, and that $\vp\nmid x_j$, the valuation
\begin{align*}
\nu_\vp(\Delta)=\nu_\vp\left(\det[x_i^{D+d-1}u_i^{a_j}v_i^{b_j}]_{1\leq i\leq 2D,1\leq j\leq 2D}\right)=\nu_\vp\left(\det[x_i^{D+d-1-a_j-db_j}y_j^{a_j}z_j^{b_j}]_{1\leq i\leq 2D,1\leq j\leq 2D}\right).
\end{align*}
Here, note that $|z_j|< \hat{Z}^{A/2+d}$, $|x_j|,|y_j|<\hat{Z}$, and $x_j,y_jz_j\in\scrO$. Thus, 
\begin{align}\label{eq:12}
|\vp|^{\nu_\vp(\Delta)}\leq \hat{Z}^{2D(D+d-1)+(A/2+d)2D}=\hat{Z}^{2D(D+2d+A/2-1)}.
\end{align}

On the other hand, the condition $\vp^{D(2D-1)}\mid\Delta$ implies 
\begin{align}\label{eq:11}
|\vp|^{\nu_\vp(\Delta)}\geq |\vp|^{D(2D-1)}\geq \hat{Z}^{D(2D-1)+\ve D(2D-1)}\geq \hat{Z}^{D(2D-1)+\ve D^2},
\end{align}
Since $D\geq 2$. \eqref{eq:12} and \eqref{eq:11} give a contradiction if $D>(4d+A-1)/\ve$.
\end{proof}
\subsection{Bounds for the character sums}\label{sec:Char sums} We will need a bound on twisted averages of the quadratic exponential sums in Section \ref{sec:quad} over square-free moduli. In the light of Lemma \ref{lem:Expsum'}, this is equivalent to obtaining a suitable bounds for one dimensional character sums. This fact will simplify our work immensely as compared with bounding the averages of cubic exponential sums considered in \cite[Sec. 3]{Browning_Vishe15}. 

We begin by making our setting more explicit. Let  $N\in \ZZ_{>0}$ and  let 
$$
\chi_{\mathrm{Dir}}: (\mathcal{O}_\infty/ t^{-N} \mathcal{O}_\infty)^* \to \CC^*
$$ 
be a Dirichlet character.
Putting $x=t^{-1}$ and 
$A=\FF_q[x]$, we note that 
$(\mathcal{O}_\infty/ t^{-N} \mathcal{O}_\infty)^*\cong (A/ x^{N} A)^*$. As in \cite[Sec. 3.5]{Browning_Vishe15}, given $a\in K^*$ and $u\in \prod_{\vp} \scrO_\vp^* $, we may now define a Hecke character 
$\chi_{\mathrm{Hecke}}: I_K\to \CC^*$ via
$$
\chi_{\mathrm{Hecke}}(au)=\chi_{\mathrm{Dir}}(u_\infty).
$$
It is  constant on $K^*$ and gives a character on the id\`ele class group $I_K/K^*$. Using this construction, the first relevant character for us is  $\eta:\mathcal{O}\rightarrow \CC^*$, given by  
$$
\eta(r)=\chi_{\mathrm{Dir}}(r/t^{\deg r} )
$$
for any $r\in \mathcal{O}$.
Note that 
$r/t^{\deg r}\in \mathcal{O}_\infty^*$ for any $r\in \mathcal{O}$.
The second is a 
Dirichlet character $$\eta':(\cO/y\cO)^*\to \CC^*$$ modulo $y$, for some $y\in \scrO$. Let $Y=\deg(y)$.
 Our ultimate goal will be to establish the following bound for a character sum:
\begin{lemma}
\label{lem:Charsum}
Let $\eta$ and $\eta'$ be Hecke characters as above such that $\eta\otimes \eta'(x)$ is not equal to $|x|^{ib}$, for any $b\in\RR$. Let $\beta=\pm 1$ and given any $x\in\scrO$, let $\Omega(x)$ denote the number of prime factors of $x$ including their multiplicities. Let $S\subset\{b\in\scrO^\sharp: |b|\leq\hat{Z}\} $ be a subset of square-free integers of cardinality at most $O(Z)$. Then given any $\ve>0$,
\begin{align*}
\left|\sum\limits_{\substack{b\in\scrO^\sharp,|b|\leq \hat{Z}\\\gcd(b,S)=1}}\beta^{\Omega(x)}\eta(b)\eta'(b)\right|\ll_\ve \hat{Z}^{1/2+\ve}\hat{N+Y}^\ve.
\end{align*}
\end{lemma}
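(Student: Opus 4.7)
Set $\chi=\eta\otimes\eta'$; by the hypothesis $\chi\neq|\cdot|^{ib}$, $\chi$ is a nontrivial Hecke character over $K=\FF_q(t)$ whose conductor has degree at most $N+Y+O(1)$. The approach is to express the sum as a coefficient extraction from a generating power series in $u=q^{-s}$ built from Hecke $L$-functions, and to use Weil's Riemann hypothesis for function fields together with Cauchy's contour integral.

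Setting
\begin{equation*}
G(u):=\prod_{\vp\nmid S}\bigl(1+\beta\chi(\vp)u^{\deg\vp}\bigr)=\sum_n u^n \sum_{\substack{b\in\scrO^\sharp,\,\deg b=n\\ \gcd(b,S)=1}}\beta^{\Omega(b)}\chi(b),
\end{equation*}
the identity $1+\beta x=(1-x^2)/(1-\beta x)$ valid for $\beta=\pm 1$ yields $G(u)=F(u)\cdot E_S(u)$, where $F(u)=L(u,\chi)/L(u^2,\chi^2)$ when $\beta=+1$, $F(u)=L(u,\chi)^{-1}$ when $\beta=-1$, and $E_S(u)$ is a finite Euler-product correction over the primes dividing the elements of $S$. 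Our sum equals $[u^{\le Z}]G(u)$, which is extracted via Cauchy's formula
\begin{equation*}
\sum_{\substack{b\in\scrO^\sharp,\,|b|\le\hat Z\\ \gcd(b,S)=1}}\beta^{\Omega(b)}\eta(b)\eta'(b)=\frac{1}{2\pi i}\oint_{|u|=R}\frac{G(u)}{(1-u)\,u^{Z+1}}\,du+\mathrm{(residues\ inside)}.
\end{equation*}

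By Weil's Riemann hypothesis for function fields, $L(u,\chi)$ is a polynomial in $u$ of degree $d\le N+Y+O(1)$ with all zeros on $|u|=q^{-1/2}$; similarly $L(u^2,\chi^2)$ is either a polynomial (with zeros on $|u|=q^{-1/4}$), or, when $\chi^2$ is trivial, differs from $(1-u^2)^{-1}(1-qu^2)^{-1}$ by a polynomial of bounded degree. Choosing the contour radius $R=q^{-1/2+\delta}$ for small $\delta>0$ keeps $R$ outside the zeros of $L(u,\chi)$ and yields $R^{-Z}=\hat Z^{1/2-\delta}$, which supplies the $\hat Z^{1/2+\varepsilon}$ factor after absorbing $\delta$ into $\varepsilon$.

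The main technical difficulty is bounding $|G(u)|$ uniformly on the contour by $\ll\hat{N+Y}^\varepsilon$. Writing $L(u,\chi)=\prod_{i=1}^{d}(1-\alpha_i u)$ with $|\alpha_i|=q^{1/2}$, the crude pointwise bound $|L(u,\chi)|\le 2^d$ is too weak for small $q$; one uses instead a mean-value (Parseval or Jensen-type) argument along the contour, reducing this to a polynomial-in-$d$ dependence and hence to $\hat{N+Y}^\varepsilon$. The reciprocal $|L(u,\chi)|^{-1}$ is controlled by keeping $R$ a fixed small distance outside $q^{-1/2}$, away from the zeros. The local factors in $E_S(u)$ are each $1+O(q^{-\deg\vp/2})$ on the critical circle; summing their logarithms over the $O(|S|\cdot Z)=O(Z^2)$ primes dividing $\prod_{s\in S}s$ (using $|S|=O(Z)$ and $|s|\le\hat Z$) contributes at most $O(\hat Z^\varepsilon)$, absorbed into the final bound. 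Assembling everything yields the claimed estimate $\ll_\varepsilon\hat Z^{1/2+\varepsilon}\hat{N+Y}^\varepsilon$.
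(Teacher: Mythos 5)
Your framework — factor the generating series through the Hecke $L$-function $L(u,\chi)$ with $\chi=\eta\otimes\eta'$, apply Weil's Riemann hypothesis, and extract the partial sum by a contour integral — is exactly the paper's (Perron's formula and Cauchy's formula are the same thing in this context). The critical gap is in the step you label "the main technical difficulty": bounding $|G(u)|$ on the contour. You correctly observe that the crude bound $|L(u,\chi)|\le (1+|u|q^{1/2})^d\asymp 2^d$ (with $d\le N+Y$) is insufficient when $q$ is small, since $2^d\not\ll_\varepsilon q^{\varepsilon d}$ unless $q^\varepsilon\ge 2$; but the proposed substitute, a "Parseval or Jensen-type mean-value argument along the contour," does not close this gap. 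Parseval applied to $G$ gives $\frac{1}{2\pi}\int |G(Re^{i\theta})|^2\,d\theta=\sum_n |g_n|^2R^{2n}$, where the $g_n$ are precisely the character sums being estimated — circular. Parseval applied to $L$ alone gives $\sum_n |c_n|^2R^{2n}$ with $|c_n|\le\binom{d}{n}q^{n/2}$, which still produces a factor of order $\binom{2d}{d}\sim 4^d$. Jensen's formula on a circle strictly inside $|u|=q^{-1/2}$ gives $\frac{1}{2\pi}\int\log|L|=\log|L(0)|=0$, and the arithmetic–geometric mean inequality then yields $\frac{1}{2\pi}\int|L|\ge 1$, an inequality pointing the wrong way. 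None of these produce the required $\ll_\varepsilon \widehat{N+Y}^{\,\varepsilon}$ uniformly in $q$. The paper supplies the missing ingredient via a Borel–Carath\'eodory argument (to pass from the explicit bound on $\mathrm{Re}\,\log F(s)=\log|F(s)|$ at $\sigma\ge 1/2+\varepsilon$ to a bound on $|\log F(s)|$) followed by Hadamard's three circle theorem (interpolating between the near-trivial bound at $\sigma\approx 2$ and the weak bound at $\sigma\approx 1/2+\varepsilon$), yielding the Lindel\"of-type estimate $|F(s)|\ll c(\varepsilon)^{(Z+N+Y)^{1-\varepsilon/2}}\ll_\varepsilon \widehat{(Z+N+Y)}^{\,\varepsilon}$ with the implied constant independent of $q$. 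Some such convexity/interpolation step appears to be essential and is absent from your plan.

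A secondary issue: you place the contour at $|u|=q^{-1/2+\delta}$ with $\delta>0$, i.e.\ strictly outside the circle of zeros $|u|=q^{-1/2}$. For $\beta=-1$, $G=L(u,\chi)^{-1}E_S(u)$ then has up to $d\le N+Y$ poles enclosed, each contributing a residue of size comparable to $\hat Z^{1/2}$; your formula acknowledges "residues inside" but does not control them, and on this contour $|L(u,\chi)|^{-1}\ge (q^\delta-1)^{-d}$ can itself be huge for small $q$. The paper instead takes $\sigma=1/2+\varepsilon$, i.e.\ $|u|=q^{-1/2-\varepsilon}$, strictly \emph{inside} the zeros, so that $G$ is holomorphic on and within the contour, no residues arise, and $R^{-Z}=\hat Z^{1/2+\varepsilon}$ accounts for the main factor in the bound.
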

The proof of this result is standard and will follow that of \cite[Lemmas 3.4, 3.5]{Browning_Vishe15} closely.  To keep this paper self contained, we will include it here. We consider the Hecke $L$-function 
$$L(\eta\otimes\eta',s):=\sum_{x\in \scrO, x\textrm{ monic }}\frac{\eta(x)\eta'(x)}{|x|^s}.$$ This Dirichlet series is a-priori convergent for $\sigma:=\re(s)>1$. However, due to Tate's thesis, this function has a meromorphic continuation to the whole complex plane.  Moreover, Tate's thesis also implies that it is entire unless $\eta(x)\eta'(x)=|x|^{ib} $, for some real number $b$. As a consequence, unless $\eta(x)\eta'(x)=|x|^{ib}$, 
\begin{equation}\label{eq:Riemann}
L(\eta\otimes\eta',s)=P(q^{-s})=\prod_{j=1}^{N+Y}(1-\alpha_jq^{-s})
\end{equation}
is a polynomial of degree at most $N+Y$, with $|\alpha_j|=q^{1/2}$. This a standard fact about the Hecke $L$-functions over $\FF_q(t)$. We will give an outline of how it can be proved. The fact that the $L$-function is a polynomial of degree $O(N+Y)$ follows from proving that the averages $\sum_{|r|=\hat{R}}\eta(r)\eta'(r)$ vanish as long as $R\gg N+Y$. If $\eta'$ is non-trivial, note that the value of $\eta(r)$ only depends on the top $N$ coefficients appearing in the expression for $r$ as a polynomial in $\FF_q[t]$. One may thus write $r=t^{R-N}r_1+r_2$, and treat $r_1$ as fixed and average over $r_2$, which must vanish as long as $R-N\geq Y$, (see \cite[Prop 4.3]{Rosen}). If $\eta'$ is trivial, then $\eta$ must be non-trivial and this strategy can be recycled by working with $\eta$ instead. Further, $|\alpha_j|=q^{1/2}$, since the zeroes of this $L$-function lie on the $s=1/2$ line. \eqref{eq:Riemann} is a key in the proof of Lemma \ref{lem:Charsum}.

Since we are interested in a sum over square-free values, we proceed to study the Dirichlet series
\begin{align*}
F(s)=\sum\limits_{\substack{b\in\scrO^\sharp\\\gcd(b,S)=1}}\frac{\beta^{\Omega(b)}\eta(b)\eta'(b)}{|\vp|^s}=\prod_{\vp\notin S}\left(1+\frac{\beta\eta\otimes\eta'(\vp)}{|\vp|^s}\right).
\end{align*}
We will begin by obtaining a satisfactory bound for $|F(s)|$ for $\mathrm{Re}(s)=\sigma\geq 1/2+\ve$. This will be done in a manner completely analogous to \cite[Lem 3.4]{Browning_Vishe15}. We will obtain a good bound for $\sigma>1$, and a weaker bound for $\sigma>1/2$. The final bound will follow from a use of the Hadamard three circle theorem. We begin by noticing that, for $\sigma>1$, we have
\begin{equation}
\label{eq:Fsbound0}
|F(s)|\leq \zeta_K(\sigma),
\end{equation}
where $\zeta_K$ is the usual zeta function for $K=\FF_q(t)$.
Moreover, for any prime $\vp$ we have
\begin{align*}
1+\frac{\eta\otimes\eta'(\vp)}{|\vp|^s}=\left(1-\frac{\eta\otimes\eta'(\vp)}{|\vp|^s}\right)^{-1}\left(1+O\left(\frac{1}{|\vp|^{2\sigma}}\right)\right),
\end{align*}
leading us to
\begin{equation}
\label{eq:FsEs}
F(s)=\begin{cases}L(\eta\otimes \eta',s)E(s),&\textrm{ if }\beta=1\\
L(\eta\otimes\eta',s)^{-1}E(s),&\textrm{ if }\beta=-1,
\end{cases}
\end{equation}
where
\begin{equation}
\label{eq:Esdef}
E(s)=\prod_{\vp\notin S}(1+O(|\vp|^{-2\sigma}))\prod_{\vp\in S}(1+O(|\vp|^{-\sigma})).
\end{equation}
Using \eqref{eq:Esdef}, $E(s)$ is holomorphic in the half plane $\sigma>1/2$. Moreover, taking a logarithm of both sides, for any $\sigma\geq 1/2+\ve$, $ \ve>0$, it is easy to establish
\begin{equation}
\label{eq:Esbound}
\log |E(s)|\ll \log \zeta_K(2\sigma)+Z.
\end{equation}
Here the implied constant only depends on $\ve$ and is independent of $q$. Similarly, using \eqref{eq:Riemann}, we may obtain
\begin{align*}
\log |L(\eta\otimes\eta',s)|\ll (Y+N)|\log(1+q^{1/2-\sigma})|\ll Y+N.
\end{align*}
Combining this bound with the one in \eqref{eq:Esbound}, we obtain that for any $\sigma\geq 1/2+\ve$, 
\begin{equation}
\label{eq:Fbound1}
\log |F(s)|\ll \log \zeta_K(2\sigma)+Z+ Y+N.
\end{equation}
Note that since $1/E(s)$ is also analytic, and since the zeroes of $L(\eta\otimes \eta',s)$ lie on the $\sigma=1/2$ line, $\log F(s)$ is analytic in the half plane $\sigma>1/2$. Moreover,
\begin{align}
\label{eq:Fbound2}
\re(\log F(s))=\log |F(s)|\ll_\ve \log \zeta_K(2\sigma)+Z+ Y+N.
\end{align}
The rest of the argument will follow exactly from the one in \cite[Lem 8.4]{Browning_Vishe15}. Therefore, we will only sketch the idea here. First, Borel Carath\'eodory theorem can be used to bound $|\log F(s)|$ using our bound \eqref{eq:Fbound2} for $\re(\log F(s))$. This obtains a weaker bound for $|F(s)|$ when $\sigma\geq 1/2+\ve$. Then the Hadamard's three circle theorem can be used to obtain the following Lindel\"of type bound:
\begin{equation}
\label{eq:Fboundfinal}
|F(s)|\ll c(\ve)^{(Z+N+Y)^{1-\ve/2}}\ll_\ve (\hat{Z+N+Y})^\ve,
\end{equation}
for some absolute constant $c(\ve)$. 
\begin{proof}[Proof of Lemma \ref{lem:Charsum}] Perron's formula implies that the sum we need to estimate is equal to
\begin{align}
\label{eq:Ffinal1}
\frac{a_k}{k^{1/2}}=\frac{1}{2\pi i}\int_{2-i\infty}^{2+i\infty}F(s)\frac{\hat{Z}^sds}{s},
\end{align}
where $a_k=\sum\limits_{\substack{b\in\scrO^\sharp, |b|=k\\\gcd(b,S)=1}}\beta^{\Omega(b)}\eta(b)\eta'(b)$. The right hand side of \eqref{eq:Ffinal1} may be rewritten as
\begin{align*}
\int_{2-iT}^{2+iT}F(s)\frac{\hat{Z}^sds}{s}+O\left(\frac{\hat{(Z+N+Y})^\ve\hat{Z}^3}{T}\right),
\end{align*}
for any $T>0$. Using \eqref{eq:FsEs} and the fact that $L(\eta\otimes\eta',s)$ is an entire function with all its zeros lying on the line $\re(s)=1/2$, $F(s)$ is holomorphic in the half plane $\sigma>1/2$, the integral over the line joining $2-iT$ and $2+iT$ may be replaced by that of the three remaining sides of the rectangle joining $2+iT,1/2+\ve+iT, 1/2+\ve-iT, 2-iT$. The integral over horizontal sides can be bounded by
$$\frac{(\hat{Z+N+Y})^\ve\hat{Z}^2}{T}.$$
The remaining line segment joining $1/2+\ve-iT$ and $1/2+\ve+iT$ satisfies the bound
$$\ll \hat{Z}^{1/2+\ve}\hat{(Z+N+Y})^\ve \int_{|t|\leq T}(1+|t|)^{-1}dt\ll \hat{Z}^{1/2+\ve}\hat{(Z+N+Y})^\ve T^\ve.$$
Upon choosing $T=\hat{Z}^3$, we obtain the statement of the Lemma.
\end{proof}
\section{Proof of Theorem \ref{thm:split}}\label{sec:split}
The focus of this section is to prove Theorem \ref{thm:split}, the result providing us with a partition of $\TT^2$. Here is an outline of the proof. In Lemma \ref{lem:Diri}, we will begin by first showing that each rational point $\ua/r\in\TT^2$ lies on a line $L(d\uc)$ of a suitable height. Lemmas \ref{lem:Dio1dim} through \ref{lem:Diogen} establish the precise distribution of rational points on individual lines $L(d\uc)$. This essentially follows from the one dimensional Dirichlet approximation theorem. Later, Lemma \ref{lem:Dio3} establishes that the lines $L(d\uc)$ stay sufficiently far away from one another. Theorem \ref{thm:split} is proved by combining all these ingredients together. 

Throughout this section, just for the sake of convenience of the notation, we will treat the tuples $\ux\in K_\infty^2$ as column vectors (instead of the row vector notation used in Sec. \ref{sec:notation} and the rest of the paper). This choice makes little difference to the analysis in the remaining sections, where $\ux\in K_\infty^2$ can be purely viewed as a tuple (either a row vector or a column).

We start by recalling the definition of lines $L(d\uc)$:
\begin{equation*}
L(d\uc):=\{\ua/r\in \TT^2\cap L_1(d\uc,k):\textrm{ where } k\in \scrO, \gcd(a_1,a_2,r)=\gcd(d,k )=1\},
\end{equation*}
where $L_1(d\uc,k)$ denotes the affine line defined by the equation $d\uc\cdot\ux=k$. Note that
 \begin{equation}\label{eq:d|r}
 \ua/r\in L(d\uc)\Rightarrow d\uc\cdot\ua=kr, \textrm{ where }\gcd(k,d)=1\Rightarrow d\mid r.
\end{equation}

We first start by proving that every rational pair $\ua/r$ satisfying $\gcd(\ua,r)=1$ lies on one of the lines of suitable height.
\begin{lemma}
\label{lem:Diri}
Given any rational $\ua/r$ satisfying $\gcd(\ua,r)=1$, there exists a primitive $\uc=\cmatr{c_1}{c_2}\in \scrO^2$ and a monic $d\in \scrO$ satisfying $|dc_1|\leq |r|^{1/2}, |dc_2|<|r|^{1/2}$, such that $\ua/r\in L(d\uc)$.
\end{lemma}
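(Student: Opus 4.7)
The plan is to run a pigeonhole (Minkowski lattice-point) argument to produce a nontrivial integer vector $(e_1,e_2)$ with $|e_1|\leq |r|^{1/2}$, $|e_2|<|r|^{1/2}$ satisfying $e_1 a_1+e_2 a_2\equiv 0\pmod r$, and then to repackage this vector as $d\uc$ in the format required by the definition of $L(d\uc)$. Concretely, I would let
\[
B=\{(e_1,e_2)\in\scrO^2:\ |e_1|\leq |r|^{1/2},\ |e_2|<|r|^{1/2}\},
\]
and count the number of polynomials of each bounded degree to check that, whether $\deg r$ is even or odd, one has $\#B=q|r|>|r|$. Then the map $B\to \scrO/r\scrO$, $(e_1,e_2)\mapsto e_1a_1+e_2a_2$, admits a collision, and differencing together with the ultrametric inequality gives a nonzero $(e_1,e_2)\in B$ with $e_1a_1+e_2a_2\equiv0\pmod r$.

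Next I would normalise $(e_1,e_2)$ so that it factors as $d\uc$ with $d$ monic and $\uc$ primitive. If $e_1\neq 0$ with leading coefficient $\lambda\in\FF_q^\times$, replace $(e_1,e_2)$ by $(\lambda^{-1}e_1,\lambda^{-1}e_2)$; this preserves the congruence and the size bounds and makes $e_1$ monic. (If $e_1=0$ then $e_2\neq 0$ and I do the same normalisation with $e_2$.) Taking $d=\gcd(e_1,e_2)$ as the monic gcd and $(c_1,c_2)=(e_1/d,e_2/d)$, I obtain $\uc$ primitive in the sense of Section 2.1, with $|dc_1|=|e_1|\leq |r|^{1/2}$ and $|dc_2|=|e_2|<|r|^{1/2}$.

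The last step is to secure the coprimality condition $\gcd(d,k)=1$ built into the definition \eqref{eq:ducdef} of $L(d\uc)$. By construction $r\mid d\uc\cdot\ua$, so there exists $k\in\scrO$ with $d\uc\cdot\ua=kr$. If $g=\gcd(d,k)$ (monic) is nontrivial, I write $d=g d'$, $k=g k'$ and cancel $g$ to get $d'\uc\cdot\ua=k'r$ with $\gcd(d',k')=1$; note this remains valid if $k=0$, in which case $g=d$ and $d'=1$. Replacing $d$ by $d'$ can only shrink $|dc_1|$ and $|dc_2|$, so the upper bounds are preserved. Combined with the standing assumption $\gcd(\ua,r)=1$, this places $\ua/r$ on $L(d'\uc)$.

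The only place where care is really needed is the counting in the pigeonhole step: the domain $B$ must have cardinality \emph{strictly} greater than $|r|$, and it is this that forces the asymmetric shape of the inequalities (``$\leq$'' in the $c_1$ coordinate, ``$<$'' in the $c_2$ coordinate). Once the count is secured, the remaining manipulations -- scaling by a unit to make the leading coordinate monic, pulling out a monic gcd, and dividing out $\gcd(d,k)$ -- are routine, and all the size inequalities are automatically preserved or improved.
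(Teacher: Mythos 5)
Your proof is correct and takes essentially the same approach as the paper: the identical pigeonhole count (exploiting the asymmetric $\leq/<$ choice to get $\#B=q|r|>|r|$) followed by normalising the resulting vector to a primitive $\uc$ and dividing out $\gcd(d,k)$ to meet the coprimality requirement in the definition of $L(d\uc)$. The paper's normalisation is slightly more circuitous — it first sets $d=\gcd(\uc_1,r)$ and passes through a reduction modulo $r/d$ before dividing out $\gcd(d,k_1)$ — but a short valuation-by-valuation check shows both routes arrive at the same final $d$, so the difference is purely presentational.
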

\begin{proof}
Let $|r|=q^L$. We will start by proving the existence of a possibly non-primitive vector $\uc_1$ such that $r\mid\uc_1\cdot\ua $.  Using the fact that for any $N\in\NN$, $\#\{x\in\scrO:|x|<\hat{N}\}=\hat{N}$, we have $$q^L<\#\{(c_1,c_2):|c_1|\leq \hat{L/2}, |c_2|<\hat{L/2}\}=q^{L+1}.$$ Therefore, for any triple $(a_1,a_2,r) $, at least two distinct elements in $\{c_1a_1+c_2a_2:|c_1|\leq \hat{L/2},|c_2|<\hat{L/2}\}$ must have the same residue modulo $r$. This implies that $\ua\cdot\uc_1=kr$ for some $\underline{0}\neq \uc_1\in\scrO^2, k\in\scrO$, satisfying the required bound on the size of the co-ordinates of $\uc_1$. If $\uc_1$ is not primitive, let $d=\gcd(\uc_1,r)$. Let $d'=\gcd(\uc_1)/d$ and $\uc=\uc_1/\gcd(\uc_1)$, where upon possibly multiplying by a unit, we may ensure that $\uc$ is monic as well.  Note that $\gcd(d',r/d)=1$.   We then have $$\ua\cdot \uc_1\equiv 0\bmod{r}\Rightarrow \ua\cdot d'\uc\equiv 0\bmod{r/d}\Rightarrow \ua\cdot \uc\equiv 0\bmod{r/d}\Rightarrow \ua\cdot d\uc\equiv 0\bmod{r}. $$
 We have now proved that $d\uc\cdot \ua=k_1r$ for some $k_1\in \scrO$, where $\uc$ is primitive. If $\gcd(d,k_1)=1$, then we are done. Otherwise, if $d_2=\gcd(d,k_1)$, then note that $\ua/r\in L_1((d/d_2)\uc,k_1/d_2)$, which further implies that $\ua/r\in L((d/d_2)\uc)$. The required bound for the coordinates of $d\uc$ follows from further observing $|d/\gcd(\uc_1)|\leq 1$.
\end{proof}
 We next prove a refinement of the one dimensional Diohantine approximation \cite[Lemma 4.2]{Browning_Vishe15}:
\begin{lemma}
\label{lem:Dio1dim}
Given any $a,r\in\scrO$ such that $\gcd(a,r)=1$ and $|r|=\hat{M}$, there exists $a_1,r_1$, such that $|r_1|=\hat{M+1}$, $\gcd(a_1,r_1)=1$ and $|a/r-a_1/r_1|=\hat{-2M-1}=(|r||r_1|)^{-1}$.
\end{lemma}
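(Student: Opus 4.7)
The plan is to reduce the problem to producing $a_1, r_1 \in \scrO$ satisfying a single Bezout-type equation $a r_1 - a_1 r = c$ with $c \in \FF_q^{\times}$ a nonzero constant. Any such pair immediately gives us everything we need: $|a r_1 - a_1 r| = |c| = 1$, so
\[
\left| \frac{a}{r} - \frac{a_1}{r_1} \right| = \frac{|a r_1 - a_1 r|}{|r||r_1|} = \frac{1}{|r||r_1|},
\]
and any common divisor of $a_1, r_1$ divides $c$ and is therefore a unit, so $\gcd(a_1, r_1) = 1$ comes for free.

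To construct such a pair I would first invoke $\gcd(a, r) = 1$: fix any $c \in \FF_q^{\times}$ and choose $r_0 \in \scrO$ with $|r_0| < |r|$ solving $a r_0 \equiv c \pmod{r}$. I then lift $r_0$ to
\[
r_1 := r_0 + \lambda r, \qquad \lambda \in \scrO \text{ of degree exactly } 1,
\]
for instance $\lambda = t$. Since $|r_0| < |r| = \hat{M}$ and $|\lambda r| = \hat{M+1}$, the non-archimedean property forces $|r_1| = \hat{M+1}$ on the nose, which is the hardest condition to pin down. Finally I set $a_1 := (a r_1 - c)/r$, which lies in $\scrO$ by the congruence defining $r_0$, and the pair $(a_1, r_1)$ satisfies $a r_1 - a_1 r = c$ by construction.

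No part of this argument is truly an obstacle: this is essentially the standard one-dimensional Dirichlet approximation in $K = \FF_q(t)$ (cf.\ \cite[Lemma~4.2]{Browning_Vishe15}), with the extra twist that we insist on the \emph{exact} size $|r_1| = \hat{M+1}$ rather than just an upper bound. The non-archimedean triangle equality, together with the freedom to choose the degree of $\lambda$, makes this equality automatic once we build $r_1$ as above. Verification of the three claimed properties then amounts to the single line displayed at the end of the previous paragraph.
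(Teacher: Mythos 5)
Your proof is correct, and it takes a genuinely different route from the paper's. The paper perturbs the rational $a/r$ by the fixed small element $z = t^{-2M-1}$ and then applies the one-dimensional Dirichlet approximation of \cite[Lemma 4.1]{Browning_Vishe15} to the perturbed point $y = a/r + z$, arguing indirectly that the resulting denominator is forced to have degree exactly $M+1$ (since $y$ cannot be approximated by any rational with denominator of degree $\leq M$ to within the required accuracy), and then recovering the exact distance $|a/r-a_1/r_1|=|z|$ from the ultrametric triangle equality. You instead build the approximation explicitly as a Bezout pair: producing $a_1, r_1$ with $ar_1 - a_1 r = c \in \FF_q^\times$ by solving $ar_0 \equiv c \pmod{r}$ with $|r_0| < |r|$ and lifting $r_1 = r_0 + tr$. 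The degree condition $|r_1|=\hat{M+1}$ is then automatic from $|r_0|<\hat{M}<\hat{M+1}=|tr|$, coprimality of $(a_1,r_1)$ follows because any common divisor divides $c$, and the exact distance is read off from $|ar_1-a_1r|=1$. This is more elementary and more constructive, and avoids the black-box citation; the only edge case worth noting explicitly is $M=0$ (so $r \in \FF_q^\times$ and $r_0=0$), where the construction still gives $r_1 = tr$ with $|r_1|=\hat{1}$ and $a_1 = at - c/r \in \scrO$, so everything goes through.
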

\begin{proof}
The proof is a direct consequence of \cite[Lemma 4.1]{Browning_Vishe15}. Let $y=a/r+z$, where $z=t^{-2M-1}$. For any $a',r'$ such that $a'/r'\neq a/r$, $|r'|\leq |r|$, note that $|y-a'/r'|\geq \hat{M}^{-1}|r'|^{-1}$. However, a further application of \cite[Lemma 4.1]{Browning_Vishe15} produces $a_1,r_1$, satisfying $|r_1|\leq \hat{M+1}$ and $|a_1/r_1-y|<\hat{M+1}^{-1}|r_1|^{-1}$. Clearly, $|r_1|=\hat{M+1}$ by our earlier observation. This implies that $|a_1/r_1-y|<\hat{M+1}^{-2}$. A simple triangle inequality establishes the lemma.
\end{proof}
We now investigate the structure of the rational points on each individual line, starting with a line $L(\uc)$, where $\uc$ is primitive.
\begin{lemma}
\label{lem:Diomain}
Let $c_1,c_2,r\in \scrO $ satisfying $\gcd(c_1,c_2,r)=1$. Then we have the following equality of residues modulo $r$:
 $$\{\ua\bmod{r}: \gcd(\ua,r)=1,\uc\cdot\ua\equiv 0\bmod{r}, |\ua|<|r|\}=\{a\uc^\bot\bmod{r}: |a|<|r|, \gcd(a,r)=1\},$$
 where $\uc^\bot=(-c_2,c_1)^t$.
\end{lemma}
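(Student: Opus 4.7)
My plan is to prove the equality by establishing the two inclusions separately, exploiting the fact that $\gcd(c_1,c_2,r)=1$ forces one of $c_1,c_2$ to be a unit modulo each prime power dividing $r$.

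For the inclusion $\supseteq$, I would take $\ua=a\uc^\bot=(-ac_2, ac_1)^t$ with $\gcd(a,r)=1$ and observe that $\uc\cdot\ua=-c_1c_2a+c_2c_1a=0$, so the divisibility condition is automatic. To verify $\gcd(\ua,r)=1$, I would argue primewise: if a prime $\vp$ divided all three of $ac_1$, $ac_2$ and $r$, then $\gcd(a,r)=1$ forces $\vp\nmid a$, hence $\vp\mid c_1$ and $\vp\mid c_2$, contradicting $\gcd(c_1,c_2,r)=1$.

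For the converse inclusion $\subseteq$, the plan is to construct, for each given $\ua$ in the left-hand set, a scalar $a\in\scrO/r\scrO$ satisfying $\ua\equiv a\uc^\bot\bmod r$. Using CRT, it suffices to build $a$ modulo each prime power $\vp^e\|r$ separately. At such a $\vp$, the hypothesis $\gcd(c_1,c_2,r)=1$ guarantees that at least one of $c_1,c_2$ is a unit modulo $\vp$; if $\vp\nmid c_1$ I would define $a\equiv c_1^{-1}a_2\bmod\vp^e$, and if $\vp\nmid c_2$ I would define $a\equiv -c_2^{-1}a_1\bmod\vp^e$ (these agree when both $c_i$ are units). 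The congruence $\uc\cdot\ua\equiv 0\bmod\vp^e$ then exactly encodes that the remaining coordinate of $\ua$ has the desired form, i.e.\ $a_1\equiv -ac_2\bmod\vp^e$ and $a_2\equiv ac_1\bmod\vp^e$.

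Finally I would verify that the globally glued $a$ satisfies $\gcd(a,r)=1$: if some prime $\vp\mid r$ divided $a$, then both coordinates $a_1\equiv -ac_2$ and $a_2\equiv ac_1$ would vanish modulo $\vp$, contradicting $\gcd(\ua,r)=1$. The only subtle point I expect is being careful at primes where both $c_1$ and $c_2$ are units modulo $\vp$, where the two candidate definitions of $a$ must be consistent; but this consistency follows immediately from $c_1a_1+c_2a_2\equiv 0\bmod\vp^e$. Once the two inclusions are in place, the equality of residue sets follows.
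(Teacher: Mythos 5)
Your proposal is correct and follows essentially the same route as the paper: reduce to prime-power moduli, use $\gcd(c_1,c_2,r)=1$ to ensure one of $c_1,c_2$ is a unit locally, solve for the scalar $a$, verify coprimality, and glue via CRT. The paper simply writes the prime-power case more compactly as a set identity $\{y(-c_1^{-1}c_2,1):\gcd(y,r)=1\}=\{yc_1(-c_1^{-1}c_2,1):\gcd(y,r)=1\}$ (handling both inclusions at once), whereas you spell out the two inclusions and the consistency check at primes where both $c_i$ are units; the underlying argument is the same.
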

\begin{proof}
We will assume that $r=\vp^k$, for some prime $\vp$. Without loss of generality, we can assume that $\vp\nmid c_1$. Clearly, modulo $r$, the left hand side is equal to $$\{y(-c_1^{-1}c_2,1):\gcd(y,r)=1, y\bmod{r}\}=\{yc_1(-c_1^{-1}c_2,1):\gcd(y,r)=1, y\bmod{r}\}.$$ In general, if $r=\vp_1^{k_1}...\vp_m^{k_m}$ is a prime decomposition of $r$ into co-prime prime powers, then our previous analysis shows that $\uc\cdot \ua\equiv 0\bmod{r}$ would necessarily imply that for each $1\leq i\leq m$, there exists $b_i$ such that $\vp_i\nmid b_i$ and $\ua\equiv b_i\uc^\perp \bmod{\vp_i^{k_i}}$ . An application of the Chinese remainder theorem will finish the proof of the lemma.
\end{proof}
As a direct corollary of Lemma \ref{lem:Diomain}, we get
\begin{corollary}
\label{cor:11}
For every $\ua/r\in L(\uc)$, there exists a unique $|a|<|r|$, $\gcd(a,r)=1 $ and a unique $\ud\in\scrO^2$ satisfying $|\ud|<|\uc|$ and $\ua/r=a\uc^\bot/r+\ud$.
\end{corollary}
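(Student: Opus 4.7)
The plan is to derive this corollary essentially as a repackaging of Lemma \ref{lem:Diomain} (applied in the case $d=1$), combined with the ultrametric inequality on $K_\infty$. Since $\ua/r \in L(\uc)$, by definition we have $\uc\cdot\ua = kr$ for some $k\in\scrO$ and $\gcd(\ua,r)=1$; in particular $\uc\cdot\ua\equiv 0\bmod r$. Applying Lemma \ref{lem:Diomain} to this residue class produces a (necessarily unique) $a$ with $|a|<|r|$, $\gcd(a,r)=1$, and $\ua \equiv a\uc^\bot \bmod r$. Setting $\ud := (\ua - a\uc^\bot)/r \in \scrO^2$ then gives the desired decomposition $\ua/r = a\uc^\bot/r + \ud$.

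The only non-formal point is checking $|\ud|<|\uc|$. I would verify this using the non-Archimedean inequality
\[
|\ud| \le \max\bigl\{ |\ua/r|,\ |a\uc^\bot/r| \bigr\}.
\]
Here $|\ua/r|<1$ because $\ua/r\in\TT^2$, and the primitivity convention forces $|\uc|\ge 1$, so the first term is strictly less than $|\uc|$. For the second, $|\uc^\bot|=|\uc|$ and $|a|<|r|$ gives $|a|/|r|\le q^{-1}$, hence $|a\uc^\bot/r| \le |\uc|/q < |\uc|$. Both quantities being $<|\uc|$ yields $|\ud|<|\uc|$, as required.

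Uniqueness of the pair $(a,\ud)$ will follow from the same mechanism. If $a_1\uc^\bot/r + \ud_1 = a_2\uc^\bot/r + \ud_2$ for two admissible choices, then $(a_1-a_2)\uc^\bot = r(\ud_2-\ud_1) \in r\scrO^2$. Since $\uc$ is primitive, $\gcd(c_1,c_2)=1$, so the coordinates of $\uc^\bot$ generate the unit ideal of $\scrO$, forcing $r\mid (a_1-a_2)$. Combined with $|a_1|,|a_2|<|r|$, this yields $a_1=a_2$ and consequently $\ud_1=\ud_2$.

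I do not expect any serious obstacle here: the whole content is in Lemma \ref{lem:Diomain}, which already identifies the set of admissible residues $\ua\bmod r$ with $\{a\uc^\bot\bmod r:|a|<|r|,\gcd(a,r)=1\}$. The only thing one has to be mildly careful with is the size control $|\ud|<|\uc|$, where it is important to invoke the primitivity convention to ensure $|\uc|\ge 1$ and thereby absorb the estimate $|\ua/r|<1$.
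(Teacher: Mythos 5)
Your proposal is correct and is essentially the elaboration the paper intends: the paper labels this result ``a direct corollary of Lemma~\ref{lem:Diomain}'' and gives no further detail, and your argument — lift the residue class from Lemma~\ref{lem:Diomain}, set $\ud=(\ua-a\uc^\bot)/r$, bound $|\ud|$ by the ultrametric inequality using $|\ua/r|<1\le|\uc|$ and $|a|/|r|\le q^{-1}$, and use primitivity of $\uc$ for uniqueness — is exactly the intended route.
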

Similarly for any general line $L(d\uc)$, we have the following generalisation:
\begin{lemma}
\label{lem:Dio}
Let $\uc\in\scrO^2$ be primitive and $d\in\scrO$. Then, for every $\ua/r\in L(d\uc),$ there exists a unique $\ua'/(r/d)\in L(\uc)$ and a unique $\ud'\in \scrO^2$ satisfying $|\ud'|<|d|$ such that $\ua/r=\ua'/r+\ud'/d $, where $|\ud'|<|d| $. Consequently, $\ua/r=a\uc^\bot/r+\ud/d$, where $(a,r/d)=1,|a|<|r/d|,\gcd(\ud,d)=1,\ud\in\scrO^2$.
\end{lemma}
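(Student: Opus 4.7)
The plan is to split off the $d$-part of $r$ and then invoke Corollary~\ref{cor:11} on the reduced line $L(\uc)$. The key preliminary observation is that $\ua/r\in L(d\uc)$ forces $d\mid r$ by \eqref{eq:d|r}, so that $r/d\in\scrO$ makes sense.

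First I would perform coordinatewise division with remainder in $\scrO$ to write $\ua=\ua'+(r/d)\ud'$ uniquely with $|\ua'|<|r/d|$; the ultrametric inequality together with $|\ua|<|r|$ then automatically forces $|\ud'|<|d|$, giving $\ua/r=\ua'/r+\ud'/d$. Next I would verify $\ua'/(r/d)\in L(\uc)$: the defining relation $d\uc\cdot\ua=kr$ with $\gcd(k,d)=1$ yields $\uc\cdot\ua=k(r/d)$, and since $\ua\equiv\ua'\pmod{r/d}$, we obtain $\uc\cdot\ua'\equiv 0\pmod{r/d}$. The primitivity $\gcd(a_1',a_2',r/d)=1$ is inherited from $\gcd(a_1,a_2,r)=1$: any prime $\vp\mid r/d$ dividing both coordinates of $\ua'$ would (via $\ua\equiv\ua'\pmod{\vp}$ and $\vp\mid r$) also divide both coordinates of $\ua$, contradicting $\gcd(a_1,a_2,r)=1$.

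Once $\ua'/(r/d)\in L(\uc)$ is in place, Corollary~\ref{cor:11} produces unique $a$ with $|a|<|r/d|$ and $\gcd(a,r/d)=1$, together with a unique $\ud''\in\scrO^2$ such that $\ua'/(r/d)=a\uc^\bot/(r/d)+\ud''$. Dividing by $d$ and adding $\ud'/d$ gives $\ua/r=a\uc^\bot/r+\ud/d$ with $\ud:=\ud'+\ud''\in\scrO^2$. To conclude $\gcd(\ud,d)=1$, I would clear denominators to $\ua=a\uc^\bot+(r/d)\ud$ and take the inner product with $\uc$: using $\uc\cdot\uc^\bot=0$ and $\uc\cdot\ua=k(r/d)$, this collapses to $\uc\cdot\ud=k$, so any prime dividing both $\ud$ and $d$ would divide $k$, contradicting $\gcd(k,d)=1$.

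The argument is essentially bookkeeping built on top of Corollary~\ref{cor:11} and \eqref{eq:d|r}, with no substantive analytic or arithmetic obstacle. The only point demanding a little care is the uniqueness clause, which follows by combining the uniqueness of the division with remainder (for $\ua'$ and $\ud'$) with the uniqueness clause of Corollary~\ref{cor:11} (for $a$ and $\ud''$); this then propagates to the uniqueness of $\ud=\ud'+\ud''$ in the final representation.
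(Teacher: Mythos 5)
Your proposal is correct and follows essentially the same path as the paper's proof: reduce $\ua$ modulo $r/d$ to get $\ua'$ and $\ud'$, feed $\ua'/(r/d)$ into Corollary~\ref{cor:11}, and then deduce $\gcd(\ud,d)=1$ from $\uc\cdot\ud=k$ and $\gcd(k,d)=1$. The one place you are a bit more careful than the paper is in explicitly checking that $\ua'/(r/d)$ actually lies in $L(\uc)$ (primitivity of $\ua'$ modulo $r/d$ and $\ua'/(r/d)\in\TT^2$), a step the paper takes for granted.
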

\begin{proof}
We begin by recalling that $\ua/r\in L(d\uc)$ implies that $d\uc\cdot\ua=kr$, where $\gcd(k,d)=1$. Thus, $\uc\cdot\ua\equiv 0\bmod{r/d}$. The first part of the lemma is established upon choosing $|\ua'|<|r/d| $ such that $\ua'\equiv \ua\bmod{r/d}$. This choice of $\ua'$ is also unique, since any representation $\ua/r=\ua_1/r+\ud_1/d$ must satisfy $\ua\equiv \ua_1\bmod{r/d}$.

Corollary \ref{cor:11} implies that $\ua'/(r/d)=a\uc^\bot/(r/d)+\ud''$, for some $\ud''\in \scrO^2$, $\gcd(a,r/d)=1$. Thus, $\ua/r=a\uc^\bot/r+\ud/d$, for some $\ud\in\scrO^2$. This implies that $d\uc\cdot (a\uc^\bot/r+\ud/d)=\uc\cdot\ud=k$. Since $(k,d)=1$, $\gcd(\ud,d)=1$.
\end{proof}
As a consequence of the previous lemmas, we are now set to establish results about the distribution of rational points on the generalised lines $L(d\uc) $. As before, we start by investigating the lines of the type $L(\uc)$. The following lemma is a consequence of the one dimensional Dirichlet approximation.
\begin{lemma}
\label{lem:Dio2}
Let $\uc$ be primitive and let $\ua_1/r_1\neq \ua_2/r_2 \in L(\uc)$ satisfying $|\uc|^2\leq|r_1||r_2|$, then $$|\ua_1/r_1-\ua_2/r_2|\geq \frac{|\uc|}{|r_1||r_2|}.$$ Moreover, given any $\ua/r\in L(\uc)$ satisfying $|\uc|^2\leq |r|$, there exist  $\ua_1/r_1\in L(\uc) $ satisfying $|r|<|r_1|$ and $$|\ua/r-\ua_1/r_1|=\frac{|\uc|}{|r||r_1|}.$$
We can further guarantee that $\ua/r$ and $\ua_1/r_1 $ both lie on the line $L_1(\uc,k)$, for some $k\in\scrO$.
\end{lemma}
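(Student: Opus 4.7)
The plan is to derive both statements from the canonical representation recorded in Corollary~\ref{cor:11}: every $\ua/r \in L(\uc)$ admits a unique writing $\ua/r = a\uc^\bot/r + \ud$ with $|a|<|r|$, $\gcd(a,r)=1$ and $\ud\in\scrO^2$, and the constant $k$ of the affine line $L_1(\uc,k)$ through $\ua/r$ is $k = \uc\cdot\ud$.

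For the first assertion I would form $\vecw := r_2\ua_1 - r_1\ua_2 \in \scrO^2$, which is nonzero since $\ua_1/r_1\neq\ua_2/r_2$. Writing $\ua_i/r_i\in L_1(\uc,k_i)$ for suitable $k_i\in\scrO$, a direct computation gives $\uc\cdot\vecw = r_1r_2(k_1-k_2)$. The argument is then a clean contradiction: if $|\vecw|<|\uc|$, then $|\uc\cdot\vecw|\leq|\uc||\vecw|<|\uc|^2\leq|r_1||r_2|$, forcing $|k_1-k_2|<1$ and hence $k_1=k_2$. Thus $\uc\cdot\vecw=0$, and primitivity of $\uc$ forces $\vecw$ to be an $\scrO$-multiple of $\uc^\bot$; since $\vecw\neq\vecnull$ this yields $|\vecw|\geq|\uc^\bot|=|\uc|$, contradicting our assumption. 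Dividing $|\vecw|\geq|\uc|$ by $|r_1||r_2|$ gives the separation bound.

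For the second assertion I would apply Lemma~\ref{lem:Dio1dim} to the scalar pair $(a,r)$ extracted from the canonical form of $\ua/r$. That lemma furnishes $(a_1,r_1)$ with $|r_1|=q|r|$, $\gcd(a_1,r_1)=1$ and $|a/r-a_1/r_1|=(|r||r_1|)^{-1}$. I would then set $\ua_1:=a_1\uc^\bot + r_1\ud$, reusing the same $\ud$. Immediately $\ua_1/r_1 - \ua/r = (a_1/r_1-a/r)\uc^\bot$, which has size exactly $|\uc|/(|r||r_1|)$, and $\uc\cdot(\ua_1/r_1) = \uc\cdot\ud = k$, so both points lie on the common line $L_1(\uc,k)$.

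The remaining work, which I view as the main obstacle, is the bookkeeping required to show that the constructed $\ua_1/r_1$ honestly lies in $L(\uc)$. Membership in $\TT^2$ will follow from the ultrametric inequality $|\ua_1/r_1|\leq\max(|\ua/r|,|\uc|/(|r||r_1|))$ together with the hypothesis $|\uc|^2\leq|r|$ and the strict inequality $|r_1|>|r|$, which together force $|\uc|/(|r||r_1|)<1$. The coprimality condition reduces, via $\ua_1 = a_1\uc^\bot + r_1\ud$, to $\gcd(a_1\uc^\bot,r_1)=\gcd(a_1\gcd(c_1,c_2),r_1)=\gcd(a_1,r_1)=1$, using primitivity of $\uc$. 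These checks are routine; the heart of the proof is the contradiction argument in the first assertion and the observation that keeping $\ud$ fixed while perturbing $a$ preserves both the line and the coprimality condition in the second.
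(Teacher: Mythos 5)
Your proof is correct and takes essentially the same approach as the paper: the second half is verbatim the same construction, and the first half uses the same two-case dichotomy ($k_1=k_2$ versus $k_1\neq k_2$) and the same key observation that primitivity of $\uc$ forces any integer vector in $\ker(\uc\cdot\,)$ to be an $\scrO$-multiple of $\uc^\bot$. Your repackaging of the first part via the integer vector $\vecw=r_2\ua_1-r_1\ua_2$ rather than via the canonical form of Corollary~\ref{cor:11} is a minor streamlining, not a different route.
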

\begin{proof}
We begin by proving the first part of the lemma. Since $\ua_1/r_1,\ua_2/r_2\in L(\uc)$, we have $(\ua_i/r_i)\cdot \uc=k_i$, for $k_1,k_2\in \scrO$. Thus, $(\ua_1/r_1-\ua_2/r_2)\cdot \uc=k_1-k_2$. If $k_1\neq k_2$, then this implies that $|\ua_1/r_1-\ua_2/r_2|\geq |\uc|^{-1} $. The first part now follows from the condition on $r_1,r_2$ and $\uc$. On the other hand, Corollary \ref{cor:11} implies that $\ua_1/r_1=a_1\uc^\bot/r_1+\ud_1$ and $\ua_2/r_2=a_2\uc^\bot/r_2+\ud_2$. As a result, if $k_1=k_2$, then this necessarily implies $(\ud_1-\ud_2)\cdot \uc=0$. Now we use the fact that $\uc$ is primitive, along with the fact that $|\ud_1|,|\ud_2|<|\uc|$ to get that $\ud_1=\ud_2$. The first part now follows from the observation $ |\ua_1/r_1-\ua_2/r_2|=|(a_1/r_1-a_2/r_2)(-c_2,c_1)|$.

To prove the second part, we appeal to Lemma \ref{lem:Dio1dim}. Suppose, $\ua/r=a(-c_2,c_1)/r+\ud$. Lemma \ref{lem:Dio1dim} provides  us $a_1/r_1$ such that $|r_1|=q|r|$ and $|a/r-a_1/r_1|=(|r||r_1|)^{-1}$. Now, let $\ua_1/r_1=a_1(-c_2,c_1)/r_1+\ud$. Clearly  $|\ua/r-\ua_1/r_1|=|(a/r-a_1/r_1)(-c_2,c_1)|=\frac{|\uc|}{|r||r_1|}<1$. Thus, $\ua_1/r_1\in\TT$. We must also have $\gcd(\ua_1,r_1)=1$, since $\ua_1\equiv a_1\uc^\perp\bmod{r_1}$. We thus have $\ua_1/r_1\in L(\uc)$. The final part of the lemma follows from choosing $k=\ud\cdot \uc$.
\end{proof}
We further extend this result to the lines of general type:
\begin{lemma}
\label{lem:Diogen}
Let $\uc$ be primitive, let $d\in \scrO$ and let $\ua_1/r_1\neq \ua_2/r_2 \in L(d\uc)$ satisfying $|d\uc|^{2}\leq |r_1||r_2|$, then $$|\ua_1/r_1-\ua_2/r_2|\geq \frac{|d\uc|}{|r_1||r_2|}.$$ Moreover, given any $\ua/r\in L(d\uc)\cap L_1(d\uc,k)$, where   $|d\uc|^2\leq |r|$, there exists $\ua_2/r_2\in L(d\uc)\cap L_1(d\uc,k) $ satisfying $|r|< |r_2|$ such that $$|\ua/r-\ua_2/r_2|=\frac{|d\uc|}{|r||r_2|}.$$

\end{lemma}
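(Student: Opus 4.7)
The plan is to extend Lemma \ref{lem:Dio2} to the general line $L(d\uc)$ by first invoking Lemma \ref{lem:Dio} to put every rational in $L(d\uc)$ into the normal form $\ua/r = a\uc^\bot/r + \ud/d$ with $r = dr'$, $|a|<|r'|$, $\gcd(a, r') = 1$, $\gcd(\ud, d) = 1$. In this form $d\uc\cdot(\ua/r) = \uc\cdot\ud =: k$, so the affine level $k$ is read off directly from $\ud$ and coincides with the one featured in $L_1(d\uc,k)$.

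For the first (repulsion) statement, let $\ua_i/r_i \in L(d\uc)$ for $i=1,2$ with associated integers $k_i$. If $k_1\neq k_2$, then $|(\ua_1/r_1 - \ua_2/r_2)\cdot(d\uc)| = |k_1-k_2| \geq 1$ already gives $|\ua_1/r_1 - \ua_2/r_2| \geq 1/|d\uc|$, and the hypothesis $|d\uc|^2 \leq |r_1||r_2|$ converts this into the required lower bound. If $k_1 = k_2$, then $\uc\cdot(\ud_1 - \ud_2) = 0$; since $\uc$ is primitive this forces $\ud_1 - \ud_2 = \nu\uc^\bot$ for some $\nu\in\scrO$, and substituting into the normal forms collapses the difference to
\[\ua_1/r_1 - \ua_2/r_2 = d^{-1}\bigl(a_1/r_1' - a_2/r_2' + \nu\bigr)\uc^\bot.\]
Expressing the scalar in parentheses over the common denominator $r_1'r_2'$ exhibits a nonzero integer numerator (nonzero since otherwise $\ua_1/r_1 = \ua_2/r_2$), yielding the desired bound $|d\uc|/(|r_1||r_2|)$ after noting $|r_i| = |d||r_i'|$.

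For the second (approximation) statement, I would apply Lemma \ref{lem:Dio1dim} to the one-dimensional fraction $a/r'$ to obtain $a_1/r_1'$ with $|r_1'| = q|r'|$, $\gcd(a_1, r_1') = 1$, and $|a/r' - a_1/r_1'| = 1/(|r'||r_1'|)$. Setting $r_2 := dr_1'$ and $\ua_2/r_2 := a_1\uc^\bot/r_2 + \ud/d$ reuses the same $\ud$, so $d\uc\cdot(\ua_2/r_2) = \uc\cdot\ud = k$ and the affine level is preserved. A direct computation gives $|\ua/r - \ua_2/r_2| = |d\uc|/(|r||r_2|)$, and the hypothesis $|d\uc|^2 \leq |r|$ forces this to be strictly less than $1$, so $\ua_2/r_2 \in \TT^2$. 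The main technical point, which I expect to be the crux, is verifying that $\ua_2/r_2$ is already in reduced form. I plan to argue by cases on any prime $\vp \mid \gcd(\ua_2, r_2)$, where $\ua_2 = a_1\uc^\bot + r_1'\ud$: if $\vp \mid r_1'$, then $\vp \mid a_1\uc^\bot$ together with primitivity of $\uc$ forces $\vp \mid a_1$, contradicting $\gcd(a_1, r_1') = 1$; if $\vp \mid d$ but $\vp\nmid r_1'$, then $\vp \mid \uc\cdot\ua_2 = r_1' k$ forces $\vp\mid k$, contradicting $\gcd(d, k) = 1$ inherited from $\ua/r\in L(d\uc)$. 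Combining this primitivity with $|r_2| = q|r| > |r|$ then places $\ua_2/r_2$ in $L(d\uc)\cap L_1(d\uc, k)$ at the required larger denominator.
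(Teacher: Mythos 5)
Your proof is correct. Its broad strategy matches the paper's --- reduce to the normal form of Lemma \ref{lem:Dio}, then argue by a case split together with Diophantine approximation along the direction $\uc^\bot$ --- but you flatten the argument where the paper recurses. For the repulsion bound, the paper decomposes $\ua_i/r_i=\ua_i'/r_i+\ud_i'/d$ with $|\ud_i'|<|d|$, handles $\ud_1'\neq\ud_2'$ by the ultrametric inequality, and in the case $\ud_1'=\ud_2'$ applies the $d=1$ statement (Lemma \ref{lem:Dio2}) to $\ua_i'/(r_i/d)\in L(\uc)$; you split instead on the affine levels $k_1=\uc\cdot\ud_1$ and $k_2=\uc\cdot\ud_2$ and, when they agree, rederive the bound by writing $\ud_1-\ud_2=\nu\uc^\bot$ and observing that $a_1r_2'-a_2r_1'+\nu r_1'r_2'$ is a nonzero element of $\scrO$. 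This is a valid, self-contained replacement for the recursion through Lemma \ref{lem:Dio2}. For the approximation statement, the essential move $a/r'\mapsto a_1/r_1'$ via Lemma \ref{lem:Dio1dim} is the same in spirit (the paper routes it through the second part of Lemma \ref{lem:Dio2}), but the proofs diverge on why the constructed point $\ua_2/r_2$ is in lowest terms with $r_2=dr_1'$: the paper deduces this indirectly, by combining the already-proved first part of the lemma with the easy inequality $|r_2|\leq|dr_1'|$ to force equality, whereas you verify $\gcd(\ua_2,dr_1')=1$ directly, prime by prime, using $\uc\cdot\uc^\bot=0$ and $\uc\cdot\ud=k$ together with $\gcd(d,k)=1$. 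Your check is sound and does not rely on the first part of the lemma; the paper's trick is shorter. Both routes are correct.
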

\begin{proof}
The first part is almost immediate from Lemmas \ref{lem:Dio} and \ref{lem:Dio2}. The first part of Lemma \ref{lem:Dio} implies that $\ua_i/r_i=\ua_i'/(dr_i/d)+\ud_i'/d$, where $\ua_i'/(r_i/d)\in L(\uc) $, for $i=1,2$. Thus, $$\frac{\ua_1}{r_1}-\frac{\ua_2}{r_2}=\frac1d\left(\frac{\ua_1'}{r_1/d}-\frac{\ua_2'}{r_2/d}\right)+\frac{\ud_1'-\ud_2'}{d}. $$
The second term is clearly bigger than the first one on the right side of the above expression, except when $\ud_1'=\ud'_2$, since $|d\uc|/(|r_1r_2|)\leq 1/|d|$, the bound $1/|d|$ is admissible. This leaves us with the case $\ud_1'=\ud'_2$. We use Lemma \ref{lem:Dio2} to get $\left|\frac{\ua_1'}{r_1/d}-\frac{\ua_2'}{r_2/d}\right|\geq \frac{|d^2\uc|}{|r_1||r_2|}$, which settles this part.

For the second part, we again begin by applying the first part of Lemma \ref{lem:Dio} to write $\ua/r=\ua'/r+\ud/d$, where $\ua'/(r/d)\in L(\uc) $. We next use the second part of Lemma \ref{lem:Dio2}, to obtain $\ua_1/r_1\in L(\uc)$ satisfying $|r/d|<|r_1|$, $|\ua'/(r/d)-\ua_1/r_1|=d/(|r||r_1|) $ and $\ua'\cdot\uc=\ua_1\cdot\uc $. Set $\ua_2/r_2=\ua_1/(r_1d)+\ud/d$. Clearly, $$(\ua_2/r_2)\cdot d\uc=\uc\cdot\ua_1/r_1+\ud\cdot \uc=\uc\cdot\ua'/(r/d)+\ud\cdot \uc=k.$$ Since $\gcd(d,k)=1$, it follows that $d\mid r_2 $. This implies that $\ua_2/r_2\in L(d\uc)\cap L_1(d\uc,k)$. Moreover,
\begin{align*}
\left|\frac{\ua}{r}-\frac{\ua_2}{r_2}\right|=\left|\frac{1}{d}\left(\frac{\ua'}{r/d}-\frac{\ua_1}{r_1}\right)\right|=\frac{|\uc|}{|r||r_1|}\leq\frac{|d\uc|}{|r||r_2|}.
\end{align*}
The last inequality comes from the fact here that $|dr_1|\geq |r_2|$. However, since $|d\uc|^2\leq |r|$, the first part of the lemma is applicable. This gives  $\left|\frac{\ua}{r}-\frac{\ua_2}{r_2}\right|\geq \frac{|d\uc|}{|r||r_2|}$, which implies the equality, and that $r_2=dr_1$.
\end{proof}

We are now almost ready to prove the fact that the lines $L(d\uc)$ stay sufficiently far away from one another, cf. Lemma \ref{lem:Dio3} below. We will start with proving an auxiliary result.

\begin{lemma}
\label{lem:det0}
Let $C\in M_{ 2}(\scrO)$ be a matrix satisfying that $\varpi\nmid C$ , for some prime $\varpi\in \scrO$. Let $\nu_\varpi(\det(C) )=k_0$, then for any $k\in \NN$, if $k>k_0$ we have $$\{\ua\bmod \varpi^k: \gcd(\ua,\varpi)=1, C\ua\equiv\underline{0}\bmod{\varpi^{k}}\}=\emptyset.$$
\end{lemma}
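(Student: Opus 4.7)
The plan is to reduce to a diagonal form via Smith normal form localised at $\varpi$. Since $\scrO_\varpi$ is a discrete valuation ring (being the localisation of the PID $\scrO=\FF_q[t]$ at the prime $\varpi$), there exist $T,S\in \GL_2(\scrO_\varpi)$ such that $TCS=\diag(\mu_1,\mu_2)$ with $\nu_\varpi(\mu_1)\leq \nu_\varpi(\mu_2)$. The assumption $\varpi\nmid C$ means some entry of $C$ is a $\varpi$-adic unit, which in the Smith form forces $\nu_\varpi(\mu_1)=0$, i.e.\ $\mu_1\in\scrO_\varpi^*$. Combined with $\nu_\varpi(\mu_1\mu_2)=\nu_\varpi(\det C)=k_0$, this gives $\nu_\varpi(\mu_2)=k_0$.

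Next, I would translate the congruence $C\ua\equiv \vecnull\bmod{\varpi^k}$ via this change of basis. Since $T,S$ are invertible in $\GL_2(\scrO_\varpi)$, their reductions modulo $\varpi^k$ are invertible in $\GL_2(\scrO_\varpi/\varpi^k\scrO_\varpi)$, so the problem is equivalent to solving $\diag(\mu_1,\mu_2)\ub\equiv\vecnull\bmod{\varpi^k}$ for $\ub=S^{-1}\ua$, and the conclusion $\varpi\mid\ua$ will be equivalent to $\varpi\mid\ub$.

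The diagonal system reads $\mu_1 b_1\equiv 0$ and $\mu_2 b_2\equiv 0\bmod{\varpi^k}$. Since $\mu_1$ is a unit, the first condition forces $\varpi^k\mid b_1$, in particular $\varpi\mid b_1$. Since $\nu_\varpi(\mu_2)=k_0<k$, the second condition forces $\varpi^{k-k_0}\mid b_2$, and because $k-k_0\geq 1$ we obtain $\varpi\mid b_2$ as well. Hence $\varpi\mid\ub$, and transporting back through $S\in \GL_2(\scrO_\varpi)$ yields $\varpi\mid\ua$, contradicting $\gcd(\ua,\varpi)=1$.

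The only mild subtlety is ensuring that the Smith normal form is applied in $\scrO_\varpi$ rather than in $\scrO$, so that the ``elementary divisors'' are powers of $\varpi$ and the unimodular factors are genuinely invertible modulo $\varpi^k$; this is the crux of why the hypothesis $\varpi\nmid C$ translates cleanly into $\mu_1\in \scrO_\varpi^*$. No further obstacle is expected, as the rest is a direct divisibility computation on the two diagonal coordinates.
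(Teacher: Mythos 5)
Your proposal is correct and follows essentially the same route as the paper: reduce $C$ to diagonal form via Smith normal form, observe that the unimodular factors are invertible modulo $\varpi^k$ so the congruence and the primitivity condition transfer to the diagonal system, and then conclude by a valuation count on the diagonal entries. The paper works with the Smith normal form over $\scrO$ directly (using $\nu_\varpi(d_1d_2)=k_0<k$ to derive the contradiction) while you localise at $\varpi$ and isolate $\mu_1\in\scrO_\varpi^*$; this is a cosmetic difference and both arguments are sound.
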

\begin{proof}
Let $C= TDS$ be a Smith normal form of $C$. The matrices $S,T\in \GL_2(\scrO)$ and $D=\smatr{d_1}{0}{0}{d_2}$ is a diagonal matrix. Clearly, $\nu_\vp(d_1d_2)=k_0$. Since $S$ and $T$ are invertible modulo $\varpi$, $\gcd(S\ua,\vp)=1\iff \gcd(\ua,\vp)=1$. We thus have the equality:
$$\#\{\ua\bmod \varpi^k: \gcd(\ua,\varpi)=1, C\ua\equiv\underline{0}\bmod{\varpi^{k}}\}=\#\{\ua\bmod \varpi^k: \gcd(\ua,\varpi)=1, D\ua\equiv\underline{0}\bmod{\varpi^{k}}\}.$$
The right hand side is empty, as $\nu_\varpi(d_1d_2)=k_0<k$.
\end{proof}

\begin{lemma}
\label{lem:Dio3}
Let $\uc_1=\cmatr{c_1}{c_2}, \uc_2=\cmatr{c_3}{c_4}\in\scrO^2$ be two primitive vectors, and let $d_1,d_2\in \scrO $ be monic such that there are points $\ua_1/r_1\in L(d_1\uc_1),\ua_2/r_2\in L(d_2\uc_2)$, satisfying $|d_1\uc_1|^2\leq |r_1|$ and $|d_2\uc_2|^2\leq |r_2| $, and $$\left|\ua_1/r_1-\ua_2/r_2\right|<\frac{\max\{|d_1\uc_1|,|d_2\uc_2|\}}{|r_1r_2|},$$ then $\ua_1/r_1=\ua_2/r_2$. 

Moreover, if $\ua/r\in L(d_1\uc_1)\cap L(d_2\uc_2)$, where $|d_1\uc_1|^2$ and $|d_2\uc_2|^2\leq |r| $, and $|c_1c_4|,|c_2c_3|<|r/d_1d_2|$, then we must have $d_1\uc_1=d_2\uc_2$.
\end{lemma}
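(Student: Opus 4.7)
The plan is to prove the two assertions separately. For the distance bound, the key step will be to show that both rational points actually lie on the single line $L(d_2\uc_2)$, after which Lemma \ref{lem:Diogen} finishes the job. For the coincidence of the lines, the key step will be a divisibility argument on the determinant of the $2 \times 2$ matrix formed by $d_1\uc_1, d_2\uc_2$, using Lemma \ref{lem:det0}.

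For the first part, I will exploit the symmetry of the hypothesis in the two indices to assume without loss of generality that $|d_2\uc_2| \ge |d_1\uc_1|$, so the bound reads $|\ua_1/r_1 - \ua_2/r_2| < |d_2\uc_2|/|r_1 r_2|$. Setting $k_i := d_i\uc_i \cdot \ua_i/r_i \in \scrO$ (so $\gcd(k_i, d_i) = 1$ from membership in $L(d_i\uc_i)$), I compute
\[
|d_2\uc_2\cdot(\ua_1/r_1 - \ua_2/r_2)| \le |d_2\uc_2|^2/|r_1 r_2| \le |r_2|/|r_1 r_2| = 1/|r_1|,
\]
where $|d_2\uc_2|^2 \le |r_2|$ is used. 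Since $d_2\uc_2 \cdot \ua_2/r_2 = k_2 \in \scrO$, this forces $|d_2\uc_2\cdot\ua_1 - k_2 r_1| < 1$, and both sides lying in $\scrO$ yields the equality $d_2\uc_2\cdot\ua_1 = k_2 r_1$. Together with $\gcd(\ua_1, r_1) = 1$ and $\gcd(k_2, d_2) = 1$, this places $\ua_1/r_1 \in L(d_2\uc_2)$. Now both $\ua_1/r_1$ and $\ua_2/r_2$ lie on $L(d_2\uc_2)$, and since $|d_2\uc_2|^2 \le |r_2| \le |r_1 r_2|$, the first part of Lemma \ref{lem:Diogen} gives $|\ua_1/r_1 - \ua_2/r_2| \ge |d_2\uc_2|/|r_1 r_2|$ unless the two coincide. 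The hypothesis rules out the inequality, so $\ua_1/r_1 = \ua_2/r_2$.

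For the second part, I will form the matrix $C = \smatr{d_1 c_1}{d_1 c_2}{d_2 c_3}{d_2 c_4}$, so that $\det C = d_1 d_2(c_1 c_4 - c_2 c_3)$. Membership $\ua/r \in L(d_1\uc_1) \cap L(d_2\uc_2)$ gives $C\ua = r\,(k_1, k_2)^t$ with $\gcd(k_i, d_i) = 1$, hence $C\ua \equiv \vecnull \pmod r$ while $\gcd(\ua, r) = 1$. For each prime $\vp$ with $\vp^k \| r$, I will extract the maximal $\vp$-power $\vp^e$ dividing all entries of $C$ and apply Lemma \ref{lem:det0} to the reduced matrix $C/\vp^e$ to deduce $\nu_\vp(\det C) \ge k$; assembling over all primes of $r$ yields $r \mid \det C$. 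The hypothesis $|c_1 c_4|, |c_2 c_3| < |r|/|d_1 d_2|$ forces $|\det C| < |r|$, so $\det C = 0$, i.e.\ $c_1 c_4 = c_2 c_3$. Parallelism of $\uc_1, \uc_2$ combined with the primitivity and monic conventions identifies them as $\uc_1 = \uc_2 =: \uc$. The relations $d_i\uc\cdot\ua = k_i r$ then give $k_1/d_1 = k_2/d_2$, and the coprimality $\gcd(k_i, d_i) = 1$ with $d_i$ monic implies $d_1 = d_2$ (the degenerate case $k_1 = k_2 = 0$ forces $d_1 = d_2 = 1$ directly). Hence $d_1\uc_1 = d_2\uc_2$.

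The principal subtlety will be the prime-by-prime application of Lemma \ref{lem:det0} in the second part: the lemma as stated assumes $\vp \nmid C$, so I must first factor out the maximal common prime power of each $\vp \mid r$ from the entries of $C$. Once this reduction is made uniform across the finitely many primes dividing $r$, the rest of the argument proceeds cleanly.
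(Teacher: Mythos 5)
Your proof is correct, and for the first part it takes a genuinely different and in fact cleaner route than the paper's. The paper, after reducing (by the WLOG choice $|d_1\uc_1| \geq |d_2\uc_2|$) to showing $\ua_2/r_2 \in L(d_1\uc_1)$, invokes the second part of Lemma~\ref{lem:Diogen} to construct an auxiliary point $\ua'/r'$ on the same generalised line, and then uses a parallelepiped-volume (determinant) argument to force $\ua_2/r_2$ onto the affine line $L_1(d_1\uc_1,k)$. You instead observe directly that $d_2\uc_2 \cdot \ua_1 - k_2 r_1$ is an integer of absolute value strictly less than $1$ (by the ultrametric bound and the hypothesis), so it vanishes, which immediately puts $\ua_1/r_1$ on $L(d_2\uc_2)$; one application of the first part of Lemma~\ref{lem:Diogen} then finishes. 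This dispenses with the auxiliary-point construction and the volume computation. (Note the first inequality in your chain should be strict, $<$, inherited from the hypothesis; your stated conclusion $|d_2\uc_2\cdot\ua_1 - k_2 r_1| < 1$ requires it, so this is just a typo.)

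For the second part the core idea is the same: force $r$ to divide a determinant that is too small in norm, conclude it vanishes, deduce $\uc_1 = \uc_2$ from primitivity and the monic convention, and then deduce $d_1 = d_2$. The detail differs slightly: the paper works with $C = \smatr{c_1}{c_2}{c_3}{c_4}$ (without the $d$'s) and modulus $r/\mathrm{lcm}(d_1,d_2)$, which, since each $\uc_i$ is primitive and hence each row of $C$ is $\vp$-primitive for every prime $\vp$, lets Lemma~\ref{lem:det0} apply directly with no reduction. Your matrix $\smatr{d_1 c_1}{d_1 c_2}{d_2 c_3}{d_2 c_4}$ carries the $d$-factors, so you must first strip out the maximal common prime power $\vp^e$ at each $\vp \mid r$; your accounting does give $\nu_\vp(\det C) \geq (k-e) + 2e \geq k$, so this works, at the cost of one extra step. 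Your concluding derivation of $d_1 = d_2$ from $k_1 d_2 = k_2 d_1$ and $\gcd(k_i,d_i)=1$ (with the degenerate $k_i=0$ case handled) is more streamlined than the paper's argument via $r'=\gcd(\ua\cdot\uc,r)$. Both are valid.
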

\begin{proof}
We start by proving the second part of the lemma first. We begin by noting that if $\ua/r\in L(d_1\uc_1)\cap L(d_2\uc_2)$, then this implies $C\cmatr{a_1}{a_2}\equiv \cmatr{0}{0}\bmod{r/\ell},$ where $C=\matr{c_1}{c_2}{c_3}{c_4}$, and $\ell=\mathrm{lcm}(d_1,d_2) $. Since both $\uc_1,\uc_2$ are primitive, we can use Lemma \ref{lem:det0} to get that $r/\ell\mid\det(C)$. Since $|c_1c_4|< |r|/|d_1d_2| $ and $|c_2c_3|<|r|/|d_1d_2| $, we have $|\det(C)|<|r/\ell|$.  This must imply that $\det(C)=0$. This would then confirm that $ \uc_1=\uc_2=\uc$, since $\uc_1,\uc_2$ are primitive and therefore monic according to our definition in Sec \ref{sec:notation}. 

We now set $r'=\gcd(\ua\cdot\uc,r), d=r/r'$, where $d$ monic. Clearly, $d\uc\cdot\ua/r=\uc\cdot\ua/r'\in\scrO$. We also have $\gcd(d,\ua\cdot\uc/r')=1$, which implies $\ua/r\in L(d\uc)$. Moreover, since $d_1\ua\cdot\uc=k_1r$, where $d_1\mid r$, $\gcd(d_1,k_1)=1$, we then have $d_1(\ua\cdot\uc/r')=(d_1\ua\cdot\uc/r)d=k_1d$. Since $\gcd(d,\ua\cdot\uc/r')=1$, we must have $d\mid d_1$, but on the other hand, $\gcd(d_1,k_1)=1$ implies that $d_1\mid d$. Since both of them are monic, this must mean $d_1=d$. We can similarly prove $d_2=d$, settling the second part of the lemma.

For the first part, let $\ua_1/r_1\neq \ua_2/r_2$. Without loss of generality, we assume $|d_1\uc_1|\geq |d_2\uc_2|$ and let $|\ua_1/r_1-\ua_2/r_2|<|d_1\uc_1|/(|r_1r_2|)$. The first part of Lemma \ref{lem:Diogen} asserts $\ua_2/r_2\notin L(d_1\uc_1)$.  Using the second part of Lemma \ref{lem:Diogen}, we have $\ua'/r'\in L(d_1\uc_1)$ such that $|\ua_1/r_1-\ua'/r'|=\left|\frac{d_1\uc_1}{r_1r'}\right|$, and moreover, $\ua_1/r_1, \ua'/r'\in L_1(d_1\uc_1,k)$, where $\gcd(d_1,k)=1$. If $\ua_2/r_2\notin L_1(d\uc,k)$, the volume of the parallelepiped with vertices $\ua_1/r_1,\ua_2/r_2,\ua'/r' $ must be non-zero. This volume is also given by $\left| \det\left(\begin{matrix}
 \ua_1/r_1-\ua_2/r_2\\ \ua_1/r_1-\ua'/r'
\end{matrix}\right)\right| $. Clearly, this volume $\geq \frac{1}{|r_1||r_2||r'|}$. On the other hand, it is $<\frac{|d_1\uc_1|^2}{|r_1|^2|r'||r_2|}\leq \frac{1}{|r_1r_2r'|}  $, which is a contradiction.  

We are now reduced to the case $\ua_2/r_2\in L_1(d_1\uc_1,k)$. This implies that $(\ua_2/r_2)\cdot d_1\uc_1=k$. Since $\gcd(d_1,k)=1$, we must have $\ua_2/r_2\in L(d_1\uc_1)$ which is a contradiction, unless, $\ua_1/r_1=\ua_2/r_2$. 
\end{proof}
As an immediate corollary of the second part of Lemma \ref{lem:Dio3} we have:
\begin{corollary}\label{cor:2}
For any $r\in \scrO$, we have
$$\{\ua\in \scrO^2:|\ua|<|r|,\gcd(\ua,r)=1\}=\bigsqcup\limits_{\substack{ d \textrm{ {\em monic}, }\uc \textrm{ {\em primitive}}\\d\mid r\\|dc_1|\leq |r|^{1/2},|dc_2|<|r|^{1/2}}}\{\ua:\gcd(\ua,r)=1,\ua/r\in L(d\uc)\}.$$

\end{corollary}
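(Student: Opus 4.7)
The plan is to derive Corollary \ref{cor:2} by combining the existence result of Lemma \ref{lem:Diri} (which produces at least one admissible pair $(d,\uc)$ for every rational $\ua/r$) with the rigidity result of Lemma \ref{lem:Dio3} (which forces any two admissible pairs to coincide). The index set on the right-hand side is exactly the set of pairs $(d,\uc)$ with $d\mid r$ monic, $\uc\in\scrO^2$ primitive, and $|dc_1|\leq |r|^{1/2}$, $|dc_2|<|r|^{1/2}$, so the task is to show the union is both exhaustive and disjoint.

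For exhaustiveness, fix $\ua\in\scrO^2$ with $|\ua|<|r|$ and $\gcd(\ua,r)=1$. Lemma \ref{lem:Diri} produces a monic $d\in\scrO$ and a primitive $\uc=(c_1,c_2)^t\in\scrO^2$ with $|dc_1|\leq|r|^{1/2}$ and $|dc_2|<|r|^{1/2}$ such that $\ua/r\in L(d\uc)$. The condition $d\mid r$ is automatic from \eqref{eq:d|r}, so $\ua$ lies in the set indexed by $(d,\uc)$.

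For disjointness, suppose $\ua/r\in L(d_1\uc_1)\cap L(d_2\uc_2)$ where both pairs satisfy the prescribed bounds; I want to show $d_1\uc_1=d_2\uc_2$. Writing $\uc_1=(c_1,c_2)^t$ and $\uc_2=(c_3,c_4)^t$, the bounds $|d_ic_{2i-1}|\leq|r|^{1/2}$ and $|d_ic_{2i}|<|r|^{1/2}$ for $i=1,2$ immediately yield $|d_i\uc_i|^2\leq|r|$. They also yield
\[
|c_1c_4|\leq\frac{|r|^{1/2}}{|d_1|}\cdot\frac{|r|^{1/2}}{|d_2|^{-}}<\frac{|r|}{|d_1d_2|},\qquad |c_2c_3|<\frac{|r|^{1/2}}{|d_1|}\cdot\frac{|r|^{1/2}}{|d_2|}=\frac{|r|}{|d_1d_2|},
\]
(where the strict inequality comes from the strict bound on one of the two coordinates of each $\uc_i$). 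These are exactly the hypotheses of the second part of Lemma \ref{lem:Dio3}, which then delivers $d_1\uc_1=d_2\uc_2$. Because $\uc_1,\uc_2$ are primitive (hence monic by the convention of Section \ref{sec:notation}) and $d_1,d_2$ are monic, this forces $\uc_1=\uc_2$ and $d_1=d_2$.

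The only delicate point is the verification of the strict inequalities $|c_1c_4|,|c_2c_3|<|r|/|d_1d_2|$ needed to apply Lemma \ref{lem:Dio3}; this is where the asymmetric conditions $|dc_1|\leq|r|^{1/2}$ (non-strict) together with $|dc_2|<|r|^{1/2}$ (strict) in the statement play their role, ensuring that in each product of cross coordinates at least one factor obeys a strict bound. Everything else is a direct invocation of the two earlier lemmas.
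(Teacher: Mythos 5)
Your proposal is correct and takes essentially the same route as the paper's own (quite terse) proof: exhaustiveness from Lemma~\ref{lem:Diri} together with \eqref{eq:d|r}, and disjointness from the second part of Lemma~\ref{lem:Dio3}. The paper leaves the verification of the hypotheses $|d_i\uc_i|^2\leq|r|$ and $|c_1c_4|,|c_2c_3|<|r/(d_1d_2)|$ implicit, whereas you spell out how the asymmetric conditions $|dc_1|\leq|r|^{1/2}$, $|dc_2|<|r|^{1/2}$ furnish the required strict inequalities on the cross products; apart from a stray $|d_2|^{-}$ typo in the display, this is exactly the intended argument.
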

\begin{proof}
The disjointness of the sets on the right hand side follows immediately from the second part of Lemma \ref{lem:Dio3}. The right hand side is obviously contained in the left hand side. Lemma \ref{lem:Diri} implies that the left hand side is contained in the right hand side.
\end{proof}
We are now ready to establish a refinement of \eqref{eq:Dirichlet}, our main objective in this section,namely, the proof of Theorem \ref{thm:count}:
\begin{proof}[ Proof of Theorem \ref{thm:count}] Throughout this argument, we assume that $ d,d_1,d_2,...\in\scrO$ are monic and $\uc,\uc_1,\uc_2\in\scrO^2$ are primitive. 
Lemma \ref{lem:Diri} implies that every $\ua/r\in L(d\uc) $, for some $d,\uc$ satisfying $|dc_1|\leq |r|^{1/2}, |dc_2|<|r|^{1/2}$. This also implies that $|d\uc|^2\leq|r| $. The proof will follow from an induction on $|r|$. We begin noting that proving Theorem \ref{thm:split} is equivalent to proving
\begin{equation}\label{eq:Dirichlet2'}
\TT^2=\bigsqcup_{0\leq Y\leq Q}\bigsqcup\limits_{\substack{r,d\textrm{ monic },\uc \textrm{ primitive}\\ |r|=\hat{Y}, d\mid r\\\hat{Y-Q/2}\leq |d\uc|\leq \hat{Y/2}\\  |dc_2|<\hat{Y/2}}}\,\,\,
\cupstar_{\substack{|\ua|<|r|\\ \ua/r\in L(d\uc)} }D(\ua,r,Q).
\end{equation}
Here $*$ beside $\sqcup$ denotes that the union is over $\ua\in\scrO^2$ such that $\gcd(\ua,r)=1$.
We begin by proving the disjointness of the intervals on the right hand of \eqref{eq:Dirichlet2'}. Let $\ua_1/r_1\in L(d_1\uc_1), \ua_2/r_2\in L(d_2\uc_2)$, where $d_i,r_i,\uc_i$ satisfy the constraints appearing on the right hand side of \eqref{eq:Dirichlet2'}.  Lemma \ref{lem:Dio3} then implies that either $\ua_1/r_1=\ua_2/r_2$ or $$|\ua_1/r_1-\ua_2/r_2|\geq \frac{\max\{|d_1\uc_1|,|d_2\uc_2|\}}{|r_1||r_2|}\geq \frac{q^{-Q/2}\max\{|r_1|,|r_2|\}}{|r_1||r_2|}\geq q^{-Q/2}\max\{|r_1|^{-1},|r_2|^{-1}\}.$$
On the other hand, if $\ua/r\in L(d_1\uc_1)\cap L(d_2\uc_2)$ then the second part of Lemma \ref{lem:Dio3} forces $d_1\uc_1=d_2\uc_2$, implying disjointness of the right hand side of \eqref{eq:Dirichlet2'}.

Clearly, the right side of \eqref{eq:Dirichlet2'} is contained in the left. To prove the other way around, we proceed with induction. We intend to prove that for any $0\leq M\leq Q $,
\begin{equation}\label{eq:Minduct}
\bigcup_{|r|\leq \hat{M}}\,\,\,\cupstar_{|\ua|<|r|}D(\ua,r,Q)\subseteq \bigsqcup_{0\leq Y\leq M}\bigsqcup\limits_{\substack{r,d\textrm{ monic },\uc \textrm{ primitive}\\ |r|=\hat{Y}, d\mid r\\\hat{Y-Q/2}\leq |d\uc|\leq \hat{Y/2}\\  |dc_2|<\hat{Y/2}}}\,\,\,
\cupstar_{\substack{|\ua|<|r|\\ \ua/r\in L(d\uc)} }D(\ua,r,Q).
\end{equation}

The base case $M=0$ is obvious, since we only have one term on the left hand side, namely, $D(\underline{0},1,Q)$. Clearly, it is contained in $L(\ue_1)$, where $\ue_1=\cmatr{1}{0}$. Let us assume the validity of \eqref{eq:Minduct} for all $M\leq M_0<Q$. Note that $d\mid r$ is forced upon us from \eqref{eq:d|r}. Now, let us choose $\ua/r$, $|\ua|<r,\gcd(\ua,r)=1$, such that $|r|=\hat{M_0+1} $. Lemma \ref{lem:Diri} implies that $\ua/r\in L(d\uc)$, where $|dc_1|\leq |r|^{1/2}, |dc_2|<|r|^{1/2}$. This forces $|d\uc|^2\leq |r|$. If $|r|\leq |d\uc|\hat{Q/2}$, we are done. Otherwise, using Lemma \ref{lem:Dio}, we write $\ua/r=a\uc^\perp/r+\ud/d$, where $|a|<|r/d|, \gcd(a,r/d)=1$. A further application of \cite[Lemma 4.3]{Browning_Vishe15} gives us $a'/r'$ satisfying $|r'|\leq |\uc|\hat{Q/2}$ such that $|a/(r/d)-a'/r'|<(|r'||\uc|\hat{Q/2})^{-1}$. We now set $\frac{\ua_1}{r_1}=\frac{a'\uc^\bot}{r'd}+\frac{\ud}{d}$. If $d\uc\cdot \ua/r=k$, for some $(k,d)=1$, then clearly, $d\uc\cdot \ua_1/r_1=\uc\cdot \ud=k$, as well. Moreover, $d\mid r_1$, and 
\begin{align*}
|\ua/r-\ua_1/r_1|=|d|^{-1}|a/(r/d)-a'/r'||\uc|<(|dr'|\hat{Q/2})^{-1}\leq (|r_1|\hat{Q/2})^{-1}
\end{align*}
We use here that $|r_1|\leq |dr'|$. However, since $|r'|<|r/d|$, we have $|r_1|<|r|$.
Thus, we have found an $\ua_1/r_1\in L(d\uc)$ satisfying $|r_1|< |r|$, such that $\ua/r\in D(\ua_1,r_1,Q)$, which further implies that $D(\ua,r,Q)\subseteq D(\ua_1,r_1,Q)$. We are now through using induction.
\end{proof} 

\begin{remark}
For any $|r|\leq \hat{Q/2}$ and $|\ua|<r$, $\gcd(\ua,r)=1$, by Lemma \ref{lem:Diri}, $\ua/r\in L(d\uc)$, where $|dc_1|\leq |r|^{1/2}, |dc_2|<|r|^{1/2}$. Moreover, since $ |r|\leq \hat{Q/2}$, $D(\ua,r,Q)$ appears exactly once on the right hand side of \eqref{eq:Dirichlet2}. Since $\ua/r$ is was chosen to be arbitrary, this shows that 

\begin{equation}\label{eq:Dirichlet3}
\TT^2=\bigsqcup_{\substack{|r|\leq \hat{Q/2}\\ r\textrm{ monic }}}\,\,\,
\cupstar_{\substack{|\ua|<|r|} }D(\ua,r,Q)\bigsqcup\limits_{\substack{r,d\textrm{ monic, }\uc \textrm{ primitive}\\ \hat{Q/2}<|r|\leq \hat{Q}\\ |r|\leq |d\uc|\hat{Q/2}, d\mid r\\ |dc_1|\leq |r|^{1/2}, |dc_2|<|r|^{1/2}}}\,\,\,
\cupstar_{\substack{|\ua|<|r|\\ \ua/r\in L(d\uc)} }D(\ua,r,Q).
\end{equation}
This is the same idea that handed us Corollary \ref{cor:2}. This is expected, since if $r_1$ and $r_2$ are small, then we do not expect any overlaps in the intervals $D(\ua_1,r_1,Q)$ and $D(\ua_2,r_2,Q)$. \eqref{eq:Dirichlet3} could be used to estimate contribution from low values of $r$ more effectively. More explicitly, we may be able to save a factor of size $O(|r|^{3/2})$ from all square-free values of $|r|\leq \hat{Q/2}$. This saving is not required in this work, but it may be useful in further applications.
\end{remark}

\section{Background on a pair of quadrics}
\label{sec:background}
In this section, we will collect some relevant facts regarding smooth complete intersections of two absolutely irreducible quadratic forms. Let $F_1,F_2\in \scrO[x_1,...,x_n]$ be absolutely irreducible quadratic forms defining a smooth complete intersection $X$.  Throughout, we  will assume that Char$(K)>2$. Let $M_1,M_2$ be symmetric matrices with $\scrO$ entries defining $F_1$ and $F_2$ respectively, i.e., $F_i(\x)=\x^tM_i\x$, for $i=1,2$. Since we are interested in obtaining an asymptotic formula for the counting function $N(P)$ defined in \eqref{eq:Count}, throughout the paper, we will also fix $N\in \scrO$ and $\b\in \scrO^n$ such that $F_1(\b)\equiv F_2(\b)\equiv 0\bmod{N}$. The geometry of $X$ is well-understood, see \cite{Reid72} and \cite{HeathBrown_Pierce17} for example. Most of the geometric properties derived there are valid for any smooth complete intersection of two quadrics over any field of odd characteristic, most of which we will just state here without any further explanation.

We begin with defining some notation.  For any pair $\ux=(x,y)\in K_\nu^2$, let 
\begin{equation}
\label{eq:FuxMuxdef}
F_\ux=-yF_1+xF_2\,\,\,\,\,\textrm{ and }\,\,\,\,\, M_{\ux}=-yM_1+xM_2
\end{equation} denote the matrix defining the quadratic form $F_\ux$.  As per \cite[Proposition 2.1]{Reid72}, we can assume that $M_1$ is of full rank. \cite[Proposition 2.1]{Reid72} also implies that the matrices $M_1$ and $M_2$ are simultaneously diagonalisable over an algebraic closure $\overline{K}$.  \cite[Condition 4]{HeathBrown_Pierce17} implies that for any primitive $\uc\in\scrO^2$, $\rank(M_\uc)\geq k-1$. Moreover, when $c_1\neq 0$, $\rank(M_\uc)=k-1$ precisely when $c_2/c_1$ is an eigenvalue of $M_1^{-1}M_2$. However, since $M_1^{-1}M_2 $ has at most $n$ distinct eigenvalues and each primitive vector $\uc$, produces a unique ratio $c_2/c_1$, there are at most $n$ distinct primitive vectors $\uc$'s for which $\rank(-c_2F_1+c_1F_2)=k-1$. We call such $\uc$'s as ``bad''.

\subsection{The determinant form $F(x,y)$}Given any $x,y\in \overline{K}_\infty$, let
\begin{equation}
\label{eq:Fxydef}
F(x,y)=\det(-yM_1+xM_2)
\end{equation}
be a homogeneous binary form of degree $n$.
   \cite[Condition 2]{HeathBrown_Pierce17} implies that $F(x,y)$ has distinct linear factors over $\bar{K}$. Let $K_1$ denote the splitting field of the polynomial $F$ over $K$. Thus, we can factor 
\begin{equation}\label{eq:Fsplit}
F(x,y)=h^{-1}\prod_{i=1}^n(\lambda_i x-\mu_iy),
\end{equation}
where $h\in \scrO$, $\lambda_i,\mu_i\in \scrO_{K_1}$. Let $\rho_i=\lambda_i/\mu_i$ denote the eigenvalues of $M=M_1^{-1}M_2$. $\rho_i$'s must be pairwise distinct and therefore, at most one of them could be $0$. Throughout, we will assume that $\rho_i\neq 0$ for any $1\leq i\leq n-1$. Without loss of generality, let $0\leq n_1\leq n$ is such that $\rho_i\notin K_\infty$ if $ i\leq n_1$ and $\rho_i\in K_\infty$ if $i>n_1$. The norm on $K_\infty$ could be suitably extended to $K_1$. Note that when $K=\QQ$, since $M$ is symmetric, $n_1=0$, and therefore $M$ can be diagonalised over $\RR$. In the function field setting however, this might not hold. However, we may still be able to obtain the following result, which will be necessary in obtaining a satisfactory bound for our singular integral (see Lemma \ref{lem:I-hard}):
\begin{lemma}
\label{lem:eigenvalue}
We can find a matrix $U\in \GL_n(K_\infty)$ satisfying $$U^{-1}MU=\matr{M'_{n_1\times n_1}}{M''_{n_1\times (n-n_1)}}{\vecnull_{(n-n_1)\times n_1}}{D(\rho_{n_1+1},...,\rho_{n})},$$ where $D$ is a $(n-n_1)\times (n-n_1)$ diagonal matrix with the prescribed diagonal entries. Moreover, the eigenvalues of $M'$ are precisely given by $\rho_1,...,\rho_{n_1}$ and therefore, they do not belong to $K_\infty$. 

Moreover, we can also find a constant $0<C_1\leq 1$ such that $|\rho_i|\leq C_1^{-1}$ for any $i$, $C_1\leq |\rho_i|$ for any $i\neq n$, $C_1\leq|\rho_i-\rho_j|$ for any $i\neq j$, and for any $z\in K_\infty$ and for any $1\leq i\leq n_1$, we have $C_1\leq |z-\rho_i|$ and $C_1\leq |z-\rho_i^{-1}|$. If $\rho_n\neq 0$, then we can also make sure that $C_1\leq |\rho_n|\leq C_1^{-1}$.
\end{lemma}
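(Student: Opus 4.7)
The plan is to treat the two assertions separately. For the existence of $U$: since $M:=M_1^{-1}M_2\in M_n(K_\infty)$ has $n$ distinct eigenvalues $\rho_1,\ldots,\rho_n\in\overline{K}_\infty$ with $\rho_{n_1+1},\ldots,\rho_n\in K_\infty$, for each $i>n_1$ the one-dimensional kernel of $M-\rho_iI$ over $K_\infty$ supplies an eigenvector $v_i\in K_\infty^n$. Over $\overline{K}_\infty$, pick eigenvectors $v_1,\ldots,v_{n_1}$ for the remaining eigenvalues and let $W':=\overline{K}_\infty v_1+\cdots+\overline{K}_\infty v_{n_1}$. Since the characteristic polynomial of $M$ is separable (its roots are distinct), its splitting field is a finite Galois extension of $K_\infty$. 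The Galois group permutes $\{\rho_1,\ldots,\rho_n\}$ while fixing each $\rho_j\in K_\infty$ pointwise, hence preserves $\{\rho_1,\ldots,\rho_{n_1}\}$ and correspondingly permutes the one-dimensional eigenlines. Thus $W'$ is Galois-stable, so by Galois descent $W'=W\otimes_{K_\infty}\overline{K}_\infty$ for an $n_1$-dimensional $M$-invariant $K_\infty$-subspace $W\subset K_\infty^n$.

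Picking any $K_\infty$-basis $u_1,\ldots,u_{n_1}$ of $W$ and forming $U=[u_1\mid\cdots\mid u_{n_1}\mid v_{n_1+1}\mid\cdots\mid v_n]\in\GL_n(K_\infty)$, the $M$-invariance of $W$ together with $Mv_i=\rho_iv_i$ for $i>n_1$ yields $U^{-1}MU$ in block diagonal form with lower right block $D(\rho_{n_1+1},\ldots,\rho_n)$; this is a fortiori of the claimed block-triangular shape with $M''=0$. The characteristic polynomial of $M'$ then equals $\prod_{i\leq n_1}(x-\rho_i)$, so its eigenvalues are exactly those $\rho_i$ outside $K_\infty$.

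For the constant $C_1$: the bounds $|\rho_i|\leq C_1^{-1}$, $|\rho_i|\geq C_1$ whenever $\rho_i\neq 0$, and $|\rho_i-\rho_j|\geq C_1$ for $i\neq j$ are automatic, as finitely many fixed, nonzero, pairwise distinct quantities are involved. The substantive estimates are $|z-\rho_i|\geq C_1$ and $|z-\rho_i^{-1}|\geq C_1$ for all $z\in K_\infty$ and $1\leq i\leq n_1$. For these I would invoke the completeness of $K_\infty$ under $|\cdot|$: the valuation extends uniquely to the finite extension $K_\infty(\rho_i)$, which is itself complete, so $K_\infty$ is closed inside it. Since $\rho_i\notin K_\infty$ and, for the same reason, $\rho_i^{-1}\notin K_\infty$, each has strictly positive distance to $K_\infty$; choosing $C_1$ smaller than the minimum of the resulting finitely many positive distances, and smaller than the other constants above, yields the claim.

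The main obstacle I anticipate is conceptual: recognizing that both halves reduce to standard properties of complete non-archimedean fields, namely Galois descent of the spectral subspace for the first half, and closedness of $K_\infty$ inside the uniquely valued extension $K_\infty(\rho_i)$ for the second. Once these inputs are identified, the remaining verification is essentially bookkeeping.
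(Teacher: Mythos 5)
Your proof is correct, and for the block form it takes a genuinely different (and arguably cleaner) route than the paper. The paper first uses the Smith normal form of $\rho_iI-M$ over $K_\infty$ to extract the $K_\infty$-rational eigenvectors $\v_{n_1+1},\dots,\v_n$, then appeals to "symmetry of $M$" to claim these are pairwise orthogonal, extends to a basis to obtain a \emph{lower} block-triangular conjugate, and finally passes to the transpose $U=U_1^t$ (again invoking symmetry) to get the upper block-triangular form. Note that $M=M_1^{-1}M_2$ is not literally symmetric — what is true is that it is self-adjoint with respect to the bilinear form defined by $M_1$, which is what underlies the orthogonality claim; the paper is terse on this point. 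Your Galois-descent argument sidesteps any appeal to (self-)adjointness: you observe that the $\overline{K}_\infty$-span of the eigenlines for $\rho_1,\dots,\rho_{n_1}$ is Galois-stable because the Galois group of the splitting field permutes that set of eigenvalues, hence by descent it comes from an $M$-invariant $K_\infty$-subspace $W$. Taking a basis of $W$ together with the rational eigenvectors even yields $M''=\vecnull$, i.e.\ a block-diagonal conjugate, which is a fortiori of the form the lemma requires. For the constant $C_1$, your argument and the paper's coincide in spirit: both observations come down to $K_\infty$ being complete and therefore closed in the uniquely-valued finite extension $K_\infty(\rho_i)$, so that $\rho_i\notin K_\infty$ forces $\inf_{z\in K_\infty}|z-\rho_i|>0$ (the paper writes $\sup$ but clearly means $\inf$); you are slightly more explicit in spelling out why $\rho_i^{-1}\notin K_\infty$ as well.
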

\begin{proof}
Let $i$ be any integer satisfying $n_1+1\leq i\leq n$. We have $\det(\rho_iI_n-M)=0$. Let $\rho_iI_n-M=TDS$ be a Smith normal form for the matrix $\rho_iI_n-M$  over $K_\infty$. Therefore $T,S\in\GL_n(K_\infty)$ and $D$ is a diagonal matrix with entries in $K_\infty$. We may also assume that only the last diagonal entry of $D$ is $0$. Let $\e_n$ be the vector which contains $1$ at the $n$-th place and $0$'s everywhere else. The vector $\v_i=S^{-1}\e_n\neq \vecnull$ satisfies $M\v_i=\rho_i\v_i$. Moreover, since $M$ is symmetric, we must have $\v_i\cdot \v_j=0$, for $i\neq j$. We thus have an orthogonal system of eigenvectors $\v_{n_1+1},...,\v_{n}\in K_\infty^n$. Upon extending the basis and changing the standard basis to this new one, we are now guaranteed a matrix $U_1\in \GL_n(K_\infty)$ such that $U_1MU_1^{-1}$ is in the form of the transpose of the required form. We may now use the symmetry of $M$ and choose $U=U_1^t$ to get the required expression.

To prove the second part, we begin by observing that $\rho_1,...,\rho_{n_1}$ have to be the eigenvalues of $M'$. For any $1\leq i\leq n_1$, we must have $\sup_{x\in K_\infty}|\rho_i-x|>0$, since otherwise, using the completeness of $K_\infty$, $\rho_i\in K_\infty$. The existence of a suitable constant $C_1$ now follows from this fact and due to the fact that $\rho_i$'s are all distinct.
\end{proof}

\subsection{Good and bad primes}\label{sec:goodbad}Let $\uc$ be a primitive pair and let $M_\uc=TDS$ denote a smith normal form over $\scrO$. Here, $T$ and $S$ are in $\GL_n(\scrO)$ satisfying $\det(T),\det(S)\in\FF_q^\times$ and $D=\diag(\mu_1,...,\mu_n)$ is diagonal. Moreover, $\mu_1\mid \mu_2\mid\mu_3...\mid\mu_n$. Therefore, $\mu_i\neq 0$ if $i\neq n$, and $\mu_n=0\iff \uc$ is a bad pair. Let $\e_j$ denote the $j$-th vector in the standard basis for $\cO^n$. Let $\y_j=S^{-1}\e_j$ be another basis of $\cO^n$. The quadratic form
\begin{equation}\label{eq:Qucdef}
Q_\uc(x_1,...,x_{n-1}):=F_\uc(x_1\y_1+...+x_{n-1}\y_{n-1}),
\end{equation} 
 in $n-1$ variables will feature prominently in our bounds for exponential sums. When $\uc$ is bad, $\mu_n=0$. Therefore, $M_\uc\y_n=\vecnull$. Moreover, since $M_\uc$ is symmetric, $\y_n^tM_\uc=\vecnull^t$. Therefore, 
 $$F_\uc(x_1\y_1+...x_n\y_n)=F_\uc(x_1\y_1+...+x_{n-1}\y_{n-1})=Q_\uc(x_1,...,x_{n-1}).$$
$Q_\uc$ has to be non-singular, since the set $\{M_\uc\y_1,...,M_\uc\y_{n-1}\}$ is linearly independent and since the rank of $M_\uc$ is $\geq n-1$.

Let 
\begin{equation}
\label{eq:deltadef}
D_\F=Nh\Delta_\F\prod_{\uc \textrm{ primitive and bad}}\Delta(Q_\uc)\prod_{\sigma\in \mathrm{Gal}(K_1/K)}\prod_{1\leq i<j\leq n}\sigma(\lambda_j\mu_i-\mu_i\lambda_j),
\end{equation}
where $h,\lambda_i,\mu_i$ as in \eqref{eq:Fsplit}, $\Delta_\F$ denotes the discriminant of the binary form $F(x,y)$ and $\Delta(Q_\uc)$ denote the discriminant of the quadratic form $Q_\uc$. Here, $\mathrm{Gal}(K_1/K)$ denotes the Galois group of the splitting field $K_1$ of the polynomial $F(x,y)$ over $K$.
We say that a prime $\vp$ is {\em bad} if $\vp\mid D_\F$, and the rest of the primes will be called {\em good} primes. For any {\em good, primitive} $\uc$, if a good prime $\vp$ is such that $\vp\nmid \det(M_\uc)$, then we say that $\vp$ is of type I for $\uc$, otherwise, we say that $\vp$ is of type II. Note that for a bad pair $\uc$, every good prime $\vp$ will be of type I, since $\vp\nmid \Delta(Q_{\uc})$ for any good prime $\vp$. Note that our definition of bad primes differs slightly from that in \cite{HeathBrown_Pierce17}. For convenience, we have added the primes dividing $N$ as well as the ``type II'' primes for bad pairs $\uc$ to this list.

\subsection{The dual variety} In our analysis, an important role will be played by the following family of dual forms defined by $$F^*(x,y,\v)=\v^t\det(-yM_1+xM_2)(-yM_1+xM_2)^{-1}\v.$$ For a fixed value of $\v$, we may consider $F^*(x,y,\v)$ as a binary, homogeneous polynomial of degree $n-1$. The discriminant of this polynomial, denoted by $\scrF^*(\v)$, is a polynomial of degree $4(n-2)$. This polynomial has an albeit more familiar interpretation:
\begin{lemma} $\scrF^*(\v)$ is the polynomial defining the dual variety $X^*$ of the complete intersection $X$. 
\end{lemma}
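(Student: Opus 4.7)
The plan is to parameterize tangent hyperplanes of $X$ explicitly and then translate the tangency condition into a double-root condition on the binary form $F^*(x,y,\v)$. A hyperplane $H_\v=\{\x:\v\cdot\x=0\}$ is tangent to $X$ at a smooth point $\x_0$ iff $\v$ lies in the conormal space at $\x_0$, which for the complete intersection $F_1=F_2=0$ is spanned by $\nabla F_1(\x_0)=2M_1\x_0$ and $\nabla F_2(\x_0)=2M_2\x_0$. Therefore
\[
X^*=\overline{\{\v\in\PP^{n-1}:\ \exists\,(x_0,y_0)\neq\vecnull,\ \exists\,\x_0\in X\text{ smooth},\ \v=M_{(x_0,y_0)}\x_0\}}.
\]

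The key auxiliary object is $G(x,y;\v):=\v^tM_{(x,y)}^{-1}\v$, defined on the open locus $\det(M_{(x,y)})\neq 0$, so that $F^*(x,y,\v)=\det(M_{(x,y)})\,G(x,y;\v)$. Since $\partial_xM_{(x,y)}=M_2$ and $\partial_yM_{(x,y)}=-M_1$, the identity $\partial_tM^{-1}=-M^{-1}(\partial_tM)M^{-1}$ yields
\begin{align*}
\partial_xG=-F_2(M_{(x,y)}^{-1}\v),\qquad\partial_yG=F_1(M_{(x,y)}^{-1}\v).
\end{align*}
Moreover, $M_{(tx,ty)}=tM_{(x,y)}$ makes $G$ homogeneous of degree $-1$ in $(x,y)$, so Euler's identity $x\partial_xG+y\partial_yG=-G$ shows that whenever both partials vanish at $(x_0,y_0)\neq\vecnull$ we automatically have $G(x_0,y_0)=0$. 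Setting $\x_0=M_{(x_0,y_0)}^{-1}\v$, the simultaneous vanishing $\partial_xG(x_0,y_0)=\partial_yG(x_0,y_0)=0$ is exactly $F_1(\x_0)=F_2(\x_0)=0$, i.e.\ $\x_0\in X$ and $\v=M_{(x_0,y_0)}\x_0$ is tangent to $X$ at $\x_0$. Hence at any $(x_0,y_0)$ with $\det(M_{(x_0,y_0)})\neq 0$, a double root of $F^*(\cdot,\cdot,\v)$ is equivalent to $\v\in X^*$ with tangency coming from $(x_0,y_0)$. Taking discriminants, $\mathscr{F}^*(\v)=0$ iff the binary form $F^*(\cdot,\cdot,\v)$ of degree $n-1$ has a repeated root, and passing to closures gives the inclusion $X^*\subseteq V(\mathscr{F}^*)$.

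The main obstacle is the reverse inclusion $V(\mathscr{F}^*)\subseteq X^*$: a priori $\mathscr{F}^*(\v)$ could vanish because $F^*$ has a repeated root at one of the $n$ degenerate pencil values $(x_0,y_0)$ with $\det(M_{(x_0,y_0)})=0$, without any tangency of $H_\v$ to $X$. To exclude this, I would use the genericity assumptions from Section~\ref{sec:goodbad}: $F(x,y)$ has $n$ distinct linear factors and $\rank(M_{(x_0,y_0)})\geq n-1$ at each such root, so $\operatorname{adj}(M_{(x_0,y_0)})$ has rank one and equals $\pm\u\u^t$ with $\u$ spanning $\ker(M_{(x_0,y_0)})$. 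Then $F^*(x_0,y_0,\v)=\pm(\u\cdot\v)^2$ enforces only the linear condition $\u\cdot\v=0$ for $(x_0,y_0)$ to be a root of $F^*$; requiring it to be a \emph{double} root imposes further $\v$-linear conditions, so the locus of $\v\in V(\mathscr{F}^*)$ whose multiple root of $F^*$ lies at a degenerate pencil value is a proper subvariety of $V(\mathscr{F}^*)$. Since both $V(\mathscr{F}^*)$ and $X^*$ are hypersurfaces in $\PP^{n-1}$ agreeing on a dense open subset (namely, the complement of this subvariety), they coincide. The bookkeeping at the degenerate pencil values is the delicate part; the tangency-to-double-root dictionary via $G$ and Euler's relation carries the essential content.
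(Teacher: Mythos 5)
You take a genuinely different route from the paper. You prove the inclusion $X^*\subseteq V(\scrF^*)$ directly, via the Euler-identity observation on $G(x,y;\v)=\v^tM_{(x,y)}^{-1}\v$, whereas the paper proves $V(\scrF^*)\subseteq X^*$ directly, by showing that any double root $(x_0,y_0)$ of $F^*(\cdot,\cdot,\v)$ produces a singular point of $X\cap\{\v\cdot\x=0\}$. Your dictionary between tangency and double roots on the open locus $\det M_{(x,y)}\neq 0$ is a clean, essentially equivalent reformulation of the paper's trace identity $(\partial/\partial t)|_{t=0}\det(M'+tM'')\propto\x_0^tM''\x_0$.

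The genuine gap is exactly where you flag it, at the degenerate pencil values, and it is not mere bookkeeping. You assert that for $(x_0,y_0)$ with kernel vector $\uu$ of $M_{(x_0,y_0)}$, upgrading $\uu\cdot\v=0$ to a double-root condition on $F^*$ imposes further conditions on $\v$, so the bad locus is a proper subvariety of $V(\scrF^*)$. That is precisely what must be proved, and a pure dimension count does not suffice: if the next-order term $\v^t\bigl(\tfrac{d}{dt}\operatorname{adj}(M_{(x_0,y_0)}+tN)|_{t=0}\bigr)\v$ happened to vanish identically on $\{\uu\cdot\v=0\}$, then that hyperplane, which has projective dimension $n-2$, the same as any component of $V(\scrF^*)$, would be an entire irreducible component of $V(\scrF^*)$ that your argument never places inside $X^*$. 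The paper sidesteps this by running the double-root computation on the restricted $(n-1)\times(n-1)$ matrix $M'$ of $F_{(x_0,y_0)}|_{\{\v\cdot\x=0\}}$, which treats degenerate $(x_0,y_0)$ uniformly with the rest. Separately, your closing `they coincide' only yields equality of zero sets; to identify $\scrF^*$ with the defining polynomial one still needs, as the paper does, that $\deg\scrF^*=4(n-2)$ and that $X^*$ is an irreducible hypersurface of the same degree (Aznar). Once that degree input is granted, your one proven inclusion $X^*\subseteq V(\scrF^*)$ already suffices: it forces the irreducible defining polynomial $G$ of $X^*$ to divide $\scrF^*$, and degrees then match, so the analysis at degenerate pencil values could in fact be bypassed entirely.
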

\begin{proof}
$F^*(x,y,\v)=0$ if and only if the quadratic variety $\{-yF_1(\x)+xF_2(\x)=\v\cdot \x=0\}$ is singular, since $F^*(x,y,\v)$ is a non-zero multiple of  the determinant of the matrix defining the corresponding quadratic form. On the other hand, if $\scrF^*(\v)=0$, then the polynomial  $F^*(x,y,\v)=0$ must have a double root $(x_0,y_0)\in \overline{K}^2$. Without loss of generality, let $x_0\neq 0$. Let $X_1=\{-y_0F_1(\x)+x_0F_2(\x)=\v\cdot \x=0\}$ and let $M'$ be a $(n-1)\times (n-1)$ matrix defining $X_1$. 

If the singular locus of this variety is of projective dimension $\geq 1$, then it must intersect $F_1(\x)=0$, thus producing a singular point in the complete intersection of $\v\cdot\x=0$ and $X$. On the other hand, if $X_1$ only has one singular point, it means that the matrix $M'$ defining $X_1$ (up to scalar multiplication) has only one zero eigenvector, say $\x_0$. Since, $\scrF^*(\v)=0$, $(\partial/\partial t)|_{t=0}(\det(M'+tM''))=0$, where $M''$ is the matrix defining $F_1(\x)=\v\cdot\x=0$. $M'$ can be diagonalised over $\bar{K}$. An easy calculation shows that $(\partial/\partial t)|_{t=0}(\det(M'+tM''))$ is proportional to $\x_0^tM''\x_0$. Thus we must have $\x_0^tM''\x_0=0$, which means $F_1(\x_0)=0$. This implies that $\x_0$ belongs to $X\cap \{\v\cdot\x=0\}$. This further implies that $X\cap\{\v\cdot\x=0\} $ is singular. Thus, $\scrF^*(\v)=0$ implies that $\v$ belongs to $X^*$. Moreover, according to \cite[Theorem 3]{Aznar}, the polynomial defining the dual $X^*$ is an irreducible polynomial of degree $4(n-2)$. Therefore, $\scrF^*(\v)$ must be a polynomial defining the dual variety $X^*$.

\end{proof}

\section{Activation of the circle method}
\label{sec:circle}
Let $w$ denote the characteristic function of $\TT^n\subset K_\infty^n$, and let $\x_0\in K_\infty^n$ be a fixed point satisfying $F_1(\x_0)=F_2(\x_0)=0$. Since both forms are homogeneous, we may also assume $|\x_0|<1/H_\F$. Let $\omega(\x)=w(t^{L}(\x-\x_0))$, where $L\geq 0$ be a suitable integer to be chosen later. The extra conditions $|\x_0|<1/H_\F$ and $L\geq 0$ are only used to make the constants a bit more explicit. Recall that for any $P\in \scrO$, we consider the counting function
\begin{equation*}
N(P)=\sum_{\substack{\x\in\scrO^n\\ F_1(\x)=F_2(\x)=0\\ \x\equiv \b\bmod{N}}}\omega(\x/P).
\end{equation*}
We intend to establish an asymptotic formula as $|P|\rightarrow \infty$. We may write
\begin{align}\label{eq:intT2}
N(P)=\int_{\TT^2}S(\alpha_1,\alpha_2)d\ualf,
\end{align}
where
\begin{align*}
S(\ualf)=\sum_{\substack{\x\in\scrO^n\\ \x\equiv \b\bmod{N}}}\omega(\x/P)\psi(\alpha_1 F_1(\x)+\alpha_2 F_2(\x)).
\end{align*}
We will apply Theorem \ref{thm:split} (version \eqref{eq:Dirichlet2'}) with $Q$ satisfying
\begin{equation}\label{eq:Qchoice}
|P|^{4/3}\leq \hat{Q}\leq |P|^{4/3}q
\end{equation} to replace the integral over $\TT^2$ in \eqref{eq:intT2} to get
\begin{align}\label{eq:D-Refine}
\int_{\TT^2}S(\ualf)d\ualf=\sum_{Y=0}^Q\,\,\,\,\,\,\sum_{\substack{r,d\textrm{ monic, }\uc \textrm{ primitive}\\  \hat{Y-Q/2}\leq |d\uc|\leq\hat{Y/2}\\ |dc_2|<\hat{Y/2}\\|r|=\hat{Y}, d\mid r}}\,\,\,
 \int_{|\uz|<\hat{Y}^{-1}q^{-Q/2}}S(d\uc,r,\uz)d\uz,
\end{align} 
where
\begin{equation}
S(d\uc,r,\uz)=\sumstar\limits_{\substack{\ua\in \cO^2\\ \ua/r\in L(d\uc)}} S(\ua/r+\uz).
\end{equation}
 This choice of $Q$ is standard for a system of two quadrics. It is chosen in such a way that when $Y=Q$ and $r$ is such that $|r|=\hat{Y}$, then for any $\gcd(\ua,r)=1$, the measure of the set $D(\ua,r,Q)$ in \eqref{eq:Dirichlet} is $\leq \hat{Q}^{-3}\ll |P|^{-4}$, aiding us to prove the right asymptotic in Theorem \ref{thm:count}.

For each $L(d\uc)$, we are going to consider the contribution from $\ua/r\in L(d\uc)$. 
Let $r_N=rN/\gcd(r,N)$, the least common multiple of $ r$ and $N$. 
 We next use a standard Poisson summation argument as in \cite[Section 4]{Browning_Vishe15} applied to \eqref{eq:D-Refine} to establish the following result:
\begin{lemma}\label{lem:NP1} Given any $\ve>0$, we have
\begin{equation*}
N(P)=|P|^n\sum_{0\leq Y\leq Q}\,\,\,\,\,\sum_{\substack{r,d\textrm{ {\em monic,} }\uc \textrm{ {\em primitive}}\\ \hat{Y-Q/2}\leq|d\uc|\leq \hat{Y/2}\\ |dc_2|<\hat{Y/2}\\ |r|=\hat{Y},d\mid r}}|r_N|^{-n}
 \int\limits_{\substack{|\uz|<\hat{Y}^{-1}q^{-Q/2}}}\sum_{\v\in\scrO^n}S_{d\uc,r,\b,N}(\v)I_{r_N}(\uz;\v),d\z,
\end{equation*}
where
\begin{equation}\label{eq:Sexpsumdef}S_{d\uc,r,\b,N}(\v)=\sum_{\ua/r\in L(d\uc)}\sum_{\substack{\x\in\scrO^n\\|\x|<|r_N|\\\x\equiv\b\bmod{N}}}\psi\left(\frac{a_1F_1(\x)+a_2F_2(\x)}{r}\right)\psi\left(\frac{-\v\cdot\x}{r_N}\right),\end{equation}
$$I_{s}(\uz;\v)=\int_{K_\infty^n}\omega(\x)\psi\left((z_1P^2F_1(\x)+z_2P^2F(\x)) +P\v\cdot\x/s\right)d\x, $$
and $r_N=rN/\gcd(r,N)$. 
\end{lemma}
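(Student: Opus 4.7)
The plan is to derive Lemma \ref{lem:NP1} directly from the decomposition \eqref{eq:D-Refine} by applying a standard Poisson summation inside each piece. Since \eqref{eq:D-Refine} already identifies $N(P)$ as a sum over admissible triples $(d,\uc,r)$ of an integral of $S(d\uc,r,\uz)$, it suffices to show that for each such triple and each $\uz$,
$$S(d\uc,r,\uz) \;=\; |P|^n |r_N|^{-n} \sum_{\v \in \scrO^n} S_{d\uc,r,\b,N}(\v)\, I_{r_N}(\uz;\v),$$
where $r_N = rN/\gcd(r,N)$. Substituting this into \eqref{eq:D-Refine} and interchanging the $\v$-sum with the $\uz$-integral then gives the statement of the lemma.

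To obtain the displayed identity, I fix $\ua \in \scrO^2$ with $\ua/r \in L(d\uc)$ and split the sum defining $S(\ua/r+\uz)$ into residue classes $\x = \y + r_N \u$ with $\y$ running through residues modulo $r_N$ satisfying $\y\equiv \b \bmod N$ and $\u \in \scrO^n$. The choice of $r_N$ is dictated by the two constraints in the inner sum: the congruence $\x\equiv \b \bmod N$, and the $r$-periodicity of the factor $\psi\!\left((\ua/r)\cdot \vecF(\x)\right)$, where $\vecF=(F_1,F_2)$. Since $r\mid r_N$ and each $F_i$ is quadratic with coefficients in $\scrO$, the routine computation $F_i(\y+r_N\u) = F_i(\y) + 2r_N\, \u^t M_i\y + r_N^2 F_i(\u) \equiv F_i(\y) \pmod{r}$ ensures that the rational factor depends only on $\y \bmod r_N$.

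Next I apply Poisson summation in the function field setting (as in \cite[Sec.~4]{Browning_Vishe15}) to the remaining sum over $\u \in \scrO^n$, namely
$$\sum_{\u \in \scrO^n} \omega\!\left(\frac{\y+r_N\u}{P}\right) \psi\bigl(\uz \cdot \vecF(\y + r_N\u)\bigr).$$
This converts the sum into $|r_N|^{-n}\sum_{\v \in \scrO^n} \psi(-\v\cdot\y/r_N)$ times a Fourier integral over $K_\infty^n$; the change of variables $\x = P\x'$ then identifies this Fourier integral (after a harmless relabelling $\v \mapsto -\v$ in the outer sum) with $|P|^n I_{r_N}(\uz;\v)$. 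Interchanging the $\ua$- and $\y$-sums with the $\v$-sum regroups the arithmetic factors into exactly $S_{d\uc,r,\b,N}(\v)$, yielding the required identity.

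There is no conceptual obstacle here: the hard combinatorial work has already been done by Theorem \ref{thm:split}, and Poisson summation on the lattice $r_N\scrO^n \subset K_\infty^n$ is routine once the correct modulus $r_N$ has been identified. The only things to watch are bookkeeping details: confirming that $|r_N|^{-n}$ emerges with the right normalisation from the function field Poisson formula, and tracking the Fourier-transform sign conventions so that the exponential $\psi(-\v\cdot\x/r_N)$ appearing in the definition of $S_{d\uc,r,\b,N}(\v)$ matches the one produced by Poisson. The explicit lower cutoff $|\uz| < \hat Y^{-1} q^{-Q/2}$ plays no role in this step; it is simply inherited from \eqref{eq:D-Refine}.
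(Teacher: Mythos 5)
Your proposal is correct and is essentially the same standard Poisson summation computation that the paper delegates to \cite[Section~4]{Browning_Vishe15}: split $\x=\y+r_N\u$ (using $N\mid r_N$ and $r\mid r_N$ so the congruence and the $r$-periodicity of the rational phase are both captured by $\y\bmod r_N$), Poisson-sum over $\u\in\scrO^n$, change variables to produce $|P|^n|r_N|^{-n}$, and regroup the arithmetic factors into $S_{d\uc,r,\b,N}(\v)$. The bookkeeping on the normalization and the sign of $\v$ matches the definitions \eqref{eq:Sexpsumdef} and of $I_s(\uz;\v)$.
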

We begin by establishing the following multiplicativity relation for the exponential sums:
\begin{lemma}
\label{lem:Multipli}
Let $d\mid r$ and let $r=r_1r_2$, where $\gcd(r_1,r_2)=1$, then there exist $\b_1,\b_2,\b_3\in(\scrO/N\scrO)^n$ such that
\begin{equation}\label{eq:multi1}
S_{d\uc,r,\b,N}(\v)=S_{d_1\uc,r_1,\b_1,N_1}(\v)S_{d_2\uc,r_2,\b_2,N_2}(\v)\psi\left(\frac{-\v\cdot\b_3}{N_3}\right),
\end{equation}
where $d=d_1d_2$ such that $d_i\mid r_i$ for $i=1,2$, and $N=N_1N_2N_3$, where $N_1\mid r_1^\infty$, $N_2\mid r_2^\infty$, $\gcd(N_3,r)=1$.

\end{lemma}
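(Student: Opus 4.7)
The plan is to establish the factorization by a simultaneous CRT decomposition of the summation variables $\ua$ and $\x$, tracking how the quadratic and linear phases and the defining constraints split along the coprime factorization.

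First, since $\gcd(r_1,r_2)=1$ and $d\mid r$, I would take $d=d_1d_2$ with $d_i := \gcd(d,r_i)\mid r_i$; this decomposition is unique. Likewise, there is a unique factorization $N=N_1N_2N_3$ with $N_1\mid r_1^\infty$, $N_2\mid r_2^\infty$ and $\gcd(N_3,r)=1$. Setting $r_{i,N_i} := r_iN_i/\gcd(r_i,N_i)$ for $i=1,2$, the three factors $r_{1,N_1}, r_{2,N_2}, N_3$ are pairwise coprime and multiply to $r_N$, giving a CRT decomposition.

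Next, I would apply CRT to the outer variable by writing $\ua/r \equiv \ua^{(1)}/r_1 + \ua^{(2)}/r_2 \pmod{1}$ with $\ua^{(i)}$ ranging over residues modulo $r_i$ (the inverses of $r_j$ mod $r_i$ being absorbed into $\ua^{(i)}$). I would then verify that the condition $\ua/r\in L(d\uc)$ is equivalent to the two conditions $\ua^{(i)}/r_i\in L(d_i\uc)$: the integrality of $d\uc\cdot \ua/r$ splits along $r_1,r_2$ by coprimality (combined with $\gcd(d_2,r_1)=\gcd(d_1,r_2)=1$); the primitivity $\gcd(\ua,r)=1$ is equivalent to $\gcd(\ua^{(i)},r_i)=1$ for both $i$; and for $k_i$ defined by $d_i\uc\cdot\ua^{(i)}=k_ir_i$, the global constant satisfies $k = d_2k_1+d_1k_2$, so $\gcd(k,d)=1$ decomposes as $\gcd(k_i,d_i)=1$ for each $i$ since $\gcd(d_1,d_2)=1$.

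Then I would apply CRT to the inner variable by writing $\x\equiv e_1\x^{(1)}+e_2\x^{(2)}+e_3\x^{(3)}\pmod{r_N}$ with $\x^{(1)}\bmod r_{1,N_1}$, $\x^{(2)}\bmod r_{2,N_2}$, $\x^{(3)}\bmod N_3$ and $e_i$ the CRT idempotents. The congruence $\x\equiv\b\pmod{N}$ decomposes into three residue conditions; the third pins $\x^{(3)}$ to the unique residue of $\b$ modulo $N_3$ and collapses that inner sum. The quadratic phase factors as $\psi(\ua^{(1)}\cdot\F(\x^{(1)})/r_1)\psi(\ua^{(2)}\cdot\F(\x^{(2)})/r_2)$ using $F_j(\x)\equiv F_j(\x^{(i)})\pmod{r_i}$. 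The linear phase $\psi(-\v\cdot\x/r_N)$ splits into three phases with denominators $r_{1,N_1}, r_{2,N_2}, N_3$, each carrying a unit CRT-idempotent coefficient on the variable.

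Finally, I would absorb those CRT-idempotent units by unit substitutions in $\x^{(i)}$ for $i=1,2$, compensated by unit substitutions in $\ua^{(i)}$ (which preserve the summation sets $\{\ua^{(i)}/r_i\in L(d_i\uc)\}$, as these sets are invariant under unit scaling modulo $r_i$), and by redefining the modular targets to obtain new vectors $\b_1,\b_2 \in(\scrO/N\scrO)^n$. The third piece analogously collapses to $\psi(-\v\cdot\b_3/N_3)$ after absorbing its CRT unit into $\b_3$. The remaining double sum is then precisely $S_{d_1\uc,r_1,\b_1,N_1}(\v)\cdot S_{d_2\uc,r_2,\b_2,N_2}(\v)$, yielding \eqref{eq:multi1}. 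The hard part will be pure bookkeeping: verifying that the unit substitutions in $\ua^{(i)}$ simultaneously preserve the line constraint, the primitivity, and the coprimality $\gcd(k_i,d_i)=1$, while being compatible with the corresponding substitutions in $\x^{(i)}$ and the adjustments of $\b_i$; however, since every relevant coefficient is a unit modulo the appropriate factor, no deep obstruction arises.
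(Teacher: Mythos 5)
Your proposal is correct and takes essentially the same approach as the paper. The key new step --- verifying via the CRT decomposition of $\ua$ across $r_1,r_2$ that $\ua/r \in L(d\uc)$ if and only if $\ua_i/r_i \in L(d_i\uc)$ for $i=1,2$ (by decomposing $\uc\cdot\ua\equiv 0 \bmod{r/d}$ along $r_i/d_i$, noting $k = d_2k_1 + d_1k_2$, and using $\gcd(d_1,d_2)=1$) --- matches the paper's argument exactly, and the remaining CRT bookkeeping over $\x$ and the $\bmod N$ condition that you carry out explicitly (idempotent decomposition into pieces modulo $r_{1,N_1}$, $r_{2,N_2}$, $N_3$, splitting the phases, absorbing units) is what the paper simply delegates to a citation of \cite[Lemma 4.5]{Browning_Vishe15}.
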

\begin{proof}
Recall that $\ua/r\in L(d\uc)\iff d\uc\cdot\ua=rk, \mathrm{ where }\,\,\, \gcd(\ua,r)=\gcd(k,d)=1$. We start by rewriting $\ua=r_2\ua_1+r_1\ua_2$, where $|\ua_i|<|r_i|, \gcd(\ua_i,r_i)=1$. Firstly, since $\uc\cdot\ua\equiv 0\bmod{r/d} $, this forces $\uc\cdot\ua_i\equiv{0}\bmod{r_i/d_i}$ for $i=1,2$. i.e. $\uc\cdot\ua_i/(r_i/d_i)\in\scrO$. Next, since $d\uc\cdot\ua/r= d\uc\cdot\ua_1/r_1+d\uc\cdot\ua_2/r_2=d_2\uc\cdot\ua_1/(r_1/d_1)+d_1\uc\cdot\ua_2/(r_2/d_2)$, $\gcd(d\uc\cdot\ua/r, d)=1$ if and only if $\gcd(d_i\uc\cdot\ua_i/r_i,d_i)=1$, for $i=1,2$, which implies that 
 $$\ua/r\in L(d\uc)\iff\ua_i/r_i\in L(d_i\uc),\,\,\,\textrm{for}\,\,\, i=1,2.$$
\eqref{eq:multi1} now follows from exactly following the argument in \cite[Lemma 4.5]{Browning_Vishe15}. 
\end{proof}
This multiplicativity relation will be used to obtain finer bounds for the exponential sums, which will be the focus of Section \ref{sec:expsum}. We now consider bounds for the exponential integral.
\subsection{Bounds for the exponential integral}\label{s:honk}

We proceed to study $I_{r_N}(\uz;\v)$ for a given  $r\in \cO$. We have 
\begin{align}\notag
I_{r_N}(\uz;\v)
&=
\int_{K_\infty^n} 
w\left(t^L(\x-\x_0)\right)
\psi \left( z_1P^2F_1(\x)+z_2P^2F_{2}(\x)+P\v.\x/r_N\right)d \x\\
&=\frac{1}{\hat{L}^n}\psi\left(\frac{P\v\cdot\x_0}{r_N}\right)
J_{\G}\left((z_1 P^2,z_2P^2);
\frac{Pt^{-L} \v}{r_N}\right),\label{eq:small}
\end{align}
in the notation of \eqref{eq:J},
where $\G(\y)=
(G_1(\y),G_2(\y))$, $G_i(\y)=F_i(\x_0+t^{-L}\y)$ for $i=1,2$.

According to Lemma 
\ref{lem:J-easy}, $
J_\G((P^3z_1,P^2z_2);P\v/r_N)=0$ if 
$$
\frac{|P||\v|}{|r_N|}> \max\{1,| P|^2 |z_1 | H_{F_1},  |P|^2 |z_2|H_{F_2}\}.
$$
Hence we may truncate the sum over $\v$ in Lemma \ref{lem:NP1} to arrive at the following result.

\begin{lemma}\label{lem:N2}
$$
N(P)=|P|^n\sum_{0\leq Y\leq Q}\,\,\,\,\,\sum_{\substack{r,d\textrm{ {\em monic,} }\uc \textrm{ {\em primitive}}\\ \hat{Y-Q/2}\leq|d\uc|\leq \hat{Y/2}\\ |dc_2|<\hat{Y/2}\\ |r|=\hat{Y},d\mid r}}|r_N|^{-n}
 \int\limits_{\substack{|\uz|<\hat{Y}^{-1}q^{-Q/2}}}\sum_{\substack{\v\in\scrO^n,|\v|\leq \hat{V}}}S_{d\c,r,\b,N}(\v)I_{r_N}(\uz;\v)d\uz,$$
where 
\begin{equation}
\label{eq:Vdef}
\hat V= H_{\F} |r_N||P|^{-1}\max\{1, |z_1|| P|^2, |z_2||P|^2\}.
\end{equation}
\end{lemma}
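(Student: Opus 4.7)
The plan is to derive Lemma \ref{lem:N2} from Lemma \ref{lem:NP1} by invoking the vanishing criterion of Lemma \ref{lem:J-easy} to truncate the Fourier dual sum over $\v$. The first step is to verify identity \eqref{eq:small} directly: in the defining integral for $I_{r_N}(\uz;\v)$, the substitution $\x = \x_0 + t^{-L}\y$ produces the Jacobian factor $\hat L^{-n}$, extracts the phase $\psi(P\v\cdot\x_0/r_N)$ from the linear term, and leaves an integral over $\TT^n$ which is precisely $J_\G((z_1 P^2, z_2 P^2); P t^{-L}\v/r_N)$ with $\G = (G_1,G_2)$ and $G_i(\y) = F_i(\x_0 + t^{-L}\y)$.

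Second, I would control the heights $H_{G_i}$. Expanding
\[
G_i(\y) = F_i(\x_0) + 2t^{-L}\x_0^t M_i\y + t^{-2L}F_i(\y),
\]
and using the hypotheses $|\x_0| < 1/H_\F$ together with $L \geq 0$ (so $|t^{-L}|, |t^{-2L}|\leq 1$), each coefficient of $G_i$ is bounded in absolute value by $H_{F_i}$; hence $H_{G_i} \leq H_{F_i} \leq H_\F$.

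Finally, I would apply Lemma \ref{lem:J-easy} with $\ualf = (z_1 P^2, z_2 P^2)$ and $\w = P t^{-L}\v/r_N$. It gives $J_\G(\ualf;\w) = 0$ whenever
\[
|Pt^{-L}\v/r_N| > \max\{1,\, |z_1|P^2 H_{G_1},\, |z_2|P^2 H_{G_2}\}.
\]
Combining with $|t^{-L}|\leq 1$ and $H_{G_i}\leq H_\F$, one gets $I_{r_N}(\uz;\v) = 0$ once $|\v| > \hat V$, where $\hat V$ is as in \eqref{eq:Vdef}. Inserting this truncation into the expression of Lemma \ref{lem:NP1} yields the statement of Lemma \ref{lem:N2}. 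There is no substantive obstacle: the result is a direct corollary of Lemmas \ref{lem:NP1} and \ref{lem:J-easy}, with the only bookkeeping being the verification $H_{G_i}\leq H_\F$ which relies on our normalising conditions on $\x_0$ and $L$.
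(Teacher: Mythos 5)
Your overall plan is exactly the one the paper follows (change variables to get \eqref{eq:small}, apply Lemma~\ref{lem:J-easy} to truncate), but there is a concrete gap at the heart of step three. The vanishing criterion of Lemma~\ref{lem:J-easy} requires $|\w|$ to be \emph{large}, and here $\w = Pt^{-L}\v/r_N$, so $|\w| = \hat{L}^{-1}\,|P||\v|/|r_N|$. The factor $|t^{-L}| = \hat{L}^{-1} \leq 1$ therefore works \emph{against} you: it shrinks $|\w|$, making the threshold $|\w| > \max\{1, |z_1|P^2 H_{G_1}, |z_2|P^2 H_{G_2}\}$ harder to meet. Your invocation of ``$|t^{-L}| \leq 1$'' is pointing the wrong way. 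With only the crude height bound $H_{G_i} \leq H_{F_i} \leq H_\F$, the hypothesis $|\v| > \hat V$ gives $|\w| > \hat{L}^{-1} H_\F \max\{1, |z_1|P^2, |z_2|P^2\}$, which dominates $\max\{1, |z_i|P^2 H_{G_i}\}$ only if $\hat{L}^{-1}H_\F \geq H_{G_i}$ — something your bound $H_{G_i}\leq H_\F$ does not yield unless $L = 0$.

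What is needed (and what the paper uses, see \eqref{eq:HGbound}) is the sharper estimate $H_\G < \hat{L}^{-1}H_\F$, not merely $H_\G \leq H_\F$. This stronger bound comes from the same three facts you already wrote down ($F_i(\x_0)=0$, $|\x_0| < 1/H_\F$, $L\geq 0$) pushed slightly further: the constant coefficient of $G_i$ vanishes outright, the linear coefficients are $< \hat{L}^{-1}$ because $|\x_0| H_{M_i} < 1$, and the quadratic coefficients are $\leq \hat{L}^{-2}H_{F_i} \leq \hat{L}^{-1}H_\F$. This extra factor $\hat{L}^{-1}$ exactly cancels the $\hat{L}^{-1}$ coming from $|t^{-L}|$ in $|\w|$, allowing one to pass from $|\v| > \hat V$ to the vanishing of $J_\G$. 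Your step two is on the right track but stops short of the bound that is actually required; as written, the argument establishes vanishing only for $|\v| > \hat{L}\hat V$, not for $|\v| > \hat V$.
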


We will need a good upper bound for 
$I_{r}(\uz;\v)$, for $r, \uz, \v$ appearing in the expression for $N(P)$ in this lemma.  This need is met by the following lemma. A key result in proving it is a decomposition of the matrix $M=M_1^{-1}M_2$ obtained in Lemma \ref{lem:eigenvalue}. 

In the following lemma, we borrow the notation from Lemma \ref{lem:eigenvalue}, i.e, the eigenvalues $\rho_j$, the matrix $U$ and the constant $C_1$ are as in the statement of Lemma \ref{lem:eigenvalue}.
\begin{lemma}\label{lem:I-hard} 
Let $Z\in \ZZ$ and let $\uz$ be such that $|\uz|=\hat{Z}$.  Let $|\v|\leq \hat{V}$, where $\hat{V}$ as in \eqref{eq:Vdef}. Then
$$|I_{r_N}(\uz;\v)|\leq  \hat{L}^{-n}\meas(\Omega_\uz),$$
where
$$\Omega_\uz=\left\{\x\in \TT^n: |\x-\x_0|<\hat{-L}, ~ |P^2z_1\nabla F_1(\x)+P^2z_2\nabla F_2(\x)+ P\v/r_N|\leq H_\F
J(Z)^{1/2}\right\},$$
where 
\begin{equation}\label{eq:Juzdef}
J(Z)=1+|P|^2\hat{Z}.
\end{equation}

Moreover,
\begin{equation*}
\int_{|\uz|=\hat{Z}}\meas(\Omega_\uz)d\uz \ll C_\F J(Z)^{-n/2+1}\log(|P|^2\hat{Z})\hat{Z+1}\sum_{j=n_1+1}^{n}\left(\frac{1+|\rho_j|}{\min\{1,|\rho_j|\}}\right)\min\left\{\hat{Z+1}, |P|^{-2}\right\}.
\end{equation*}
where $C_\F=(H_{U^{-1}}H_{M_1^{-1}}H_\F)^{n}H_{M'}^{n_1(n_1-1)}C_1^{-2n^2-2(n-n_1)} $.
\end{lemma}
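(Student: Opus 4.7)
The first inequality is a pointwise analogue of Lemma \ref{lem:small}. Starting from the identity \eqref{eq:small}, I would apply the non-Archimedean stationary phase argument from that proof (cf.\ also \cite[Lemma 2.7]{Browning_Vishe15}), but without averaging over $\ualf$: partition the $\y$-integration into small cosets, and use the integral over small translations to force vanishing whenever $|\alpha_1\nabla G_1(\y)+\alpha_2\nabla G_2(\y)+\w|>H_\G\max\{1,|\ualf|^{1/2}\}$. Rescaling back to $\x$-coordinates, and using $\hat L\,H_\G\leq H_\F$ (which holds because $F_i(\x_0)=0$ kills the constant term of $G_i$), yields exactly the threshold $H_\F J(Z)^{1/2}$ from the definition of $\Omega_\uz$.

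For the integral bound, the plan is first to simplify the gradient condition via the substitution $\y=\x-\x_0$, turning it into $|2P^2M_1(z_1I+z_2M)\y+\vecd(\uz,\v)|<H_\F J(Z)^{1/2}$ with $M=M_1^{-1}M_2$. Next, introduce $\u=U^{-1}\y$ using the matrix $U$ from Lemma \ref{lem:eigenvalue}, so that $z_1I+z_2U^{-1}MU$ becomes block upper-triangular: a top-left block $z_1I+z_2M'$ of size $n_1$, whose eigenvalues $\rho_1,\ldots,\rho_{n_1}$ lie outside $K_\infty$, and a diagonal bottom-right block with entries $z_1+z_2\rho_j$ for $n_1<j\leq n$. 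Applying $(M_1U)^{-1}$ (at the cost of an $H_{(M_1U)^{-1}}^n$-factor that gets absorbed into $C_\F$) decouples the constraint on $\u=(\u_1,\u_2)$ into the two blocks.

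For the top-left block, Lemma \ref{lem:eigenvalue} guarantees $|z_1+z_2\rho_j|\gg C_1|\uz|$ for $1\leq j\leq n_1$, giving $|\det(z_1I+z_2M')|\gg C_1^{n_1}|\uz|^{n_1}$; by Cramer's rule $|(z_1I+z_2M')^{-1}|\ll H_{M'}^{n_1-1}C_1^{-n_1}/|\uz|$, so the $\u_1$-contribution to $\meas(\Omega_\uz)$ is bounded by $\min\{\hat{-L},J(Z)^{1/2}/(|P|^2|\uz|)\}^{n_1}$ up to absorbed constants. The bottom-right block imposes independent one-variable constraints on the coordinates of $\u_2$, contributing $\prod_{j>n_1}\min\{\hat{-L},J(Z)^{1/2}/(|P|^2|z_1+z_2\rho_j|)\}$. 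To integrate over $|\uz|=\hat Z$ I would parametrise by $z_1=wz_2$ with $|z_2|=\hat Z$, $|w|\leq 1$ (the symmetric case $|z_1|=\hat Z$ being analogous, the total measure of $\{|\uz|=\hat Z\}$ being $\approx\hat Z^2$). The top-left factor is $w$-independent. For the bottom-right I would split $\{|w|\leq 1\}=\bigsqcup_{j_0>n_1}S_{j_0}$ according to which $|w+\rho_{j_0}|$ is smallest; the pairwise separation $|\rho_j-\rho_k|\geq C_1$ bounds the other $n-n_1-1$ factors by $\min(\hat{-L},J(Z)^{1/2}/(|P|^2\hat Z C_1))$, while the remaining integral $\int_{|w|\leq 1}\min(K,A/|w+\rho_{j_0}|)dw$, with $K=\hat{-L}$ and $A=J(Z)^{1/2}/(|P|^2\hat Z)$, evaluates to $\ll A(1+\log_q(K/A+1))$ when $|\rho_{j_0}|\leq 1$ and to $\min(K,A/|\rho_{j_0}|)$ when $|\rho_{j_0}|>1$, producing the $\sum_j(1+|\rho_j|)/\min(1,|\rho_j|)$ factor. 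The term $\min(\hat{Z+1},|P|^{-2})$ arises from whichever side of the min saturates in the top-left block, according to whether $|P|^2\hat Z\geq 1$ or not.

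The main obstacle will be careful constant bookkeeping: propagating $H_\F$, $H_{U^{-1}}$, $H_{M_1^{-1}}$, $H_{M'}^{n_1(n_1-1)}$ and the various powers of $C_1^{-1}$ through the block-triangular reductions and the measure estimates, while ensuring the logarithmic factor $\log(|P|^2\hat Z)$ genuinely emerges from the non-Archimedean integral rather than a cruder $q$-dependent bound.
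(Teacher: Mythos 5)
Your proposal is correct and follows essentially the same route as the paper: a pointwise analogue of Lemma~\ref{lem:small} for the first inequality, then the change of variables $\y=U^{-1}\x$ from Lemma~\ref{lem:eigenvalue} to block-triangularize $\gamma_1 I+\gamma_2 M$, separation of the non-real-eigenvalue block from the diagonal block, and a final integration over $\uz$ split according to which $|z_1+\rho_j z_2|$ is smallest (the paper via the exceptional sets $I_j$, you via the parametrization $z_1=wz_2$). The only small imprecision is attributing the factor $\min\{\hat{Z+1},|P|^{-2}\}$ to the top-left block: in the paper it comes from comparing the trivial area bound $\hat{Z+1}^2$ on $\int_{|\uz|\le\hat Z}\meas(\Omega_\uz)\,d\uz$ with the refined bound $|P|^{-2}\hat Z\log(\cdot)\sum_j(\cdot)$ coming from the small sets near the eigenlines, but this has no bearing on the validity of the argument.
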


\begin{proof}
Let $G_1$ and $G_2$ be as in \eqref{eq:small}. Let 
$\gamma_i= z_iP^{2}$ and $\w=Pt^{-L}\v/r_N$, for convenience. Let $\hat{Z_i}=|z_i|$, and therefore $Z=\max\{Z_1,Z_2\}$. Since $F_1(\x_0)=F_2(\x_0)=0$, $|\x_0|< 1$ and $L\geq 0$, 
\begin{equation}\label{eq:HGbound}
H_\G< \hat{L}^{-1}H_\F.
\end{equation}
In particular, when $|\v|\leq \hat{V}$, we have 
$$
 |\w|\leq H_\F\max\{1,|\gamma_1|,|\gamma_2|\}=H_\F J(Z).
$$
Lemma \ref{lem:small} in conjunction with \eqref{eq:small} implies that 
\begin{align*}
|I_{r_N}(\uz;\v)| &\leq \frac{1}{\hat{L}^n}
\left|J_\G((\gamma_1,\gamma_2);\w)\right| \\
&
\leq \frac{1}{\hat{L}^n}\meas\left\{\y\in \TT^n: |\gamma_1\nabla G_1(\y)+\gamma_2\nabla G_2(\y)+ \w|\leq
H_\G \max\{1,
|\gamma_1|,|\gamma_2|\}^{1/2}\right\}\\
&\leq \meas\left\{\x\in \TT^n: |\x-\x_0|<\hat{-L}, ~ |\gamma_1\nabla F_1(\x)+\gamma_2\nabla F_2(\x)+ t^L\w|\leq H_\F
\max\{1,
|\gamma_1|^{1/2},|\gamma_2|^{1/2}\}\right\}\\
&=\meas(\Omega_\uz).
\end{align*}
This settles the first part of the lemma. We can further bound 
\begin{align*}
\meas(\Omega_\uz)&\leq \meas\left\{\x\in \TT^n: |\x|<1, ~ |M_1(\gamma_1I_n+\gamma_2M)\x+ t^L\w|\leq H_\F
\max\{1,
|\gamma_1|^{1/2},|\gamma_2|^{1/2}\}\right\}\\
&\leq \meas(\mathcal{R})
\end{align*}
where
$$
\mathcal{R}=
\left\{\x\in \TT^n: |\x|<1, ~ |(\gamma_1I_n+\gamma_2M)\x+ t^{L}\w|\leq H_{M_1^{-1}}H_\F
\max\{1,
|\gamma_1|^{1/2},|\gamma_2|^{1/2}\}\right\}.
$$
If $\x$ and $\x+\x'\in\mathcal{R}$, then $|(\gamma_1I_n+\gamma_2M)\x'|\leq H_{M_1^{-1}}H_\F
\max\{1,
|\gamma_1|^{1/2},|\gamma_2|^{1/2}\}$.
If $|\gamma_1|, |\gamma_2|\leq  1$, then the trivial bound $1$ will suffice here. Hence from now on, we assume the contrary, i.e.  $1<\max\{|\gamma_1|,|\gamma_2|$\}.

At this point, we change the variables to place $\y=U^{-1}\x$, where $U$ is as in Lemma \ref{lem:eigenvalue}. Thus, it is enough to estimate the measure of the set 
\begin{equation}\label{eq:One}\left\{|\y|<H_{U^{-1}}:\left|\left(\gamma_1 I+\gamma_2\matr{M'_{n_1\times n_1}}{M''_{n_1\times (n-n_1)}}{\vecnull_{(n-n_1)\times n_1}}{D(\rho_{n_1+1},...,\rho_{n})}\right)\y\right|\leq H_0\right\},\end{equation}
where $H_0=H_{U^{-1}}H_{M_1^{-1}}H_\F
\max\{1,
|\gamma_1|^{1/2},|\gamma_2|^{1/2}\}$.

First, we turn our attention to $y_{n_1+1},...,y_{n}$. If $|\gamma_1+\gamma_2\rho_{i_0}|< C_1^2\max\{|\gamma_1|,|\gamma_2|\}$ for some $n_1+1\leq i_0\leq n-1$, then since $C_1\leq |\rho_{i_0}|\leq C_1^{-1} $, this forces that  $|\gamma_1|=|\rho_{i_0}\gamma_2|$ which gives $|\gamma_2|\geq C_1|\gamma_1| $. Moreover, for any $i\neq i_0$ we have,
\begin{align*}
|\gamma_1+\gamma_2\rho_i|=|\gamma_1+\gamma_2\rho_{i_0}+\gamma_2(\rho_i-\rho_{i_0})|\geq |\gamma_2(\rho_i-\rho_{i_0})|\geq C_1|\gamma_2|\geq C_1^2\max\{1,|\gamma_1|,|\gamma_2|\}.
\end{align*}
If $i_0=n$ and $\rho_n\neq 0$, then the argument outlined above goes through verbatim. On the other hand, if $\rho_n=0$, then this forces $|\gamma_1|< C_1^2\max\{|\gamma_1|,|\gamma_2|\}$ which implies that $|\gamma_2|>C_1^{-2}|\gamma_1|$ and hence $|\gamma_2|>1$, and thus for any $i\neq n$ we get
\begin{align*}
|\gamma_1+\gamma_2\rho_i|=|\gamma_2\rho_i|\geq C_1|\gamma_2|.
\end{align*}
Combining these bounds, the measure of $y_{n_1+1},...,y_n$ appearing in \eqref{eq:One} is bounded by \begin{equation}\label{eq:1}
(H_{U^{-1}}H_{M_1^{-1}}H_\F)^{(n-n_1)}C_1^{-2(n-n_1)}\max\{1,|\gamma_1|,|\gamma_2|\}^{-(n-n_1-2)/2}(1+\min_{n_1+1\leq j\leq n}|\gamma_1+\rho_j\gamma_2|)^{-1}.
\end{equation}
To bound the size of the first $n_1$ co-ordinates $\y_1=(y_1,..,y_{n_1})$ appearing in \eqref{eq:One}, note that for a fixed choice of $y_{n_1+1},...,y_n$, the two different values of $\y_1$ must differ by an element in the set
$$\{|\y_1|<H_{U^{-1}}:|(\gamma_1I+\gamma_2M')\y_1|\leq H_0\}.$$
Therefore, it is enough to bound the measure of this set. Suppose, $|\gamma_1|\geq |\gamma_2|$, then the eigenvalues of $\gamma_1I+\gamma_2M'$ are $$|\gamma_1+\gamma_2\rho_i|=|\gamma_1||\rho_i||\gamma_2/\gamma_1+\rho_i^{-1}|\geq|\gamma_1|C_1^{2}.$$
We can prove a similar statement when $|\gamma_2|>|\gamma_1|$. This gives us that $|\det(\gamma_1I+\gamma_2M')|\geq C_1^{2n_1}\max\{1,|\gamma_1|,|\gamma_2|\}^{n_1}$. Thus, $(\gamma_1I_n+\gamma_2M')^{-1}$ has entries bounded by $H_{M'}^{n-1}C_1^{-2n_1}\max\{1,|\gamma_1|,|\gamma_2|\}^{-1}$. Thus, the condition on $\y_1$ transforms to bounding
\begin{equation}\label{eq:2}\begin{split}\meas\{|\y_1|\leq &H_{U^{-1}}H_{M_1^{-1}}H_{M'}^{n_1-1}C_1^{-2n_1}\max\{1,|\gamma_1|,|\gamma_2|\}^{-1/2}\}\\ &\leq (H_{U^{-1}}H_{M_1^{-1}}H_\F)^{n_1}H_{M'}^{n_1(n_1-1)}C_1^{-2n_1^2}\max\{1,|\gamma_1|,|\gamma_2|\}^{-n_1/2},
\end{split}
\end{equation}
\eqref{eq:1} and \eqref{eq:2} give us that \begin{equation}\label{eq:omuzB}
\meas(\Omega_\uz)\leq C_\F 
J(Z)^{-(n-2)/2}(1+|P|^2\min_{n_1+1\leq j\leq n}|z_1+\rho_jz_2|)^{-1}.
\end{equation}
This readily gives us the bound
\begin{equation}\label{eq:omuzB1}
\int_{|\uz|\leq \hat{Z}}\meas(\Omega_\uz)d\uz\leq C_\F 
J(Z)^{-(n-2)/2}\hat{Z+1}^2.
\end{equation}
To obtain the other bound, note that to bound $\int_{|\uz|=\hat{Z}}\meas(\Omega_\uz)d\uz$, it is clearly enough to bound the integral $\int_{|\uz|\leq \hat{Z}}\meas(\Omega_\uz)d\uz$. For every $n_1+1\leq j\leq n$, let $$I_j=\{|\uz|\leq \hat{Z}: |z_1+\rho_j z_2|<|P|^{-2}\}.$$
Measure of $I_j$ is clearly $\leq |P|^{-2}\hat{Z}$. We may now bound the required integral by:
\begin{align*}
\int_{|\uz|= \hat{Z}}(1+|P|^2\min_{n_1+1\leq j\leq n}|z_1+\rho_jz_2|)^{-1} d\uz&\leq (n-n_1)|P|^{-2}\hat{Z}\\&+|P|^{-4}\sum_{j=n_1+1}^n\int_{\{|\uz|\leq |P|^2\hat{Z}\}\setminus I_j'}(1+ |z_1+\rho_jz_2|)^{-1}d\uz,
\end{align*}
where
$$I_j'=\{|\uz|\leq |P|^2\hat{Z}: |z_1+\rho_j z_2|<1\}.$$
If $\rho_n=0$, $\{|\uz|\leq |P|^2\hat{Z}\setminus I_n'\}=\{|\uz|\leq \hat{Z}, |z_1|\geq 1\}$.
Thus,
\begin{align*}
\int_{\{|\uz|\leq |P|^2\hat{Z}\}\setminus I_n'}|z_1|^{-1}dz_1dz_2=q|P|^2\hat{Z}\int_{1\leq |z_1|\leq |P|^2\hat{Z}}|z_1|^{-1}dz_1=|P|^{2}\hat{Z+1}\log(|P|^{2}\hat{Z})
\end{align*}
which is clearly admissible. When $\rho_j\neq 0$, we may change the variables to put $s_1=z_1,s_2=z_1+\rho_jz_2$ to get
\begin{align*}
\int_{\{|\uz|\leq |P|^2\hat{Z}\}\setminus I_j'}|z_1+\rho_jz_2|^{-1}d\uz\leq |\rho_j|^{-1}\int_{\substack{|s_1|,|s_2|\leq |P|^2\hat{Z}|(1+|\rho_j|)\\ 1\leq |s_2|}}|s_2|^{-1}ds_1ds_2\leq q \frac{1+|\rho_j|}{|\rho_j|}\log(|P|^{2}\hat{Z})\hat{Z}.
\end{align*}
Combining the above bound with \eqref{eq:omuzB}, proves the final part of the lemma.
\end{proof}
\subsection{Preparation of the error term}\label{sec:Prep}
We now come back to our main counting function $N(P)$. Lemma \ref{lem:N2} implies
$$
N(P)=|P|^n\sum_{0\leq Y\leq Q}\,\,\,\,\,\sum_{\substack{r,d\textrm{ { monic,} }\uc \textrm{ { primitive}}\\ \hat{Y-Q/2}\leq|d\uc|\leq \hat{Y/2}\\ |dc_2|<\hat{Y/2}\\ |r|=\hat{Y},d\mid r}}|r_N|^{-n}
 \int\limits_{\substack{|\uz|<\hat{Y}^{-1}q^{-Q/2}}}\sum_{|\v|\leq \hat{V}}S_{d\uc,r,\b,N}(\v)I_{r_N}(\uz;\v)d\uz,$$
where $S_{d\uc,r,\b,N}(\v)$, $I_{r}(\uz,\v)$ and $\hat{V}$ are as in the statements of Lemmas \ref{lem:NP1} and \ref{lem:N2} respectively. The main contribution would arise from the $\v=\vecnull$ terms when $|r|\leq \hat{Q}^\Delta$, where $0<\Delta<1/2$ be a constant to be decided later, which we fix throughout this argument. i.e. Our main term, the major arcs regime, will correspond to
\begin{equation}\label{eq:M(P)def}
N_0(P):=|P|^n\sum_{0\leq Y\leq \Delta Q}\,\,\,\,\,\sum_{\substack{r,d\textrm{ { monic,} }\uc \textrm{ { primitive}}\\ |d\uc|\leq \hat{Y/2},\, |dc_2|<\hat{Y/2}\\ |r|=\hat{Y},d\mid r}}|r_N|^{-n}
 \int\limits_{\substack{|\uz|<\hat{Y}^{-1}q^{-Q/2}}}S_{d\c,r,\b,N}(\vecnull)I_{r_N}(\uz;\vecnull)d\uz.
\end{equation}
The rest of the terms will contribute to the error, which we denote by $E(P)$, the minor arcs contribution. Here the dependence of both the terms on $\Delta$ is implicit.

We first observe that using the trivial bound $|S(\ua/r+\uz)|\ll |P|^n$ we may satisfactorily bound the contribution from the regions $$|\uz|< |P|^{-5},$$ to \eqref{eq:D-Refine}  directly. For any $Y\leq Q$, where $Q$ is as in \eqref{eq:Qchoice}, the measure $$\textrm{meas}(|\uz|< |P|^{-5})\ll |P|^{-10+\ve}.$$ Using this fact, for any $\ve>0$, the total contribution from this region to \eqref{eq:D-Refine} is at most
\begin{align*}
\sum_{Y=0}^Q\sum_{\substack{|r|=\hat{Y}\\ r \textrm{ monic }}}\sum_{\substack{|\ua|<\hat{Y}\\ \gcd(\ua,r)=1}}\int\limits_{|\uz|< |P|^{-5}}|S(\ua/r+\uz)|\leq \sum_{Y=0}^Q \sum_{\substack{|r|=\hat{Y}\\ r \textrm{ monic }}}\sum_{\substack{|\ua|<\hat{Y}\\ \gcd(\ua,r)=1}}|P|^{n-10+\ve}\ll |P|^{n-6+\ve},
\end{align*}
using the fact that $\hat{Q}^3\ll |P|^{4}$.
In the light of this bound, we may ignore the contribution from the region corresponding to the integrals over $ |\uz|< |P|^{-5}$ in our error term $E(P)$. Incorporating this observation, we will further split the error term in two major parts:
\begin{align}\label{eq:D-Refine1}
\int_{\TT^2}S(\ualf)d\ualf=N_0(P)+E_1(P)+E_2(P)+O_\ve(|P|^{n-6+\ve}),
\end{align}  
where 
\begin{equation}
\label{eq:E1def}
E_1(P):=|P|^n\sum_{0\leq Y\leq Q}\,\,\,\,\sum_{\substack{r,d\textrm{ { monic,} }\uc \textrm{ { primitive}}\\ \hat{Y-Q/2}\leq|d\uc|\leq \hat{Y/2}\\ |dc_2|<\hat{Y/2}\\ |r|=\hat{Y},d\mid r}}|r_N|^{-n}
 \int\limits_{\substack{|P|^{-5}\leq |\uz|<\hat{Y}^{-1}q^{-Q/2}}}\sum_{\substack{\v\in\scrO^n\setminus\vecnull,\\|\v|\leq \hat{V}}}S_{d\uc,r,\b,N}(\v)I_{r_N}(\uz;\v)d\uz,
\end{equation}
and
\begin{equation}
\label{eq:E3def}
E_2(P):=|P|^{n}\sum_{Q\Delta< Y\leq Q}\sum_{\substack{d\textrm{ monic, }\uc \textrm{ primitive}\\ \hat{Y-Q/2}\leq|d\uc|\leq \hat{Y/2}\\ |dc_2|<\hat{Y/2}}}|r_N|^{-n}
  \int\limits_{\substack{|P|^{-5}\leq |\uz|<\hat{Y}^{-1}q^{-Q/2}}}\sum\limits_{\substack{|r|=\hat{Y}\\ r \textrm{ monic, }d\mid r}}S_{d\uc,r,\b,N}(\vecnull)I_{r_N}(\uz;\v)d\uz.
\end{equation}
\subsection{The main term}\label{sec:main term}
We begin by establishing the required asymptotic formula for our main term $N_0(P)$. Throughout, we will treat $q$ as fixed and the implied constants may depend on it. When $\v=\vecnull$, the exponential integral $I_r(\uz,\vecnull)$ is independent of $r$, which we denote by $I(\uz)$ for simplicity, i.e. set
$$I(\uz)=\int_{K_\infty^n}\omega(\x)\psi\left(z_1P^2F_1(\x)+z_2P^2F(\x)\right)d\x. $$
Thus,
\begin{equation*}
N_0(P)=|P|^n  \sum_{\substack{ r \textrm{ monic }\\ |r|\leq Q^\Delta}}|r_N|^{-n}S_r\int_{|\uz|<|r|^{-1}q^{-Q/2}}I(\uz)d\uz.
\end{equation*}
 where
\begin{equation}\label{eq:S(r)def}
S_r=\sum\limits_{\substack{d\textrm{ monic, } \uc \textrm{ primitive }\\ |d\uc|\leq |r|^{1/2}\\ |dc_2|<|r|^{1/2}\\ d\mid r}}S_{d\uc,r,\b,N}(\vecnull)=\sumstar_{|\ua|<|r|}\sum_{\substack{|\x|<|r_N|\\\x\equiv \b\bmod{N}}}\psi\left(\frac{a_1F_1(\x)+a_2F_2(\x)}{r}\right).
\end{equation}
Here, the second equality is obtained from 
using Corollary \ref{cor:2}. We begin by proving the convergence of the singular series assuming the validity of the bound in Lemma \ref{lem:ExpsumsComb}, which will be proved in the following section:
\begin{lemma}\label{lem:singul}
For any $Y\geq 1$, and for any $\ve>0$,
\begin{align*}
\sum_{\substack{r\in\scrO\\ r\textrm{ {\em monic} }\\ |r|=\hat{Y}}}|r_N|^{-n}|S_r|\ll \hat{Y}^{(7-n)/2+\ve}.
\end{align*}
\end{lemma}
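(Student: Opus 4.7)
The plan is to reduce the statement to the sharp exponential sum bound of Lemma \ref{lem:ExpsumsComb} by factorising $S_r$ multiplicatively along the prime decomposition of $r$, and then to separate the contribution of square-free moduli (where Lemma \ref{lem:Charsum} yields square-root cancellation) from that of square-full and bad-prime moduli (where only a pointwise bound is available, compensated by sparsity).

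By Corollary \ref{cor:2}, the rearrangement already recorded in \eqref{eq:S(r)def} identifies $S_r$ with the complete quadratic Kloosterman sum
$$S_r=\sumstar_{|\ua|<|r|}\sum_{\substack{|\x|<|r_N|\\\x\equiv\b\bmod N}}\psi\!\left(\tfrac{a_1 F_1(\x)+a_2 F_2(\x)}{r}\right),$$
which is multiplicative in $r$ by the Chinese Remainder Theorem; Lemma \ref{lem:Multipli} supplies a compatible multiplicativity of the $d\uc$-indexed pieces and of the $\b\bmod N$ twist. I would split $r=r_1 r_2 r_3$ with $r_1\in\scrO^\sharp$ square-free supported on good primes, $r_2$ square-full supported on good primes, and $r_3\mid D_\F^\infty$ supported on the finite set of bad primes from Section \ref{sec:goodbad}, so that $|S_r|=|S_{r_1,\b_1,N_1}||S_{r_2,\b_2,N_2}||S_{r_3,\b_3,N_3}|$ and $|r_N|^{-n}=\prod_i|r_{i,N_i}|^{-n}$.

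For the square-free factor $r_1$, Lemma \ref{lem:Expsum'} expresses $S_{r_1}$ at each good prime as a Gauss sum times a Legendre-symbol character in $-F^*(\ua)$; summing over $\ua$ and invoking Lemma \ref{lem:Charsum} delivers the square-root cancellation packaged in Lemma \ref{lem:ExpsumsComb}, with the non-triviality of the resulting Hecke character being guaranteed for generic $\ua$ by $\scrF^*(\ua)\neq 0$. For $r_2$, Lemma \ref{lem:pointwise} yields the pointwise estimate $|r_{2,N_2}|^{-n}|S_{r_2}|\ll |r_2|^{(3-n)/2+\ve}$, which combined with the sparsity $\#\{|r_2|=\hat{Y_2}:\,r_2\textrm{ square-full}\}\ll \hat{Y_2/2}$ is admissible. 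The bad-prime factor $r_3$ is handled by the same pointwise bound, using that the count $\#\{|r_3|=\hat{Y_3}:\,r_3\mid D_\F^\infty\}\ll_\ve \hat{Y_3}^\ve$ absorbs any loss. Multiplying the three estimates and summing over $\hat{Y_1}\hat{Y_2}\hat{Y_3}=\hat{Y}$ then yields the required $\hat{Y}^{(7-n)/2+\ve}$.

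The principal obstacle lies in controlling the degenerate directions $\uc$ with $\rank(M_\uc)<n$, i.e.\ the bad pairs: for these the Hecke character of Lemma \ref{lem:Charsum} becomes degenerate, and one must either absorb the corresponding contribution into the $r_3$-factor (which is why $\prod_{\uc\textrm{ bad}}\Delta(Q_\uc)$ is included in $D_\F$ in \eqref{eq:deltadef}), or estimate it directly via Lemma \ref{lem:pointwise} on the $(n-1)$-dimensional non-degenerate quadratic $Q_\uc$ defined in \eqref{eq:Qucdef}. A secondary but routine point is ensuring that the residue classes $\b_1,\b_2,\b_3$ and moduli $N_1,N_2,N_3$ produced by Lemma \ref{lem:Multipli} are uniformly controlled so that $|r_{i,N_i}|\asymp|r_i|$ up to bounded factors depending only on $N$.
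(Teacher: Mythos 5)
The paper's proof is a one-line computation that does not use any multiplicative splitting of $r$ or any character-sum cancellation. Starting from the expansion in \eqref{eq:S(r)def}, $S_r=\sum_{d,\uc}S_{d\uc,r,\b,N}(\vecnull)$, the paper applies the pointwise bound \eqref{eq:baddd}, $|S_{d\uc,r,\b,N}(\vecnull)|\ll_{D_\F}|d|^{1/2}|r|^{n/2+3/2}$, to every term, counts the $(d,\uc)$ pairs trivially (at most $\asymp |r|$ of them, weighted by $|d|^{1/2}(|r|^{1/2}/|d|)^2$), sums over the $\hat{Y}$ monic $r$ of degree $Y$, and divides by $|r_N|^{-n}\leq \hat{Y}^{-n}$. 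That is the whole proof.

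Your approach is both structurally different and, as written, flawed in its central mechanism. You claim that Lemma \ref{lem:Charsum} ``delivers the square-root cancellation packaged in Lemma \ref{lem:ExpsumsComb}.'' This misattributes the provenance of Lemma \ref{lem:ExpsumsComb}: that lemma is a \emph{pointwise} bound following from Lemma \ref{lem:Expweak} (itself built on Lemma \ref{lem:pointwise}, the Smith normal form, and Hensel's lemma), and Lemma \ref{lem:Charsum} is used nowhere in its derivation. More importantly, Lemma \ref{lem:Charsum} exploits oscillation in the sign of $S_b(\v)$ as $b$ ranges over square-free moduli; it applies to sums $\sum_b \eta(b)\eta'(b)S_b(\v)$ \emph{without absolute values}, as deployed in Section \ref{sec:sqfree} for the minor-arc error term with $\v\neq\vecnull$. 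The quantity in Lemma \ref{lem:singul} is $\sum_r |S_r|$, a sum of absolute values, so there is no oscillation over $r$ to exploit and Lemma \ref{lem:Charsum} simply has no purchase here. Finally, your phrases ``Legendre-symbol character in $-F^*(\ua)$'' and ``$\scrF^*(\ua)\neq 0$'' are dimensionally ill-posed: both $F^*$ from \eqref{eq:f*def} and $\scrF^*$ are functions of a vector $\v\in\scrO^n$, not of the pair $\ua\in\scrO^2$; you appear to have conflated the Kloosterman numerators $\ua$ with the Poisson dual variable $\v$. None of these objections means the conclusion is false, but the route you describe would not produce a valid proof without substantial repair, and the correct argument is considerably shorter: expand $S_r$ as in \eqref{eq:S(r)def}, apply \eqref{eq:baddd} termwise, and sum.
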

\begin{proof}
We assume the bound \eqref{eq:baddd}, which gives us:
\begin{align}\label{eq:singul}
\sum_{|r|=\hat{Y}}|S_r|\ll \sum_{|r|=\hat{Y}}\sum\limits_{\substack{d\textrm{ monic, } \uc \textrm{ primitive }\\ |d\uc|\leq |r|^{1/2}\\ |dc_2|<|r|^{1/2}\\ d\mid r}}|S_{d\uc,r,\b,N}(\vecnull)|\ll \sum_{|r|=\hat{Y}}\hat{Y}^{n/2+3/2}\sum\limits_{\substack{d\textrm{ monic, } \uc \textrm{ primitive }\\ |d\uc|\leq |r|^{1/2}\\ |dc_2|<|r|^{1/2}\\ d\mid r}}|d|^{1/2}\ll \hat{Y}^{n/2+7/2+\ve},
\end{align}
 establishing the bound.
\end{proof}
We next deal with the integral over $\uz$. We split it over $ \{|\uz|<\hat{C}|P|^{-2}\}$ and $\{\hat{C}|P|^{-2}\leq |\uz|< |r|^{-1}q^{-Q/2}\}$, where $ C>0$ is a fixed positive integer to be decided later. To bound the contribution of the second term, we use Lemma \ref{lem:I-hard}. Thus, for any $Z\geq |P|^{-2}$, we have
\begin{align*}
\int_{|\uz|=\hat{Z}}|I(\uz)|d\uz\ll \hat{L}^{-n}|P|^{-2}\log(|P|^2\hat{Z})\hat{Z}(1+|P|^2\hat{Z})^{1-n/2}\ll_\ve \hat{L}^{-n}|P|^{-2}\hat{Z}(1+|P|^2\hat{Z})^{1-n/2+\ve}.
\end{align*}
After summing over $Z$ and replacing $Z_1=|P|^2\hat{Z}$ for $n\geq 7$,
\begin{align*}
\int_{\hat{C}|P|^{-2}\leq |\uz|}|I(\uz)|d\uz\leq |P|^{-4}\sum_{\hat{C}\leq Z_1}(1+Z_1)^{-3/2+\ve}\ll |P|^{-4}\hat{C}^{-1/2+\ve}.
\end{align*}
This bound, in conjunction with Lemma \ref{lem:singul} assert that for $n\geq 8$ we have
\begin{equation}
\begin{split}
N_0(P)&=|P|^{n}\mathfrak{S}(\hat{Q}^\Delta)\int_{|\uz|<\hat{C}|P|^{-2}}\int\omega(\x)\psi(P^2z_1F_1(\x)+P^2z_2F_2(\x))d\x d\uz+O(|P|^{n-4}\hat{L}^{-n}\hat{C}^{-1/2+\ve})\\
&=|P|^{n-4}\mathfrak{S}(\hat{Q}^\Delta)\int_{|\uz|<\hat{C}}\int\omega(\x)\psi(z_1F_1(\x)+z_2F_2(\x))d\x d\uz+O(|P|^{n-4}\hat{L}^{-n}\hat{C}^{-1/2+\ve}).
\end{split}
\end{equation}
Here, given $Y\in \RR_{\geq 0}$,
$$\mathfrak{S}(\hat{Y})=\sum_{\substack{r\in\scrO, r \textrm{ monic }\\ |r|\leq \hat{Y}}}|r_N|^{-n} S_r,$$
is a truncated singular series.
We now switch the order of integrals over $\x$ and over $\uz$ and employ Lemma \cite[Lemma 2.2]{Browning_Vishe15} to obtain:
\begin{align*}
\int\omega(\x)\int_{|\uz|<\hat{C}}\psi(z_1F_1(\x)+z_2F_2(\x)) d z_1d z_2d\x=\hat{C}^2\meas\{|\x-\x_0|<\hat{L}^{-1}: |F_1(\x)|<\hat{C}^{-1}, |F_2(\x)|<\hat{C}^{-1}\}.
\end{align*}
Let us investigate the measure of the above set. Upon a change of variable, this is bounded by
\begin{equation}\label{eq:mE}
\hat{L}^{-n}\meas\{|\x|<1: |F_1(t^{-L}\x+\x_0)|<\hat{C}^{-1}, |F_2(t^{-L}\x+\x_0)|<\hat{C}^{-1}\}.
\end{equation}
For $i=1,2$, from \eqref{eq:HGbound} we get
\begin{align*}|F_i(t^{-L}\x+\x_0)|<H_{\F}\hat{L}^{-1}.
\end{align*}
 We may now choose $L$ to be an even integer $2\leq L$ such that $H_{\F}\leq \hat{L/2}$ for $i=1,2$, and choose $C=L/2$. Thus, for such a choice of $L$ and $C$, we get
\begin{align*}
\int_{|\uz|<\hat{C}}\int\omega(\x)\psi(z_1F_1(\x)+z_2F_2(\x))d\x d\uz=\hat{C}^2\hat{L}^{-n}=\hat{L}^{-n+1}.
\end{align*}
Finally, as a consequence of Lemma \ref{lem:singul}, we have also established the convergence of the singular series, namely
\begin{align*}
|\mathfrak{S}(\hat{Q}^\Delta)-\mathfrak{S}|\ll \hat{Q}^{-\Delta/2+\ve}\ll |P|^{-2\Delta/3+\ve},
\end{align*}
where
\begin{align*}
\mathfrak{S}=\sum_{r\in\scrO, r \textrm{ monic }}|r_N|^{-n}S_r,
\end{align*}
be the usual singular series. If $X(\AA_K)\neq\emptyset$, \cite[Cor. 7.7]{Lee11} establishes that $\mathfrak{S}>0$. The argument in \cite[Cor. 7.7]{Lee11} is obtained for $\b=\vecnull, N=1$, however, adapting it to deal with a fixed and general $\b,N$ is a routine exercise, which we skip here.

To summarise, we have established the following asymptotic formula:
\begin{lemma}\label{lem:Majorfinal}
For $n\geq 8$, for any even integer $L$ satisfying $H_\F\leq \hat{L/2}$, and any $0<\Delta<1/2$, we have
\begin{align*}
N_0(P)=\mathfrak{S}|P|^{n-4}\hat{L}^{-n+1}+O(|P|^{n-4}\hat{L}^{-n-1/4+\ve})+O(\hat{L}^{-n+1}|P|^{n-4-2\Delta/3+\ve}),
\end{align*} 
where $\mathfrak{S}>0$ if $X(\AA_K)\neq\emptyset$.
\end{lemma}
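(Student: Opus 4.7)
The plan is essentially to collect and tidy up the computations leading up to the statement. Start from
\[
N_0(P)=|P|^n\sum_{\substack{r\text{ monic}\\|r|\leq\hat{Q}^\Delta}}|r_N|^{-n}S_r\int_{|\uz|<|r|^{-1}q^{-Q/2}}I(\uz)d\uz,
\]
and split the $\uz$-integral across the threshold $|\uz|=\hat{C}|P|^{-2}$ where $C$ will be chosen in terms of $L$. For the tail region $\hat{C}|P|^{-2}\leq|\uz|<|r|^{-1}q^{-Q/2}$, the plan is to apply Lemma \ref{lem:I-hard} dyadically in $|\uz|=\hat{Z}$. Writing $Z_1=|P|^2\hat{Z}$ and using $n\geq 8$, the geometric sum gives
\[
\int_{|\uz|\geq \hat{C}|P|^{-2}}|I(\uz)|d\uz\ll \hat{L}^{-n}|P|^{-4}\hat{C}^{-1/2+\ve},
\]
and Lemma \ref{lem:singul} bounds $\sum_{r\text{ monic}}|r_N|^{-n}|S_r|$, so the total tail contribution is $O(|P|^{n-4}\hat{L}^{-n}\hat{C}^{-1/2+\ve})$.

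Next, for the core region $|\uz|<\hat{C}|P|^{-2}$, I would rescale $\uz\mapsto \uz/|P|^2$, which pulls out a factor $|P|^{-4}$ and turns the integral into $\int_{|\uz|<\hat{C}}\int\omega(\x)\psi(z_1F_1(\x)+z_2F_2(\x))d\x d\uz$. Swapping the order of integration (by Fubini) and applying \cite[Lemma 2.2]{Browning_Vishe15} to the inner $\uz$-integral converts this into
\[
\hat{C}^2\meas\bigl\{\x:|\x-\x_0|<\hat{L}^{-1},\ |F_1(\x)|<\hat{C}^{-1},\ |F_2(\x)|<\hat{C}^{-1}\bigr\}.
\]
After substituting $\x=\x_0+t^{-L}\y$ the bound $|F_i(\x_0+t^{-L}\y)|\leq H_\F\hat{L}^{-1}$ from \eqref{eq:HGbound} shows that, provided $L$ is even with $H_\F\leq \hat{L/2}$ and $C=L/2$, the two size conditions on $F_1,F_2$ are automatic for $|\y|<1$. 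The measure then collapses to $\hat{L}^{-n}\cdot 1$, so the whole core contribution equals $|P|^{n-4}\mathfrak{S}(\hat{Q}^\Delta)\hat{L}^{-n+1}$.

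Finally I would compare $\mathfrak{S}(\hat{Q}^\Delta)$ with the full singular series $\mathfrak{S}$: dyadic summation using Lemma \ref{lem:singul} gives $|\mathfrak{S}(\hat{Q}^\Delta)-\mathfrak{S}|\ll \hat{Q}^{-\Delta/2+\ve}\ll |P|^{-2\Delta/3+\ve}$ by the choice \eqref{eq:Qchoice} of $Q$, which yields the second error term after multiplication by $\hat{L}^{-n+1}|P|^{n-4}$. Combining the three pieces produces exactly the asymptotic formula in the statement. For the positivity $\mathfrak{S}>0$ under $X(\AA_K)\neq\emptyset$, I would invoke \cite[Cor.\ 7.7]{Lee11}, noting that the generalisation from $\b=\vecnull, N=1$ to arbitrary $\b,N$ is a routine adaptation. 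The only mild subtlety, and the step I expect to require the most care, is the convergence bound in Lemma \ref{lem:singul}, which depends on the quadratic exponential sum estimates developed in Section~\ref{sec:expsum}; everything else is accounting.
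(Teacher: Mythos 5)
Your proposal reproduces the paper's argument step for step: the same split of the $\uz$-integral at $\hat{C}|P|^{-2}$, the same dyadic application of Lemma~\ref{lem:I-hard} and Lemma~\ref{lem:singul} to bound the tail by $O(|P|^{n-4}\hat{L}^{-n}\hat{C}^{-1/2+\ve})$, the same rescaling and Fubini/\cite[Lemma~2.2]{Browning_Vishe15} reduction of the core to $\hat{C}^2\meas\{|\x-\x_0|<\hat{L}^{-1}:|F_i(\x)|<\hat{C}^{-1}\}$, the same choice $C=L/2$ with $H_\F\leq\hat{L/2}$ to make the measure exactly $\hat{L}^{-n}$, the same truncation bound for the singular series, and the same appeal to \cite[Cor.~7.7]{Lee11} for positivity. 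Correct, and essentially identical to the paper's proof.
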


\section{Complete exponential sums bounds}\label{sec:expsum}
In this section, we will focus on getting satisfactory bounds for the exponential sums $S_{d\uc,r,\b,N}(\v)$. The notation and the results in Sec. \ref{sec:background} will be used throughout this section. Throughout, let $\v\in \scrO^n $, let $d\in\scrO$ be monic and $\uc\in\scrO^2$ be primitive. Recall that given any $r\in \scrO$, we consider the exponential sums
$$S_{d\uc,r,\b,N}(\v)=\sum_{\ua/r\in L(d\uc)}\sum_{\substack{\x\in\scrO^n\\|\x|<|r_N|\\\x\equiv \b\bmod{N}}}\psi\left(\frac{a_1F_1(\x)+a_2F_2(\x)}{r}\right)\psi\left(\frac{-\v\cdot\x}{r_N}\right).$$

The multiplicativity relation in Lemma \ref{lem:Multipli} will allow us to consider exponential sums modulo powers of primes $\vp^k$. Note that as per our definition, our set of bad primes, defined in section \ref{sec:goodbad}, includes all primes dividing $N$. We will begin by obtaining bounds for the exponential sums modulo $\vp^k$, where $\vp$ is a type I prime, which does not divide $d$. These translate to traditional quadratic exponential sums corresponding to the quadratic form $F_\uc=-c_2F_1+c_1F_2$, which have been considered in Lemma \ref{lem:Expsum'}. The treatment of type II primes will be similar to that of bad $\uc$'s.

\subsection{Exponential sum bounds I}
This part will be devoted to obtaining bounds for $S_{\uc,r,\vecnull, 1}(\v)$, i.e., when $d=1$ and $\vp$ is not a bad prime. When $d=1$, Lemma \ref{lem:Diomain} implies that the exponential sums $S_{\uc,r,\vecnull,1}(\v)$ are equal to a familiar quadratic exponential sums:
\begin{equation}\label{eq:S,,}
S_{\uc,r,\vecnull,1}(\v)=\sumstar_{|a|<|r|}\sum_{\substack{\x\in\scrO^n\\|\x|<|r|}}\psi\left(\frac{a(-c_2F_1(\x)+c_1F_2(\x))-\v\cdot\x}{r}\right).
\end{equation} 
Throughout this section, let \begin{equation}\label{eq:fdef}f(\x):=F_\uc(\x)=-c_2F_1(\x)+c_1F_2(\x).\end{equation} As before $M_\uc=-c_2M_1+c_1M_2$ is the defining matrix for $f$. If we want to give up on the cancellations arising from the extra average over $a$, then using Lemma \ref{lem:pointwise} in the {\em generic} case, it is expected to be able to obtain square-root cancellations in the inner sum over $\x$ in \eqref{eq:S,,}, which would hand us the following generic bound: 
\begin{equation}\label{eq:generic}
|S_{\uc,r,\vecnull,1} (\v)|\ll |r|^{n/2+1}.
\end{equation}
We will use this bound only as a reference for comparing with various bounds showing up in this section.

\subsubsection{$\uc$ good case} Let us assume that $\uc$ is good. \cite[Lemma 2.1]{HeathBrown_Pierce17} implies that when $\vp$ is not a bad prime, $\rank_{\vp}(f(\x))\geq n-1$. Since $\uc$ is good, $\det(M_\uc)\neq 0 $. Therefore, the set of primes of type I consists of all good primes which do not divide $\det(M_\uc)$, and the set of primes of type II consists of good primes which divide $\det(M_\uc) $. Thus the cardinality of the set of type II primes is at most $O(\log|\uc|)$. We simplify our notation and define
\begin{equation}\label{eq:Sr}
S_{r}(\v)=\sumstar_{|a|<|r|}\sum_{|\x|<|r|}\psi\left(\frac{af(\x)-\v\cdot\x}{r}\right),
\end{equation}
where $f$ as in \eqref{eq:fdef}. Since $f$ is a quadratic form, we can explicitly evaluate $S_{\vp^k}(\v)$ when $\vp$ is a type I prime using Lemma \ref{lem:Expsum'}:
\begin{lemma}\label{lem:Expsum}
Let $\uc$ be a good pair and let $\vp$ be a prime of type I. Let $|\vp|=q^L$, and $q=p^{\ell_0}$. Then
\begin{align*}
|S_{\vp^k}(\v)|\leq |\vp|^{(n+1)k/2}\gcd(f^*(\v),\vp^k)^{1/2},
\end{align*}
where $f^*(\v)=\det(M_\uc)\v^tM_\uc^{-1}\v$ is the dual form. More explicitly, we have:
\begin{align*}
S_{\vp^k}(\v)=\begin{cases}
|\vp|^{nk/2}(|\vp|^k\delta_{\vp^k\mid f^*(\v)}-|\vp|^{k-1}\delta_{\vp^{k-1}\mid f^*(\v)}), & \text{if }2\mid k,\\
\left(\frac{\det(M_\uc)}{\vp}\right)|\vp|^{kn/2}i_p^{L\ell_0n}(|\vp|^k\delta_{\vp^k\mid f^*(\v)}-|\vp|^{k-1}\delta_{\vp^{k-1}\mid f^*(\v)}), &\textrm{ if } 2\mid n,2\nmid k,\\
\left(\frac{-f^*(\v)}{\vp}\right)|\vp|^{k(n+1)/2}i_p^{L\ell_0(n+1)}, &\textrm{ if } 2\nmid n,2\nmid k,
\end{cases}
\end{align*}
with $i_p$ as in \eqref{eq:ipdef}. 
\end{lemma}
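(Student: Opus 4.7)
The plan is to recognise that Lemma \ref{lem:Expsum} is essentially a direct specialisation of the already-proved Lemma \ref{lem:Expsum'} to the pure quadratic form $f = F_\uc$. Indeed, $f(\x) = -c_2 F_1(\x) + c_1 F_2(\x)$ has defining matrix $M_\uc$ and no linear or constant term, so in the notation of \eqref{eq:fdef1} we have $\f = \vecnull$ and $m = 0$. The hypothesis that $\vp$ is of type I for the good pair $\uc$ is, by the definitions in Section \ref{sec:goodbad}, exactly the condition $\vp \nmid \det(M_\uc)$ required in Lemma \ref{lem:Expsum'}.

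First I would verify that the dual form matches up: $f^*(\v) = \det(M_\uc)\,\v^t M_\uc^{-1} \v$ is precisely $F^*(\v)$ in the notation of \eqref{eq:f*def} applied to $F = f$, so the gcd appearing in \eqref{eq:firstB} collapses to
\[
\gcd\bigl(F^*(\f) - 4\det(M_\uc) m,\, F^*(\v),\, \vp^k\bigr) \;=\; \gcd(0,\, f^*(\v),\, \vp^k) \;=\; \gcd(f^*(\v), \vp^k),
\]
yielding the claimed bound $|S_{\vp^k}(\v)| \leq |\vp|^{(n+1)k/2} \gcd(f^*(\v), \vp^k)^{1/2}$.

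Next I would read off the three-case explicit evaluation directly from the corresponding three-case formula at the end of Lemma \ref{lem:Expsum'}, which is stated precisely under the $\f = \vecnull$, $m = 0$ assumption. The case distinctions on the parities of $n$ and $k$, the Legendre symbols $\bigl(\tfrac{\det(M_\uc)}{\vp}\bigr)$ and $\bigl(\tfrac{-f^*(\v)}{\vp}\bigr)$, and the Gauss-sum factors $i_p^{L\ell_0 n}$ and $i_p^{L\ell_0(n+1)}$ transfer verbatim once $M$ is replaced by $M_\uc$ and $F^*$ by $f^*$.

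There is no genuine obstacle here: the work has already been done in Lemma \ref{lem:Expsum'}, and the only substantive check is that the hypotheses line up. In particular, one should note that "type I" was defined precisely so that this specialisation is legitimate, and that "good" ensures $\det(M_\uc) \neq 0$ so that the inverse $M_\uc^{-1}$ (and hence $f^*$) makes sense modulo $\vp^k$. With those observations in place the proof is a one-line citation of Lemma \ref{lem:Expsum'}.
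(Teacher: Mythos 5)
Your proposal is correct and coincides with the paper's own treatment: the paper states Lemma \ref{lem:Expsum} with no separate proof, introducing it immediately after \eqref{eq:Sr} by the remark that since $f$ is a quadratic form one can "explicitly evaluate $S_{\vp^k}(\v)$ when $\vp$ is a type I prime using Lemma \ref{lem:Expsum'}." Your observations — that $\f=\vecnull$, $m=0$, that type I exactly matches the hypothesis $\vp\nmid\det(M_\uc)$, and that the gcd in \eqref{eq:firstB} collapses to $\gcd(f^*(\v),\vp^k)$ — are precisely the checks implicit in that one-line citation.
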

\begin{remark}\label{rem:1}
Let us consider various implications of the bounds in Lemma \ref{lem:Expsum}. The bounds depend on the parities of $n$ and $k$. When $r$ is generic, i.e., when $\gcd(r,f^*(\v))=1$, we may always save a factor of size $|r|^{1/2}$ as compared with \eqref{eq:generic}. We will save another factor of size $O(|r|^{1/2})$ from an average over the square-free values of $r$. As a result, we are able to adequately bound $E(P)$ as long as $n\geq 9$. When $n=8$, and $r$ is square-free and generic, Lemma \ref{lem:Expsum} hands us a $O(|r|^{n/2})$ bound instead of \eqref{eq:generic}, effectively saving a factor of size $O(|r|)$ without even utilising the average over $r$. In theory, this should lead us to settle this case. However, when $r\mid f^*(\v)$, we are handed back the bound in \eqref{eq:generic}. Moreover, $f^*(\v)$ depends both on $\v$ as well as on $\uc$, and this is the primary reason why we are unable to deal this contribution in a satisfactory manner. 
\end{remark}

When $\vp$ is a prime of type II, our bounds will not be as good as those in Lemma \ref{lem:Expsum}. Let $M_\uc=TDS$, where $T,S$ are invertible matrices as in Sec. \ref{sec:goodbad} with entries in $\scrO$ and $D=\diag(\mu_1,...,\mu_n)$ is a diagonal matrix satisfying $\mu_i\mid \mu_{i+1}$. Let $\{\y_j=S^{-1}\e_j\}$ be a basis for $\scrO^n$, and recall that the quadratic form $$Q_\uc(x_1,...,x_{n-1})=f(x_1\y_1+...+x_{n-1}\y_{n-1})$$ defined in \eqref{eq:Qucdef} is non-singular modulo $\vp$. Clearly, $\vp\mid M_\uc\y_n=\mu_nT\e_n$. We will therefore end up giving up on an extra factor of size $\gcd(\vp^k,\det(M_\uc))^{1/2}=\gcd(\vp^k,\mu_n)^{1/2}$, as compared with the bound in \eqref{eq:generic}. However, we will salvage this loss somewhat by obtaining a congruence condition on the vector $\v$:
\begin{lemma}
\label{lem:type II}
Let $\vp$ be a prime of type II, and let $k_1=\min\{k,\nu_\vp(\mu_n)\}$. Then,
\begin{equation}
|S_{\vp^k}(\v)|\leq |\vp|^{k(n/2+1)}\delta_{\vp^{k_1}\mid ((S^{-1})^t\v)_n}\gcd(\vp^{k_1},Q_\uc^*(\v'))^{1/2},
\end{equation}
where $Q_\uc^*$ denotes the dual of the quadratic form $Q_\uc$, and $((S^{-1})^t\v)_n $ denotes the $n$-th entry of the vector $(S^{-1})^t\v$, and $\v'$ denotes the $n-1$ dimensional vector obtained by deleting the $n$-th entry of $(S^{-1})^t\v$. As a consequence,
\begin{equation}
|S_{\vp^k}(\v)|\leq |\vp|^{k(n/2+1)}\gcd(\vp^{k_1},Q_\uc^*(\v'),((S^{-1})^t\v)_n)^{1/2},
\end{equation}
\end{lemma}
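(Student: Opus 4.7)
My strategy adapts the proof of Lemma~\ref{lem:Expsum'} to the degenerate setting, using the Smith decomposition $M_\uc = TDS$ to isolate the non-degenerate $(n-1)$-dimensional block $Q_\uc$ from the degenerate $y_n$-direction (weighted by $\mu_n$). The first step is to establish the vanishing condition $S_{\vp^k}(\v) = 0$ unless $\vp^{k_1}\mid w_n$, where $\w = (S^{-1})^t\v$. This mirrors the proof of Lemma~\ref{lem:pointwise}: for each fixed $a\in(\scrO/\vp^k)^*$, squaring $T_a(\v) = \sum_\x \psi((af(\x)-\v\cdot\x)/\vp^k)$ and changing variables gives $|T_a(\v)|^2 = |\vp|^{nk}\sum_{\x_3\in K}\psi(-\v\cdot\x_3/\vp^k)$, where $K = \{\x_3\bmod\vp^k : \vp^k\mid M_\uc\x_3\}$. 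Parametrising $K$ via the Smith form as $\{S^{-1}(0,\ldots,0,y_n) : \vp^{k-k_1}\mid y_n\}$, the inner character sum evaluates to $|\vp|^{k_1}\delta_{\vp^{k_1}\mid w_n}$; hence $T_a(\v)=0$ and $S_{\vp^k}(\v)=0$ whenever this indicator vanishes.

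\textbf{Step~2: Decoupling.} Assume now $\vp^{k_1}\mid w_n$. After the invertible substitution $\x = S^{-1}\uy$, a direct computation using $\y_i^tM_\uc\y_j = \mu_j(T^tS^{-1})_{ji}$ gives $f(\x) = Q_\uc(y_1,\ldots,y_{n-1}) + 2\mu_n L(y_1,\ldots,y_{n-1}) y_n + \mu_n\gamma y_n^2$, where $L$ is a linear form on $(y_1,\ldots,y_{n-1})$ and $\gamma = (S^{-t}T)_{nn}$, and $\v\cdot\x = \w\cdot\uy$. Since $\vp\nmid\det B$ with $B$ the matrix of $Q_\uc$, one may complete the square in $(y_1,\ldots,y_{n-1})$ modulo $\vp^k$ via the bijection $\uy'\mapsto\uy'' = \uy' + \mu_n y_n B^{-1}\ell$, where $\ell$ is the coefficient vector of $L$. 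This produces
$$f(\x)\equiv Q_\uc(\uy'')+\delta y_n^2\pmod{\vp^k},\qquad \delta = \mu_n(\gamma-\mu_n\ell^t B^{-1}\ell)\in\mu_n\scrO_\vp,$$
and $\v\cdot\x\equiv \w'\cdot\uy''+w_n'y_n\pmod{\vp^k}$ with $w_n'\equiv w_n\pmod{\vp^{k_1}}$. The sum then factors as $S_{\vp^k}(\v) = \sumstar_a A_1(a)A_2(a)$, where $A_1(a)$ is an $(n-1)$-variable quadratic exponential sum with non-singular leading form $Q_\uc$, and $A_2(a)$ is a one-variable Gauss sum in $y_n$.

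\textbf{Step~3: Explicit evaluation and combination.} The recipe of Lemma~\ref{lem:Expsum'} applied to $Q_\uc$ yields $A_1(a) = \psi(-Q_\uc^*(\w')/(4a\det B\,\vp^k))(\det B/\vp^k)(a/\vp^k)^{n-1}\tau_{\vp^k}^{n-1}$. For $A_2(a)$, a $\vp^{k_1}$-adic digit decomposition of $y_n$ confirms the indicator $\delta_{\vp^{k_1}\mid w_n'}$ and reduces the remainder to a standard quadratic Gauss sum of total size $|\vp|^{(k+k_1)/2}$. The product $A_1(a)A_2(a)$ therefore has the form $C\cdot(a/\vp)^{e}\psi(\chi_1\bar a/\vp^k)$, with $|C|\ll|\vp|^{kn/2+k_1/2}$ and $\chi_1\equiv -Q_\uc^*(\w')/(4\det B)\pmod{\vp^{k_1}}$, so that $\gcd(\chi_1,\vp^{k_1}) = \gcd(Q_\uc^*(\v'),\vp^{k_1})$. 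Summing $\sumstar_a$ gives a Ramanujan or Salié sum bounded by $O(|\vp|^{k/2}\gcd(\chi_1,\vp^k)^{1/2})$, and combining all the factors yields $|S_{\vp^k}(\v)|\ll |\vp|^{k(n/2+1)}\gcd(\vp^{k_1},Q_\uc^*(\v'))^{1/2}\delta_{\vp^{k_1}\mid w_n}$, as required. The consequence follows immediately: if $\vp^{k_1}\mid w_n$ then $\gcd(\vp^{k_1},Q_\uc^*(\v'),w_n) = \gcd(\vp^{k_1},Q_\uc^*(\v'))$ matches the first bound; if $\vp^{k_1}\nmid w_n$, both sides vanish by Step~1.

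\textbf{Main obstacle.} The principal technical difficulty is the $\vp$-adic bookkeeping required to propagate valuations simultaneously through (i) the basis change by the Smith form, (ii) the completion of the square using $B^{-1}$ in $\scrO_\vp/\vp^k$ rather than $\scrO$, and (iii) the Salié/Ramanujan cancellation over $a$, where the gcd factor emerging from the $a$-sum must be related back to $\gcd(\vp^{k_1}, Q_\uc^*(\v'))$ in spite of the possibly larger valuation of $\delta$. One must also separately handle the edge case $\nu_\vp(\delta)\geq k$, where the $y_n$-direction becomes entirely degenerate modulo $\vp^k$ and Lemma~\ref{lem:Expsum'} is applied directly to $Q_\uc$ in $n-1$ variables to close the estimate.
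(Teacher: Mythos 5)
Your proof is correct, but it takes a genuinely different and more computationally explicit route than the paper's. The paper obtains the result as the minimum of two independent bounds: the squaring argument of Lemma~\ref{lem:pointwise} combined with a counting of the kernel of $M_\uc$ mod $\vp^k$ yields $|S_{\vp^k}(\v)|\leq|\vp|^{k(n/2+1)+k_1/2}\delta_{\vp^{k_1}\mid w_n}$, while summing trivially over the last coordinate $x_n$ and applying the explicit $(n-1)$-variable evaluation of Lemma~\ref{lem:Expsum'} to the non-singular form $Q_\uc$ yields $|S_{\vp^k}(\v)|\leq|\vp|^{k(n/2+1)}\gcd(Q_\uc^*(\v'),\vp^k)^{1/2}$; combining these two bounds with the observation $\gcd(\vp^{k_1},\gcd(Q_\uc^*(\v'),\vp^k))=\gcd(\vp^{k_1},Q_\uc^*(\v'))$ gives the claim. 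Your approach instead decouples the quadratic form exactly by completing the square in the first $n-1$ variables (using $\vp\nmid\det B$), factors the sum as $\sumstar_a A_1(a)A_2(a)$, and evaluates both Gauss-sum factors and the resulting Ramanujan/Sali\'e sum over $a$ explicitly, propagating the gcd factor $\gcd(Q_\uc^*(\v'),\vp^{k_1})$ through the combined phase $\chi_1+\vp^{k_1}\chi_2$. What each route buys: the paper's minimum-of-two-bounds argument is shorter and avoids the $\vp$-adic valuation bookkeeping you correctly flag as delicate (in particular relating $\nu_\vp(\delta)$ to $\nu_\vp(\mu_n)$, tracking the phase correction $w_n'\equiv w_n\bmod\vp^{k_1}$, and handling the case where the contribution of $\chi_2$ could change $\gcd(\chi,\vp^k)$); your single-computation approach, while longer, establishes the stronger structural fact that the sum literally factors after completing the square, which could yield asymptotics rather than bounds if pushed further. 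One small efficiency to note: your Step~1 squaring identity applied pointwise in $a$ in fact recovers \emph{both} the paper's indicator $\delta_{\vp^{k_1}\mid w_n}$ and its bound \eqref{eq:Bound1} in one shot, since $|T_a|^2=|\vp|^{nk+k_1}\delta_{\vp^{k_1}\mid w_n}$ exactly for every unit $a$.
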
 
\begin{proof}
Recall that $M_\uc=TDS$ where $T,S$ are in $\GL_n(\scrO)$ with $\det(T),\det(S)\in \FF_q^\times$. Since $\vp$ is a prime of type II, $\vp\mid \mu_n$, and $\vp\nmid \mu_j$ for any $1\leq j\leq n-1$. Let $Q(\x)=f(S^{-1}\x)$.
\begin{align}\label{eq:type2}
S_{\vp^k}(\v)&=\sumstar_{|a|<|\vp|^k}\sum_{|\x|<|\vp|^k}\psi\left(\frac{af(\x)-\v\cdot\x}{\vp^k}\right)=\det(S)^{-1}\sumstar_{|a|<|\vp|^k}\sum_{|\x|<|\vp|^k}\psi\left(\frac{aQ(\x)-((S^{-1})^t\v)\cdot \x}{\vp^k}\right).
\end{align}
We now change the variables to write $x_n=x_{n,1}+\vp^{k-k_1}x_{n,2}$, and $\x=\x_1+\x_2$ where $\x_2=(0,...,0,\vp^{k-k_1}x_{n,2})^t$. Note that $Q(\x)=\x^t(S^{-1})^t (T DS)S^{-1}\x$. Moreover, $M_\uc S^{-1}\x_2\equiv \vecnull\bmod{\vp^k}$, and therefore, using the symmetry of $M_\uc$, we must have $\x_2^t(S^{-1})^tM_\uc\equiv \vecnull^t\bmod{\vp^k}$, as well. Therefore, the value of $Q(\x)\bmod{\vp^k}$ is independent of $x_{n,2}$. We thus get:
\begin{align*}
S_{\vp^k}(\v)&=\det(S)^{-1}\sumstar_{|a|<|\vp|^k}\sum_{\x_1}\psi\left(\frac{aQ(\x_1)-((S^{-1})^t\v)\cdot \x_1}{\vp^k}\right)\sum_{|x_{n,2}|<|\vp|^{k_1}}\psi\left(\frac{((S^{-1})^t\v)_nx_{n,2}}{\vp^{k_1}}\right).
\end{align*}
The inner sum vanishes unless $\vp^{k_1}\mid ((S^{-1})^t\v)_n$.
On the other hand, Lemma \ref{lem:pointwise} gives
\begin{align*}
|S_{\vp^k}(\v)|^2\leq |\vp|^{k(2+n)}\#\{\x\bmod{\vp^k}:\vp^k\mid M_\uc\x\}.
\end{align*}
Using the Smith normal form again, $$\#\{\x\bmod{\vp^k}:\vp^k\mid M_\uc\x\}=\#\{\x\bmod{\vp^k}:\vp^k\mid D\x\}=|\vp|^{k_1}, $$
using the fact that $S$ and $T$ are invertible. This provides us with our first bound:
\begin{equation}\label{eq:Bound1}
|S_{\vp^k}(\v)|\leq \vp^{k_1/2}|\vp|^{k(n/2+1)}\delta_{\vp^{k_1}\mid ((S^{-1})^t\v)_n}.
\end{equation}
Unfortunately, this bound is not enough for us. Therefore we go back to \eqref{eq:type2}, and evaluate the sum in a different way. This time we write $\x=\x'+x_n\e_n$, where $\e_n=(0,...,0,1)$ as before, to get:
 \begin{align*}
|S_{\vp^k}(\v)|&=\left|\sumstar_{|a|<|\vp|^k}\sum_{|\x|<|\vp|^k}\psi\left(\frac{aQ(\x)-((S^{-1})^t\v)\cdot \x}{\vp^k}\right)\right|\\&\leq \sum_{|x_n|<|\vp|^k}\left|\sumstar_{|a|<|\vp|^k}\sum_{|\x'|<|\vp|^k}\psi\left(\frac{aQ(\x'+x_n\e_n)-\v'\cdot \x'}{\vp^k}\right)\right|.
\end{align*}
We now invoke our general bound \eqref{eq:firstB} in Lemma \ref{lem:Expsum'} by  applying it to the inner exponential sums with the quadratic polynomial $g(\x')=Q(\x'+x_n\e_n)$. Note that following the above notation, $Q(\x')=Q_\uc(\x')$ is the leading quadratic part of $g(\x')$. We are thus left with:
\begin{align*}
|S_{\vp^k}(\v)|\leq |\vp|^{k(n/2+1)}\gcd(Q_\uc^*(\v'),\vp^k)^{1/2}.
\end{align*}
The lemma now follows upon taking the minimum of this bound and the one in \eqref{eq:Bound1}. 
\end{proof}

\subsubsection{$\uc$ bad case ($f$ singular)}
The strategy for dealing with the bad values of $\uc$ will emulate that of type II primes. Note that $\vp\mid \mu_{n-1}$ if and only if $\vp$ is a bad prime. After using the change of variables as in \eqref{eq:type2}, we have
\begin{align*}
S_{r}(\v)&=\det(S)^{-1}\sumstar_{|a|<|r|}\sum_{|\x|<|r|}\psi\left(\frac{aQ(\x)-((S^{-1})^t\v)\cdot \x}{r}\right)\\&=\det(S)^{-1}|r|\delta_{r\mid ((S^{-1})^t\v)_n}\sumstar_{|a|<|r|}\sum_{|\x_1|<|r|}\psi\left(\frac{aQ_\uc(\x_1)-\v'\cdot \x_1}{r}\right),
\end{align*}
where $((S^{-1})^t\v)_n$ is the $n$-th entry of the vector $(S^{-1})^t\v$, $\v'$ denotes the $n-1$ dimensional vector obtained after deleting the $n$-the entry in $(S^{-1})^t\v$, and $\x_1=(x_1,...,x_{n-1}) $. The last exponential sum can again be evaluated using Lemma \ref{lem:Expsum'} to obtain:
\begin{lemma}
\label{lem:expsumsingular}
Let $f$ be singular, and let $\vp$ is not a bad prime. Let $|\vp|=q^L$, and $q=p^{\ell_0}$. Then we have:
\begin{equation*}
\begin{split}
S_{\vp^k}(\v)=|\vp|^k&\delta_{\vp^k\mid ((S^{-1})^t\v)_n}\\&\times\begin{cases}
|\vp|^{(n-1)k/2}(|\vp|^k\delta_{\vp^k\mid Q_\uc^*(\v')}-|\vp|^{k-1}\delta_{\vp^{k-1}\mid Q_\uc^*(\v')}), & \text{if }2\mid k,\\
\left(\frac{\det(M_\uc')}{\vp}\right)|\vp|^{k(n-1)/2}i_p^{L\ell_0(n-1)}(|\vp|^k\delta_{\vp^k\mid Q_\uc^*(\v')}-|\vp|^{k-1}\delta_{\vp^{k-1}\mid Q_\uc^*(\v')}), &\textrm{ if } 2\nmid n,2\nmid k,\\
\left(\frac{-Q_\uc^*(\v')}{\vp}\right)|\vp|^{kn/2}i_p^{L\ell_0n}, &\textrm{ if } 2\mid n,2\nmid k,
\end{cases}
\end{split}
\end{equation*}
where  $M_\uc'$ is the matrix defining $Q_\uc$, $\v'$ as in Lemma \ref{lem:type II}, and $i_p$ as in \eqref{eq:ipdef}. 
\end{lemma}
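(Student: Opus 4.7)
The plan is to reduce $S_{\vp^k}(\v)$ for the singular quadratic form $f = F_\uc$ to a complete quadratic exponential sum in $n-1$ variables attached to the non-singular form $Q_\uc$, and then to invoke Lemma \ref{lem:Expsum'} directly.

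First I would use the Smith normal form $M_\uc = TDS$ with $D = \diag(\mu_1,\dots,\mu_{n-1},0)$; the hypothesis that $\vp$ is not a bad prime ensures $\vp \nmid \mu_{n-1}$, and hence $\vp \nmid \mu_j$ for all $j \le n-1$. After the linear change of variable $\x \mapsto S^{-1}\x$, which is a bijection modulo $\vp^k$ since $\det(S) \in \FF_q^\times$, the phase becomes $aQ(\x) - ((S^{-1})^t\v)\cdot\x$, where $Q(\x) = f(S^{-1}\x) = \sum_{i,j} x_i x_j\, \y_i^t M_\uc \y_j$ with $\y_i = S^{-1}\e_i$. Because $M_\uc \y_n = \mu_n T\e_n = \vecnull$ exactly (not merely modulo some power of $\vp$) and $M_\uc$ is symmetric, the variable $x_n$ drops out of $Q$ entirely, so $Q(\x) = Q_\uc(x_1,\dots,x_{n-1})$. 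This is precisely the computation already displayed above the lemma statement, and isolating the $x_n$-sum produces the factor $|\vp|^k\, \delta_{\vp^k \mid ((S^{-1})^t\v)_n}$ in front of the reduced $(n-1)$-dimensional sum.

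Next I would apply Lemma \ref{lem:Expsum'} to this remaining sum with form $Q_\uc$, $\f = \vecnull$, $m = 0$. The hypothesis $\vp \nmid \det(M_\uc')$ is automatic since $\vp$ being good for $\uc$ means in particular $\vp \nmid \Delta(Q_\uc)$. Substituting $n \mapsto n-1$ in the three explicit formulas of Lemma \ref{lem:Expsum'} swaps the parity condition on the dimension: the even-$k$ case is unchanged, the ``$2 \mid n', 2 \nmid k$'' case of the reference lemma becomes the ``$2 \nmid n, 2 \nmid k$'' case here (producing the factor $\left(\frac{\det(M_\uc')}{\vp}\right) i_p^{L\ell_0(n-1)}$ together with the $\delta_{\vp^k\mid Q_\uc^*(\v')}$ terms), and the ``$2 \nmid n', 2 \nmid k$'' case becomes the ``$2 \mid n, 2 \nmid k$'' case (producing $\left(\frac{-Q_\uc^*(\v')}{\vp}\right) i_p^{L\ell_0 n}$). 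Combining these with the prefactor $|\vp|^k$ yields the three claimed formulas verbatim. The only delicate point in the argument is verifying that $x_n$ decouples from $Q$ modulo $\vp^k$ for \emph{every} $k$, which is ensured by the exact vanishing $\mu_n = 0$ in the Smith form (equivalent to $\rank(M_\uc) = n-1$, i.e.\ $\uc$ being bad); everything else is routine bookkeeping.
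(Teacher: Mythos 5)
Your proposal is correct and takes essentially the same approach as the paper: reduce to the non-singular form $Q_\uc$ via the Smith-form change of variables $\y = S\x$ (the exact vanishing $\mu_n=0$ for a bad $\uc$ makes $x_n$ decouple, and summing over it yields the prefactor $|\vp|^k\delta_{\vp^k\mid((S^{-1})^t\v)_n}$), then apply Lemma~\ref{lem:Expsum'} with $n\mapsto n-1$, which swaps the dimension-parity cases exactly as you describe.
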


\subsection{A general bound}
So far, the above bounds suffice as long as $\vp\nmid dD_\F$. We first shift the focus to $\vp\mid d$. Using the multiplicativity of the exponential sums in Lemma \ref{lem:Multipli}, it is enough to look at the sums of type $S_{\vp^{m}\uc,\vp^{k}, \b,\vp^{\ell}}(\v)$, where $m\leq k$. As before, let us first assume that $\vp\nmid N$. First, we begin by investigating the structure of points $\ua/\vp^k\in L(\vp^{m}\uc)$. From our definition \eqref{eq:ducdef}, when $m\geq 1$, $\ua/\vp^k\in L(\vp^{m}\uc)$ if and only if the conditions $\gcd(\ua,\vp)=1, \vp^{k-m}\mid\ua\cdot\uc$ and $\vp\nmid (\ua\cdot\uc/\vp^{k-m})$ simultaneously hold. Lemma \ref{lem:Dio} implies that
\begin{align*}
\{\ua\bmod{\vp^k}:\ua/\vp^k\in L(\vp^{m}\uc)\}\subseteq\{a\uc^\perp+\vp^{k-m}\ud\bmod{\vp^k}:|a|<|\vp|^{k-m},\gcd(a,\vp)=1,\gcd(\ud,\vp)=1\}.
\end{align*}
$\ud$ also needs to satisfy an extra condition that $\vp\nmid\uc\cdot\ud$, which forces that $\ud$ itself can not be of the form $a'\uc^\perp+\vp\ud_1$, where $0\leq |a'|<|\vp|$. Therefore, this concludes that when $m<k$, we have the following equality of the sets modulo $\vp^k$:
\begin{align}
\nonumber\{\ua:\ua/\vp^k\in L(\vp^{m}\uc)\}&=\{a\uc^\perp+\vp^{k-m}\ud:|a|<|\vp|^{k-m},\gcd(a,\vp)=1,|\ud|<|\vp|^m\}\setminus\\
&\{a\uc^\perp+\vp^{k-m+1}\ud:\gcd(a,\vp)=1,|a|<|\vp|^{k-m+1},|\ud|<|\vp|^{m-1}\},\label{eq:kneqm}
\end{align}
while when $k=m$, we get
\begin{align}
\nonumber\{\ua:\ua/\vp^k\in L(\vp^{k}\uc)\}&=\{\ud:\gcd(\ud,\vp)=1,|\ud|<|\vp|^k\}\setminus\\
&\{a\uc^\perp+\vp^{k-1}\ud:\gcd(a,\vp)=1,|a|<|\vp|,|\ud|<|\vp|^{k-1}\}.\label{eq:k=m}
\end{align}
Using the above structure, it is easy to obtain the bound:
\begin{equation}
\label{eq:Lbound}
\#\{\ua/\vp^k\in L(\vp^m\uc)\}\leq |\vp|^{k-m+2m}=|\vp|^{k+m}.
\end{equation}
When $\uc$ is a bad pair, we will need to obtain some saving from the primes which divide the square-free part of $d$. It will be enough to obtain the following bound:
\begin{lemma}
\label{lem:TypeIgen}
Let $\uc$ be a bad pair and let $\vp$ not be a bad prime further satisfying $\gcd(\vp, \scrF^*(\v))=\gcd(\vp,Q_\uc^*(\v'))=1$, then
\begin{align*}
|S_{\vp,\uc,\vp,\vecnull,1}(\v)|\leq |\vp|^{n/2+1}.
\end{align*}
\end{lemma}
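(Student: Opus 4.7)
The plan is to isolate the restriction $\ua/\vp\in L(\vp\uc)$ via \eqref{eq:k=m} with $k=m=1$, which describes this set as all $\ua$ with $\gcd(\ua,\vp)=1$ except the $|\vp|-1$ scalar multiples $a\uc^\perp$ with $a\in\FF_\vp^*$. This yields
\begin{equation*}
S_{\vp\uc,\vp,\vecnull,1}(\v)=T(\v)-S_\vp(\v),
\end{equation*}
where $T(\v)$ ranges over all $\ua$ coprime to $\vp$ and $S_\vp(\v)$ is the quadratic Gauss sum \eqref{eq:Sr} associated to $f=F_\uc$. The subtracted piece is dispatched at once: since $\uc$ is bad, $F_\uc$ is singular of rank $n-1$, and since $\vp\nmid D_\F$ we have $\vp\nmid\Delta(Q_\uc)$, so Lemma \ref{lem:expsumsingular} with $k=1$ applies and together with $\gcd(\vp,Q_\uc^*(\v'))=1$ delivers $|S_\vp(\v)|\leq|\vp|^{n/2+1}$.

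For $T(\v)$, the plan is two nested applications of additive orthogonality. First, swapping summations and using
\begin{equation*}
\sum_{\gcd(\ua,\vp)=1}\psi\!\left(\frac{a_1F_1(\x)+a_2F_2(\x)}{\vp}\right)=|\vp|^2\delta_{\vp\mid F_1(\x),\,\vp\mid F_2(\x)}-1,
\end{equation*}
together with the observation that $\scrF^*(\vecnull)=0$ forces $\vp\nmid\v$ under our hypothesis $\vp\nmid\scrF^*(\v)$, reduces $T(\v)$ to $|\vp|^2 N(\v)$ with $N(\v)=\sum_{\x\bmod\vp:\,\vp\mid F_1,\,\vp\mid F_2}\psi(-\v\cdot\x/\vp)$. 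A second orthogonality, now over $(b_1,b_2)\in\FF_\vp^2\setminus\{(0,0)\}$ grouped by direction $[\w]\in\PP^1(\FF_\vp)$, then gives
\begin{equation*}
|\vp|^2 N(\v)=\sum_{[\w]\in\PP^1(\FF_\vp)}S_\vp^{[\w]}(\v),\qquad S_\vp^{[\w]}(\v):=\sum_{a\in\FF_\vp^*}\sum_{|\x|<|\vp|}\psi\!\left(\frac{aF_\w(\x)-\v\cdot\x}{\vp}\right),
\end{equation*}
with $F_\w:=w_1F_1+w_2F_2$ for any fixed representative of $[\w]$.

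Since $\vp\nmid\Delta_\F$, the binary form $F(x,y)=\det(-yM_1+xM_2)$ is square-free mod $\vp$, so at most $n$ directions $[\w]\in\PP^1(\FF_\vp)$ have $F_\w$ singular mod $\vp$. For each such singular direction, Lemma \ref{lem:type II} (or, in the case $[\w]=[-c_2:c_1]$, Lemma \ref{lem:expsumsingular}) gives $|S_\vp^{[\w]}(\v)|\leq|\vp|^{n/2+1}$, contributing $O(n|\vp|^{n/2+1})$ in total. For each non-singular direction, Lemma \ref{lem:Expsum'} with $k=1$ evaluates $S_\vp^{[\w]}(\v)$ in closed form as $|\vp|^{(n+1)/2}$ (for $n$ odd) or $|\vp|^{n/2}$ (for $n$ even) times a Legendre symbol of the form $\left(\tfrac{-F_\w^*(\v)}{\vp}\right)$ respectively $\left(\tfrac{\det M_\w}{\vp}\right)$, plus, when $n$ is even, a harmless $(|\vp|\delta_{\vp\mid F_\w^*(\v)}-1)$ factor whose $\delta$-part is supported on at most $n-1$ directions and contributes $O(n|\vp|^{n/2+1})$. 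The main obstacle, and the crux of the argument, is summing the remaining Legendre symbols across $[\w]\in\PP^1(\FF_\vp)$: writing $F_\w^*(\v)=F^*(w_2,-w_1,\v)$, one obtains $\sum_{[\w]}\left(\tfrac{P(\w)}{\vp}\right)$ for a binary form $P$ of degree $n-1$ or $n$; the hypothesis $\vp\nmid\scrF^*(\v)$ (respectively $\vp\nmid\Delta_\F$) is precisely the non-vanishing of the discriminant of $P$, so $P$ is square-free mod $\vp$ and Weil's bound for character sums gives $O(n|\vp|^{1/2})$. Multiplying by the prefactor produces $O(n|\vp|^{n/2+1})$, and assembling all contributions yields the stated bound.
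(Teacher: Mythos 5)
Your opening decomposition matches the paper's exactly: both start from \eqref{eq:k=m} with $k=m=1$, writing $S_{\vp\uc,\vp,\vecnull,1}(\v)=T(\v)-S_\vp(\v)$, and both dispose of $S_\vp(\v)$ via Lemma \ref{lem:expsumsingular} together with $\gcd(\vp,Q_\uc^*(\v'))=1$. The genuine divergence is in the treatment of the complete sum $T(\v)$. The paper treats it as a black box: it is a complete exponential sum in $n+2$ variables, and the stated bound is taken off the shelf from Deligne via Katz's Lemma 14. You instead re-expand $T(\v)$ by directions $[\w]\in\PP^1(\FF_\vp)$ and try to reconstruct the square-root saving from classical closed-form evaluations of quadratic Gauss sums (Lemmas \ref{lem:Expsum'}, \ref{lem:type II}, \ref{lem:expsumsingular}) plus Weil's bound for the resulting character sum in $\w$. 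That is a legitimately different route, and if it closed it would give a more elementary, self-contained proof than the paper's.

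However, there is a real gap at the singular directions. You assert that for a singular direction $[\w]$ (i.e.\ $\vp\mid\det(M_\w)$ with $[\w]\neq[-c_2:c_1]$), Lemma \ref{lem:type II} ``gives $|S_\vp^{[\w]}(\v)|\leq|\vp|^{n/2+1}$.'' That is not what the lemma says. With $k=1$ it gives
\begin{equation*}
|S_\vp^{[\w]}(\v)|\leq |\vp|^{n/2+1}\gcd\!\bigl(\vp,\,Q_\w^*(\v'),\,((S_\w^{-1})^t\v)_n\bigr)^{1/2},
\end{equation*}
and the $\gcd$ factor can be $|\vp|^{1/2}$, pushing the bound to $|\vp|^{n/2+3/2}$. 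For the direction $[\uc^\perp]$ you have the explicit hypothesis $\vp\nmid Q_\uc^*(\v')$ which kills this factor, but for the $O(n)$ other singular directions you have no such hypothesis and you cannot simply drop the $\gcd$. To close the gap you would have to show that $\vp\nmid\scrF^*(\v)$ forces $\gcd(\vp,Q_\w^*(\v'),((S_\w^{-1})^t\v)_n)=1$ at every singular direction $[\w]$. The natural route is to observe that at such $[\w]$ one has $\mathrm{adj}(M_\w)\equiv c\,\y_n\y_n^t\bmod\vp$ (with $\y_n$ the null vector and $c\not\equiv 0$ since $\vp\nmid\Delta_\F$), hence $F^*(\w,\v)\equiv c((S_\w^{-1})^t\v)_n^2\bmod\vp$, so $\vp\mid((S_\w^{-1})^t\v)_n$ says exactly that $[\w]$ is a root of $F^*(\cdot,\v)\bmod\vp$; one then needs the further step that $\vp\mid Q_\w^*(\v')$ in addition would make $[\w]$ a \emph{double} root, which $\vp\nmid\scrF^*(\v)$ forbids. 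This is essentially the argument in Heath-Brown and Pierce behind the nonsingularity that makes the Deligne bound go through, so you are not avoiding it — you would just be doing it by hand. As written, the step is missing, and without it the singular-direction contribution is only bounded by $O(n|\vp|^{n/2+3/2})$, which is too weak.

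Two smaller points. First, the Weil step for even $n$ sums $\left(\tfrac{\det M_\w}{\vp}\right)$ over a \emph{subset} of $\PP^1(\FF_\vp)$ (the non-singular, non-$\delta$-supported directions); you should note that this differs from the full sum by $O(n)$ terms each of size $1$, which is harmless but should be said. Second, both your argument and the paper's actually produce $O_n(|\vp|^{n/2+1})$ rather than the literal $\leq|\vp|^{n/2+1}$ in the lemma statement; since the lemma is only ever invoked with $\ll$, this is a cosmetic issue with the paper's phrasing rather than with either proof.
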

\begin{proof}
\eqref{eq:k=m} implies
\begin{align*}
S_{\vp\uc,\vp,\vecnull,1}(\v)=\sum_{\substack{|\ud|<|\vp|\\ \gcd(\ud,\vp)=1}}\sum_{|\x|<|\vp|}\psi\left(\frac{d_1F_1(\x)+d_2F_2(\x)-\v\cdot \x}{\vp}\right)-S_{\uc,\vp,\vecnull,1}(\v).
\end{align*}
The bound here follows for a standard Deligne bound (see \cite[Lem. 14]{Katz} for example) for the complete exponential sums, and our bounds in Lemma \ref{lem:expsumsingular}.
\end{proof}
Note that the method of the above lemma could be generalised to obtain further savings from $S_{\vp^m\uc,\vp^k,\vecnull,1}(\v)$, when $k,m\neq 1$, however this is not needed in this work.

When $\vp$ is a bad prime, we know that $|\vp|$ is absolutely bounded. The argument of  \cite[Lemma 5.5]{HeathBrown_Pierce17} holds here as well, as it only depends on the fact that $f(\x)$ has dimension at least $ k-1$ over $K_\vp$. \cite[Lemma 5.5]{HeathBrown_Pierce17} thus provides us:
\begin{lemma}
For each bad prime $\vp$, there is a constant $c_\vp$ such that
$$\nu_\vp(\mu_{n-1})\leq c_\vp.$$
\end{lemma}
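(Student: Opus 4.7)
The plan is to adapt the proof of \cite[Lemma 5.5]{HeathBrown_Pierce17} essentially verbatim. The only geometric input required is that for every primitive $\uc \in \scrO_\vp^2$, the symmetric matrix $M_\uc$ has rank at least $n-1$ over $K_\vp$. This follows from the smoothness of the complete intersection $X$ (Condition 4 of \cite{HeathBrown_Pierce17}): each pencil member $F_\uc = -c_2F_1+c_1F_2$ has at most a one-dimensional singular locus over any field containing $K$, so in particular $\rank_{K_\vp}(M_\uc)\geq n-1$.

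First I observe that $\nu_\vp(\mu_{n-1}(M_\uc))$ depends only on the class $[\uc]\in\PP^1(K_\vp)$: scaling $\uc$ by a unit $u\in\scrO_\vp^\times$ scales $M_\uc$, and hence each Smith invariant $\mu_i(M_\uc)$, by $u$, leaving its $\vp$-adic valuation unchanged. Since primitivity in $\scrO^2$ is preserved by localization (so one coordinate of $\uc$ is always a $\vp$-adic unit), we may regard $[\uc]\mapsto \nu_\vp(\mu_{n-1}(M_\uc))$ as a function on the compact space $\PP^1(K_\vp)$.

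Next I would verify continuity of this function. Writing $\mu_{n-1}(M)=d_{n-1}(M)/d_{n-2}(M)$, where $d_k(M)$ is the gcd of the $k\times k$ minors of $M$, each $d_k$ is the minimum of the $\vp$-adic valuations of finitely many polynomial expressions in the entries of $M$; a minimum of finitely many continuous $\ZZ\cup\{\infty\}$-valued functions is continuous, so $\nu_\vp(d_{n-1})$ and $\nu_\vp(d_{n-2})$ are continuous in $[\uc]$. The rank bound guarantees $d_{n-2}(M_\uc)\neq 0$ everywhere on $\PP^1(K_\vp)$, so the difference is well-defined and finite. If $\nu_\vp(\mu_{n-1}(M_{\uc^*}))=\infty$ at some $[\uc^*]$, then $d_{n-1}(M_{\uc^*})=0$, forcing $\rank_{K_\vp}(M_{\uc^*})\leq n-2$, which contradicts the rank condition.

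The conclusion follows by compactness: a continuous map from the compact totally disconnected space $\PP^1(K_\vp)$ to the discrete set $\ZZ$ takes only finitely many values, hence is bounded by some $c_\vp$. The main subtle point is controlling the continuity behaviour of the Smith invariants under $\vp$-adic perturbation (made possible by writing them as ratios of gcds of polynomials in the entries); once that is in place, the result reduces to the compactness of $\PP^1(K_\vp)$ together with the pencil rank condition furnished by smoothness of $X$.
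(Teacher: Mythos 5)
Your proof is correct and follows essentially the same route as the paper, which simply invokes \cite[Lemma 5.5]{HeathBrown_Pierce17} and observes that its only geometric input---that $\rank(M_\uc)\geq n-1$ over $K_\vp$ for every primitive $\uc$, a consequence of the smoothness of $X$---transfers to the function-field setting. Your compactness-and-continuity argument on $\PP^1(K_\vp)$, via the determinantal divisors $d_{n-1},d_{n-2}$, is that cited argument correctly adapted.
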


We now turn our attention to a more general bound which can be seen as a combination of methods in Lemma \ref{lem:type II} and \cite[Lemma 5.4]{HeathBrown_Pierce17}. In the light of \eqref{eq:Lbound}, the bound obtained in the following Lemma, upto a factor of $|\vp|^{k_1/2} $, is a direct analogue of \eqref{eq:generic} in this case. The loss of the factor $|\vp|^{k_1/2}$ essentially arises from $\gcd(\vp^{k-m},F(\uc))$, where $F(\uc)=\det(M_\uc)$. Akin to Lemma \ref{lem:type II}, we compensate the loss of this factor by obtaining a congruence condition on $\v$.
\begin{lemma}
\label{lem:Expweak}
Let $\uc$ be any primitive pair. Then for any good prime $\vp$, and for any $1\leq m\leq k$ we have
\begin{align}\label{eq:enough}
|S_{\vp^m\uc,\vp^k,\vecnull,1}(\v)|\leq |\vp|^{k(n/2+1)+m+k_1/2}\delta_{\vp^{k_1}\mid ((S^{-1})^t\v)_n}.
\end{align} 
When $\vp$ is a bad prime, then 
\begin{align}\label{eq:enough1}
|S_{\vp^m\uc,\vp^k,\b,\vp^\ell}(\v)|\leq C_{\vp,\ell} |\vp|^{k(n/2+1)+m+k_1/2}\delta_{\vp^{k_2}\mid ((S^{-1})^t\v)_n},
\end{align}
where $C_{\vp,\ell}$ is a constant which only depends on $|\vp|$ and $\ell$. Here $k_1=\min\{k-m,\nu_\vp(\mu_n)\}$, and $k_2=\min\{k-m,\nu_\vp(\mu_n),k-\ell\} $. 
\end{lemma}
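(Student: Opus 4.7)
The plan is to mimic the argument of Lemma~\ref{lem:type II}, generalised to accommodate the extra factor $d=\vp^m$, and then to apply the pointwise estimate of Lemma~\ref{lem:pointwise} to the residual quadratic exponential sum. First I would invoke Lemma~\ref{lem:Dio}: for $m<k$, every residue class $\ua$ with $\ua/\vp^k\in L(\vp^m\uc)$ can be written uniquely modulo $\vp^k$ as $\ua\equiv a\uc^\perp+\vp^{k-m}\ud$ with $|a|<|\vp|^{k-m}$, $\gcd(a,\vp)=1$, $|\ud|<|\vp|^m$ and $\gcd(\ud,\vp)=1$, using \eqref{eq:kneqm} (the case $m=k$ being handled analogously via \eqref{eq:k=m}). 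This splits the phase as
$$\frac{\ua\cdot\G(\x)}{\vp^k}=\frac{aF_\uc(\x)}{\vp^k}+\frac{F_\ud(\x)}{\vp^m},$$
where $F_\ud=d_1F_1+d_2F_2$.

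Next I would change variables via $\x=S^{-1}\y$ (with $M_\uc=TDS$ the Smith normal form) and split $y_n=y_{n,1}+\vp^{k-k_1}y_{n,2}$ with $|y_{n,1}|<|\vp|^{k-k_1}$ and $|y_{n,2}|<|\vp|^{k_1}$, just as in Lemma~\ref{lem:type II}. Because $M_\uc\y_n=\mu_n T\e_n$, every term involving $y_n$ in $F_\uc(S^{-1}\y)$ carries a factor of $\mu_n$; hence, after multiplication by $a$ and division by $\vp^k$, the $y_{n,2}$ contributions from $aF_\uc$ acquire a power of $\vp$ of at least $\nu_\vp(\mu_n)+(k-k_1)\geq k$ and vanish modulo $\scrO$. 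An analogous, more favourable computation for the perturbation $\vp^{k-m}F_\ud(S^{-1}\y)$ shows that its $y_{n,2}$ dependence also vanishes modulo $\vp^k$, because each such term carries an additional factor of $\vp^{k-m}\cdot\vp^{k-k_1}$ whose exponent $2k-m-k_1\geq k$ (using $k_1\leq k-m$). The only residual $y_{n,2}$ contribution is then the linear term $-((S^{-1})^t\v)_n y_{n,2}/\vp^{k_1}$ coming from $\v\cdot\x$, and summing over $y_{n,2}$ produces the claimed factor $|\vp|^{k_1}\delta_{\vp^{k_1}\mid ((S^{-1})^t\v)_n}$.

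It will then remain to bound the reduced sum over $a$, $\ud$ and the truncated $\y$ variables by $|\vp|^{k(n/2+1)+m-k_1/2}$. For each fixed pair $(a,\ud)$ I would apply Lemma~\ref{lem:pointwise} to the inner $\x$-sum, viewed as a quadratic exponential sum with defining matrix $M_\ua=aM_\uc+\vp^{k-m}M_\ud$. Conjugating by the Smith transformations, $M_\ua$ differs from the diagonal $aD$ by the perturbation $\vp^{k-m}T^{-1}M_\ud S^{-1}$; since the first $n-1$ diagonal entries of $D$ are units modulo $\vp$, a Gaussian-elimination step reduces the kernel problem modulo $\vp^k$ to a single equation in the $n$-th coordinate whose leading coefficient has $\vp$-valuation $\min\{\nu_\vp(\mu_n),k-m\}=k_1$ for generic $\ud$. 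This would yield $N(\vp^k)\leq|\vp|^{k_1}$ and hence $|\sum_\x|\leq|\vp|^{kn/2+k_1/2}$; combining with the $|\vp|^{k-m}$ choices of $a$ and the $|\vp|^{2m}$ choices of $\ud$ would produce the target $|\vp|^{k(n/2+1)+m+k_1/2}$.

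The hard part will be the non-generic values of $\ud$, for which the eliminated coefficient acquires a larger $\vp$-valuation and the naive kernel bound degrades; these form a $\vp$-codimension-one locus in $\ud$-space, and the excess loss will have to be compensated against the reduced count of such $\ud$. For the bad-prime bound \eqref{eq:enough1}, the absolutely bounded size of $|\vp|$ will allow all remaining subtleties to be absorbed into the constant $C_{\vp,\ell}$: the congruence $\x\equiv\b\bmod\vp^\ell$ only constrains $y_{n,2}$ modulo $\vp^{k-\ell}$, truncating the $y_{n,2}$-sum and thereby replacing $k_1$ by $k_2=\min\{k-m,\nu_\vp(\mu_n),k-\ell\}$, while the weaker control $\nu_\vp(\mu_{n-1})\leq c_\vp$ at bad primes will cost only a bounded power of $|\vp|$.
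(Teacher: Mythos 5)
Your overall plan is the same as the paper's: decompose $\ua$ as $a\uc^\perp+\vp^{k-m}\ud$ via Lemma~\ref{lem:Dio}, split $y_n=y_{n,1}+\vp^{k-k_2}y_{n,2}$ to extract the divisibility constraint on $((S^{-1})^t\v)_n$, and control the size via Lemma~\ref{lem:pointwise} applied with defining matrix $aM_\uc+\vp^{k-m}M_\ud$, whose Smith-normal-form kernel is governed by the valuation of $\det(aM_\uc+\vp^{k-m}M_\ud)$. Your observation that the $F_\ud$ perturbation is also $y_{n,2}$-independent modulo $\vp^k$ (because $2k-m-k_2\ge k$) is correct and is in fact stated more explicitly than in the paper's proof, which simply asserts $f(S^{-1}\y)=f(S^{-1}\y_1)$ for $f=F_\uc$.

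There are, however, two real problems. First, an accounting slip: you claim to extract a factor $|\vp|^{k_1}\delta_{\vp^{k_1}\mid(\cdot)_n}$ from the $y_{n,2}$-sum and say \emph{then} that the reduced sum over $a,\ud$ and the truncated $\y$-box must be bounded by $|\vp|^{k(n/2+1)+m-k_1/2}$, but the computation you actually carry out (full box in $\x$, $N(\vp^k)\le|\vp|^{k_1}$, $|\vp|^{k-m}$ choices of $a$, $|\vp|^{2m}$ choices of $\ud$) already produces the target $|\vp|^{k(n/2+1)+m+k_1/2}$ for the \emph{whole} sum, not the reduced one. Multiplying again by $|\vp|^{k_1}$ from the $y_{n,2}$-factor would overshoot by $|\vp|^{k_1}$. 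Lemma~\ref{lem:pointwise} also cannot be applied directly to the truncated $\y_1$-box. The correct bookkeeping is the paper's: use the $y_{n,2}$-analysis \emph{only} to deduce that the sum vanishes unless $\vp^{k_2}\mid((S^{-1})^t\v)_n$, and independently bound the full sum via Lemma~\ref{lem:pointwise}, then multiply the size bound by the delta.

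Second, and more seriously, the step you flag as ``the hard part'' is genuinely unfinished, and the ``$\vp$-codimension-one locus'' heuristic is not adequate. The pointwise bound gives $N(\vp^k)^{1/2}=\gcd(F(a\uc^\perp+\vp^{k-m}\ud),\vp^k)^{1/2}$, and once $\vp^{k-m}\mid F(\uc)$ one must show, uniformly in the precision $g\ge 1$, that the number of $\ud$ with $\vp^{k-m+g}\mid F(a\uc^\perp+\vp^{k-m}\ud)$ drops by a full factor $|\vp|^g$; a single codimension count handles $g=1$ but not the whole sum over $g$. This requires an absolutely bounded root count for $F(u,1)$ modulo arbitrary prime powers: for good $\vp$ this is Hensel's lemma (using $\vp\nmid\Delta_\F$), and for bad $\vp$ the paper invokes the resultant estimate of \cite{GGI} together with the bound $\nu_\vp(\mu_{n-1})\le c_\vp$, absorbing the excess into $C_{\vp,\ell}$. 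Without such a root-count input the compensation argument does not close.
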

\begin{proof}
Since the set of bad primes is bounded and $N$ is fixed, without loss of generality, we may assume that $\ell\leq k/3$. This dependence may be absorbed in the constant.  Recall that
\begin{align*}
S_{\vp^m\uc,\vp^k,\b,\vp^\ell}(\v)&=\sum\limits_{\substack{|\ud|<|\vp|^m\\ \vp\nmid\uc\cdot\ud}}\sumstar_{|a|<|\vp|^{k-m}}\sum\limits_{\substack{|\x|<|\vp|^k\\ \x\equiv\b\bmod{\vp^\ell} }}\psi\left(\frac{(a\uc^\perp+\vp^{k-m}\ud)\cdot(F_1(\x),F_2(\x))-\v\cdot\x}{\vp^k}\right)\\
&=\sum\limits_{\substack{|\ud|<|\vp|^m\\ \vp\nmid\uc\cdot\ud}}\sumstar_{|a|<|\vp|^{k-m}}\sum\limits_{\substack{|\x|<|\vp|^k\\ \x\equiv\b\bmod{\vp^\ell} }}\psi\left(\frac{af(\x)+\vp^{k-m}(d_1F_1(\x)+d_2F_2(\x))-\v\cdot\x}{\vp^k}\right).
\end{align*}
We follow the recipe of Lemma \ref{lem:type II} to first change the variables and write $\y=S\x$, and then write $\y=\y_1+\vp^{k-k_2}\y_2$, where $\y_2=(0,...,0,y_2)$. It is easy to see that $f(S^{-1}\y)=f(S^{-1}\y_1)$. Moreover, the congruence condition is converted to $\y_1\equiv S\b\bmod{\vp^\ell}$. As a result, akin to the argument in Lemma \ref{lem:type II}, the sum over $\y_2$ hands us the condition $\vp^{k_2}\mid ((S^{-1})^t\v)_n$. 

On the other hand, we substitute $\x=\b+\vp^\ell\y$ and apply the bound in Lemma \ref{lem:pointwise} to get
\begin{align}\notag
|S_{\vp^m\uc,\vp^k,\b,\vp^\ell}(\v)|&\leq C_{\vp,\ell}' |\vp|^{n(k-\ell)/2}\sum\limits_{|\ud|<|\vp|^m,\gcd(\ud,\vp)=1}\,\,\,\sumstar_{|a|<|\vp|^{k-m}}N(a\uc^\perp+\vp^{k-m}\ud,\vp^{k})^{1/2}\\
&\leq  C_{\vp,\ell}' |\vp|^{n(k-\ell)/2+k-m}\sum\limits_{|\ud|<|\vp|^m,\gcd(\ud,\vp)=1}\gcd(F(\uc+\vp^{k-m}\ud),\vp^k)^{1/2}\label{eq:no}
\end{align}
where $$N(\ua,\vp^k)=\#\{\x\bmod{\vp^{k}}:\vp^{k}\mid(a_1M_1+a_2M_2)\x\},$$
and $F(x,y)$ is the determinant form defined in \eqref{eq:Fxydef}. 

If $\vp$ is not a bad prime then $\nu_{\vp}(F(\uc))=\nu_{\vp}(\mu_n)$. Therefore, if $\nu_{\vp}(F(\uc))<k-m$, then $k_1=\nu_{\vp}(F(\uc))$. Moreover, for any choice of $\ud$ and $\gcd(a,\vp)=1$, $\nu_{\vp}(F(a\uc+\vp^{k-m}\ud))=k_1<k-m$ as well.  \eqref{eq:enough} further follows from \eqref{eq:no}.
When $\vp$ is a bad prime, then $\nu_{\vp}(\mu_n)\leq \nu_{\vp}(F(\uc))\leq \nu_{\vp}(\mu_n)+(n-1)c_\vp $. If we further have that $m_1=\nu_{\vp}(F(\uc))<k-m$, then \eqref{eq:enough1} follows from a minor modification of the argument above after observing that $|\vp|^{m_1}\leq |\vp|^{k_1+(n-1)c_\vp}$.

 It is therefore enough to assume that $\vp^{k-m}\mid F(\uc)$, which we do for the rest of the proof. This in turn implies that $k-m\leq k_1$ if $\vp$ is not bad and $k-m\leq k_1+(n-1)c_\vp$ otherwise. The rest of the argument will follow from minor modifications of the proof of \cite[Lemma 5.4]{HeathBrown_Pierce17}, which we reproduce below.

We start by rewriting \eqref{eq:no} as
\begin{align*}
&|S_{\vp^m\uc,\vp^k,\b,\vp^\ell}(\v)|\\&\leq C_{\vp,\ell} |\vp|^{nk/2+k-m}\sum_{g=0}^{m}|\vp|^{(g+(k-m))/2}\#\{|\ud|<|\vp|^m,\vp\nmid \ud,\gcd(\vp^{k},F(\uc+\vp^{k-m}\ud))=\vp^{g+k-m}\}.\\
&\leq C_{\vp,\ell}' |\vp|^{nk/2+k-m+k_1/2}(|\vp|^{2m}+\sum_{g=1}^{m}|\vp|^{g/2}\#\{|\ud|<|\vp|^m,\vp\nmid \ud,\gcd(\vp^{k},F(\uc+\vp^{k-m}\ud))=\vp^{g+k-m}\})\\
&\leq C_{\vp,\ell}' |\vp|^{nk/2+k-m+k_1/2+2m}(1+\sum_{g=1}^{m}|\vp|^{-3g/2}\#\{|\ud|<|\vp|^g,\vp\nmid \ud,\gcd(\vp^{k},F(\uc+\vp^{k-m}\ud))=\vp^{g+k-m}\})
\end{align*}
The number of $|\ud|<|\vp|^g$ such that the second co-ordinate of $\uc+\vp^{k-m}\ud$ is co-prime to $\vp$ is 
$$\leq |\vp|^{g}\#\{|u|<|\vp|^{k-m+g}: \vp^{k-m+g}\mid F(u,1) \}. $$
The main result in \cite{GGI} applied to the polynomial $F(u,1)$ and its derivative, implies that for any root $u_0\in\scrO$, satisfying $\vp\mid F(u_0,1)$, we must have $\nu_\vp(F'(u_0,1))\leq\nu_{\vp}(D_\F)$, where $D_\F$ is as in \eqref{eq:deltadef}. We may now further use Hensel's Lemma to obtain $$\#\{|u|<|\vp|^{k-m+g}: \vp^{k-m+g}\mid F(u,1) \}\leq n|D_\F|.$$ 
This bound is clearly enough. We can similarly bound the number of terms where the first co-ordinate is co-prime to $\vp$ to finish the proof.
\end{proof}
As an immediate corollary of the above Lemma, we get the following weak bound, which holds for any $r$ and any primitive $\uc$:
\begin{lemma}\label{lem:ExpsumsComb}Let $d,N\in\scrO$, $\uc\in\scrO^2$ be any primitive pair, and let $\b\in\scrO^n$. Given any $r$, we have
\begin{align}\label{eq:bdd1}
|S_{d\uc,r,\b,N}(\v)|&\ll_{D_\F}|d||r|^{n/2+1}\gcd(r/d,((S^{-1})^t\v)_n,\det(M_\uc))^{1/2}\\
& \ll_{D_\F}|d|^{1/2}|r|^{n/2+3/2}.\label{eq:baddd}
\end{align}
\end{lemma}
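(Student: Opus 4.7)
The plan is to reduce the estimation to prime-power moduli via the multiplicativity established in Lemma \ref{lem:Multipli}, apply the pointwise bounds already proved at each prime power, and then reassemble the local factors to recover the global gcd condition.

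First, factor $r=\prod_\vp \vp^{k_\vp}$ and write $d=\prod_\vp \vp^{m_\vp}$ with $m_\vp\le k_\vp$ (since $d\mid r$), and similarly $N=\prod_\vp\vp^{\ell_\vp}$. An iterated application of Lemma \ref{lem:Multipli} yields
$$
|S_{d\uc,r,\b,N}(\v)|=\prod_\vp |S_{\vp^{m_\vp}\uc,\,\vp^{k_\vp},\,\b_\vp,\,\vp^{\ell_\vp}}(\v)|
$$
for suitable residues $\b_\vp$, since the extra exponential phase in \eqref{eq:multi1} has modulus $1$. Thus it suffices to bound each local factor.

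Second, at each prime $\vp$ we apply the appropriate result from Section \ref{sec:expsum}. When $m_\vp=0$ and $\vp$ is a good prime of type I for $\uc$ (including the case when $\uc$ is bad and $\vp\nmid \Delta(Q_\uc)$), Lemma \ref{lem:Expsum} or Lemma \ref{lem:expsumsingular} gives a bound $\le |\vp|^{k_\vp(n/2+1)}\gcd(\vp^{k_\vp},\det(M_\uc))^{1/2}$, which is trivial since $\vp\nmid\det(M_\uc)$ in the first case and is handled by the delta-condition on $((S^{-1})^t\v)_n$ in the second. When $m_\vp=0$ and $\vp$ is a good type II prime for a good $\uc$, Lemma \ref{lem:type II} gives
$$|S_{\uc,\vp^{k_\vp}}(\v)|\le |\vp|^{k_\vp(n/2+1)}\,\gcd(\vp^{k_1},Q_\uc^*(\v'),\,((S^{-1})^t\v)_n)^{1/2},$$
and in the remaining case ($m_\vp\ge 1$, or $\vp$ a bad prime) Lemma \ref{lem:Expweak} supplies
$$|S_{\vp^{m_\vp}\uc,\vp^{k_\vp},\b_\vp,\vp^{\ell_\vp}}(\v)|\le C_{\vp,\ell_\vp}\,|\vp|^{k_\vp(n/2+1)+m_\vp+k_1/2}\,\delta_{\vp^{k_1}\mid ((S^{-1})^t\v)_n}.$$
Since $\nu_\vp(\det(M_\uc))=\nu_\vp(\mu_n)$ at every good prime (only $\mu_n$ can be non-unit modulo a good $\vp$), the exponent $k_1$ that enters each of these bounds satisfies $|\vp|^{k_1/2}\delta_{\vp^{k_1}\mid ((S^{-1})^t\v)_n}\le \gcd(\vp^{k_\vp-m_\vp},\,((S^{-1})^t\v)_n,\,\det(M_\uc))^{1/2}$.

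Third, take the product over $\vp$. The contributions $|\vp|^{k_\vp(n/2+1)+m_\vp}$ multiply to $|r|^{n/2+1}|d|$; the local gcd factors multiply to $\gcd(r/d,\,((S^{-1})^t\v)_n,\,\det(M_\uc))^{1/2}$ by the Chinese remainder theorem; and the finitely many constants $C_{\vp,\ell_\vp}$ at bad primes are bounded in terms of $D_\F$ (and the fixed $N$) and absorbed into the implied constant. This yields \eqref{eq:bdd1}. Bound \eqref{eq:baddd} then follows at once from the trivial estimate $\gcd(r/d,\,((S^{-1})^t\v)_n,\,\det(M_\uc))^{1/2}\le |r/d|^{1/2}$, since $|d|\,|r|^{n/2+1}|r/d|^{1/2}=|d|^{1/2}|r|^{n/2+3/2}$.

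The only real subtlety is the bookkeeping at bad primes: the Smith data $(S,\mu_n)$ depends on $\uc$ but not on the prime, so the local delta-conditions do combine cleanly into a single gcd condition, and the discrepancy $k-m-k_1\le (n-1)c_\vp$ at bad primes $\vp$ (from Lemma \ref{lem:Expweak}) contributes only an $O_{D_\F}(1)$ loss per bad prime, which is absorbed because the set of bad primes is fixed. Everything else is routine assembly from the prime-power bounds already in hand.
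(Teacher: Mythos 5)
Your proposal is correct in substance and takes the route the paper intends: the paper simply asserts Lemma~\ref{lem:ExpsumsComb} as ``an immediate corollary'' of Lemma~\ref{lem:Expweak}, and your write-up supplies the missing multiplicativity/local-factor assembly that the author is implicitly relying on. In that sense you are filling in a proof the paper omits, not deviating from it.

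Two bookkeeping issues are worth flagging. First, Lemma~\ref{lem:Expweak} is stated (and its proof valid) only for $1\leq m\leq k$: the parameterisation $\ua=a\uc^\perp+\vp^{k-m}\ud$ over $|\ud|<|\vp|^m$ subject to $\vp\nmid\uc\cdot\ud$ is vacuous when $m=0$. So the case of a \emph{bad} prime $\vp$ with $m_\vp=0$ (which genuinely occurs, since bad primes include those dividing $N$ and need not divide $d$) is not literally covered by any of the lemmas you cite: Lemmas~\ref{lem:Expsum}, \ref{lem:type II}, \ref{lem:expsumsingular} all require $\vp$ to be good. The required local bound does hold, and follows by the same two steps as in Lemma~\ref{lem:Expweak}'s proof (Smith normal form change of variables to extract $\delta_{\vp^{k_2}\mid((S^{-1})^t\v)_n}$, and Lemma~\ref{lem:pointwise} after the shift $\x=\b+\vp^\ell\y$, with no $\ud$-sum), so this is a presentational gap that the paper itself shares; but you should say so rather than attribute it to Lemma~\ref{lem:Expweak}. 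Second, at a bad prime the delta condition supplied by Lemma~\ref{lem:Expweak} is $\delta_{\vp^{k_2}\mid((S^{-1})^t\v)_n}$ with $k_2=\min\{k-m,\nu_\vp(\mu_n),k-\ell\}\le k_1$, not $\delta_{\vp^{k_1}\mid\cdot}$ as you wrote; your inequality $|\vp|^{k_1/2}\delta\le\gcd(\cdot)^{1/2}$ then needs an extra factor $|\vp|^{(k_1-k_2)/2}\le|\vp|^{\ell/2}$, which is $O_{D_\F,N}(1)$ at the finitely many bad primes and absorbed into the implied constant. The ``discrepancy $k-m-k_1\le(n-1)c_\vp$'' you mention is a related but distinct point (it relates $\nu_\vp(\mu_n)$ to $\nu_\vp(\det M_\uc)$ at bad primes); both losses are bounded and absorbed, but it is the $k_1$-versus-$k_2$ gap that actually needs to be cited when passing from the delta to the gcd.
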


Observe that our bounds throughout this section are independent of the choice of $\b$ and depend only on $|D_\F|$. This will make their application rather convenient. 

\section{Square-free moduli contribution}\label{sec:sqfree}
Our rest of the effort will be spent in proving that $|E_i(P))|\ll |P|^{n-4-\ve}$, for $i=1,2$. We will begin by considering the term $E_1(P)$ as defined in \eqref{eq:E1def}. Let $|r|=\hat{Y}$ and let $|\uz|=\hat{Z}$. Let $J(Z)=1+|P|^2\hat{Z}$.
 Since $\v\neq \vecnull$, this forces,
\begin{equation}
\label{eq:YBound}
\hat{Y}\gg \frac{|P|}{J(Z)}.
\end{equation}

From now on, we fix  $0\leq Y\leq Q$, and $d\uc$ satisfying  $|d\uc|\leq \hat{Y/2}, |dc_2|<\hat{Y/2}$ and $Z\in\ZZ$ such that $-5\log_q |P|\leq Z<-Y-Q/2$. Note that there are only $O(|P|^\ve)$ choices for $Y$ and $Z$.
 Let $E_i(d\uc,Y,Z)$ denote the contribution to the term $E_i$ from this specific choice of $d\uc$ after summing over all monic $|r|=\hat{Y} $, and integrating over $|\uz|=\hat{Z}$. For example:
\begin{equation}
\label{eq:Edcdef}
E_1(d\uc, Y,Z):=|P|^n\sum\limits_{\substack{|r|=\hat{Y}\\d\mid r}}|r_N|^{-n}
 \int_{|\uz|=\hat{Z}}\sum_{\substack{\v\in\scrO^n\setminus\vecnull,\\|\v|\leq \hat{V}}}S_{d\uc,r,\b,N}(\v)I_{r_N}(\uz;\v)d\uz,
\end{equation}
 
 Let $\scrP$ denote a set of primes to be specified later, containing at least all primes dividing $dD_\F$. Next, we write $r=br_1$, where $b$ denotes the square free part of $r$ satisfying a further constraint: $\gcd(b,\scrP)=1$. 
Recalling the factorisation of the exponential sum in Lemma \ref{lem:Multipli}, there exist $\b_1\in (\scrO/N\scrO)^n, b_0\in (\scrO/N\scrO)^*$ satisfying
\begin{equation}\label{eq:E(P)-1}
E_1(d\uc,Y,Z)
\ll \frac{|P|^n}{\hat Y^{n}}
\sum_{\substack{ \v\in \cO^n\\
\v\neq \vecnull\\
|\v|\ll \hat{V}
}} 
\sum_{\substack{
r_1\in \cO, d\mid r_1\\ |r_1|\leq \hat{Y}}} 
|S_{d\uc,r_1,\b_1,N_1}(\v)|
|\Sigma(Z,r_1,\hat{Y}/|r_1|)| ,
\end{equation}
where
\begin{equation}\label{eq:access}
\Sigma(Z,y,\hat{B})
=
\int_{|\uz|=\hat{Z}}\sum_{\substack{
b\in \cO^\sharp
\\
(b,\scrP)=1\\
|b|=\hat{B} 
\\ b\equiv b_0\bmod{N}} }S_{\uc,b,\vecnull,1}(\v)
I_{by_N}(\uz;\v)d\uz.
\end{equation}
Let $f(\x)=-c_2F_1+c_1F_2$ as in the previous section. Using notation \eqref{eq:Sr}, we may rewrite this as
\begin{equation}\label{eq:access1}
\Sigma(Z,y,\hat{B})
=
\int_{|\uz|=\hat{Z}}\sum_{\substack{
b\in \cO^\sharp
\\
(b,\scrP)=1\\
|b|=\hat{B} 
\\ b\equiv b_0\bmod{N}} }S_{b}(\v)
I_{by_N}(\uz;\v)d\uz.
\end{equation}
In this section, we will derive a good bound for $\Sigma(Z,y,\hat{B})$, and eventually apply it with $y=r_1,\hat{B}=\hat{Y}/|r_1|$. Since our bounds for exponential sums differ with the parity of $n$, so will our treatment. We begin by noting a weaker bound which is a direct consequence of Lemmas \ref{lem:I-hard} and \ref{lem:Expsum}:
\begin{lemma}
\label{lem:sqfreeeasy}
Let $\ve>0$, let $\uc$ be monic primitive and good, and $\scrP$ be the set of primes dividing $d\det(M_\uc)f^*(\v)$ and $D_\F$ if $n$ is even, and the set of primes dividing $d\det(M_\uc)D_\F$ if $n$ is odd. Then
\begin{align*}
|\Sigma(Z,y,\hat{B})|\ll J(Z)^{-n/2+1}(\log|P|)\hat{Z}\min\{\hat{Z}, |P|^{-2}\}\hat{B}^{n/2+1}\begin{cases}1&\textrm{ if }2\mid n\\
\hat{B}^{1/2}&\textrm{ if }2\nmid n.
\end{cases}
\end{align*}
Let $\uc$ be bad, then let $\scrP$ denote the set of primes dividing $dD_\F Q_\uc^*(\v')$, then
\begin{align*}
|\Sigma(Z,y,\hat{B})|\ll J(Z)^{-n/2+1}(\log|P|)\hat{Z}\min\{\hat{Z}, |P|^{-2}\}\hat{B}^{n/2+3/2}\begin{cases}1&\textrm{ if }2\nmid n\\
\hat{B}^{1/2}&\textrm{ if }2\mid n.
\end{cases}
\end{align*}
\end{lemma}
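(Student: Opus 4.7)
The plan is to apply the triangle inequality in \eqref{eq:access1} to decouple the $\uz$-integral from the sum over square-free $b$, estimate each piece with previously proved results, and combine. Since $S_b(\v)$ is independent of $\uz$, I will bound
\[
|\Sigma(Z,y,\hat{B})|\leq \sum_{\substack{b\in\cO^\sharp,\,|b|=\hat{B}\\ (b,\scrP)=1,\,b\equiv b_0\,(N)}} |S_b(\v)|\int_{|\uz|=\hat{Z}}|I_{by_N}(\uz;\v)|\,d\uz,
\]
and observe that Lemma \ref{lem:I-hard} controls the inner integral by a quantity uniform in $b$ (the bound on $\int_{|\uz|=\hat{Z}}\meas(\Omega_\uz)\,d\uz$ there depends on $\uc$ and $\v$ but not on $r_N$), of size $J(Z)^{-n/2+1}(\log|P|)\hat{Z+1}\min\{\hat{Z+1},|P|^{-2}\}$ up to an absolute constant. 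The task therefore reduces to bounding the square-free sum $\sum_b|S_b(\v)|$.

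For the $b$-sum the essential input is multiplicativity. Since $b$ is square-free and coprime to $\scrP$, Lemma \ref{lem:Multipli} (with $d=1$ and trivial congruence on $\b$) factors $S_b(\v)=\prod_{\vp\mid b}S_\vp(\v)$ up to unimodular twists, and every prime $\vp\mid b$ lies outside $\scrP$, so Lemmas \ref{lem:Expsum} and \ref{lem:expsumsingular} at $k=1$ apply directly. When $\uc$ is good, Lemma \ref{lem:Expsum} gives $|S_\vp(\v)|\leq|\vp|^{n/2}$ for even $n$ (the hypothesis $\vp\nmid f^*(\v)$, which is precisely why $f^*(\v)$ must be placed in $\scrP$ in this case, collapses the delta factors to $-1$) and $|S_\vp(\v)|\leq|\vp|^{(n+1)/2}$ for odd $n$. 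When $\uc$ is bad, Lemma \ref{lem:expsumsingular} analogously yields $|S_\vp(\v)|\leq|\vp|^{n/2+1}$ for even $n$ and $|\vp|^{(n+1)/2}$ for odd $n$, the extra factor of $|\vp|$ in the even case being the delta factor $\delta_{\vp\mid((S^{-1})^t\v)_n}$ bounded trivially by $1$. Taking products over $\vp\mid b$ yields a pointwise bound $|S_b(\v)|\leq|b|^\kappa$ with $\kappa$ determined by the case.

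Finally, since the number of monic square-free polynomials in $\cO$ of fixed degree $B$ is at most $\hat{B}$, and the congruence $b\equiv b_0\pmod{N}$ only costs a constant factor (absorbed since $N$ is fixed), summing yields $\sum_b|S_b(\v)|\ll\hat{B}^{\kappa+1}$. The four resulting values $\kappa+1\in\{n/2+1,\,n/2+3/2,\,n/2+2,\,n/2+3/2\}$ correspond respectively to (good, $n$ even), (good, $n$ odd), (bad, $n$ even), (bad, $n$ odd), and match the statement exactly. Multiplication by the integral bound from Lemma \ref{lem:I-hard} then completes the proof. There is no significant obstacle: the entire argument is a case-by-case combination of previously proved multiplicative, pointwise, and integral bounds; the only subtle point is that $\scrP$ must be chosen sufficiently large (including $f^*(\v)$ in the even-$n$ good case and $Q_\uc^*(\v')$ in the bad case) so that the square-root-type saving actually survives the trivial estimation of the delta factors at each prime $\vp\mid b$.
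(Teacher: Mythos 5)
Your proof is correct and coincides with the route the paper intends (it describes the lemma as a direct consequence of Lemma \ref{lem:I-hard} and Lemma \ref{lem:Expsum}, with Lemma \ref{lem:expsumsingular} needed additionally for bad $\uc$, exactly as you use it). Pulling $S_b(\v)$ out, invoking the second estimate of Lemma \ref{lem:I-hard} (which is uniform in $r_N$), factoring $S_b(\v)$ over primes by Lemma \ref{lem:Multipli}, and reading off $|S_\vp(\v)|$ from the explicit formulas in Lemmas \ref{lem:Expsum} and \ref{lem:expsumsingular} under the coprimality to $\scrP$ gives precisely the four exponents in the statement, and your accounting of why $f^*(\v)$ (respectively $Q_\uc^*(\v')$) must be placed in $\scrP$ is the correct and relevant point.
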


In obtaining the above lemma, we are giving up on some extra cancellations we may be able to obtain from the sum over $b$. In order to exploit this, we need to look at this contribution more closely. We begin by noting that 
\begin{align*}
\Sigma(Z,y,\hat{B})&=\sum_{\substack{
b\in \cO^\sharp
\\
(b,\scrP)=1\\
|b|=\hat{B} 
\\ b\equiv b_0\bmod{N}} }S_{b}(\v)
\int_{|\uz|=\hat{Z}}I_{by_N}(\uz;\v)d\uz.
\end{align*}
We begin by focusing on the average value of the exponential integral:
\begin{align*}
\int_{|\uz|=\hat{Z}}I_{by_N}(\uz;\v)d\uz&=\int_{|\uz|=\hat{Z}}\int \omega(\x)\psi(z_1 P^2F_1(\x)+z_2P^2F_2(\x))\psi\left(\frac{P\v\cdot\x}{by_N}\right)d\x d\uz\\
&=\int_{|\uz|=\hat{Z}}\int w(t^{L}(\x-\x_0))\psi(z_1 P^2F_1(\x)+z_2P^2F_2(\x))\psi\left(\frac{P\v\cdot\x}{by_N}\right)d\x d\uz\\
&=\hat{L}^{-n}\int_{|\uz|=\hat{Z}}\int_{\TT^n}\psi(z_1P^2F_1(t^{-L}\x+\x_0))+z_2F_2(t^{-L}\x+\x_0))\psi\left(\frac{P\v\cdot(t^{-L}\x+\x_0)}{by_N}\right)d\x d\uz\\
&=\hat{L}^{-n}\psi\left(\frac{P\v\cdot\x_0}{by_N}\right)\int_{|\uz|=\hat{\uZ}}J_\G(z_1P^2,z_2P^2,t^{-L}P\v/(by_N))d\uz,
\end{align*}
where $G_i=F_i(t^{-L}\x+\x_0)$. Note $H_\G< \hat{L}^{-1}H_\F$ as noted in \eqref{eq:HGbound}. Using Lemma \ref{lem:small}, for any $\w$, 
\begin{align*}
\int_{|\uz|=\hat{\uZ}}J_\G(z_1P^2,z_2P^2,\w)d\uz=
\int_{|\uz|=\hat{Z}} \int_{\Omega} \psi\left(z_1P^2 G_1(\x)+z_2P^2G_2(\x)
+\w.\x \right) d\x d\uz,
\end{align*}
where
\begin{equation}
\label{eq:Omegadef}
\Omega=\left\{\x\in \TT^n: |z_1P^2G_1(\x)|,|z_2P^2G_2(\x)|\leq \max\{1,H_\G\} J(Z)^{1/2}, |P^2\uz\cdot\nabla \G(\x)+ \w|\leq H_\G J(Z)^{1/2}\right\}.
\end{equation}
Here, $\uz\cdot \nabla \G(\x):=z_1\nabla G_1(\x)+z_2\nabla G_2(\x) $.
We now replace $\w=t^{-L}P\v/(by_N)$ and $G_i(\x)=F_i(t^{-L}\x+\x_0)$. Thus, after noting that $|t^{-L}\x+\x_0|\leq 1$ for all $\x\in \TT^n$, we have
\begin{align*}
&|P^2t^{-L}(z_1\nabla F_1(t^{-L}\x+\x_0)+z_2\nabla F_2(t^{-L}\x+\x_0))+\w|\leq H_\G J(Z)^{1/2}\\
\Rightarrow&|P^2t^{-L}(t^{-L}\x+\x_0)\cdot(z_1\nabla F_1(t^{-L}\x+\x_0)+z_2\nabla F_2(t^{-L}\x+\x_0))+\w\cdot (t^{-L}\x+\x_0)|\leq H_\G J(Z)^{1/2}\\
\Rightarrow& |P^2t^{-L}(z_1G_1(\x)+z_2G_2(\x))+t^{-L}P\v\cdot(t^{-L}\x+\x_0)/(by_N)|\leq H_\G J(Z)^{1/2}\\
\Rightarrow & |P^2(z_1G_1(\x)+z_2G_2(\x))+P\v\cdot(t^{-L}\x+\x_0)/(by_N)|\leq H_\G \hat{L} J(Z)^{1/2}\leq H_\F J(Z)^{1/2}.
\end{align*}
However, we also have $|P^2(z_1G_1(\x)+z_2G_2(\x))|\leq \max\{1,H_\G\} J(Z)^{1/2}\leq H_\F J(Z)^{1/2}$. Thus, we must have 
\begin{equation}\label{eq:no1}
|P\v\cdot(t^{-L}\x+\x_0)/(by_N)|\leq H_\F J(Z)^{1/2},\,\,\,\forall \x\in \Omega.
\end{equation}
Our findings therefore give:
\begin{equation}\label{eq:access2}
\Sigma(Z,y,\hat{B})=\sum_{\substack{
b\in \cO^\sharp
\\
(b,\scrP)=1\\
|b|=\hat B 
\\ b\equiv b_0\bmod{N}} }S_{b}(\v)
\int_{|\uz|=\hat{Z}}\int_{\Omega_1}\omega\left(\x\right)
\psi \left(z_1 P^2F_1(\x)+z_2P^2F_2(\x)\right)\psi\left(\frac{P\v\cdot\x}{by_n}\right)d\x d\uz,
\end{equation}
where 
\begin{equation*}
\Omega_1=\{\x\in K_\infty^n:|P\v\cdot\x/(by_N)|\leq H_\F J(Z)^{1/2} \}\cap \Omega',
\end{equation*}
where
\begin{equation*}
\Omega'=\left\{\x\in \TT^n: |\x-\x_0|<\hat{-L}, ~ |P^2z_1\nabla F_1(\x)+P^2z_2\nabla F_2(\x)+ P\v/(by_N)|\leq H_\F
J(Z)^{1/2}\right\}
\end{equation*}
Note that for a fixed value of $y$, the set $\{\x\in K_\infty^n:|P\v\cdot\x/(by_N)|\leq H_\F J(Z)^{1/2} \}$ only depends on the absolute value $|b|$. Let $J$ be the smallest integer such that
\begin{equation}
\label{eq:Jdef}
H_\F J(Z)^{1/2}\leq q^J\leq qH_\F J(Z)^{1/2}.
\end{equation}
If $J\leq B$, then, since $b$ is  monic, there exist
$c_1,\dots,c_K\in \FF_q$ such that 
$$
b=
\underbrace{t^B+c_1t^{B-1}+\dots+c_{J-1}t^{B-J+1}}_{=t^Ba}
+\underbrace{c_Jt^{B-J}+\dots+c_B}_{=t^{B-J}b'},
$$
where  $a\in (A/x^J A)^*$ and 
 $b'\in A$, where $x=t^{-1}$. If $B<J$, the treatment above still formally works upon choosing $c_{B+1}=...=c_{J-1} =0$ and $b'=0 $.  Since $|P\v/(by_N)|\leq H_\F J(Z)$, we have
 \begin{align*}
 \left|\frac{P\v}{t^By_N}\left(\frac{1}{a+x^Jb'}-\frac{1}{a}\right)\right|\leq H_\F J(Z)\hat{J}^{-1}\leq  J(Z)^{1/2}.
\end{align*}
Therefore, the set $\Omega'$ only depends on the value of $b/t^B\bmod{x^J}$, i.e. on $a$. Moreover, an analogous calculation shows that since $|P\v\cdot\x|\leq q^{J}|by_N|$, the value of $\psi\left(\frac{P \v.\x/y_N}{b }\right)$ also only depends on the value of $b/t^B\bmod{x^J}$. We pick up this condition by introducing Dirichlet characters modulo $x^J$. Moreover, we pick up the condition $b\equiv b_0\bmod{N}$ by introducing characters modulo $N$. 
  Letting $D_1=(\cO/N\cO)^*$, and $D_2=\FF_q[x]/x^J\FF_q[x]$, we establish the identity
 \begin{equation}\label{eq:cross}
 \begin{split}
 \Sigma(Z,y,\hat{B})
=
&\int_{|\uz|=\hat{Z}}\int_{\Omega_1}\omega\left(\x\right)
\psi \left(z_1 P^2F_1(\x)+z_2P^2F_2(\x)\right)
\\&\times\frac{1}{\#D_1\#D_2}\sum_{\eta_1\bmod{N}}
\sum_{\chi \bmod{x^J}}\sum_{a\in D_2}
\psi\left(\frac{P \v.\x/y_N}{t^Ba}\right)
\bar{\eta_1(b_0)
\chi(a)}
\Sigma_0(\eta_1,\chi,\hat{B})d\x\, d\uz,
\end{split}
\end{equation}
where
$$
\Sigma_0(\eta_1,\chi,\hat{B})
=\sum_{\substack{
b\in \cO^\sharp
\\
(b,\scrP)=1\\
|b|=\hat B\\
} }\eta_1(b)\chi(t^{-B}b)S_{b}(\v).
$$
The strategy will follow closely with that of the proof of \cite[Lemma 8.2]{Browning_Vishe15}. There are two main estimates we would need. Firstly, we would like to bound the inner sum over $a$. Note that trivially we can obtain the bound:
\begin{align}\label{eq:no3}
\frac{1}{\#D_2}\sum_{\chi \bmod{x^J}}\left|\sum_{a\in 
D_2}\bar{\chi(a)}\psi\left(\frac{P \v.\x/y_N}{t^Ba}\right)\right|\ll \hat{J}\ll qJ(Z)^{1/2}.
\end{align}
Note that \eqref{eq:no3} already hands us a saving of an extra factor of $J(Z)^{1/2}$ as compared with \cite[(8.4)]{Browning_Vishe15}. This saving is obtained from our refined bounds in Lemma \ref{lem:small}, which handed us \eqref{eq:no1}. As in \cite[Lemma 8.3]{Browning_Vishe15}, this can be further improved by utilising the sum over $a$. This will be our next focus. The argument here is almost identical to that of \cite[Lemma 8.3]{Browning_Vishe15}.
\begin{lemma} For any $\x$ satisfying $|P\v\cdot\x|\leq \hat{J+B}|y_N|$,
\label{lem:red}
\begin{align*}
\frac{1}{\#D_2}\sum_{\chi \bmod{x^J}}\left|\sum_{a\in 
D_2}\bar{\chi(a)}\psi\left(\frac{P \v.\x/y_N}{t^Ba}\right)\right|
&\leq \hat{\lceil J/2\rceil}\leq qJ(Z)^{1/4}.
\end{align*}
\end{lemma}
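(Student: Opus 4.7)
The plan is to upgrade the trivial bound \eqref{eq:no3} (which discards all oscillation in $\chi$) by pairing Cauchy--Schwarz over $\chi$ with orthogonality of Dirichlet characters modulo $x^J$. Set $c := P\v\cdot\x/y_N$, so that the hypothesis reads $|c| \leq \hat{J+B}$. Because characters modulo $x^J$ vanish on non-units, the inner sum is effectively
\begin{equation*}
S(\chi) := \sum_{a \in D_2^\times} \bar{\chi}(a)\,\psi\!\left(\frac{c}{t^B a}\right), \qquad D_2^\times := (A/x^JA)^\times,
\end{equation*}
and $|c/(t^B a)| \leq \hat J$. The same calculation that justified introducing the Dirichlet characters in the passage leading to \eqref{eq:cross} shows that $\psi(c/(t^B a))$ depends on $a$ only modulo $x^J$, so $S(\chi)$ is well-defined on $D_2^\times$.

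Cauchy--Schwarz over the $|D_2^\times|$ characters then gives
\begin{equation*}
\sum_\chi |S(\chi)| \leq |D_2^\times|^{1/2}\left(\sum_\chi |S(\chi)|^2\right)^{1/2},
\end{equation*}
and expanding $|S(\chi)|^2$, swapping the order of summation, and invoking the orthogonality relation $\sum_\chi \bar{\chi}(a)\chi(a') = |D_2^\times|\,\delta_{a=a'}$ yields
\begin{equation*}
\sum_\chi |S(\chi)|^2 = \sum_{a,a' \in D_2^\times} \psi\!\left(\frac{c}{t^B}\!\left(\frac{1}{a} - \frac{1}{a'}\right)\right)\sum_\chi \bar\chi(a)\chi(a') = |D_2^\times|^2,
\end{equation*}
since each diagonal $(a = a')$ phase is $1$. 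Hence $\sum_\chi |S(\chi)| \leq |D_2^\times|^{3/2}$, and dividing by $\#D_2 = q^J$ produces
\begin{equation*}
\frac{1}{\#D_2}\sum_\chi |S(\chi)| \leq \frac{|D_2^\times|^{3/2}}{q^J} = q^{J/2}\!\left(\frac{q-1}{q}\right)^{3/2} \leq q^{\lceil J/2\rceil} = \hat{\lceil J/2 \rceil},
\end{equation*}
which is the first claimed bound.

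The final estimate $\hat{\lceil J/2\rceil} \ll qJ(Z)^{1/4}$ then follows directly from \eqref{eq:Jdef}, namely $\hat J \leq qH_\F J(Z)^{1/2}$, after absorbing $H_\F$ into the implicit constant. I do not anticipate any serious obstacle: the one technical point is to verify that the phase is well-defined on $D_2^\times$, which is already done in the passage surrounding \eqref{eq:cross}; everything else is a short Cauchy--Schwarz plus orthogonality computation. Note that Cauchy--Schwarz in fact yields the slightly sharper $q^{J/2}$, and the ceiling in the lemma statement is kept only for notational uniformity across parities of $J$.
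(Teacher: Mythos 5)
Your argument is correct, and it is genuinely different from the paper's. The paper proves this bound by the phase-splitting device it borrows from \cite[Lemma 8.3]{Browning_Vishe15}: write $a=a_0+x^{J_0}a_1$ with $J_0=\lceil J/2\rceil$, Taylor-expand $1/a$ so that the $a_1$-sum becomes a complete linear exponential sum, observe it vanishes unless the ``frequency'' $a_\chi$ of $\chi$ satisfies a congruence $a_\chi\equiv a'''\bmod{x^{J-J_0}}$, and then count the $\chi$ with that property. Your route --- Cauchy--Schwarz in $\chi$ followed by orthogonality to compute $\sum_\chi|S(\chi)|^2=|D_2^\times|^2$ exactly --- is shorter, more elementary, and (when $J$ is odd) slightly sharper, giving $q^{J/2}\bigl((q-1)/q\bigr)^{3/2}$ rather than $q^{\lceil J/2\rceil}$. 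It also has a structural advantage you could flag more explicitly: the hypothesis $|P\v\cdot\x|\leq\hat{J+B}|y_N|$ plays no role in your computation at all, since Cauchy--Schwarz plus orthogonality works for any coefficients $z_a$ of modulus $\leq 1$; the paper genuinely needs that hypothesis to linearize the phase $\psi(c/(t^Ba))$ in $a_1$. What the paper's longer argument buys in return is additional structural information --- it identifies \emph{which} characters contribute, namely those whose parameter $a_\chi$ is congruent to a quantity determined by $\x$ --- but Lemma \ref{lem:red} only asserts an $L^1$ bound, so your averaged argument suffices. One small point: the trailing inequality $\hat{\lceil J/2\rceil}\leq qJ(Z)^{1/4}$ as literally stated requires $H_\F\leq 1$ once \eqref{eq:Jdef} is unwound; you correctly interpret it as $\ll_{\F}$, which matches how it is used downstream.
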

\begin{proof}
Let $\chi \bmod{x^J}$ be a Dirichlet character. 
Let $\ve>0$ and choose $J_0\in \ZZ$ such that $ J_0=\lceil J/2\rceil$.
Clearly, $J/2\leq J_0\leq J$.
Recall that $x=t^{-1}$ and suppose that  $a\equiv a'\bmod{x^{J_0}}$, for $a,a'\in D_2$. Then for $\x$ as in the hypothesis of this lemma,
\begin{align*}
\left| \frac{P \v\cdot\x}{t^Bay_N}-\frac{P \v\cdot\x}{t^Ba'y_N}\right|\leq
\hJ\left|\frac{a-a'}{aa'}\right|\leq \frac{\hat J}{\hat J_0}\leq \hat{J-J_0}.
\end{align*}

Let us write $a=a_0+x^{J_0}a_1$, where 
$a_0\in (A/x^{J_0}A)^*$ and $a_1\in A/x^{J-J_0}A $. 
Then 
\begin{align*}
 \sum_{a\in D_2}\chi(a)\psi\left(\frac{P \v.\x/y_N}{t^Ba}\right)
 =~&
 \sum_{a_0\in (A/x^{J_0}A)^*}\sum_{a_1\in A/x^{J-J_0}A}\chi(a_0+x^{J_0}a_1)
\psi\left(\frac{P \v.\x}{t^B(a_0+x^{J_0}a_1)y_N}\right).
\end{align*}
For fixed   $a_0\in (A/x^{J_0} A)^*$ and $\x$, we proceed to examine the sum
\begin{align*}
S(\x)= \sum_{a_1\in A/x^{J-J_0}A}\psi\left(\frac{P \v.\x}{t^B(a_0+x^{J_0}a_1)y_N}\right)\chi(1+x^{J_0}a_1\bar{a_0}),
\end{align*}
where
$\bar{a_0}$ denotes the multiplicative inverse of $a_0\bmod{x^{J-J_0}}$. As seen in the proof of \cite[Lemma 8.3]{Browning_Vishe15}, the function $\phi_\chi(a)=\chi(1+x^{J_0}a)$ must be a twist of a standard additive character 
$$\phi_\chi(a)=\psi\left(\frac{a_\chi a}{x^{J-J_0}}\right). $$

Similarly, since $|\v\cdot\x|\leq \hat{J+B}|y_N|/|P|$, 
\begin{align*}
\psi\left(\frac{P \v.\x}{t^B(a_0+x^{J_0}a_1)y_N}\right)=\psi\left(\frac{P \v.\x}{t^Ba_0(1+x^{J_0}\overline{a_0}a_1)y_N}\right)=\psi\left(\frac{P \v.\x (1-x^{J_0}\overline{a_0}a_1)}{t^Ba_0y_N}\right)=\psi\left(\frac{P \v.\x}{t^Ba_0y_N}+\frac{a_1\bar{a_0}^2a''}{x^{J-J_0}}\right),
\end{align*} where $a''$ is independent of the choices of $\chi$, $a_0$ and $a_1$. Therefore, 
\begin{align*}
S(\x)= 
\psi\left(\frac{P 
\v.\x}{t^Ba_0y_N}\right)
\sum_{a_1\in A/x^{J-J_0}A}\psi\left(\frac{a_1\bar{a_0}(a_\chi+a''\bar{a_0})}{x^{J-J_0}}\right).
\end{align*}
For a fixed $a_0$, we deduce that $S(\x)=0$ unless  
$a_\chi\equiv a'''\bmod{x^{J-J_0}} 
$, where $a'''=-a''\bar{a_0}\bmod{x^{J-J_0}}$,  in which case $|S(\x)|\leq \hat J/\hat J_0$.  
 However, for a fixed $a'''\in A/x^{J-J_0}A$ we have 
$
\#\{\chi:a_\chi\equiv a'''\bmod{x^{J-J_0}}\}\leq \hat{J_0}.
$ 
Thus
\begin{align*}
\frac{1}{\#D_2}\sum_{\chi \bmod{x^J}}\left|\sum_{a\in 
D_2}\bar{\chi(a)}\psi\left(\frac{P \v.\x/y_N}{t^Ba}\right)\right|
\leq~& \frac{1}{\hat J} 
\sum_{\chi \bmod{x^J}}
\sum_{a_0\in (A/x^{J_0}A)^*}|S_\x|
\leq~ \hat J_0.
\end{align*}
This completes the proof of the lemma.
\end{proof}
We now turn our attention to the term $\Sigma_0(\eta_1,\chi,\hat{Y}/|y|)$. Let $\eta_2:\scrO\rightarrow\CC^*$ be the Hecke character given by $\eta_2(r)=\chi(r/t^{\deg{r}}) $. We thus focus on the following twisted averages:
\begin{align}\label{eq:sun}
\sum_{\substack{
b\in \cO^\sharp
\\
(b,\scrP)=1\\
|b|=\hat B\\
} }\eta_1(b)\eta_2(b)S_{b}(\v).
\end{align}
We next replace the exponential sums $S_{b}(\v)$ by their explicit values obtained in Lemma \ref{lem:Expsum}. This will transform the sum  \eqref{eq:sun} to a character sum. Let
\begin{equation}
\alpha=\begin{cases}
(i_p^{\ell_0})^{n}& \textrm{ if } 2\mid n,\\
(i_p^{\ell_0})^{(n+1)}&\textrm{ if }2\nmid n.
\end{cases}
\end{equation}
where $q=p^{\ell_0}$ and $i_p$ as in \eqref{eq:ipdef}. Moreover, let 
\begin{equation}
\beta=\begin{cases}-1&\textrm{ if }2\mid n\\
1&\textrm{ if }\textrm{ if }2\nmid n.
\end{cases}
\end{equation}
Let us finally define a Dirichlet character $\eta_3$
\begin{equation}
\label{eq:eta3def}
\eta_3(b)=\begin{cases}\left(\frac{\det(M_\uc)}{b}\right)&\textrm{ if }2\mid n\\\left(\frac{-f^*(\v)}{b}\right)&\textrm{ if }2\nmid n.
\end{cases}
\end{equation}
Using Lemma \ref{lem:Expsum}, we get
\begin{align*}
\sum_{\substack{
b\in \cO^\sharp
\\
(b,\scrP)=1\\
|b|=\hat B\\
} }\eta_1(b)\eta_2(b)S_{b}(\v)=\alpha^{B}\sum_{\substack{
b\in \cO^\sharp
\\
(b,\scrP)=1\\
|b|=\hat Z\\
} }\beta^{\Omega(b)}\eta_1(b)\eta_2(b)\eta_3(b)\times\begin{cases}\hat{B}^{n/2}&\textrm{ if }2\mid n\\
\hat{B}^{n/2+1/2}&\textrm{ if }2\nmid n.
\end{cases}
\end{align*}
At this point, we wish to invoke Lemma \ref{lem:Charsum} to bound the character sum satisfactorily. In order to achieve the extra square-root cancellations in the $b$ sum, we need to make sure that $\eta_1(b)\eta_2(b)\eta_3(b)$ is not a character of type $|b|^{ix}$ for any $x\in\RR$. $\eta_3$ can be viewed as a Dirichlet character of order $2$, modulo $\det(M_\uc)$ if $n$ is even and modulo $-f^*(\v)$ if $n$ is odd. This is non-trivial if $\det(M_\uc)$ is not a perfect square when $n$ is even and when $-f^*(\v)$ is not a perfect square when $n$ is odd. However, this is not enough. We also need to guarantee that the character $\eta_1\eta_3$ is non-trivial. Since $\eta_3$ is a quadratic character, it is enough to make sure that for any $b'\in \cO$ satisfying $b'\mid N$, $b'\det(M_\uc)$ is not a perfect square if $n$ is even, and that $b'f^*(\v)$ is not a perfect square if $n$ is odd. This is due to the fact that these conditions would guarantee that $f^*(\v)$ (or $\det(M_\uc)$) will contain an odd power of a prime not dividing $N$. We now apply Lemma \ref{lem:Charsum} to obtain the desired square-root cancellations in the character sum \eqref{eq:sun}:

\begin{lemma}
\label{lem:good1}
For any good pair $\uc$, as long as
\begin{align}
\label{eq:case2}&\forall b'\mid N,\begin{cases}
b'\det(M_\uc)\textrm{ is not a perfect square} & \textrm{ if }2\mid n,\\
b'f^*(\v) \textrm{ is not a perfect square} & \textrm{ if }2\nmid n,
\end{cases}
\end{align}
we have that given any $\ve>0$, any $B\in\NN$ we have:
\begin{equation}
\label{eq:sigma0bound}
\Sigma_0(\eta_1,\chi;\hat{B})\ll_{n,\|\F\|}|P|^{\ve}\begin{cases}\hat{B}^{n/2+1/2}&\textrm{ if }2\mid n,\\
\hat{B}^{n/2+1}&\textrm{ if }2\nmid n.\end{cases}
\end{equation}
\end{lemma}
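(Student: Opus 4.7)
The plan is to reduce $\Sigma_0(\eta_1,\chi;\hat B)$ to the Hecke character sum controlled by Lemma \ref{lem:Charsum}. Since $b\in \cO^\sharp$ is square-free and coprime to $\scrP$ (which by construction contains all primes dividing $\det(M_\uc)\cdot f^*(\v)\cdot D_\F$), every prime factor $\vp\mid b$ is of type I for $\uc$ and is coprime to $f^*(\v)$. Multiplicativity of the quadratic exponential sum gives $S_b(\v)=\prod_{\vp\mid b}S_\vp(\v)$, and specialising Lemma \ref{lem:Expsum} to $k=1$ in this regime yields $S_\vp(\v)=\beta\,\eta_3(\vp)\,|\vp|^{e}\,\alpha^{\deg\vp}$, where $\beta,\alpha$ and $\eta_3$ are as defined in the preamble, and $e=n/2$ when $n$ is even, $e=(n+1)/2$ when $n$ is odd. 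Taking the product over $\vp\mid b$ and using that $b$ is monic of degree $B$ produces the clean identity $S_b(\v)=\beta^{\Omega(b)}\eta_3(b)\hat B^{e}\alpha^{B}$.

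The definition of $\Sigma_0$, combined with the observation that $\chi(t^{-B}b)=\eta_2(b)$ whenever $|b|=\hat B$, then produces
\[
\Sigma_0(\eta_1,\chi;\hat B)=\alpha^{B}\,\hat B^{e}\sum_{\substack{b\in\cO^\sharp\\ (b,\scrP)=1\\ |b|=\hat B}}\beta^{\Omega(b)}\eta_1(b)\eta_2(b)\eta_3(b).
\]
Since $|\alpha|=1$, it suffices to bound the remaining character sum. This places matters exactly in the setting of Lemma \ref{lem:Charsum}, with $\eta=\eta_1$ the Dirichlet component mod $N$, $\eta'=\eta_2\eta_3$ packaging the infinite-place character together with the quadratic Jacobi symbol, and with the set of excluded primes $S=\scrP$, whose cardinality is $O(\log|P|)$ (well within the tolerance $\#S=O(Z)$ of Lemma \ref{lem:Charsum}, with $\hat Z=\hat B$).

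The final step is verifying the hypothesis of Lemma \ref{lem:Charsum}, namely that $\eta_1\otimes\eta_2\otimes\eta_3$ is not of the form $|x|^{ib}$ for any $b\in\RR$. This is precisely what condition \eqref{eq:case2} is designed to ensure. Indeed, $\eta_3$ is a quadratic character of conductor dividing $\det(M_\uc)$ (resp.\ $-f^*(\v)$), $\eta_1$ is supported on primes dividing $N$, and $\eta_2$ is ramified only at infinity. The assumption that $b'\det(M_\uc)$ (resp.\ $b'f^*(\v)$) is not a perfect square for every $b'\mid N$ forces the relevant discriminant to have some prime factor $\vp_0\nmid N$ appearing to an odd power; hence $\eta_3$ is ramified at $\vp_0$ while $\eta_1\eta_2$ is unramified there, so the product character is ramified at $\vp_0$, whereas any character $x\mapsto|x|^{ib}$ is unramified at every finite prime. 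Applying Lemma \ref{lem:Charsum}, and noting that $J=O(\log|P|)$, that $\deg N$ is fixed, and that $\deg\det(M_\uc),\deg f^*(\v)=O(\log|P|)$ for $|\v|\leq\hat V$, yields the bound $\hat B^{1/2+\ve}|P|^\ve$ for the inner sum; multiplying by $\hat B^{e}$ produces \eqref{eq:sigma0bound}. The main obstacle is precisely this non-triviality verification, which is the reason the hypothesis \eqref{eq:case2} is isolated as a separate condition; the remainder of the argument is a straightforward unpacking via Lemma \ref{lem:Expsum}.
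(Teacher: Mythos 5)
Your proposal is correct and follows essentially the same route as the paper: unpack $S_b(\v)$ for square-free $b$ coprime to $\scrP$ via multiplicativity and the $k=1$ case of Lemma~\ref{lem:Expsum}, factor out $\alpha^B$ and the power of $\hat B$ to leave a $\beta$-twisted product of $\eta_1\eta_2\eta_3$, verify that condition~\eqref{eq:case2} forces a prime $\vp_0\nmid N$ at which $\eta_3$ is ramified while $\eta_1\eta_2$ is not, and invoke Lemma~\ref{lem:Charsum}. One small notational slip: in the setting of Lemma~\ref{lem:Charsum}, the role of $\eta$ (the character built from a Dirichlet character at the infinite place) is played by $\eta_2$, and $\eta'$ (the finite-conductor Dirichlet character) should be taken to be $\eta_1\eta_3$, not the other way around as you wrote; this does not affect the argument, since your ramification criterion at $\vp_0$ is what actually certifies that $\eta_1\eta_2\eta_3\neq|\cdot|^{ib}$.
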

We will summarize our findings into the following lemma:

\begin{lemma}\label{lem:SigB}Let $\uc$ be a good primitive pair in $\scrO^2$, let $\ve>0$, and $\scrP$ be a set of primes dividing $d$, $f^*(\v),\det(M_\uc)$ and $D_\F$ when $2\mid n$ and a set of primes dividing $d$, $\det(M_\uc)$ and $D_\F$ when $2\nmid n$. If \eqref{eq:case2} is true then we have
\begin{align*}
|\Sigma(Z,y,\hat{B})|\ll_{q,\F} J(Z)^{-n/2+5/4}(\log|P|)\hat{Z}\min\{\hat{Z}, |P|^{-2}\}\hat{B}^{(n+1)/2}\begin{cases}1&\textrm{ if }2\mid n\\
\hat{B}^{1/2}&\textrm{ if }2\nmid n.
\end{cases}
\end{align*}
Combining this bound with the weaker one in Lemma \ref{lem:sqfreeeasy}, we get that for any $0\leq \gamma\leq 1/2$, we must have
\begin{align*}
|\Sigma(Z,y,\hat{B})|\ll_{q,\F} J(Z)^{-n/2+5/4-\gamma/2}(\log|P|)\hat{Z}\min\{\hat{Z}, |P|^{-2}\}\hat{B}^{(n+1)/2+\gamma}\begin{cases}1&\textrm{ if }2\mid n\\
\hat{B}^{1/2}&\textrm{ if }2\nmid n.
\end{cases}
\end{align*}
\end{lemma}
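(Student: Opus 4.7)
The plan is to substitute the strong bound for $\Sigma_0$ from Lemma~\ref{lem:good1} and the bound for the $\chi$-averaged $a$-sum from Lemma~\ref{lem:red} into the identity \eqref{eq:cross}, and then collapse the remaining $\x,\uz$ integrals via Lemma~\ref{lem:I-hard}.

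First I would verify that the hypothesis of Lemma~\ref{lem:red} is in force on $\Omega_1$: by definition of $\Omega_1$ together with \eqref{eq:Jdef}, one has $|P\v\cdot\x/(by_N)|\leq H_\F J(Z)^{1/2}\leq \hat J$, so the $\chi$-averaged $a$-sum is controlled uniformly for $\x\in\Omega_1$ by $qJ(Z)^{1/4}$. Taking maxima over the $O_N(1)$ characters $\eta_1$ and over $\chi$, the hypothesis \eqref{eq:case2} permits the application of Lemma~\ref{lem:good1}, yielding
\[
\max_{\eta_1,\chi}|\Sigma_0(\eta_1,\chi,\hat B)|\ll_{n,\F}|P|^\ve \hat B^{n/2+1/2}\quad(2\mid n),\qquad |P|^\ve\hat B^{n/2+1}\quad(2\nmid n).
\]
Combining these two estimates via the triangle inequality produces a pointwise (in $\x,\uz$) bound on the inner character-averaged sum appearing in \eqref{eq:cross}.

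Second, I would reduce the outer $\x,\uz$ integrals to the form handled by Lemma~\ref{lem:I-hard}. The phase $\psi(z_1P^2F_1(\x)+z_2P^2F_2(\x))$ has modulus one and $\omega(\x)\leq 1$, while $\Omega_1\subseteq\Omega'$ where $\Omega'$ is the region appearing in Lemma~\ref{lem:I-hard}. Hence
\[
\int_{|\uz|=\hat Z}\int_{\Omega_1}\omega(\x)\,d\x\,d\uz\;\leq\;\int_{|\uz|=\hat Z}\meas(\Omega')\,d\uz\;\ll_\F\; J(Z)^{-n/2+1}\log(|P|^2\hat Z)\,\hat{Z+1}\min\{\hat{Z+1},|P|^{-2}\}.
\]
Multiplying the three factors (the $\Sigma_0$ bound, the $J(Z)^{1/4}$ saving, and the integral bound) yields the first inequality of the lemma, after absorbing $\log(|P|^2\hat Z)$ into $\log|P|$ and $q$ into the implied constant.

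The second bound is obtained by geometric interpolation with the weak bound of Lemma~\ref{lem:sqfreeeasy}: raising the strong bound to the exponent $1-2\gamma$ and the weak bound to the exponent $2\gamma$, the $J(Z)$-exponent becomes $(1-2\gamma)(-n/2+5/4)+2\gamma(-n/2+1)=-n/2+5/4-\gamma/2$ and the $\hat B$-exponent becomes $(1-2\gamma)(n+1)/2+2\gamma(n/2+1)=(n+1)/2+\gamma$ (with the trailing $\hat B^{1/2}$ preserved in the odd case), so $\gamma\in[0,1/2]$ interpolates between the two regimes. The only substantive obstacle is Lemma~\ref{lem:red} itself, which upgrades the trivial $J(Z)^{1/2}$ estimate on the $a$-sum to $J(Z)^{1/4}$ and thereby produces the decisive saving; the rest of the argument is bookkeeping.
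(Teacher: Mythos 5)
Your proposal is correct and follows exactly the route that the paper implicitly takes (the paper itself only says ``We will summarize our findings into the following lemma'' without writing out the combination). You correctly verify the hypothesis of Lemma~\ref{lem:red} on $\Omega_1$ (noting that $|P\v\cdot\x/(by_N)|\leq H_\F J(Z)^{1/2}$ together with $|b|=\hat B$ and \eqref{eq:Jdef} gives $|P\v\cdot\x|\leq \hat{J+B}|y_N|$), apply Lemma~\ref{lem:good1} for $\Sigma_0$, use $\Omega_1\subseteq\Omega'$ to invoke the $\uz$-average in Lemma~\ref{lem:I-hard}, and multiply the three factors; the second bound is indeed the standard geometric interpolation $\min(A,B)\leq A^{1-2\gamma}B^{2\gamma}$ with the weak bound of Lemma~\ref{lem:sqfreeeasy}, and your exponent arithmetic checks out.
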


\section{Minor arcs bound and  proof of Theorem \ref{thm:count}}
\label{sec:minor}
We continue our analysis from the last section. In the light of our results in Section \ref{sec:Prep}, Theorem \ref{thm:count} will be established upon proving that the minor arcs contribution $|E_i(P))|\ll |P|^{n-4-\ve}$, for $i=1,2$. This will be our main focus here. Our treatment in the $n$ odd and even cases will be slightly different, due to the nature of our exponential sum bounds. $n=9$ will be the hardest case for us, $n\geq 10$ being relatively easier, aided by the fact that Lemma \ref{lem:sqfreeeasy} will be enough for these. In many cases, the bounds for the $n=9$ case will subsume those for the even $n$'s. Therefore, here we shall mostly concentrate on the $2\nmid n$ case.  In each case, we will deal with the contributions from the good 
 and bad pairs $\uc$'s separately.  
 
 Throughout this section, we will assume that $q$ is fixed, and our constants may implicitly depend on it. We recall that $\scrF^*(\v)$ denotes the dual variety of the complete intersection of $F_1$ and $F_2$. 
\subsection{Good $\uc$ contribution: $n$ odd case}
Recall the definition of $E_1(d\uc,Y,Z)$ from \eqref{eq:Edcdef}. When $n$ is odd and when $\uc$ is good, we will split the sum over $\v$ in $E_1(d\uc,Y,Z)$ into two subsums:
\begin{align*}
\sum_{\substack{ \v\in \cO^n\\
\v\neq \vecnull\\
|\v|\ll \hat Y |P|^{-1}J(Z)
}} =\sum_{\substack{ \v\in \cO^n\\
\scrF^*(\v)\neq 0\\
|\v|\ll \hat Y |P|^{-1}J(Z)
}}+\sum_{\substack{ \v\neq\vecnull\in \cO^n\\
\scrF^*(\v)=0\\
|\v|\ll \hat Y |P|^{-1}J(Z)
}}.
\end{align*}
We call the corresponding contributions $E_{1,1}$ and $E_{1,2}$ respectively. The reason behind doing so is that we can obtain square-root cancellations in Lemma \ref{lem:good1} as long as $b'f^*(\v)$ is not a perfect square for any $b'\mid N$. For a fixed value of $\v$ satisfying $\scrF^*(\v)\neq 0$, we are able to employ Lemma \ref{lem:Bro} to bound the number of $\uc$'s satisfying $b'f^*(\v)=y^2$, for some $y\in\scrO$ and $b'\in \scrO$. The condition that $\scrF^*(\v)\neq 0$ is crucial here as it would imply that $b'f^*(\v)$ is a square-free polynomial in $\uc$. On the other hand, when $\scrF^*(\v)=0$, we gain by sparseness of such $\v$'s using a Serre type bound. 

We now turn to our main optimisation process. First and foremost, we write $r=br_1$, where $b$ denotes the square-free part of $r$ which is co-prime to $d\det(M_\uc)D_\F$ if $\uc$ is good and is co-prime to $Q_\uc^*(\v)dD_\F$ if $\uc$ is bad. Due to our separate bounds for good, type II and bad primes, we will write 
$$r_1=r_2r_3r_4,$$
 where $r_j$'s are all pairwise co-prime. $r_2r_3$ is free of any fifth power and furthermore, $\gcd(r_2,d\det(M_\uc)D_\F)=1$ and $r_3$ is a $5$-free number satisfying $r_3\mid \det(M_\uc)^\infty $ according to our notation \eqref{eq:a||b}, but $\gcd(r_3,dD_\F)=1$, i.e., $r_3$ consists of type II primes which are co-prime to $dD_\F$.  Lastly, $r_4$ consist of the rest, i.e., it is composed of the primes dividing $dD_\F$ and all $5$-full numbers.
We will now split our sum into the following dyadic sums 
\begin{equation}\label{eq:udsum}
\ud:=(|d|,|\uc|,|r_2|,|r_3|,|r_4|)=\d:=(\hat{D},\hat{C},\hat{R_2},\hat{R_3},\hat{R_4}),
\end{equation} where $|b|=\hat{B}$, such that, $B+R_2+R_3+R_4=Y$ and $2(D+C)\leq Y\leq D+C+Q/2$, with an extra condition that $\uc$ is good. We also define $R_1=R_2+R_3+R_4$.

As noted in the previous section, since $0\leq Y\leq Q$, and $Z\in\ZZ$ such that $-5\log_q |P|\leq Z<-Y-Q/2$, there are only $O(|P|^\ve)$ different choices for vectors $\d$. Therefore, it will be enough to focus on the contribution from $\ud=\d$ to $E_1$ and $E_2$ for any arbitrary, permissible choice of $\d$.

 Let $E_{1,1}(\d,Y,Z)$ denote the contribution to $E_{1,1}$ by the sum over $\ud=\d$. Throughout, we will adopt the notation $f^*(\v)\neq\msquare$  to denote that $b'f^*(\v)\neq y^2$ for any $y\in \scrO$ and any $b'\mid N$. We may analogously define $f^*(\v)=\msquare$. When $f^*(\v)\neq\msquare$, we apply Lemma \ref{lem:SigB} with $\gamma=1/5$,  and when $f^*(\v)=\msquare$, Lemma \ref{lem:sqfreeeasy} to \eqref{eq:E(P)-1}, to obtain that there exist $\b_1\in\scrO^n $ and $N_1\mid N$ such that
\begin{equation*}
\begin{split}
E_{1,1}(\d,Y,Z)
\ll~& \frac{|P|^n}{\hat Y^{n}}
\sum_{\substack{ \v\in \cO^n\\
\scrF^*(\v)\neq 0\\
|\v|\ll \hat{V}
}} 
\sum\limits_{\substack{\ud=\d\\\uc \textrm{ good }}}
|S_{d\uc,r_1,\b_1,N_1}(\v)|J(Z)^{-n/2+1}\hat{B}^{n/2+1}\times\\
&\left(J(Z)^{1/4-1/10}\hat{B}^{1/5}\delta_{f^*(\v)\neq\msquare}+\hat{B}^{1/2}\delta_{f^*(\v)=\msquare}\right)
 \min\{\hat{Z_1+Z_2}, |P|^{-2}\log|P|\max\{\hat{Z_1},\hat{Z_2}\}\},
\end{split}
\end{equation*} 
Let $\eone^1$ denote the contribution from $f^*(\v)\neq\msquare$ and $\eone^2$ from $f^*(\v)=\msquare$. Thus, 
\begin{equation}\label{eq:E11Bound}
\begin{split}
E_{1,1}^1(\d,Y,Z)
\ll~& \frac{|P|^n}{\hat Y^{n}}J(Z)^{-n/2+5/4-1/10} \hat{Z}\min\{\hat{Z}, |P|^{-2}\}\sum_{\substack{ \v\in \cO^n\\
\scrF^*(\v)\neq 0\\
|\v|\ll \hat{V}
}} 
\sum_{\substack{\ud=\d\\f^*(\v)\neq\msquare\\ \uc \textrm{ good }}}
|S_{d\uc,r_1,\b_1,N_1}(\v)|\hat{B}^{n/2+6/5},
\end{split}
\end{equation} 
where $\hat{V}=\hat Y |P|^{-1}J(Z)$.

Our choice of the decomposition of $r_1$ arises from different bounds in Section \ref{sec:expsum}. Lemma \ref{lem:Expsum} provides a satisfactory bound for the exponential sums modulo $r_2$. Lemma \ref{lem:type II} bounds the sums modulo $r_3$.  Lastly, for a fixed $d$, the number of permissible $r_4$ is at most $O(\hat{R_4}^{1/5})$. We make our bounds in Lemma \ref{lem:ExpsumsComb} work for the sums modulo $r_4$. More explicitly, we write
$$|S_{d\uc,r_1,\b_1,N_1}(\v)|=|S_{r_2}(\v)S_{r_3}(\v)S_{d\uc,r_4,\b_2,N}(\v)|,$$
and obtain
\begin{equation}
\label{eq:Splitbounds}
\begin{split}
|S_{r_2}(\v)|&\ll \hat{R_2}^{(n+1)/2}\gcd(r_2,f^*(\v))^{1/2},\\
|S_{r_3}(\v)|&\ll \hat{R_3}^{n/2+1}\gcd(r_3,((S^{-1})^t\v)_n, Q_\uc^*(\v'))^{1/2}.\\
|S_{d\c,r_4,\b_2,N}(\v)|&\ll \hat{D}\hat{R_4}^{n/2+1}\gcd(r_4/d,\det(M_\uc),((S^{-1})^t\v)_n))^{1/2}.
\end{split}
\end{equation}
We now arrange the various sums in the following order and evaluate them using our previous bounds:
\begin{align*}
\sum_{d}\sum_{r_4}\sum_{\uc}\sum_{r_3}\sum_\v\sum_{r_2}.
\end{align*}
Note that $r_2$ only consists of square-full numbers and the condition $f^*(\v)\neq \msquare$ guarantees that $f^*(\v)\neq 0$. Therefore, for a fixed value of $\v$, we must have
\begin{equation}\label{eq:r2sum}
\sum_{|r_2|=\hat{R_2}}|S_{r_2}(\v)|\ll \hat{R_2}^{n/2+1+\ve}.
\end{equation}
Combining our bounds, we obtain the following result:
\begin{align}\notag
&\sum_{\substack{ \v\in \cO^n\\
\scrF^*(\v)\neq 0\\
|\v|\ll \hat{V}
}} 
\sum_{\substack{\ud=\d\\f^*(\v)\neq\msquare\\ \uc \textrm{ good }}}|S_{d\uc,r_1,\b_1,N_1}(\v)|
\ll |P|^\ve\widehat{R_1}^{n/2+1}\hat{D} \\ &\sum_{d}\sum_{r_4}\sum_{\uc}\sum_{r_3}\sum_{\v}\gcd(r_4/d,\det(M_\uc),((S^{-1})^t\v)_n)^{1/2}\gcd(r_3,((S^{-1})^t\v)_n,Q_\uc^*(\v'))^{1/2}\notag\\
&\ll |P|^\ve\widehat{R_1}^{n/2+1}\hat{D}\sum_{|d|=\hat{D}}\sum_{|r_4|=\hat{R_4}}\sum_{x_1\mid r_4/d}\sum_{\substack{|\uc|=\hat{C}\\\uc \textrm{ good }\\x_1\mid \det(M_\uc)}}\sum_{x_2\mid r_3}\sum_{\substack{0\neq |\v|\leq \hat{V}\\\scrF^*(\v)\neq 0\\ f^*(\v)\neq \msquare\\ x_1x_2\mid ((S^{-1})^t\v)_n\\ x_2\mid  Q_\uc^*(\v')}}|x_1x_2|^{1/2}.
\label{eq:sumfinal1}
\end{align}
Before we start our final computation, we will need an estimate for $$\#\{\uc:x\mid \det(M_{\uc})\}\textrm{  and  }\#\{|\v|\leq \hat{V}:x\mid ((S^{-1})^t\v)_n,y\mid Q^*(\v')\}.$$ Here $\v'$ denotes the vector obtained from the first $n-1$ entries of $(S^{-1})^t\v$. This will be our next goal.
\begin{lemma}
\label{lem:VCsum}
Given any $n\geq 2$, any $x,y\in\scrO$ such that $y\mid x$, any primitive $\uc\in\scrO^2$,  any $V\in \NN_{\geq 0}$, we have
\begin{equation}
\label{eq:Vsumbound}
\#\{|\v|\leq\hat{V}:x\mid ((S^{-1})^t\v)_n,y\mid Q_\uc^*(\v')\}\ll (\hat{V})^{n-2}\min\left\{\hat{V}\left(1+\frac{\hat{V}}{|x|}\right),\left(1+\frac{\hat{V}}{\prod_{\vp\mid y}|\vp|}\right)\left(1+\frac{\hat{V}}{\prod_{\vp\mid x}|\vp|}\right)\right\}.
\end{equation}  
Similarly, given any $n\geq 2$, $x\in\scrO$, any $\uc$, any $\ve>0$ and any $C\in \NN_{\geq 0}$ we have
\begin{equation}
\label{eq:csumbound}
\#\{\uc\in\scrO^2 \textrm{ {\em primitive} }: |\uc|< \hat{C},x\mid \det(M_{\uc})\}\ll_{\Delta_\F,\ve} |x|^\ve\hat{C}\left(1+\frac{\hat{C}}{|x|^{1/2}}\right).
\end{equation}
\end{lemma}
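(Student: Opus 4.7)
The plan is to prove the two inequalities separately, with (8.1) following from a change of variables plus CRT/Schwartz--Zippel, and (8.2) from a combination of Hensel lifting and the function-field geometry of numbers.

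For the bound \eqref{eq:Vsumbound}: Since $S \in \GL_n(\scrO)$ with $\det(S) \in \FF_q^\times$, the map $\w = (S^{-1})^t\v$ is a volume-preserving $\scrO$-linear bijection with $|\w| \asymp |\v|$, so it suffices to count $\w$ with $|\w| \ll \hat V$, $x \mid w_n$, and $y \mid Q_\uc^*(w')$ where $w' = (w_1,\dots,w_{n-1})$. For the first term of the minimum, I would choose $w'$ freely (contributing $\hat V^{n-1}$), and for each fixed $w'$, count $w_n \in x\scrO$ with $|w_n| \leq \hat V$, contributing $\ll 1 + \hat V/|x|$. For the second term of the minimum, I factor the count as (choices of $w_n$) $\times$ (choices of $w'$). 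After weakening $x \mid w_n$ to $x_0 \mid w_n$ (where $x_0$ is the radical of $x$), the first factor is $\ll 1 + \hat V/|x_0|$. For the $w'$ factor, I weaken $y \mid Q_\uc^*(w')$ to $y_0 \mid Q_\uc^*(w')$. For each prime $\vp \mid y_0$, since $Q_\uc^*$ is a nonzero quadratic form, Schwartz--Zippel gives $\#\{w' \bmod \vp : \vp \mid Q_\uc^*(w')\} \ll |\vp|^{n-2}$; by CRT the number of relevant residue classes mod $y_0$ is $\ll |y_0|^{n-2}$, and a standard lattice-point estimate in boxes yields $\ll \hat V^{n-2}(1 + \hat V/|y_0|)$.

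For the bound \eqref{eq:csumbound}: Note that $\det(M_\uc) = F(c_1,c_2)$ is the binary form of degree $n$ from \eqref{eq:Fxydef}, whose discriminant divides $D_\F$. I would count projective residues $[c_1 : c_2] \in \PP^1(\scrO/x)$ with $x \mid F(\uc)$. For each prime power $\vp^k \parallel x$ with $\vp \nmid \Delta_F$, Hensel's lemma (applied to $F(X,1)$ and, in the symmetric case, to $F(1,X)$) lifts simple roots uniquely, giving at most $n$ such residues in $\PP^1(\scrO/\vp^k)$; for the bounded set of bad primes $\vp \mid D_\F$, a weaker Hensel-type bound absorbs the contribution into a constant depending on $D_\F$. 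Thus by CRT the total number of residues is $\ll_{D_\F,\varepsilon} |x|^{\varepsilon}$. For each residue $[r : 1]$ say, the set of lifts forms the lattice $\Lambda = \{(c_1,c_2) \in \scrO^2 : c_1 \equiv r c_2 \bmod x\}$ of covolume $|x|$ in $\scrO^2$, with successive minima satisfying $\lambda_1 \lambda_2 \asymp |x|$ and the trivial lower bound $\lambda_1 \geq 1$. The function-field analogue of the Minkowski counting bound then yields
\begin{equation*}
\#\{\uc \in \Lambda : |\uc| < \hat C\} \ll (1 + \hat C/\lambda_1)(1 + \hat C/\lambda_2) \leq (1 + \hat C)(1 + \hat C/|x|^{1/2}),
\end{equation*}
using $\lambda_2 \geq |x|^{1/2}$. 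Summing over the $\ll |x|^\varepsilon$ residue classes yields $\ll |x|^\varepsilon \hat C (1 + \hat C/|x|^{1/2})$.

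The main technical obstacles are twofold. In \eqref{eq:Vsumbound}, the polynomial $Q_\uc^*$ varies with $\uc$, so one must verify that the Schwartz--Zippel bound applies uniformly, handling primes where $Q_\uc^*$ becomes degenerate (either by a direct estimate or by noting that these primes are controlled by the content of $Q_\uc^*$, which for primitive $\uc$ behaves reasonably). In \eqref{eq:csumbound}, the delicate residues are those corresponding to rational linear factors of $F$ over $K$, where $\lambda_1 = 1$; however, there are at most $n$ such rational factors of $F$, and their contribution is precisely absorbed by the $\hat C$ term in $\hat C(1 + \hat C/|x|^{1/2})$.
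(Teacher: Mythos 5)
Your treatment of \eqref{eq:csumbound} is sound and is a genuinely different, perhaps cleaner, argument than the paper's: the paper fixes $c_1$ in a decomposition $x=x_1x_2$ with $\gcd(x_1,c_1)=\gcd(x_2,c_2)=1$, counts $c_2$ mod $x_1$ via Hensel lifting applied to $g(T)=\det(-TM_1+M_2)$ (with a separate compactness argument over $\scrO_\vp$ for the finitely many bad primes $\vp\mid D_\F$), and takes a union over the $2^{\omega(x)}$ decompositions; your projective residue count plus the Minkowski bound $\#\{\uc\in\Lambda:|\uc|<\hat C\}\ll(1+\hat C/\lambda_1)(1+\hat C/\lambda_2)$ with $\lambda_1\geq 1$, $\lambda_2\geq|x|^{1/2}$ arrives at the same estimate without the $(x_1,x_2)$ splitting.

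However, your proof of \eqref{eq:Vsumbound} contains a genuine gap, namely the assertion that the map $\w=(S^{-1})^t\v$ satisfies $|\w|\asymp|\v|$. Here $S$ comes from a Smith normal form $M_\uc=TDS$, and while $S,T\in\GL_n(\scrO)$ with unit determinant, nothing bounds the degrees of the entries of $S$ or $S^{-1}$: for example $S=\smatr{1}{0}{t^k}{1}$ is unimodular with $|S^{-1}|=q^k$ arbitrary. Since in the application $\uc$ ranges over pairs of size up to $\hat{Q/4}$, the entries of $S^{-1}$ cannot be absorbed into the implied constant. Concretely, the image of the box $\{|\v|\leq\hat V\}$ under $(S^{-1})^t$ is a parallelepiped whose sides are much longer than $\hat V$, so the factorisation ``choose $w'$ in $\hat V^{n-1}$ ways, then count $|w_n|\leq\hat V$ with $x\mid w_n$'' does not produce the claimed $\hat V^{n-1}(1+\hat V/|x|)$. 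The same issue poisons the second term of the minimum, because the box-counting step of your Schwartz--Zippel/CRT argument also presupposes a box of side $\asymp\hat V$ in the $\w$-coordinates. Note also that there is no naive salvage by staying in the $\v$-coordinates: since $\gcd(s_{n,n},x)$ need not equal $1$, fixing $v_1,\dots,v_{n-1}$ does not pin down $v_n$ mod $x$. The paper's proof avoids the change of variables entirely and instead runs a filtration on the coordinates: defining $x_i=\gcd(x,s_{n,1},\dots,s_{n,i-1})/\gcd(x,s_{n,1},\dots,s_{n,i})$ (so that $\prod_i x_i=x$ by the primitivity of the last row of $(S^{-1})^t$), one shows that once $v_{i+1},\dots,v_n$ are fixed, the constraint forces $x_i\mid v_i-v_i'$ for any two admissible $v_i,v_i'$, giving the count $\ll\prod_i(1+\hat V/|x_i|)\ll\hat V^{n-1}(1+\hat V/|x|)$. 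For the second term, the paper reduces to counting $\v$ in a box reducing mod $\vp$ into the hyperplane $\scrV_1$ (for $\vp\mid x$) and the codimension-$2$ variety $\scrV_2$ (for $\vp\mid y$), and cites \cite[Lemma 4]{Browning-Heath-Brown09}; this sidesteps the coordinate change because the varieties are defined by the original linear form $\sum_j s_{n,j}v_j$.

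The secondary obstacle you flag — that $Q_\uc^*$ could degenerate at some primes — is not an issue in the application, since the relevant moduli $y$ are by construction coprime to $\det(M_\uc)D_\F$, so $Q_\uc^*$ stays non-degenerate modulo every $\vp\mid y$.
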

\begin{proof}
We start by proving \eqref{eq:Vsumbound}. Let $(S^{-1})^t=(s_{i,j})_{1\leq i,j\leq n}$. For each $i\geq 1$, let $$x_i=\gcd(x,s_{n,1},...,s_{n,i-1})/\gcd(x,s_{n,1},...,s_{n,i}).$$ Here, by convention, $s_{n,0}=1$. Since $\det(S)\in \FF_q^\times$, each row and column of $S^{-1}$ should be primitive. Thus, $x_n=\gcd(x,s_{n,1},...,s_{n,n-1})$.  If $x\mid s_{n,1}v_1+...+s_{n,n}v_n$, then note that we must have $x_n\mid v_n$. For a fixed choice of such $v_n$, if $(v_1,...,v_{n-1},v_n)$ and $(v_1',...,v_{n-1}',v_n) $ both satisfy $x\mid ((S^{-1})^t\v)_n$, then we must have $x\mid \sum_{i=1}^{n-1}s_{n,i}(v_i-v_i')$. This forces that $x_{n-1}\mid v_{n-1}-v_{n-1}'$. Continuing inductively, for a fixed choice of $v_j,...,v_n$, if two vectors $(v_1,...,v_n)$ and $(v_1',...,v_{j-1}',v_j,...,v_n)$ are both solutions of $x\mid ((S^{-1})^t\v)_n$, then this must imply $x_{j-1}\mid v_{j-1}-v'_{j-1}$. Thus, the quantity in \eqref{eq:Vsumbound} is $\ll \prod_{i=1}^n (1+\hat{V}/|x_i|).$ This is clearly enough to obtain 
\begin{equation}
\label{eq:Vsumbound1}
\#\{|\v|\leq\hat{V}:x\mid ((S^{-1})^t\v)_n\}\ll (\hat{V})^{n-1}\left(1+\frac{\hat{V}}{|x|}\right),
\end{equation} since $|x|=|x_1...x_n|$. 

The second bound in \eqref{eq:Vsumbound} is obtained by realising this as a counting problem modulo primes. Let $\scrV_1$ be the variety defined by $s_{n,1}v_1+...+s_{n,n}v_n=0$, of affine dimension $n-1$ and let $\scrV_2$ denote the complete intersection of $s_{n,1}v_1+...+s_{n,n}v_n=Q_\uc^*(\v')=0$, an variety of affine dimension $n-2$. We may clearly bound the left hand side in \eqref{eq:Vsumbound} by 
\begin{equation*}
\#\{|\v|\leq\hat{V}:(\v\bmod{\vp})\in \scrV_1, \forall ~ \vp\mid x, (\v\bmod{\vp})\in \scrV_2, \forall ~ \vp\mid y\}.
\end{equation*}   The second bound on the right hand side of \eqref{eq:Vsumbound} is then an easy consequence of \cite[Lemma 4]{Browning-Heath-Brown09}, which holds in the function field setting analogously, since it only uses bounds for number of points on varieties over finite fields.

We now focus on obtaining \eqref{eq:csumbound}. For any decomposition $x=x_1x_2$, where $\gcd(x_1,x_2)=1$, let $$C_\ux=\{|\uc|< \hat{C}:\gcd(x_1,c_1)=\gcd(x_2,c_2)=1\}.$$
From now on we fix a decomposition $x=x_1x_2$ as above, we will establish the bound $$\#\{\uc\in C_\ux,x\mid \det(M_\uc)\}\ll (\log|x|)^{n-1}\hat{C}\left(1+\frac{\hat{C}}{\max\{|x_1|,|x_2|\}}\right),$$
This bound will clearly suffice for us. Without loss of generality, let us assume that $|x_2|\leq |x_1|$. For a fixed value of $c_1$, we will bound
$$\#\{|c_2|<\hat{C}:x_1\mid g(\bar{c_1}c_2)\}, $$
where $g(T)=\det(-TM_1+M_2)$, a polynomial of degree at most $n$. Here, $\bar{c_1}$ denotes a multiplicative inverse of $c_1$ modulo $x_1$. If $\vp\mid x_1$ is not a bad prime, then we know that $\vp$ does not divide the discriminant of the polynomial $g(T)$, and therefore $g(T)$ does not have multiple roots modulo $\vp$. Thus the number of roots of $g(T)$ modulo $\vp$ is at most $n$ and each root is necessarily simple. Hensel's lemma then implies that there are at most $n$ roots of $g(T)$ modulo $\vp^k$ for any $k$ and that each root is simple. On the other hand, if $\vp$ is a bad prime, then $\vp$ is bounded. Moreover, we know that 
$$g(T)=\prod_{i=1}^{n-1} (T-\gamma_i),$$
where $\gamma_i$ are distinct elements of $\bar{K_\vp}$. Let $\gamma_1,...,\gamma_i\in \scrO_\vp$ and $\gamma_{i+1},...,\gamma_{n-1}\notin \scrO_\vp$. Since $\scrO_\vp$ is compact, clearly $\sup_{T\in\scrO_\vp}|\prod_{j=i+1}^{n-1}(T-\gamma_j)|_\vp\gg 1$, (since we can't have a sequence of elements of $\scrO_\vp$ converging to $\gamma_j$ for any $i<j$). Thus, $|g(T)|_\vp\gg \prod_{j=1}^i|T-\gamma_j|_\vp$. Moreover, $\gamma_1,...,\gamma_i$ are all distinct elements of $\scrO_\vp $ and therefore are sufficiently separated from one another. Thus, $\vp^k\mid g(T)$ must necessarily imply that $T\equiv \gamma_j\bmod{\vp^{k-k_0}}$ for some $k_0\ll_\vp 1$ and $1\leq j\leq i$. We have thus proved that for any $\vp^k|| x_1$, the equation $\vp^k\mid g(\bar{c_1}c_2)$ must imply that $\bar{c_1}c_2\bmod{\vp^{k-k(\vp)}}$ has at most $n$ distinct choices. Here, $k(\vp)=0$ if $\vp$ is not bad, and $k(\vp)\ll_{\vp} 1$ for when $\vp$ is bad. Thus, $\bar{c_1}c_2\mod{x_1}$ has at most $O(n^{\log(|x|)/\log\log(|x|)})=O_\ve(|x|^\ve)$ different choices modulo $x_1$. This leads to \eqref{eq:csumbound}.
\end{proof}
\subsubsection{Final optimisation for good $\uc$'s}\label{subsec:ucgood}
We are now set to establish the contribution of all good $\uc$'s to $ E_1$. Let us give an overview of how the optimisation process will work. Note that, for a fixed $d$, the number of $r_4=O(\hat{R_4}^{1/5})$ and for a fixed $\uc$, there are only $O(|P|^\ve)$ choices for $r_3$.
We may trivially bound \eqref{eq:sumfinal1} by
\begin{equation}\label{eq:trivialB}
|P|^\ve \widehat{2C+2D}\hat{R_1}^{n/2+3/2}\hat{R_4}^{1/5}\hat{D}^{-1/2}.
\end{equation}
This bound is only enough to obtain $n\geq 11$ unfortunately. 

Let us get back to \eqref{eq:sumfinal1}. A critical case for us is when $Y\asymp Q,D+C\asymp Q/2$. In this case, $V\asymp Q/4$. In the worst case, $|x_1x_2|^{1/2}\asymp\hat{Q}^{1/2}$. When $C$ and $|x_1|$ are large, we may simultaneously save from the sum over $\uc$ by utilising the condition $x_1\mid \det(M_\uc)$ in conjunction with the linear constraint $x_1x_2\mid ((S^{-1})^t\v)_n$ by applying \eqref{eq:csumbound} and \eqref{eq:Vsumbound1} respectively. When $C$ is relatively large, but $x_1$ is small, we need to resort to the second bound in \eqref{eq:Vsumbound}. Note that $x_2$ is free of any fifth power, so the factor $\prod_{\vp\mid x_2}\vp$ is of size at least $|x_2|^{1/4}$, making the second bound in \eqref{eq:Vsumbound} powerful here. When $C$ is very small, the saving by the factor $\hat{D}^{1/2}\asymp \hat{Q/4}$ appearing on the right hand side of \eqref{eq:trivialB} and the saving of size $\hat{V}$ from the linear constraint $x_1x_2\mid ((S^{-1})^t\v)_n$ together are enough. Lastly, when $f^*(\v)=\msquare$ and $\scrF^*(\v)\neq 0$, we may employ our counting estimates in Lemmas \ref{lem:B2} and \ref{lem:Bro}, the former being more useful when $C$ is small. 

Let us start with bounding $\eone$. We first apply the estimate \eqref{eq:Vsumbound} to the inner sum over $\v$ in \eqref{eq:sumfinal1}, along with the observation that $x_2$ appearing there is free of any fifth power and therefore $|\prod_{\vp\mid x_2}\vp|\leq |x_2|^{1/4}$ to obtain
\begin{align*}
&\#\{ |\v|\leq \hat{V}: x_1x_2\mid ((S^{-1})^t\v)_n, x_2\mid  Q_\uc^*(\v')\}\ll \min\{\hat{V}^{n-1}+\hat{V}^n/|x_1x_2|,\hat{V}^{n-2}+\hat{V}^n/|x_2|^{1/2}\}\\&\leq \hat{V}^{n-2}+\hat{V}^n/|x_1x_2|^{1/2}+\min\{\hat{V}^{n-1},\hat{V}^n/|x_2|^{1/2}\}
=\hat{V}^{n-2}+\hat{V}^n/|x_1x_2|^{1/2}+\hat{V}^n/|x_2|^{1/2}\min\{1,|x_2|^{1/2}/\hat{V}\}.
\end{align*}
Note that in principle, these bounds only work when $V\geq 0$. However, since we are summing over $\v\neq 0$, we may assume their validity for all $V\in \RR$. Next, we apply \eqref{eq:csumbound} to obtain
\begin{align}\label{eq:chelp}
\#\{|\uc|=\hat{C}:x_1\mid \det(M_\uc)\}\ll |x_1|^\ve \hat{C}(1+\hat{C}/|x_1|^{1/2}).
\end{align}
Applying these bounds to \eqref{eq:sumfinal1}, and as before noting that for a fixed $\uc$ there are only $O(|P|^\ve)$ choices for $r_3$, and for a fixed $d$, only $O(\hat{R_4}^{1/5})$ choices for $R_4$, we get
\begin{align}\notag
&\sum_{\substack{ \v\in \cO^n\\
\scrF^*(\v)\neq 0\\
|\v|\ll \hat{V}
}} 
\sum_{\substack{\ud=\d\\f^*(\v)\neq\msquare\\ \uc \textrm{ good }}}|S_{d\uc,r_1,\b_1,N_1}(\v)|
\\ 
\notag&\ll \widehat{R_1}^{n/2+1+\ve}\hat{D} \sum_{d}\sum_{r_4}\sum_{x_1\mid (r_4/d)}\sum_{\uc}\delta_{x_1\mid \det(M_\uc)}\sum_{r_3}\sum_{\substack{x_2\mid r_3}}|x_1x_2|^{1/2}\times\\&\left(\hat{V}^n/|x_1x_2|^{1/2}+\hat{V}^{n-2}+\hat{V}^n|x_2|^{-1/2}\min\{1,|x_2|^{1/2}/\hat{V}\}\right)\label{eq:Final3}\\
&\ll \hat{R_1}^{n/2+1+\ve}\hat{R_4}^{1/5}\hat{2D+2C}\hat{V}^n+\hat{R_1}^{n/2+1+\ve}\hat{R_4}^{1/5}\hat{2D+2C}(\hat{R_1}/\hat{D})^{1/2}\hat{V}^{n-2}\label{eq:Final1}\\
&+ \hat{R_1}^{n/2+1+\ve}\hat{R_4}^{1/5}\hat{2D+C}(\hat{R_1}/\hat{D})^{1/2}\hat{V}^{n-1}.\label{eq:Final2}
\end{align}
The bounds in \eqref{eq:Final1} are obtained from the first two terms in \eqref{eq:Final3}. It is not important to save from the sum over $\uc$ in these bounds. Therefore, we will sum over $\uc$ trivially here. While dealing with the third term in \eqref{eq:Final3}, we substitute our bound \eqref{eq:chelp}. The second term in \eqref{eq:chelp} hands us back the first bound in \eqref{eq:Final1} and the remaining term in \eqref{eq:chelp} hands us \eqref{eq:Final2}.

We are finally ready to analyse the term $E_{1,1}^1$. Inserting the bounds in \eqref{eq:Final1} and \eqref{eq:Final2} to \eqref{eq:E11Bound} we get
\begin{equation}\label{eq:E11Bound2}
\begin{split}
E_{1,1}^1&\ll |P|^{n+\ve}\hat{Y}^{6/5-n/2}(1+|P|^2\hat{Z})^{-n/2+5/4-1/10}\hat{Z}\min(\hat{Z},|P|^{-2})\\ &\hat{2D+C}\left(\hat{C}\hat{V}^n+\hat{C}(\hat{R_1}/\hat{D})^{1/2}\hat{V}^{n-2}+\hat{V}^{n}+(\hat{R_1}/\hat{D})^{1/2}\hat{V}^{n-1}\right).
\end{split}
\end{equation}
We will bound the different terms on the right hand side of \eqref{eq:E11Bound2} separately. Let us start with the term $\hat{C}\hat{V}^n$.  This term corresponds to obtaining perfect square root cancellations. Clearly, this term is at its maximum when $C+D=Y/2$. The total contribution is then
\begin{align*}
&\ll |P|^{n+\ve}\hat{Y}^{11/5-n/2}(1+|P|^2\hat{Z})^{-n/2+5/4-1/10}\hat{Z}\min(\hat{Z},|P|^{-2})\hat{V}^n\\
&\ll |P|^\ve\hat{Y}^{11/5+n/2}(1+|P|^2\hat{Z})^{n/2+5/4-1/10}\hat{Z}\min(\hat{Z},|P|^{-2}). 
\end{align*}
This expression is maximum when $\hat{Z}=\hat{-Y}|P|^{-2/3}=|P|^{-2}(\hat{Q}/\hat{Y})\geq |P|^{-2}$. Thus, $P^2\hat{Z}=|P|^{4/3}/\hat{Y}=\hat{Q}/\hat{Y}$. We thus have that this term is 
\begin{align*}
&\ll  |P|^\ve\hat{Y}^{11/5+n/2}(\hat{Q}/\hat{Y})^{n/2+5/4-1/10}|P|^{-4}\hat{Q}/\hat{Y}\ll |P|^\ve\hat{Y}^{1/20}|P|^{-4}\hat{Q}^{n/2+9/4-1/10}\\
&\ll |P|^\ve|P|^{-4}\hat{Q}^{n/2+9/4-1/20}=|P|^{n-4}|P|^{-n/3+3-1/15+\ve}.
\end{align*}
 This is enough as long as $n\geq 9$ and $\ve\leq 1/30$

We now move to the $\hat{C}(\hat{R_1}/\hat{D})^{1/2}\hat{V}^{n-2}$ term. This term is maximum when $Y=R_1$, $C=Y/2$ and $D=0$. Thus, the total contribution is
\begin{align*}
&\ll |P|^{n+\ve}\hat{Y}^{27/10-n/2}(1+|P|^2\hat{Z})^{-n/2+5/4-1/10}\hat{Z}\min(\hat{Z},|P|^{-2})\hat{V}^{n-2}\\
&\ll |P|^{2+\ve}\hat{Y}^{7/10+n/2}(1+|P|^2\hat{Z})^{n/2-17/20}\hat{Z}\min(\hat{Z},|P|^{-2}). 
\end{align*}
The maximum is again achieved when $\hat{Z}=(\hat{Q}/\hat{Y})|P|^{-2}\geq |P|^{-2} $. Thus, this contribution is
\begin{align*}
\ll |P|^{-2+\ve}\hat{Y}^{n/2+7/10}(\hat{Q}/\hat{Y})^{n/2+3/20}\ll |P|^{-2+\ve}\hat{Q}^{n/2+7/10}\ll |P|^{n-4}|P|^{-n/3+44/15+\ve}.
\end{align*}
which is enough when $n\geq 9$ and $\ve\leq 1/30$.

Now we move on to the last term in \eqref{eq:E11Bound2}. The maximum value is taken when $R_1=Y,D=Y/2,C=0$. Thus, this contribution is
\begin{align*}
&\ll |P|^{n+\ve}\hat{Y}^{49/20-n/2}(1+|P|^2\hat{Z})^{-n/2+5/4-1/10}\hat{Z}\min(\hat{Z},|P|^{-2})\hat{V}^{n-1}\\&\ll |P|^{1+\ve}\hat{Y}^{n/2+29/20}(1+|P|^2\hat{Z})^{n/2+3/20}\hat{Z}\min(\hat{Z},|P|^{-2}).
\end{align*}
The maximum is again achieved when $\hat{Y}=\hat{Q},\hat{Z}=\hat{-Y-Q/2}=|P|^{-2}(\hat{Q}/\hat{Y})$. Thus, this contribution is 
\begin{align*}
\ll |P|^{-3+\ve}\hat{Q}^{n/2+29/20}\ll |P|^{n-4}|P|^{-n/3+44/15+\ve}\ll |P|^{n-4-1/15+\ve},
\end{align*}
for all $n\geq 9$. We thus effectively bound all contributions for $\eone^1$, as long as, $n\geq 9$ and $\ve\leq 1/30$.

We now consider the term $\eone^2$ which corresponds to the validity of the conditions $f^*(\v)=\msquare$ and $\scrF^*(\v)\neq 0$. As noted in Sec. \ref{sec:background}, $\scrF^*(\v)$ is the discriminant of the polynomial $f^*(\v)$, seen as a polynomial in $\uc$. Thus, this would imply that $f^*(\v)$ has distinct roots  in $\PP^1_{\bar{K}}$, and therefore, this polynomial is necessarily square-free. We may now apply Lemma \ref{lem:Bro} to count the number of $\uc$'s for which $f^*(\v)=\msquare$. This bound will be effective when $C$ is large. Alternatively, for a fixed good $\uc$, $ f^*(\v)$ is a smooth quadratic form and therefore we may be able to bound the number of possible choices of $\v$'s for which  $f^*(\v)=\msquare$, using Lemma \ref{lem:B1}. We summarize these bounds into:
\begin{align*}
\#\{|\uc|\leq\hat{C},|\v|\leq \hat{V}: \uc\textrm{ primitive, }f^*(\v)=\msquare\}\ll \hat{V+C}^\ve\min\{\hat{V}^n\hat{C}, \hat{C}^2\hat{V}^{n-1}\}.
\end{align*}  Recall that \eqref{eq:E11Bound} hands us:
\begin{equation*}
\begin{split}
E_{1,1}^2(\d,Y,Z)
\ll &\frac{|P|^{n+\ve}}{\hat Y^{n}}
J(Z)^{-n/2+1}\hat{B}^{n/2+3/2}
\hat{Z} \min\{\hat{Z}, |P|^{-2}\}\sum_{\ud=\d}\sum_{\substack{ \v\in \cO^n\\
\scrF^*(\v)\neq 0\\
|\v|\ll \hat Y |P|^{-1}J(Z)
\\ f^*(\v)=\msquare}} |S_{d\uc,r_1,\b_1,N_1}(\v)|.
 \end{split}
\end{equation*} 
In this case, the extra saving obtained from the condition $f^*(\v)=\msquare$ will be enough. Using \eqref{eq:baddd}, we will use a weaker bound $|S_{d\uc,r_3r_4,\b_1,N_1}(\v)|\ll \hat{D}^{1/2}\hat{R_3R_4}^{n/2+3/2}$ to bound the sums modulo $r_3r_4$. This simplifies our process and we order our sums the following way:
\begin{equation}\label{eq:way}
\sum_d \sum_{\uc,\v}\sum_{r_2,r_3,r_4}.
\end{equation}
For a fixed value of $d,\uc$ and $\v$, there are at most $O(|P|^\ve)$ different choices for $r_3$ and $r_4$. Moreover, our bound \eqref{eq:r2sum} dealing with the exponential sums modulo $r_2$ still holds. Thus,
\begin{equation*}
\begin{split}
E_{1,1}^2\ll &\frac{|P|^{n+\ve}}{\hat Y^{n}}
(1+|P|^2\hat{Z})^{-n/2+1}\hat{Y}^{n/2+3/2}
 \hat{Z}\min\{\hat{Z}, |P|^{-2}\hat{Z}\}\hat{C}\hat{D}^{3/2}\hat{V}^{n-1}\min\{\hat{C},\hat{V}\}\\
 &\ll \frac{|P|^{n+\ve}}{\hat Y^{n}}
(1+|P|^2\hat{Z})^{-n/2+1}\hat{Y}^{n/2+3/2}
 \hat{Z}\min\{\hat{Z}, |P|^{-2}\hat{Z}\}\hat{(C+D)}^{3/2}\hat{V}^{n-1/2}\\
  &\ll |P|^{1/2+\ve}
(1+|P|^2\hat{Z})^{n/2+1/2}\hat{Y}^{n/2+1}
 \hat{Z}\min\{\hat{Z}, |P|^{-2}\hat{Z}\}\hat{(C+D)}^{3/2}.
 \end{split}
\end{equation*} 
Again, the maximum is achieved when $Z=-Y-Q/2$, and when $C+D=Y/2$. Thus, this contribution is
\begin{align*}
 &\ll |P|^{-3/2+\ve}\hat{Y}^{n/2+7/4}(\hat{Q}/\hat{Y})^{n/2+1/2}\hat{-Y-Q/2}.
\end{align*}
After comparing the powers of $\hat{Y}$, the above expression is maximum, when $Y=Q$ and therefore the contribution is 
\begin{align*}
\ll |P|^{-3/2+\ve}\hat{Q}^{n/2+1/4}\ll |P|^{-3/2+2n/3+1/3+\ve}\ll |P|^{n-4+\ve-(2n-17)/6} \ll |P|^{n-4+\ve-1/6},
\end{align*}
as long as $n\geq 9$ and $\ve\leq 1/12$.

Next, let us deal with the term $\etwo$. The main saving will be obtained here from a Serre type bound \cite[Lemma 2.9]{Browning_Vishe15}, which gives us:
$$\#\{|\v|\leq \hat{V}:\scrF^*(\v)=0\}\ll \hat{V}^{n-3/2}.$$
Our strategy will emulate closely that of bounding $\eone^2$. We again use the decomposition $r=br_2r_3r_4$ as before and use the bound in \eqref{eq:baddd} to bound the sums modulo $r_3r_4$, and use \eqref{eq:r2sum} to bound the averages modulo $r_2$. We also arrange the sums in a simplified way as in \eqref{eq:way}, to get:
\begin{equation}\label{eq:M}
\begin{split}
\etwo(\d,Y,Z)
\ll &\frac{|P|^{n+\ve}}{\hat Y^{n}}
(1+|P|^2\hat{Z})^{-n/2+1}\hat{Y}^{n/2+3/2}\hat{C}^2\hat{D}^{3/2}\hat{Z}\min\{\hat{Z},|P|^{-2}\}\hat{V}^{n-3/2}\\
&\ll \frac{|P|^{n+\ve}}{\hat Y^{n}}
(1+|P|^2\hat{Z})^{-n/2+1}\hat{Y}^{n/2+5/2}\hat{Z}\min\{\hat{Z},|P|^{-2}\}\hat{V}^{n-3/2}\\
&\ll |P|^{3/2+\ve}
(1+|P|^2\hat{Z})^{n/2-1/2}\hat{Y}^{n/2+1}\hat{Z}\min\{\hat{Z},|P|^{-2}\}.
 \end{split}
\end{equation}
We may again assume $Z=\hat{-Y-Q/2}$ to get,
\begin{align*}
\etwo&
\ll |P|^{-5/2+\ve}
(\hat{Q}/\hat{Y})^{n/2+1/2}\hat{Y}^{n/2+1}
\ll |P|^{-5/2+\ve}\hat{Q}^{n/2+1}\ll |P|^{n-4} |P|^{-n/3+17/6+\ve}\ll |P|^{n-4} |P|^{-1/6+\ve},
\end{align*}
as long as $n\geq 9$ and $\ve\leq 1/12$.
\subsection{Good $\uc$ contribution: $n$ even case.}
We will obtain a bound for the contribution to $E_1$ from the good values of $\uc$, when $2\mid n$. Since we are only aiming for $n\geq 10$ here, the analysis here is somewhat simpler and we may recycle many of our estimates from the previous case. To establish $n\geq 10$, we do not need our refined estimate in Lemma \ref{lem:SigB}, we will be content in using Lemma \ref{lem:sqfreeeasy} instead. When $n$ is even and $\uc$ is good, we will split the sum over $\v$ in $E_1$ into two subsums:
\begin{align*}
\sum_{\substack{ \v\in \cO^n\\
\v\neq \vecnull\\
|\v|\ll \hat Y |P|^{-1}J(Z)
}} =\sum_{\substack{ \v\in \cO^n\\
f^*(\v)\neq 0\\
|\v|\ll \hat Y |P|^{-1}J(Z)
}}+\sum_{\substack{ \v\neq\vecnull\in \cO^n\\f^*(\v)=0\\
|\v|\ll \hat Y |P|^{-1}J(Z)
}}.
\end{align*}
We again call the corresponding contributions $E_{1,1}$ and $E_{1,2}$ respectively.

As always, we write $r=br_1$, where $b$ denotes the square-free part of $r$ which is co-prime to $f^*(\v)dD_\F$ if $\uc$ is good and co-prime to $Q_\uc^*(\v)dD_\F$ if $\uc$ is bad. Analogous to \eqref{eq:E11Bound},  we
 apply Lemma \ref{lem:sqfreeeasy} to \eqref{eq:E(P)-1} to obtain $\b_1,N_1$  such that
\begin{equation}\label{eq:E(P)-2}
\eone(d\uc,Y,Z)
\ll \frac{|P|^{n+\ve}}{\hat Y^{n}}\hat{B}^{n/2+1}
\sum_{\substack{ \v\in \cO^n\\
f^*(\v)\neq 0\\
|\v|\ll \hat{V}
}} 
\sum_{\substack{
r_1\in \cO, d\mid r_1\\ |r_1|\leq \hat{Y}\\ r_1 \textrm{ monic }}} 
|S_{d\uc,r_1,\b_1,N_1}(\v)|J(Z)^{-n/2+1}\hat{Z}\min\{\hat{Z}, |P|^{-2}\}.
\end{equation}
We use the same process as in the beginning of Sec. \ref{sec:minor} and write $r=br_1=br_2r_3r_4$, where $r_2,r_3,r_4$ are chosen exactly as the analysis of $\eone$ in the $2\nmid n $ case, and introduce {\em dyadic} averages following the notation in \eqref{eq:udsum} to get:
\begin{equation}\label{eq:E(P)-3}
\begin{split}
\eone(\d,Y,Z)&:=\sum_{\substack{\ud=\d\\ \uc \textrm{ good }}}\eone(d\uc,Y,Z)\\
&\ll \frac{|P|^{n+\ve}}{\hat Y^{n}}\hat{B}^{n/2+1}
\sum_{\substack{\ud=\d\\ \uc \textrm{ good }}}\sum_{\substack{ \v\in \cO^n\\
f^*(\v)\neq 0\\
|\v|\ll \hat{V}
}} 
|S_{d\uc,r_1,\b_1,N_1}(\v)|J(Z)^{-n/2+1}\hat{Z}\min\{\hat{Z},|P|^{-2}\}.
\end{split}
\end{equation}
Note that our bounds in \eqref{eq:Splitbounds} to bound the exponential sums modulo $r_2,r_3$ and $r_4$ still hold when $2\mid n$. Therefore, this contribution is clearly less than our bounds for $\eone^1$ when $n\geq 9$ was odd (as compared with the corresponding bound \eqref{eq:sumfinal1}). Therefore, our analysis in Sec. \ref{subsec:ucgood} still holds and is enough to establish a suitable bound here. Note that the only auxiliary counting estimate which used the fact that $n$ was odd was in Lemma \ref{lem:Bro}, which was used to bound the number of solutions of $f^*(\v)=\msquare$, which is not necessary here, and it was only used to bound $\eone^2$ in Section \ref{subsec:ucgood}.

In a similar vein, when $f^*(\v)=0$, \cite[Lemma 2.9]{Browning_Vishe15} gives us:
$$\#\{|\v|\leq \hat{V}:f^*(\v)=0\}\ll \hat{V}^{n-3/2}.$$
Thus, the contribution $\etwo(\d,Y,Z)$ can be bound using the same process as from the corresponding bound when $2\nmid n$. Namely, the analysis in \eqref{eq:M} hands us a suitable bound for this contribution.

\subsection{Bad $\uc$ contribution}
We now focus on the contribution of the bad values of $\uc$ to $E_1$. We will deal with both odd and even values of $n$ here. Throughout, let $\uc$ denote an arbitrary, but fixed bad pair. We know that $|\uc|\ll 1$. In this case, there are no type II primes, as these are already included in our list of bad primes. However, an extra complication here arises due to the fact that when $\vp$ is a good prime satisfying $\vp\mid Q_\uc^*(\v')$, Lemma \ref{lem:expsumsingular} hands us the bound $|S_{\vp^k}(\v)|\ll |\vp|^{k(n+3/2)}$, which carries an extra factor of size $O(|\vp|^{1/2})$ as compared with the worst bound in Lemma \ref{lem:Expsum}. For a fixed $\v$, this bound only affects $\vp\mid Q_\uc^*(\v')$, which is a small set if $\uc$ and $\v$ are treated to be fixed. However, this would hinder us from obtaining any saving from the congruence condition $\vp^k\mid ((S^{-1})^t\v)_n$. Therefore, we will instead save more from the sum over $d$. This will be facilitated by the bound in Lemma \ref{lem:TypeIgen}. To this end, we split the sum $E_1(d\uc,Y,Z)$ into two subsums:
 \begin{align*}
\sum_{\substack{ \v\in \cO^n\\
\v\neq \vecnull\\
|\v|\ll \hat{V}
}} =\sum_{\substack{ \v\in \cO^n\\
Q_\uc^*(\v)\neq 0, \scrF^*(\v)\neq 0\\
|\v|\ll \hat{V}
}}+\sum_{\substack{ \v\neq\vecnull\in \cO^n\\
Q_\uc^*(\v)=0\textrm{ or }\scrF^*(\v)=0\\
|\v|\ll \hat{V}
}}.
\end{align*}
We call the contribution from the first sum on the right hand side as $E_3$ and from the second sum as $E_4$. When $Q_\uc^*(\v)\neq 0 $ and $\scrF^*(\v)\neq 0$, we write $d=d_1d_2d_3$, where $d_1,d_2,d_3$ are pairwise co-prime. Here $d_1d_2$ denote the square-free part of $d$ further satisfying $\gcd(d_1d_2,D_\F)=1$, $\gcd(d_1,r/d_1)=1$ and $d_2^2\mid r$. As a consequence, we may use Lemma \ref{lem:TypeIgen} to deal with the exponential sum $S_{d_1,d_1,\vecnull,1}(\v)$. If $d_2$ is large, we save from the fact that $d_2^2\mid r$, which reduces the number of permitted $r$'s (as opposed to just using the condition $d\mid r$). $d_3$ consists of square-full numbers and bad primes. Therefore, the total number of permitted $d_3$ is at most $O(\hat{D_3}^{1/2+\ve})$. 

To this end, as always we first write $r=br_1$, where $b$ denotes the square-free part of $r$ which is co-prime to $dD_\F Q_\uc^*(\v')$. Next, we write $r_1=d_1r_2r_3$, where $\gcd(r_2,dD_\F)=1$, $ r_3\mid (d_2d_3D_\F)^\infty$. In other words, $r_2$ consists of the part of $r_1$ which is free of the bad primes and of the primes dividing $d$, $r_3$ only consist of the powers of primes diving $d_2d_3D_\F$. Thus, for any given $d$, there are only $O(\hat{R_3}^\ve)$ choices for $r_3$, and $O(\hat{R_2}^{1/2})$ choices for $r_2$. We split our sum into analogous dyadic sums:
$$\ud=(|d_1|,|d_2|,|d_3|,|r_2|,|r_3|)=\d:=(\hat{D_1},\hat{D_2},\hat{D_3},\hat{R_2},\hat{R_3}),$$
where as before, let $|b|=B$, $B+D_1+R_2+R_3=Y, D=D_1+D_2+D_3, 2D\leq Y\leq Q/2+D$. Since $d_2^2\mid r_3$, we must have $B+R_2\leq Y-D-D_2\leq Q/2-D_2$. We begin by applying Lemma \ref{lem:sqfreeeasy} to \eqref{eq:E(P)-1} to get:
\begin{equation}\label{eq:moon}
\begin{split}
E_{3}(\d,Y,Z)
\ll~& \frac{|P|^{n+\ve}}{\hat Y^{n}}
\sum_{\substack{ \v\in \cO^n\\
\scrF^*(\v)Q_1^*(\v)\neq 0\\
|\v|\ll \hat Y |P|^{-1}J(Z)
}} 
\sum\limits_{\substack{\ud=\d}}
|S_{d\uc,r_1,\b_1,N_1}(\v)|J(Z)^{-n/2+1}\hat{B}^{n/2+(3+\delta_{2\mid n})/2}\times
 \hat{Z}\min\{\hat{Z},|P|^{-2}\}.
\end{split}
\end{equation} 
Here, the term $\delta_{2\mid n}$ is $1$ when $n$ is even and $0$ otherwise. Using the multiplicativity relation in Lemma \ref{lem:Multipli}, we may write
$$S_{d\uc,r_1,\b_1,N_1}(\v)=S_{d_1\uc,d_1,\vecnull,1}(\v)S_{r_2}(\v)S_{d_2d_3\uc,r_3,\b_2,N_2}(\v).$$
Lemmas \ref{lem:TypeIgen} and \ref{lem:ExpsumsComb} imply
$$|S_{d_1\uc,d_1,\vecnull,1}(\v)|\ll \hat{D_1}^{n/2+3/2}\gcd(d_1,Q_\uc^*(\v')\scrF^*(\v))^{1/2}. $$
Lemma \ref{lem:expsumsingular}, in conjunction with an argument similar to \eqref{eq:r2sum} implies that for a fixed $\v$ satisfying $Q^*_\uc(\v')\neq 0$, we have
\begin{align*}
\sum_{r_2}|S_{r_2}(\v)|\ll \hat{R_2}^{n/2+1}\sum_{r_2}\gcd(r_2,Q_\uc^*(\v'))^{1/2}\ll \hat{R_2}^{n/2+3/2+\ve}.
\end{align*}
Lastly, Lemma \ref{lem:ExpsumsComb} gives us
$$|S_{d_2d_3\uc,r_3,\b_2,N_2}(\v)|\ll \hat{D_2}\hat{D_3}\hat{R_3}^{n/2+1}(\gcd(r_3/d_2d_3,((S^{-1})^t\v)_n)^{1/2}. $$
As before, for a fixed $d$, there are only $O(\hat{R_3}^{\ve})$ choices for $r_3$. We evaluate the sums in the following order
\begin{align*}
\sum_{d_1,d_2,d_3}\sum_{r_3}\sum_{\v}\sum_{r_2}.
\end{align*}  
Combining our bounds, we get:
\begin{align*}
\sum_{\ud=\d}|S_{d\uc,r_1,\b_1,N}(\v)|&\ll \hat{D}\sum_{d_1,d_2,d_3}\sum_{r_3}
\sum_{x_1\mid r_3/(d_2d_3)}\sum_{x_2\mid d_1}
\sum_{\substack{ \v\in \cO^n\\|\v|\leq \hat{V}\\x_1\mid ((S^{-1})^t\v)_n\\
 x_2\mid Q_\uc^*(\v')\scrF^*(\v)
}} |x_1x_2|^{1/2}\hat{D_1}^{n/2+1/2}\hat{R_3}^{n/2+1}\hat{R_2}^{n/2+3/2}.
\end{align*}
We may use \eqref{eq:Vsumbound1} to bound the number of permissible $\v$'s satisfying $x_1\mid ((S^{-1})^t\v)_n$ and \cite[Lemma 4]{Browning-Heath-Brown09} to bound the number of $\v$'s satisfying $x_2\mid Q_\uc^*(\v')\scrF^*(\v)$, to obtain:
\begin{align*}
\sum_{\ud=\d}|S_{d\uc,r_1,\b_1,N}(\v)|&\ll \hat{D}\hat{R_1}^{n/2+1}\hat{D_1}^{-1/2}\hat{R_2}^{1/2}\sum_d\sum_{r_3}
\sum_{x_1\mid r_3/(d_2d_3)}\sum_{x_2\mid d_1}
 |x_1x_2|^{1/2}\hat{V}^{n-1}(1+\hat{V}\min\{|x_1|^{-1},|x|_2^{-1}\})\\
 &\ll \hat{D}\hat{R_1}^{n/2+1}\hat{D_1}^{-1/2}\hat{R_2}^{1/2}\sum_d\sum_{r_3}
\sum_{x_1\mid r_3/(d_2d_3)}\sum_{x_2\mid d_1}
 |x_1x_2|^{1/2}\hat{V}^{n-1}(1+\hat{V}/|x_1x_2|^{1/2})\\
 &\ll \hat{D}\hat{D_1}^{1/2}\hat{D_2}\hat{D_3}^{1/2}\hat{R_1}^{n/2+1+\ve}\hat{R_2}^{1/2}\hat{V}^{n-1}
 (\hat{R_3}^{1/2}\hat{D_1}^{1/2}/(\hat{D_2}\hat{D_3})^{1/2}+\hat{V}).
\end{align*}
Feeding this bound back to \eqref{eq:moon}, we get:
\begin{align*}\notag
E_{3}(\d,Y,Z)
\ll~& \hat{B}^{\frac{\delta_{2\mid n}}{2}}\frac{|P|^{n+\ve}}{\hat{Y}^{n/2-1}}\hat{D}\hat{D_1}^{1/2}\hat{D_2}\hat{D_3}^{1/2}(\hat{B}\hat{R_2})^{1/2}J(Z)^{-n/2+1}\hat{V}^{n-1}\hat{Z}\min\{\hat{Z},|P|^{-2}\}\times\\
~&(\hat{R_3}^{1/2}\hat{D_1}^{1/2}/(\hat{D_2}\hat{D_3})^{1/2}+\hat{V}).
\end{align*}
 Thus, when $2\nmid n$, $E_3$ can be bounded by 
\begin{align}\label{eq:R}
\ll (\hat{B}\hat{R_2})^{1/2}\frac{|P|^{n+\ve}}{\hat Y^{n/2-1}}\hat{D}\hat{D_1}^{1/2}\hat{D_2}\hat{D_3}^{1/2}
\hat{V}^{n-1}((\hat{R_3}/(\hat{D_2}\hat{D_3})^{1/2}+\hat{V} )(1+|P|^2\hat{Z})^{-n/2+1}\min\{\hat{Z},|P|^{-2}\}.
\end{align}
After replacing $\hat{V}=\hat{Y}|P|^{-1}(1+|P|^2\hat{Z})$, clearly, the contribution is maximum when $Z=-Y-Q/2$, which we assume from now on. Let us first investigate the contribution coming from the term $(\hat{R_3}/(\hat{D_2}\hat{D_3}))^{1/2}$ on the right hand side of \eqref{eq:R}. This contribution is 
\begin{equation}\label{eq:oone}
\begin{split}
&\ll \frac{|P|^{n+\ve}}{\hat Y^{n/2-3/2}}\hat{D}^{3/2}\hat{V}^{n-1}(1+|P|^2\hat{Z})^{-n/2+1}|P|^{-2}\hat{Z}\ll |P|^{-1+\ve}\hat{Y}^{n/2+1/2+3/4}(1+|P|^2\hat{Z})^{n/2}\hat{Z}\\
&\ll |P|^{-1-2/3+\ve}\hat{Y}^{n/2+1/4}(\hat{Q}/\hat{Y})^{n/2}\ll |P|^{-5/3+\ve}\hat{Q}^{n/2+1/4}=|P|^{2n/3-4/3+\ve}=|P|^{n-4-(n-8)/3+\ve}.
\end{split}
\end{equation}
This is admissible for $n\geq 9$ and odd, as long as $\ve\leq 1/16$. 

Now let us turn to the remaining contribution to $E_3$. Here, we will use that $B+R_2\leq Y-D_1-2D_2-D_3$. Thus, this contribution is 
\begin{equation}\label{eq:Two}
\begin{split}
&\ll (\hat{B+R_2})^{1/2}\frac{|P|^{n+\ve}}{\hat Y^{n/2-1}}\hat{D}\hat{D_1}^{1/2}\hat{D_2}\hat{D_3}^{1/2}\hat{V}^{n}(1+|P|^2\hat{Z})^{-n/2+1}|P|^{-2}\hat{Z}\\
&\ll |P|^{-2+\ve}\hat{Y}^{n/2+3/2}\hat{D}(1+|P|^2\hat{Z})^{n/2+1}\hat{Z}\ll |P|^{-2+\ve}\hat{Y}^{n/2+2}(1+|P|^2\hat{Z})^{n/2+1}\hat{Z}.
\end{split}
\end{equation}
We may again assume that $Z=-Y-Q/2$ to obtain that this is
\begin{align*}
&\ll |P|^{-8/3+\ve}\hat{Y}^{n/2+1}(\hat{Q}/\hat{Y})^{n/2+1}
\ll |P|^{-8/3+\ve}\hat{Q}^{n/2+1}=|P|^{2n/3-4/3+\ve},
\end{align*}
which is clearly enough from our previous calculation.

When $2\mid n$, the bound in \eqref{eq:R} gets multiplied with an extra factor of size $O(\hat{B}^{1/2})$. Here, we will use a weaker bound $\hat{B}\leq \hat{Y}/\hat{D}$ and combine it with our above bounds. Note that in the extreme case when $D=Y/2$, $\hat{B}^{1/2}$ factor amounts to the introduction of an extra factor of  size $O(\hat{Y}^{1/4}) $ in the final computation. In particular, when $2\mid n$, the bound corresponding to \eqref{eq:oone} is given by 
\begin{align*}
\ll \frac{|P|^{n+\ve}}{\hat Y^{n/2-2}}\hat{D}\hat{V}^{n-1}(1+|P|^2\hat{Z})^{-n/2+1}|P|^{-2}\hat{Z}\ll |P|^{-5/3+\ve}\hat{Y}^{n/2+1/2}(\hat{Q}/\hat{Y})^{n/2}\ll |P|^{n-4+\ve-(n-9)/3},
\end{align*}
which is admissible as long as $n\geq 10$ and $\ve\leq 1/6$. 

Similarly, when $2\mid n$, the contribution corresponding to \eqref{eq:Two} to $E_3$ is bounded by:
\begin{align*}
\ll |P|^{-2+\ve}\hat{Y}^{n/2+2}\hat{D}^{1/2}(1+|P|^2\hat{Z})^{n/2+1}\hat{Z}\ll |P|^{-8/3+\ve}\hat{Q}^{n/2+5/4}\ll |P|^{n-4+\ve-(n-9)/3},
\end{align*}
again enough when $n\geq 10$ and $\ve\leq 1/6$.

We now turn to the term $E_4$. When either $\scrF^*(\v)=0$ or $Q_\uc^*(\v)=0$, we gain from the sparseness of such $\v$'s. We write $r_1=r_2r_3$, where $\gcd(r_2,dD_\F)=1$ and that $ r_3\mid (d D_F)^\infty$. We split our sum into the dyadic sums:
$$\ud=(|d|,|r_2|,|r_3|)=\d:=(\hat{D},\hat{R_2},\hat{R_3}).$$
Here, $D\leq Y/2$, $Y=B+R_2+R_3$. In this case, we will use the following softer bound coming from Lemma \ref{lem:Expweak}: $$|S_{d\uc,r_1,\b_1,N_1}(\v)|\ll |d|^{1/2}|r_1|^{n/2+3/2}.$$
Thus, following the recipe before, 
\begin{align*}
E_4(\d,Y,Z)\ll \hat{B}^{\frac{\delta_{2\mid n}}{2}}\frac{|P|^{n+\ve}}{\hat{Y}^n}\hat{V}^{n-3/2}\hat{D}^{3/2}\hat{Y}^{n/2+3/2}J(Z)^{-n/2+1}\hat{Z}\min\{\hat{Z},|P|^{-2}\}.
\end{align*}
Again, when $2\mid n$, an extra factor of $\hat{B}^{1/2}$ arises due to our worse bounds in Lemma \ref{lem:sqfreeeasy}.
When $2\nmid n$, this contribution is clearly sufficient from our bounds for $E_{1,2}$, cf. \eqref{eq:M}, as long as $\ve\leq 1/12 $. Similarly, when $2\mid n$, the extra factor of size $\hat{B}^{1/2}$ ultimately, adds a factor of size $\hat{Q}^{1/4}$ to our worst case scenario, i.e. when $D=Y/2=Q/2 $. Therefore, following similar steps as in our bounds for $E_3$, this can be shown to be satisfactory as long as $n\geq 10$ and $\ve\leq 1/12$.
\subsection{Bounding $E_2$}
Finally, we turn to the term $E_2$. Note that the bounds for $E_2$ are superseded by those for $E_1$ as long as $\hat{V}=\frac{\hat{Y}}{|P|} J(Z)\geq 1$. Thus, we only need consider bounding $E_2$ when both conditions $\hat{Q}^\Delta\leq \hat{Y}\leq |P|$ and  $\hat{Z}\leq (|P|\hat{Y})^{-1}$ are satisfied. Here, we may use the bound in \eqref{eq:singul} to get:
\begin{equation}
\label{eq:Singular1}
|r_N|^{-n}\sum_{|d\uc|\leq \hat{Y}^{1/2}}\sum_{\substack{|r|=\hat{Y}\\ d\mid r}}|S_{d\uc,r,\b,N}(\vecnull)|\ll \hat{Y}^{(7-n)/2}.
\end{equation}
Thus, 
\begin{align*}
E_2(Y,P,Z)&:=|P|^{n+\ve}\sum\limits_{\substack{|r|=\hat{Y}\\r \textrm{ monic }}}\,\,\,\sum_{\substack{d\mid r\textrm{ monic, }\uc \textrm{ primitive}\\ \hat{Y-Q/2}\leq|d\uc|\leq \hat{Y/2}\\ |dc_2|<\hat{Y/2}}}|r_N|^{-n}
 \int_{|\uz|=\hat{Z}}S_{d\uc,r,\b,N}(\vecnull)I_{r_N}(\uz;\v)d\uz\\
&\ll |P|^{n+\ve} \hat{Y}^{-(n-7)/2}J(Z)^{-n/2+1}\hat{Z}\min\{\hat{Z},|P|^{-2}\}\\
&\ll |P|^{n-2+\ve} \hat{Y}^{-(n-7)/2}(1+|P|^2\hat{Z})^{-1}\hat{Z}\ll |P|^{n-4+\ve} \hat{Y}^{-(n-7)/2}\\
&\ll |P|^{n-4+\ve-\Delta/2}.
\end{align*}
as long as $n\geq 8$ and $\ve\leq \Delta/4$.

\bibliographystyle{plain}
\bibliographystyle{plain}
\end{document}